\renewcommand{\theequation}{\arabic{section}.\arabic{equation}}
\newtheorem{theo}{Theorem}[section]
\newtheorem{lemme}[theo]{Lemma}
\newtheorem{lemmeA}{Lemma A.}
\newtheorem{nbA}{Remark A.}
\newtheorem{exaA}{Example A.}
\newtheorem{theoA}{Theorem A.}
\newtheorem{corA}{Corollary A.}
\newtheorem{propoA}{Proposition A.}
\newtheorem{theoB}{Theorem B.}
\newtheorem{lemmeB}{Lemma B.}
\newtheorem{nbB}{Remark B.}
\newtheorem{defiA}{Definition A.}
\newtheorem{propoB}{Proposition B.}
\newtheorem{propo}[theo]{Proposition}
\newtheorem{cor}[theo]{Corollary}
\newtheorem{hyp}[theo]{Assumptions}
\newtheorem{nb}[theo]{Remark}
\newtheorem{defi}[theo]{Definition}
\theoremstyle{definition}
\def \leq {\leqslant}
\def \geq {\geqslant}
\numberwithin{equation}{section}
\def\ind#1{\lower5pt\hbox{$\scriptstyle #1$}}
\def \le {\leqslant}
\def \ge {\geqslant}
\def \l {\ell}
\def \d {\, \mathrm{d} }
\def \fe {F_\la}
\def \gl {G_\la}
\def \la {\lambda}
\def \ds {\displaystyle}
\def \l {\ell}
\def \ds {\displaystyle}
\def\Q {\mathcal{Q}}
\def\R{{\mathbb R}}
\def \S {{\mathbb S}^2}
\def \E {\mathcal{E}}
\def \n {\widehat{n}}
\def \v {{v}}
\def \vb {\v_{\star}}
\def \IS {\int_{\S}}
\def \IR {\int_{\R^3}}
\def \IRR {\int_{\R^3 \times \R^3}}
\def \us {\widehat{u} \cdot \sigma}
\def \el {e_\la}
\def \m {\mathbf{m}}
\def \RF {\mathcal{RF}}
\def \F {\mathcal{F}}
\def \H {\mathcal{H}}
\begin{document}
\title[]
{\textbf{Uniqueness and regularity of steady states of the Boltzmann equation for viscoelastic hard-spheres driven by a thermal bath}}

\author{R. J. Alonso \& B. Lods}

\address{\textbf{Ricardo J. Alonso}, Dept. of Computational \& Applied Mathematics, Rice University
Houston, TX 77005-1892, USA.}
\email{Ricardo.J.Alonso@rice.edu}

\address{\textbf{Bertrand Lods},  Dipartimento di Statistica e Matematica Applicata \& Collegio Carlo Alberto, Universit\`{a} degli
Studi di Torino,  Corso Unione Sovietica, 218/bis, 10134 Torino, Italy.}\email{lods@econ.unito.it}

\thanks{The work of R. Alonso was partially supported by the Office of Naval Research, grant N00014-09-1-0290 and by the National Science Foundation Supplemental Funding DMS-0439872 to UCLA-IPAM}

\hyphenation{bounda-ry rea-so-na-ble be-ha-vior pro-per-ties
cha-rac-te-ris-tic  coer-ci-vity}

\maketitle

\begin{abstract}
We study the uniqueness and regularity of the steady states of the diffusively driven Boltzmann equation in the physically relevant case where the restitution coefficient depends on the impact velocity including, in particular, the case of viscoelastic hard-spheres. We adopt a strategy which is novel in several aspects, in particular, the study of regularity does not requires \textit{a priori} knowledge of the time-dependent problem.  Furthermore, the uniqueness result is obtained in the small thermalization regime by studying the so-called \textit{quasi-elastic limit} for the problem. An important new aspect lies in the fact that no entropy functional inequality is needed in the limiting process.
\end{abstract}

\medskip
\section{Introduction}
\label{intro}
\setcounter{equation}{0}

\subsection{General setting} We investigate in the present paper the properties of the steady states of the spatially
homogeneous diffusively driven inelastic Boltzmann equation for hard spheres interactions and non-constant restitution coefficient. More precisely, we consider inelastic hard-spheres particles described by their  distribution density
$F=F(v) \geq 0$, $v \in \R^3$ and we consider the case in which $F$ satisfies
the stationary equation
\begin{equation}\label{steady}
\Q_e(F,F)+\mu \Delta F=0
\end{equation}
for some positive thermalization (or diffusion) coefficient $\mu >0$.  Moreover, assume $F$ has a given mass $\varrho >0$  and vanishing momentum:
$$\IR F(v)\d v=\varrho, \qquad \IR F(v)v\d v=0.$$
The diffusion operator $\mu\Delta_v F(v)$ appearing in \eqref{steady} represents a constant heat bath which models particles uncorrelated random accelerations between collisions.  The quadratic collision operator $\Q_e(F,F)$ models the interactions of hard-spheres by inelastic  binary collisions where the inelasticity is characterized by the so-called normal restitution coefficient $e(\cdot)$ that we shall assume here, in contrast with previous contributions on the subject, to be \textit{non-constant}. This restitution coefficient  quantifies the loss of relative
normal velocity of a pair of colliding particles after the
collision with respect to the impact velocity. Namely, if $v$ and $\vb$  denote the velocities of two particles before collision, their respective velocities $v'$ and $\vb'$ after collision are such that
\begin{equation}\label{coef}
(u'\cdot \n)=-(u\cdot \n) \,e(|u \cdot \n|),
\end{equation}
where the restitution coefficient $e:=e(|u \cdot \n|)$ is such that
$0 \leq e \leq 1$.  The unitary vector $\n \in \mathbb{S}^2$  determines the impact direction, that is, $\n$ stands for the unit vector that points from the $v$-particle center to the $\vb$-particle center at the instant of impact.  Here above
$$u=v-\vb,\qquad u'=v'-\vb',$$
denote respectively the relative velocity before and after collision.  Assuming  the granular particles to be perfectly smooth hard-spheres of mass
$m=1$, the velocities after collision $v'$ and $\vb'$ are given, in virtue of \eqref{coef} and the conservation of momentum, by
\begin{equation}
\label{transfpre}
  v'=v-\frac{1+e}{2}\,(u\cdot \n)\n,
\qquad \vb'=\vb+\frac{1+e}{2}\,(u\cdot \n)\n.
\end{equation}  The main assumption on $e(\cdot)$ we shall need for our analysis is listed in the following, see \cite{AlonsoIumj}.
\begin{hyp}\label{HYP}
Throughout the paper, one assumes the following to hold:
\begin{enumerate}
\item The mapping  $r \in \mathbb{R}_+ \mapsto e(r) \in (0,1]$ is absolutely continuous and non-increasing.
\item The mapping $r\in\mathbb{R}^{+}\mapsto \vartheta_e(r):=r\;e(r )$ is strictly increasing.
\item There exist $\mathfrak{a} >0$ and $\gamma \geq 0$ such that
\begin{equation}\label{gamma}
e(r) \simeq 1 - \mathfrak{a}\,r^\gamma \quad \text{ as } \quad r \simeq 0.\end{equation}
\end{enumerate}
\end{hyp}
The assumption that $e(\cdot)$ is non-increasing can be relaxed and replaced by the more general Assumptions 3.1 in \cite{AloLo1} (notice that, if $e(\cdot)$ is non-increasing, it is proven in \cite[Appendix A]{AloLo1} that such Assumptions 3.1 are indeed satisfied). In several places in our analysis, we shall need slightly stronger assumptions on the restitution coefficient that will be properly stated when needed. When no supplementary assumption is specified means that the stated result is true under the sole Assumptions \ref{HYP}. Notice that all these assumptions will be met by the \textit{visco-elastic hard-spheres} model which is the most physically relevant model for applications \cite{BrPo}. For such a model, the properties of the restitution coefficient have been derived in \cite{BrPo,PoSc}; in particular, $e(r)$ can be defined explicitly by the following series
\begin{equation}\label{visc}e(r)=1+ \sum_{k=1}^\infty (-1)^k a_k r^{\frac{k}{5}}, \qquad r\geq 0\end{equation}
where $a_k >0$ for any $k \in  \mathbb{N}$ are parameters depending on the material viscosity. In such a case, Assumptions \ref{HYP} are met with $\gamma=\frac1 5$ and $\mathfrak{a}=a_1$.
In the sequel, it shall be more convenient to deal with a second, and equivalent, parametrization of the post-collisional velocities.  Fix $v$ and $\vb$ with $v\neq \vb$ and let $\widehat{u}={u}/{|u|}$. Performing in \eqref{transfpre} the change of unknown
$\sigma=\widehat{u}-2 \,(\widehat{u}\cdot \n)\n \in\mathbb{S}^2$ provides an alternative parametrization of the unit sphere $\mathbb{S}^2$.  In this case, the impact velocity reads $|u\cdot\n|=|u| \sqrt{\tfrac{1-\widehat{u} \cdot \sigma}{2}}$ and the post-collisional velocities $v'$ and $\vb'$ are then given by
\begin{equation}\label{postsig} v'=v-\beta\frac{u-|u|\sigma}{2}, \qquad \vb'=\vb+ \beta\frac{u-|u|\sigma}{2}\end{equation}
where $\beta=\frac{1+e}{2}=\beta\left(|u| \sqrt{\tfrac{1-\widehat{u} \cdot \sigma}{2}}\right)  \in \left(\tfrac{1}{2},1\right].$
This representation allows us to give a precise definition of the Boltzmann collision operator in \textit{weak form} by
\begin{multline}\label{Ie3BHS}
\int_{\mathbb{R}^{3}}\Q_{e}(f,f)(v)\psi(v)\d v=\\
\frac{1}{2}\int_{\mathbb{R}^{3} \times \mathbb{R}^3 \times \mathbb{S}^{2}}f(v)f(\vb)\bigg(\psi( {v'})+\psi( {\vb'})-\psi(v)-\psi(\vb)\bigg)\d\sigma\d\vb\d v
\end{multline}
for any test function $\psi=\psi(v)$.  Here, the post-collisional velocities $v'$ and $\vb'$ are defined by \eqref{postsig}.  Notice that
$$|v'|^2+|\vb'|^2-|v|^2-|\vb|^2=-|u|^2\dfrac{1-\us}{4}\left(1- e\left(|u|\sqrt{\frac{1-\us}{2}}\right)^2\right),$$
thus, it follows that (see \cite{AloLo1} for details)
\begin{equation}\label{PhiPsi}\IR \Q_{e}(f,f)(v)|v|^2\d v=-\IRR f( v)f( \vb) \mathbf{\Psi}_{e}(|u|^2)\d v\d\vb \leq 0\end{equation}
where  the \textit{energy dissipation potential} $\mathbf{\Psi}_{e}$ is given by
\begin{equation}\label{Psie}
\mathbf{\Psi}_{e}(r):=\frac{r^{3/2}}{2}\int_0^{1} \left(1-e(\sqrt{r}z)^2\right)z^3\,\d z , \qquad \qquad \forall r >0.
\end{equation}
Notice that, under Assumptions \ref{HYP}, the mapping $\mathbf{\Psi}_e(\cdot)$ is \textit{convex and non-decreasing} (see again \cite{AloLo1}). The functional
\begin{equation}\label{dissIeF}\mathcal{I}_e(f):=\IRR f( v)f( \vb) \mathbf{\Psi}_{e}(|u|^2)\d v\d\vb\end{equation}
can be seen as an energy dissipation functional for the operator $\Q_e$. In particular, multiplying \eqref{steady} by $|v|^2$, one sees that
$$\mathcal{I}_e(F)=6\mu\varrho$$
for any solution $F$ to \eqref{steady} with mass $\varrho.$

Stationary solutions for equation \eqref{steady} in the case of \textit{constant} restitution coefficient have been studied from the mathematical viewpoint by different authors.  Existence of such   solutions was shown in \cite{GaPaVi}.  The study of moments and tails was described in \cite{BoGaPa}. Uniqueness and stability of these steady states (in the elastic limit)  was presented in \cite{MiMo3}.  Different kinds of forcing terms have also been considered in the literature. In particular, for the inelastic Boltzmann equation in \textit{self-similar variables} (corresponding to an anti-drift forcing term), stationary solutions correspond to the so-called \textit{homogeneous cooling state} and uniqueness, study of the elastic limit and convergence to self-similarity  were presented in \cite{MiMo2}.  Uniqueness of steady states for the Boltzmann equation under the thermalization induced by a host medium with a fixed Maxwellian
  distribution was recently presented in \cite{canizo}.  We mention that the case of dissipative Maxwell molecules has been studied as well in \cite{Bobylev-Carrillo-Gamba} and \cite{ccc}.

Regarding the existence of stationary  states, it is not very difficult to extend the results given in \cite{GaPaVi} to non-constant restitution coefficient $e(\cdot)$ and obtain the existence of a steady solutions for the diffusively driven Boltzmann equation \eqref{steady}.

\begin{theo}\label{theo:exists} Assume that the restitution coefficient $e(\cdot)$ satisfies Assumption \ref{HYP}. Then, for any $\mu >0$ and any $\varrho >0$, there exists a nonnegative $F=F(v) \in L^1_2(\R^3) \cap L^2(\R^3)$ such that
$$\Q_e(F,F)+\mu \Delta F=0$$
with $\ds\IR F(v)\d v=\varrho$ and $\ds \IR vF(v)\d v=0$.
\end{theo}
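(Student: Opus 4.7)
The plan is to adapt the Schauder-type construction of Gamba--Panferov--Villani \cite{GaPaVi} (treating constant $e$) to the variable-$e$ setting, using the estimates for $\Q_e$ developed in \cite{AloLo1,AlonsoIumj}. Split
$$\Q_e(f,f)=\Q_e^+(f,f)-f\,L[f],\qquad L[f](v)=\IR f(w)|v-w|\,\d w$$
(the hard-sphere collision frequency does not depend on $e$). Restrict attention to nonnegative \emph{even} functions $f$, which automatically have vanishing first moment, a property preserved by both $\Q_e$ and $\Delta$. For such $f$ with $\|f\|_{L^1}=\varrho$ and controlled second moment and $L^2$-norm, the linear elliptic problem
$$-\mu\Delta F+L[f](v)\,F=\Q_e^+(f,f),\qquad F\in H^1(\R^3),\ F\ge 0,$$
admits a unique nonnegative solution via Lax--Milgram (since $L[f](v)\gtrsim\langle v\rangle$ at infinity) and the maximum principle, and integration against $1$ yields $\IR F\,\d v=\varrho$. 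Define $\mathcal{T}(f):=F$.

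The crucial \emph{a priori} bound comes from the dissipation identity \eqref{PhiPsi}: any solution of the full nonlinear equation satisfies $\mathcal{I}_e(F)=6\mu\varrho$. Since $\mathbf{\Psi}_e$ is convex and non-decreasing, applying Jensen's inequality to the probability measure $F(v)F(\vb)/\varrho^2\,\d v\d\vb$, together with $\IRR F(v)F(\vb)|v-\vb|^2\,\d v\d\vb=2\varrho\IR|v|^2 F\,\d v$ (valid because $F$ is even), gives
$$\varrho^{\,2}\,\mathbf{\Psi}_e\!\left(\frac{2}{\varrho}\IR|v|^2 F\,\d v\right)\le \mathcal{I}_e(F)=6\mu\varrho,$$
and the divergence of $\mathbf{\Psi}_e$ at infinity yields a uniform upper bound $\IR|v|^2 F\,\d v\le K_0$. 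For the linearised iterate $F=\mathcal{T}(f)$, an analogous computation (testing the linear equation against $|v|^2$ and against $F$ itself) combined with $L^2$-convolution bounds on $\Q_e^+$ produces $\|F\|_{L^1_2}\le K$ and $\|F\|_{H^1}\le M$ as soon as $f$ obeys the same bounds. Consequently the convex set
$$\mathcal{C}:=\bigl\{f\in L^1_2(\R^3)\cap L^2(\R^3):\ f\ge 0\text{ even},\ \|f\|_{L^1}=\varrho,\ \IR|v|^2 f\,\d v\le K,\ \|f\|_{L^2}\le M\bigr\}$$
is $\mathcal{T}$-invariant for $K,M$ chosen large enough.

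Endowed with the strong $L^1_2$ topology, $\mathcal{C}$ is convex, and the $L^2$- and second-moment bounds combined with standard tightness render it relatively compact in $L^1_2(\R^3)$. Continuity of $\mathcal{T}:\mathcal{C}\to\mathcal{C}$ follows from the stability of the linear elliptic problem under perturbation of its data and the continuity of the bilinear operator $\Q_e^+$ on the relevant function spaces. Schauder's fixed-point theorem then yields $F=\mathcal{T}(F)$, which is the desired nonnegative element of $L^1_2\cap L^2$ solving $\Q_e(F,F)+\mu\Delta F=0$ with $\IR F\,\d v=\varrho$ and $\IR vF\,\d v=0$.

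The main obstacle I anticipate is obtaining the uniform $L^2$-control on $\Q_e^+(f,f)$ for $f\in\mathcal{C}$: because of the variable factor $\beta=(1+e)/2$ in the scattering kernel, the classical Alonso--Carneiro--Gamba convolution estimates do not apply verbatim, and one must invoke the refined convolution inequalities tailored to non-constant $e(\cdot)$ from \cite{AloLo1,AlonsoIumj}. A secondary technical point is that the linearised iteration must preserve the nonlinear energy bound, handled by absorbing the gain-part contribution on the right-hand side using the convexity and monotonicity of $\mathbf{\Psi}_e$.
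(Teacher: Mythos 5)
Your plan diverges from the paper's: the paper constructs the steady state via the time-dependent Cauchy problem \eqref{driven} (Theorem B.~1 in the appendix, established through a truncated \emph{parabolic} auxiliary problem and uniform-in-$(m,M)$ a~priori bounds) and then invokes the dynamic Tykhonov-type fixed-point theorem of \cite[Theorem~5.2]{GaPaVi} to extract a stationary point of the semiflow. You instead propose a static Schauder argument directly on the steady equation, with the iteration $\mathcal{T}(f)=F$ defined by the linear elliptic problem $-\mu\Delta F+L[f]F=\Q_e^+(f,f)$.

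There is a genuine gap in the static route, and it is exactly the point the dynamic route is designed to avoid. Integrating your linear elliptic problem against $1$ does \emph{not} give $\IR F\,\d v=\varrho$. What it gives is
$$\IR L[f](v)\,F(v)\,\d v=\IR \Q_e^+(f,f)(v)\,\d v=\IR \Q_e^-(f,f)(v)\,\d v=\IR L[f](v)\,f(v)\,\d v,$$
i.e.\ $\IR L[f]\,(F-f)\,\d v=0$, a \emph{weighted} orthogonality relation which does not pin down the total mass of $F$. Consequently the set $\mathcal{C}$ is not $\mathcal{T}$-invariant as stated, and the downstream energy estimate (which you run by testing against $|v|^2$ and absorbing via the $6\mu\IR F\,\d v$ term) does not close because $\IR F\,\d v$ is unknown. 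Normalising $F$ at each step, or adding a penalty $M(F-f)$ as in the paper's parabolic iterate, does not repair this at the static level: the mass of the Lax--Milgram solution is determined by the data and is generically $\neq\varrho$. In the time-dependent formulation used by the paper, mass and momentum are exact conservation laws of the auxiliary flow (integrate \eqref{drivenmM} against $1$ and $v$), so the invariant set keeps them fixed automatically, and the fixed point of the semiflow inherits $\IR F\,\d v=\varrho$, $\IR vF\,\d v=0$ for free. You would need either to pass through the dynamics as the paper does, or to introduce an explicit Lagrange-multiplier / normalisation device into the fixed-point map and then verify that the resulting fixed point in fact solves the original unmodified equation; neither is done in the proposal.

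A secondary point: restricting to even $f$ to fix the momentum is a workable simplification, but it is not needed in the paper's approach (momentum is a conserved quantity of the flow) and it silently restricts the class of steady states you could produce; the paper's argument does not require evenness.
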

The proof of this theorem follows the path given for constant inelasticity parameter in \cite{GaPaVi} and can be deduced from the properties of the solution to the Cauchy problem associated to \eqref{steady}. We refer to Appendix B for the main steps of the proof of the Theorem \ref{theo:exists}.

\subsection{Scaling argument and formal limit $\la \to 0$}

Let us discuss the main concern of the present work, namely, proving the uniqueness of solutions to \eqref{steady}
$$\Q_e(F,F)+\mu \Delta F=0$$
in the weak thermalization regime, i.e. when the diffusion parameter $\mu$ is sufficiently small.  In order to understand this regime and the strategy fix $\mu >0$ and denote by $F$ a solution to \eqref{steady} with given mass $\varrho$ and vanishing momentum.  Introduce the following rescaled solution
\begin{equation}
\label{GL}\gl (v)=\la^3 F(\la v), \qquad \la >0
\end{equation}
and the rescaled restitution coefficient
$$\el(r)=e(\la r) \qquad \forall r >0.$$
Since
$$\la^2 \Q_e(F,F)(\la v)=\Q_{\el}(\gl,\gl)(v)\qquad \text{and}\qquad \la^5 \Delta_v F(\la v)=\Delta_v \gl(v)$$  for any $v \in \R^3$, one gets that $\gl$ is a solution to the rescaled stationary problem
\begin{equation}
\label{rescaledmu}\Q_{\el}(\gl,\gl)=-\dfrac{\mu}{\la^3}\Delta_v \gl.
\end{equation}
In other words, for any $\la >0$ the rescaled distribution $\gl$ is a solution to the steady diffusively driven Boltzmann equation  with thermalization coefficient ${\mu}/\la^{3}$ and restitution coefficient $\el$ (notice that $\el$ still satisfies Assumptions \ref{HYP}). For any $\la >0$, the solution to \eqref{steady} is unique if and only if the solution to \eqref{rescaledmu} is unique. Such a scaling is particularly interesting because, in addition to preserve mass and momentum, when $\la \to 0$ the rescaled restitution coefficient $\el(r)$ converges pointwise to the elastic restitution coefficient $\lim_{\la \to 0}\el(r)=1$ for any $r\geq 0.$  Consequently, one formally expects that
$$\Q_{\el}(f,f) \simeq \Q_1(f,f) \qquad \text{ as } \quad \la \to 0$$
where $\Q_1(f,f)$ denotes the classical Boltzmann operator for elastic interactions. This means that the dissipation of energy is expected to vanish as $\la \to 0$.  Formally, one sees that if $\mu >0$ is kept fixed the right side of \eqref{rescaledmu} will be infinite in the limit $\la \to 0$.  In other words, the thermalization $\mu$ has to be reduced to compensate the loss of dissipation, i.e. one has to choose a diffusion coefficient $\mu=\mu_\la$ depending on $\la$ such that $\lim_{\la \to 0}\mu_\la=0.$ 
Intuitively, it make sense to look for a parameter that keeps the solution's energy $$\E_\lambda=\dfrac{1}{\varrho} \IR \gl(v)|v|^2\d v$$
of order one in the limit $\la \to 0.$  Let us investigate the correct scaling by multiplying \eqref{rescaledmu} by $|v|^2$ and integrating over $\R^3$ to get
$$6\mu_\la \varrho=\mathcal{I}_{\el}(\gl)$$
where $\mathcal{I}_{\el}$ is the energy dissipation functional associated to the rescaled restitution coefficient $\el$ given by
\begin{equation}\label{scaling}
\mathcal{I}_{\el}(f)= \IRR f(v)f(\vb)\mathbf{\Psi}_e(\la^2|u|^2)\d v\d\vb.\end{equation}
Since $\mathbf{\Psi}_e$ is convex, a simple use of Jensen's inequality yields $6\mu_\la \varrho \geq \varrho^2 \mathbf{\Psi}_{e}\left(\lambda^2\,\E_\lambda \right).$ Moreover,
\begin{equation}
\label{psie0}
\mathbf{\Psi}_e(r)\simeq \dfrac{\mathfrak{a}r^{\frac{3+\gamma}{2}}}{4+\gamma}  \quad \text{as} \quad r \simeq 0,
\end{equation}
from which it follows that $\lambda^2 \E_\lambda=\mathrm{O} (\mu_\lambda^{\frac{2}{3+\gamma}} )$   as $\lambda \simeq 0.$
Consequently, to keep the kinetic energy $\E_\la$ of unit order we must have
$$\mu:=\mu_\lambda=\lambda^{3+\gamma}.$$
For such a scaling, equation \eqref{rescaledmu} becomes
\begin{equation}
\label{rescaled}
\Q_{\el}(\gl,\gl)+\la^\gamma\Delta_v \gl=0.
\end{equation}
Note that with our choice of $\mu_\la$ the limit $G_0$ as $\la \to 0$ of $\gl$, if it exists, has to satisfy
$$\Q_1(G_0,G_0)=0.$$
In other words, $G_0$ is a suitable Maxwellian with same mass and momentum that $\gl$. Moreover, using the dissipation functional
\begin{equation}
\label{scaleDE}
6\varrho=\dfrac{1}{\lambda^\gamma}\IRR \gl(v)\gl(\vb)\mathbf{\Psi}_{e}(\la^2|v-\vb|^2)\d v\d\vb
\end{equation}
one expects that the limit $G_0$ satisfies
\begin{equation}\label{rhozeta0} 6\varrho=\IRR G_0(v)G_0(\vb)\zeta_0(|v-\vb|^2)\d v\d\vb\end{equation}
with $\zeta_0(r^2)=\lim_{\la \to 0}\frac{1}{\la^\gamma}\mathbf{\Psi}_{e}(\la^2 r^2)$ (several properties of such energy dissipation functionals are investaged in Appendix A).  With this observation, it is not difficult to prove that the unique possible limit as $\la \to 0$ of $\gl$ is the Maxwellian distribution
\begin{equation}\label{maxwellian}
\mathcal{M}(v)=\dfrac{\varrho}{(2\pi\Theta)^{3/2}}\exp\left(-\dfrac{|v|^2}{2\Theta}\right)
\end{equation}
for some explicit temperature $\Theta$ determined by the above identity \eqref{rhozeta0}.

The limiting Maxwellian $\mathcal{M}(v)$ is called the \textit{quasi-elastic limit} for this problem.  With this knowledge we will prove uniqueness of solutions for the problem \eqref{rescaled} when the rescaling parameter $\lambda$ is small (lying in an explicit interval).  In this sense our uniqueness result will be valid in the \textit{weak thermalization regime.}

\subsection{Main results and strategy} \label{sec:strategy} Let us state precisely the main problems we wish to address in this document:
\begin{enumerate}
\item Prove that \textit{any solution} $\gl$ to \eqref{rescaled} satisfies
$$\lim_{\la \to 0}\gl=\mathcal{M}$$
in some suitable sense.  Specifically, find a suitable Banach space $X$ such that $\gl \in X$ for any $\la>0$ and $\lim_{\la \to 0}\|\gl-\mathcal{M}\|_X=0$.
\item Prove that solution $\gl$ to \eqref{rescaled} is unique, at least in the weak elastic regime.  That is, determine $\la^\dag \in (0,1)$ such that $\mathfrak{S}_\la$ reduces to a singleton as soon as $\la\in(0,\la^\dag]$.  We use the symbol $\mathfrak{S}_\la$ to denote the set of solution $\gl$ to \eqref{rescaled} with given mass and vanishing momentum.
\item Provide quantitative answers to the two previous questions. More precisely, find the rate of convergence of $\gl$ towards $\mathcal{M}$ as well as some estimate for the parameter $\la^\dag$.
\end{enumerate}

The first question is answered with the following theorem (see Theorem \ref{convergenceMax} for a detailed statement) that can be interpreted as a  \textit{quasi-elastic limit} result.
\begin{theo}\label{intro:conv} If $e(\cdot)$ is of class $\mathcal{C}^m$  with $m \geq 3$ (with some additional regularity properties), one has
$$\lim_{\la \to 0}\|\gl-\mathcal{M}\|_{\mathbf{H}^\ell_k}=0 \qquad \forall k \geq 0, \qquad \forall \ell \in [0,m-2].$$ The limit $\mathcal{M}$ is the Maxwellian given by \eqref{maxwellian} with an explicit temperature $\Theta$ given by \eqref{temptheta}. The convergence also holds in exponential weighted $L^1$-spaces.
\end{theo}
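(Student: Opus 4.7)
The plan is a standard compactness-plus-identification argument, followed by an interpolation upgrade of the mode of convergence.

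\textbf{Step 1: uniform estimates on $\{\gl\}_{\la\in(0,1]}$.} Mass and vanishing momentum are fixed by assumption. Testing \eqref{rescaled} against $|v|^2$ gives $6\la^\gamma\varrho=\mathcal{I}_{\el}(\gl)$; Jensen applied to the convex potential $\mathbf{\Psi}_e$ together with the asymptotics \eqref{psie0} yields a uniform-in-$\la$ bound on the energy $\E_\la$, and a matching lower bound follows from the dissipation identity by interpolation with the mass. Propagation of arbitrary polynomial moments $L^1_k$ then comes from Povzner-type estimates for $\Q_{\el}$ that stay uniform in $\la$, since $\el \to 1$ pointwise renders the operator less and less dissipative. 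Multiplying \eqref{rescaled} by $\gl$ provides an $H^1$-type bound, and a bootstrap on the elliptic identity $-\la^\gamma\Delta \gl=\Q_{\el}(\gl,\gl)$ combined with the smoothing/regularity theory of the hard-sphere collision operator yields uniform bounds in every weighted Sobolev space $\mathbf{H}^m_k$, exploiting $e\in\mathcal{C}^m$.

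\textbf{Step 2: passage to the limit.} By Rellich, any sequence $\la_n\to 0$ admits a subsequence along which $\gl\to G_0$ strongly in $\mathbf{H}^{m-1}_k$ and almost everywhere. Since $\el(r)\to 1$ pointwise and the hard-sphere kernel is linear in $e$, the nonlinear term passes to the limit, $\Q_{\el}(\gl,\gl)\to \Q_1(G_0,G_0)$ (strongly in $L^1$, say), while $\la_n^\gamma\to 0$ and $\Delta\gl$ bounded force the diffusion term to vanish. Hence $G_0\geq 0$ satisfies $\Q_1(G_0,G_0)=0$ with mass $\varrho$ and zero momentum, so it must be a Maxwellian of the form \eqref{maxwellian} with some temperature $\Theta_0$.

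\textbf{Step 3: identification of the temperature.} To pin down $\Theta_0$, I would pass to the limit in \eqref{scaleDE}. Pointwise, $\la^{-\gamma}\mathbf{\Psi}_e(\la^2 r^2)\to \zeta_0(r^2):=\tfrac{\mathfrak{a}}{4+\gamma}\,r^{3+\gamma}$ by \eqref{psie0}, and uniform moments of $\gl$ in $L^1_{3+\gamma}$ from Step 1 supply the dominated-convergence majorant, so $G_0$ satisfies \eqref{rhozeta0}. Because the map $\Theta\mapsto\IRR \mathcal{M}_\Theta(v)\mathcal{M}_\Theta(\vb)\zeta_0(|v-\vb|^2)\d v\d\vb$ is strictly increasing, this determines $\Theta_0=\Theta$ uniquely. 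Hence every accumulation point of the family is $\mathcal{M}$, and the full family converges.

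\textbf{Step 4: norm upgrade and main obstacle.} Interpolating the uniform $\mathbf{H}^m_k$ bound against convergence in, for instance, $L^2_k$ yields the stated convergence in $\mathbf{H}^\ell_k$ for all $\ell\in[0,m-2]$; the loss of two derivatives absorbs the Rellich gain. Exponentially weighted $L^1$ convergence follows similarly once pointwise Maxwellian-type tail bounds are shown to hold uniformly in $\la$. The real difficulty lies in Step 1: a naive elliptic estimate based on the $\la^\gamma\Delta$ term blows up as $\la\to 0$, so the uniform regularity must come from the nearly-conservative nature of $\Q_{\el}$ for small $\la$, carefully transferring smoothing and $L^p$ bounds known for the time-dependent inelastic problem to this stationary setting. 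The $\mathcal{C}^m$ hypothesis on $e(\cdot)$ is precisely what is needed to close such bounds at order $m$.
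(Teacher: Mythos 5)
The overall architecture (uniform estimates, compactness extraction, identification of the limit equation, identification of the temperature via the energy identity, upgrade of the mode of convergence) matches the paper's proof of Theorem \ref{convergenceMax}, but two of the hinges are handled differently or glossed over, and one of them is exactly where the paper's contribution lies.

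On the uniform regularity (your Step~1/Step~4): you correctly observe that a naive elliptic bootstrap on $-\la^\gamma\Delta\gl=\Q_{\el}(\gl,\gl)$ degenerates as $\la\to 0$, but your proposed fix --- ``transferring smoothing and $L^p$ bounds known for the time-dependent inelastic problem to this stationary setting'' --- is precisely the route the paper \emph{avoids}, and it is advertised as one of its main novelties that it does so. Transferring from the Cauchy flow would require showing that the family $\mathfrak{S}_\la$ is captured by the long-time asymptotics of the time-dependent problem, which is not established here. The paper's direct mechanism (Theorem \ref{theo:HKL}) is: multiply \eqref{rescaled} by $\partial^\ell\gl\langle v\rangle^{2k}$, invoke the new bilinear Sobolev regularity estimate of Theorem~\ref{regularite} for $\Q^+_{\el}$, which comes with a freely small parameter $\varepsilon$ in front of the top-order norms, and absorb those terms into the coercivity $c_0\|\partial^\ell\gl\|^2_{L^2_{k+1/2}}$ supplied by the loss term via Proposition~\ref{prop:inf}. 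The diffusive term is kept on the good side of the inequality and then simply dropped; the closure is entirely $\la$-independent.

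On passing to the limit in the collision operator (your Step~2): the assertion that ``the hard-sphere kernel is linear in $e$'' is not correct and, more importantly, does not yield the convergence $\Q_{\el}(\gl,\gl)\to\Q_1(G_0,G_0)$. The kernel $B(u,\sigma)=|u|/4\pi$ does not depend on $e$ at all; the restitution coefficient enters only through the post-collisional velocity $v^{(\la)}$, and that dependence is nonlinear. What the paper actually uses is a quantitative continuity estimate, Proposition~\ref{exponetialest} (and its iterated form Proposition~\ref{propo:sobdiff}), which controls $\|\Q^+_{\el}(f,g)-\Q^+_1(f,g)\|_{\mathbf{H}^{-1}}$ by $\la^\gamma$ times weighted norms of $f,g$, by Taylor-expanding $\psi(v^{(\la)})-\psi(v')$ in the velocity shift and recognizing the remainder as a new gain-type Boltzmann operator with a modified restitution coefficient. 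Combined with $\|\Q_{\el}(\gl,\gl)\|_{L^2}=\la^\gamma\|\Delta\gl\|_{L^2}=O(\la^\gamma)$, this gives $\|\Q_1(\gl,\gl)\|_{L^2}=O(\la^\gamma)$ directly, which is stronger and cleaner than a qualitative a.e.-convergence argument. Finally, on the temperature (your Step~3): dominated convergence does not apply as stated, because both the kernel $\zeta_{\la_n}$ and the measure $G_{\la_n}\otimes G_{\la_n}$ vary with $n$. One must split $|v-\vb|\le R$ (uniform convergence of $\zeta_\la\to\zeta_0$ there) from $|v-\vb|>R$ (controlled by a moment of order strictly greater than $3+\gamma$), which is the content of Lemma~A.\ref{lem:dissTem}; the moments of order exactly $3+\gamma$ you propose as a majorant are not quite enough for the tail.
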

The proof of the above result is based upon a compactness argument and requires a careful investigation of the regularity properties of the solution to \eqref{rescaled}.  Our approach for the study of regularity of solutions to \eqref{rescaled} differs from the related contributions on the matter \cite{MiMo3,MiMo2} where the regularity of steady solutions is deduced from the properties of the time-dependent problem (namely on the propagation of regularity combined with the damping in time of the singularities for solution to the time-dependent problem, see \cite{MoVi}). In contrast with these results, our methodology is direct and \textit{relies only} on the steady equation \eqref{rescaled}.   It is not difficult to prove by a bootstrap argument that any solution $\gl$ to \eqref{rescaled} is smooth, however, it is more delicate to obtain regularity estimates which are \textit{uniform} with respect to the parameter $\la >0$ since the diffusive heating in \eqref{rescaled} is vanishing in the limit $\la \to 0$.  On the basis of \textit{new regularity estimates} of the collision operator (see Theorem \ref{regularite}), we can prove the following proposition.
\begin{propo}\label{introregu}   Under the regularity assumptions on $e(\cdot)$ of Theorem \ref{intro:conv}, one has
$$\sup_{\la \in (0,1]} \|\gl\|_{\mathbf{H}^\ell_k} < \infty \qquad \forall k \geq 0, \qquad \ell \in (0,m-1].$$
\end{propo}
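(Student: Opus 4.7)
The plan is to bootstrap Sobolev regularity using the smoothing properties of $\Q^+_{\el}$ established in Theorem \ref{regularite}, drawing the weighted control not from the vanishing Laplacian but from the collision frequency itself. Splitting $\Q_{\el} = \Q^+_{\el} - \Q^-_{\el}$ with $\Q^-_{\el}(\gl, \gl) = \nu_{\gl}\,\gl$ and $\nu_{\gl}(v) = \int_{\R^3} |v-\vb|\,\gl(\vb)\,\d\vb$ (up to a universal positive constant), equation \eqref{rescaled} reads
\begin{equation*}
\nu_{\gl}\,\gl - \la^\gamma \Delta \gl = \Q^+_{\el}(\gl, \gl).
\end{equation*}
The identity $\mathcal{I}_{\el}(\gl) = 6\la^\gamma \varrho$, combined with the asymptotics \eqref{psie0} and Jensen's inequality, forces the second moment of $\gl$ to be uniformly bounded in $\la \in (0,1]$, which in turn yields the coercivity $\nu_{\gl}(v) \ge c_0\,\langle v\rangle$ with $c_0 > 0$ independent of $\la$. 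This uniform weight is the backbone of the whole argument.

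Uniform $L^1_k$ moment bounds for every $k \ge 1$ are obtained by testing \eqref{rescaled} against $\langle v\rangle^{2k}$: the Povzner-type moment production of the inelastic hard-sphere operator (uniform in $\el$ thanks to \cite{AloLo1}) dominates the Laplacian contribution $2k(2k+1)\,\la^\gamma \int \gl\,\langle v\rangle^{2k-2}\,\d v$, and an induction on $k$ closes the estimate. Uniform $L^2_k$ bounds then follow by multiplying \eqref{rescaled} by $\gl\,\langle v\rangle^{2k}$ and integrating by parts:
\begin{equation*}
\int \nu_{\gl}\,\gl^2\,\langle v\rangle^{2k}\,\d v + \la^\gamma \int |\nabla \gl|^2\,\langle v\rangle^{2k}\,\d v = \int \Q^+_{\el}(\gl, \gl)\,\gl\,\langle v\rangle^{2k}\,\d v + \tfrac{\la^\gamma}{2}\int \gl^2\,\Delta\langle v\rangle^{2k}\,\d v.
\end{equation*}
The coercivity of $\nu_{\gl}$ produces $c_0\,\|\gl\|_{L^2_{k+1/2}}^2$ on the left; a Young-type inequality for $\Q^+_{\el}$ (uniform in $\el$) controls the cubic term on the right by a product of the form $\|\gl\|_{L^2_k}\,\|\gl\|_{L^1_{k+1}}\,\|\gl\|_{L^2_{k+1/2}}$, so that after Cauchy--Schwarz absorption one obtains $\sup_{\la\in(0,1]}\|\gl\|_{L^2_k} < \infty$ for every $k \ge 0$.

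The bootstrap to $\mathbf{H}^\ell_k$ proceeds by induction on the derivation order $|\alpha| \le \ell$. Applying $\partial^\alpha$ to \eqref{rescaled} gives
\begin{equation*}
\nu_{\gl}\,\partial^\alpha \gl - \la^\gamma \Delta\,\partial^\alpha \gl = \partial^\alpha \Q^+_{\el}(\gl, \gl) - \sum_{0 < \beta \le \alpha} \binom{\alpha}{\beta}\,\partial^\beta \nu_{\gl}\,\partial^{\alpha-\beta}\gl,
\end{equation*}
and repeating the weighted energy argument for $\partial^\alpha \gl$ yields $c_0\,\|\partial^\alpha \gl\|_{L^2_{k+1/2}}^2$ bounded by $\|\partial^\alpha \Q^+_{\el}(\gl,\gl)\|_{L^2_{k-1/2}}^2$ plus commutator contributions. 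Theorem \ref{regularite} estimates the first term uniformly in $\la$ by weighted Sobolev norms of $\gl$ in which the top-order piece is reabsorbed on the left thanks to some smoothing gain, while the commutator sum only involves derivatives of $\gl$ of order $< |\alpha|$: indeed, for $|\beta| \ge 1$, the factor $\partial^\beta \nu_{\gl}$ reduces, after a standard redistribution of derivatives in the convolution $\gl * |\cdot|$, to a bounded convolution of a lower-order derivative of $\gl$. The induction closes up to $\ell = m-1$, the ceiling being set by the number of derivatives of $e$ admitted in Theorem \ref{regularite}.

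The main obstacle is to ensure that every constant entering the bootstrap is \emph{uniform in $\la \in (0,1]$}. This reduces to verifying that the rescaled family $\{\el\}_{\la \in (0,1]}$ satisfies Assumptions \ref{HYP} with constants controlled independently of $\la$: indeed $\el(r) = e(\la r)$ trivially inherits the absolute continuity and the monotonicity of $r \mapsto r\,\el(r)$ from $e$, and the expansion \eqref{gamma} holds for $\el$ with the same exponent $\gamma$ (the leading coefficient being $\mathfrak{a}\la^\gamma \le \mathfrak{a}$). Granted this structural uniformity, together with a careful Leibniz and interpolation treatment of the commutators to avoid any circular dependence on the highest-order norm, the bootstrap yields the claimed estimate $\sup_{\la\in(0,1]}\|\gl\|_{\mathbf{H}^\ell_k} < \infty$ for every $k \ge 0$ and $\ell \in (0, m-1]$.
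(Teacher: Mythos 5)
Your overall architecture coincides with the paper's: uniform moment bounds via Povzner estimates and the Laplacian test, coercivity $\nu_{\gl}(v)\ge c_0\langle v\rangle$ coming from the uniform lower bound on the energy, then a weighted energy estimate for $L^2_k$, and finally a bootstrap in $\mathbf{H}^\ell_k$ driven by the smoothing Theorem~\ref{regularite} together with a Leibniz treatment of the loss-term commutators. Your description of the $\mathbf{H}^\ell_k$ induction in particular matches Theorem~\ref{theo:HKL} closely, including the absorption of the top-order piece via the small $\varepsilon$-coefficient in \eqref{regestim}.

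There is, however, a genuine gap in your $L^2_k$ step, and it is not a cosmetic one. You propose to control
$\int \Q^+_{\el}(\gl,\gl)\gl\langle v\rangle^{2k}\,\d v$
by a Young-type estimate of the form $\|\gl\|_{L^2_k}\,\|\gl\|_{L^1_{k+1}}\,\|\gl\|_{L^2_{k+1/2}}$ and then ``absorb by Cauchy--Schwarz.'' But this expression is quadratic in the $L^2$-type norms of $\gl$, of the same homogeneity as the coercive term $c_0\|\gl\|^2_{L^2_{k+1/2}}$ on the left. After Young/Cauchy--Schwarz absorption the surviving inequality is of the schematic form
$\|\gl\|^2_{L^2_{k+1/2}} \lesssim \|\gl\|^2_{L^2_k} + \la^\gamma\|\gl\|^2_{L^2_{k-1}}$,
which does not bound $\|\gl\|_{L^2_{k+1/2}}$: for $k=0$ it reduces to the circular statement $\|\gl\|_{L^2_{1/2}}\lesssim\|\gl\|_{L^2}$, and there is no a priori uniform bound on $\|\gl\|_{L^2}$ to seed an induction. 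The naive Young estimate (Theorem A.2 with $p=1$, $q=r=2$) produces a coefficient proportional to $\|\gl\|_{L^1_{k+1/2}}$ which is bounded but has no reason to be smaller than $c_0$, so absorption fails. This is precisely the obstruction that Proposition~\ref{propo:L2} circumvents by invoking \cite[Corollary 4.14]{AloLo1}: that estimate delivers the split
$\int \Q_{\el}^+(\gl,\gl)\gl\langle v\rangle^{2k}\,\d v\le C_{\el}\delta^{-z}\|\gl\|^{1+2\theta}_{L^1_k}\|\gl\|^{2(1-\theta)}_{L^2_k}+\delta\|\gl\|_{L^1_{2+k}}\|\gl\|^{2}_{L^2_{k+1/2}}$,
and both features are indispensable: the $\delta$ in front of the top-weight square makes it absorbable into the coercivity, and --- crucially --- the subquadratic exponent $2(1-\theta)<2$ on the remaining $L^2_k$ factor turns the final bound into a self-closing inequality of the type $X^2\le A+BX^{2(1-\theta)}$. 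Your sketch omits exactly this ingredient, so the $L^2_k$ base of the bootstrap does not stand as written. Secondarily, the uniformity in $\la$ of the constant $C_{\el}$ in Theorem~\ref{regularite} is not a consequence of $\el$ merely satisfying Assumptions~\ref{HYP} pointwise; it requires the quantitative verification carried out in Remark~\ref{scaleCe} (again via \cite[Corollary 4.14]{AloLo1}), which should be stated rather than declared trivial.
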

Theorem \ref{intro:conv} serves as a fundamental brick to prove the main result of the paper.
\begin{theo}\label{introunique} Under suitable regularity assumptions on $e(\cdot)$ there exists $\la^\dag \in (0,1]$ such that the set $\mathfrak{S}_\la$ of solutions to \eqref{rescaled} with given mass $\varrho$ and vanishing momentum reduces to a singleton for any $\la \in [0,\la^\dag]$.
\end{theo}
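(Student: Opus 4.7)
The plan is a perturbative argument around the elastic Maxwellian $\mathcal{M}$ from Theorem \ref{intro:conv}, coupled with the classical spectral gap of the linearized elastic Boltzmann operator. Assume two solutions $\gl^{(1)}, \gl^{(2)} \in \mathfrak{S}_\la$ sharing mass $\varrho$ and vanishing momentum, and set $H_\la := \gl^{(1)} - \gl^{(2)}$. By bilinearity of $\Q_{\el}$, $H_\la$ satisfies the linear equation
\begin{equation*}
\la^\gamma \Delta H_\la + \Q_{\el}(\gl^{(1)}, H_\la) + \Q_{\el}(H_\la, \gl^{(2)}) = 0,
\end{equation*}
with $\IR H_\la\, \d v = 0$ and $\IR v\,H_\la\, \d v = 0$. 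Isolating the linearized elastic operator $\mathcal{L}_\mathcal{M} h := \Q_1(\mathcal{M}, h) + \Q_1(h, \mathcal{M})$, this rearranges as $\mathcal{L}_\mathcal{M} H_\la + \la^\gamma \Delta H_\la = \mathcal{E}_\la(H_\la)$, where the ``error'' operator $\mathcal{E}_\la$ collects the discrepancies $\Q_{\el} - \Q_1$ and $\gl^{(i)} - \mathcal{M}$, both vanishing as $\la \to 0$ by the quasi-elastic limit.

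The core estimate would come from testing against $H_\la/\mathcal{M}$ in a weighted $L^2$ space $X$ compatible with the spectral gap (either $L^2(\mathcal{M}^{-1/2})$ directly, or a Sobolev/weighted $L^1$ variant to which the gap is transported via a Mischler--Mouhot enlargement argument, should the tails of $\gl^{(i)}$ not accommodate the stronger weight). Integration by parts on the Laplacian produces a non-positive contribution $-\la^\gamma \IR |\nabla H_\la|^2/\mathcal{M}\, \d v$ plus lower-order bulk terms of size $\la^\gamma$. Combined with the spectral gap of $\mathcal{L}_\mathcal{M}$ on the orthogonal complement of its five-dimensional kernel
\begin{equation*}
\mathrm{Ker}(\mathcal{L}_\mathcal{M}) = \mathrm{span}\{\mathcal{M},\, v_1\mathcal{M},\, v_2\mathcal{M},\, v_3\mathcal{M},\, |v|^2\mathcal{M}\},
\end{equation*}
and with the bound $\|\mathcal{E}_\la\|_{X\to X} = o(1)$ (supplied by Theorem \ref{intro:conv}, Proposition \ref{introregu}, and bilinear continuity of $\Q_1$), this yields $\nu\,\|H_\la^\perp\|_X^2 \leq o(1)\,\|H_\la\|_X^2$, where $H_\la = \alpha_\la\,(|v|^2 - 3\Theta)\mathcal{M} + H_\la^\perp$ is the orthogonal decomposition. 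Four of the five kernel directions are killed by the mass and momentum constraints, leaving the single scalar $\alpha_\la$.

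To control $\alpha_\la$, I would invoke the energy dissipation identity $\mathcal{I}_{\el}(\gl^{(i)}) = 6\la^\gamma\varrho$. Subtracting the two identities and dividing by $\la^{3+\gamma}$, then passing to the limit using \eqref{psie0} and $\gl^{(i)} \to \mathcal{M}$, yields
\begin{equation*}
\IR H_\la(v)\,\Phi_{\mathcal{M}}(v)\, \d v = o(\|H_\la\|_X), \qquad \Phi_{\mathcal{M}}(v) := \frac{2\mathfrak{a}}{4+\gamma}\,\IR \mathcal{M}(\vb)\,|v-\vb|^{3+\gamma}\, \d \vb.
\end{equation*}
Inserting the decomposition of $H_\la$ and verifying that the covariance
\begin{equation*}
\mathcal{K} := \IR (|v|^2 - 3\Theta)\,\mathcal{M}(v)\,\Phi_{\mathcal{M}}(v)\, \d v \neq 0,
\end{equation*}
which follows from the strict radial monotonicity of $\Phi_{\mathcal{M}}$ via a Chebyshev-type correlation inequality, produces $|\alpha_\la| \leq o(1)\,\|H_\la\|_X + C\,\|H_\la^\perp\|_X$. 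Combining with the coercivity bound gives $\|H_\la\|_X^2 \leq o(1)\,\|H_\la\|_X^2$ as $\la \to 0$, which for $\la \leq \la^\dag$ forces $H_\la \equiv 0$.

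The main obstacle, in my view, is securing $\|\mathcal{E}_\la\|_{X\to X} = o(1)$ in a space $X$ that is strong enough to receive the convergence of Theorem \ref{intro:conv} yet still supports the spectral gap of $\mathcal{L}_\mathcal{M}$. Since the natural gap space $L^2(\mathcal{M}^{-1/2})$ need not accommodate $\gl^{(i)}$ a priori, one likely has to transport the gap to a Sobolev/weighted $L^1$ scale through an enlargement of the functional framework. A subsidiary technical point is quantifying the convergence $\Q_{\el} \to \Q_1$ with rate $O(\la^\gamma)$ via the modulus of continuity of $e$ provided by Assumption \ref{HYP}(3), together with the fine asymptotics of $\mathbf{\Psi}_e$ needed in the energy dissipation step.
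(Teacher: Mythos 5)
Your plan reproduces the paper's own strategy: form the difference $H_\la$ of two solutions, rewrite $\mathscr{L}_1 H_\la$ as a sum of error terms that vanish with $\la$ by the quasi-elastic limit, invoke the spectral gap of $\mathscr{L}_1$ modulo its five-dimensional kernel, kill four kernel directions by the mass and momentum constraints, and recover the residual energy direction from the dissipation identity through a nonvanishing covariance. The paper does precisely this, except that it packages the energy direction into the lifting operator $\mathcal{A}$ with $\mathcal{A}_1 h=2\mathcal{I}_0(\mathcal{M},h)$ (Lemma A.\ref{Ainvert}) rather than your orthogonal-decomposition-plus-covariance argument; the two are formally equivalent, and your condition $\mathcal{K}\neq 0$ is exactly the paper's $\wp_\gamma\neq 0$.

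There is, however, a genuine gap in your treatment of the diffusion term $\la^\gamma\Delta H_\la$. The integration-by-parts step that turns the Laplacian into the non-positive contribution $-\la^\gamma\int|\nabla H_\la|^2/\mathcal{M}\,\d v$ requires working in $L^2(\mathcal{M}^{-1})$, so that $H_\la/\mathcal{M}$ is admissible. But the tail estimate of Proposition \ref{propo:tails} only provides $\exp(A|v|^{3/2})$-decay, which does not place $\gl$ (nor $H_\la$) in $L^2(\mathcal{M}^{-1})$; you flag this yourself and propose to fall back to a weighted-$L^1$ enlargement, i.e.\ to $\mathcal{X}=L^1(m_a)$. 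In that space, however, the Laplacian no longer has a favorable sign, and the term $\la^\gamma\Delta H_\la$ must instead be \emph{bounded}: one needs $\|\Delta H_\la\|_{L^1(m_a)}\lesssim\|H_\la\|_{L^1_1(m_a)}$ uniformly in $\la$. That is precisely the content of Proposition \ref{prop:Hkdiff}, which gives the uniform transfer estimate $\|H_\la\|_{\mathbf{W}^{2,1}(m_a)}\leq C\|H_\la\|_{L^1(m_a)}$ \emph{for the difference of two solutions}, obtained directly from the steady equation. This is not a consequence of the uniform regularity of $\gl$ alone (Proposition \ref{introregu}): that controls each $\gl$ separately, but gives no way to dominate $\|\Delta H_\la\|$ by $\|H_\la\|_{L^1_1(m_a)}$, which is what is required for the contraction $\|H_\la\|\leq o(1)\|H_\la\|$ to close. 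Without the transfer estimate, the $L^1$ branch of your argument does not go through, and the $L^2(\mathcal{M}^{-1})$ branch is unavailable because of the tails — so the proof as written has a hole precisely at the Laplacian term.
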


Theorem \ref{introunique} can be interpreted as an uniqueness result in the \textit{quasi-elastic} regime where $\la$ is small.  This theorem, however, can also be interpreted as a \textit{weak thermalization} uniqueness result since in this regime the diffusion parameter $\mu$ is small as well.

\begin{theo}\label{introdiff} Under suitable regularity assumptions on $e(\cdot)$, there exists $\mu^\dag >0$ such that, for any $\mu \in (0,\mu^\dag]$ the steady problem
$$\Q_e(F,F)+\mu\Delta F=0$$
admits an unique solution $F$ for a given mass and vanishing momentum.
\end{theo}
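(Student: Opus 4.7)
The plan is to reduce Theorem \ref{introdiff} directly to Theorem \ref{introunique} via the scaling argument that was carefully set up in Section 1.2. The central point is that the choice of exponent $\mu_\lambda=\lambda^{3+\gamma}$ was precisely designed so that the map $F\mapsto \gl(v)=\lambda^3 F(\lambda v)$ is a bijection between the stationary solutions of the original problem \eqref{steady} with diffusion coefficient $\mu$ and those of the rescaled problem \eqref{rescaled} with small parameter $\lambda$.

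First I would, given $\mu>0$, simply set $\lambda=\mu^{1/(3+\gamma)}$ so that $\mu=\lambda^{3+\gamma}$. One then checks (this is the content of formulas \eqref{GL}--\eqref{rescaled}) that $F\in L^1_2(\R^3)\cap L^2(\R^3)$ solves $\Q_e(F,F)+\mu\Delta F=0$ if and only if $\gl$ solves $\Q_{\el}(\gl,\gl)+\lambda^\gamma\Delta \gl=0$. Since the rescaling is a linear bijection on $L^1_2(\R^3)$, it preserves the mass and the momentum: indeed, $\IR \gl(v)\,\d v=\IR F(w)\,\d w=\varrho$ and $\IR v\,\gl(v)\,\d v=\lambda^{-1}\IR w\,F(w)\,\d w=0$. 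Consequently the set of solutions to \eqref{steady} with mass $\varrho$ and vanishing momentum is in one-to-one correspondence with $\mathfrak{S}_\lambda$.

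Next I would invoke Theorem \ref{introunique}: there exists $\lambda^\dag\in(0,1]$ such that $\mathfrak{S}_\lambda$ is a singleton for every $\lambda\in(0,\lambda^\dag]$. Define
\begin{equation*}
\mu^\dag:=(\lambda^\dag)^{3+\gamma}>0.
\end{equation*}
For any $\mu\in(0,\mu^\dag]$, the associated $\lambda=\mu^{1/(3+\gamma)}$ lies in $(0,\lambda^\dag]$, and therefore \eqref{rescaled} admits a unique solution with mass $\varrho$ and vanishing momentum. Pulling this back through the scaling $F(v)=\lambda^{-3}\gl(v/\lambda)$ yields uniqueness of $F$ for \eqref{steady}. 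Existence in $L^1_2\cap L^2$ is already supplied by Theorem \ref{theo:exists} applied to the original problem (or equivalently by applying it to the rescaled problem and pulling back).

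I do not expect any substantial obstacle in this argument beyond the bookkeeping above; all the heavy lifting, namely the quasi-elastic analysis and the uniqueness in the rescaled variables, has been placed in Theorem \ref{introunique}. The only subtle point worth double-checking is that the "suitable regularity assumptions on $e(\cdot)$" required for Theorem \ref{introunique} transfer to assumptions on $e$ itself: since $\el(r)=e(\lambda r)$, any $\mathcal{C}^m$ regularity and structural hypothesis on $e$ (including the behavior \eqref{gamma} near the origin, with the same exponent $\gamma$ and a rescaled constant $\mathfrak{a}\lambda^\gamma$) is automatically inherited by $\el$ uniformly on any bounded $\lambda$-range, so no extra hypothesis is needed here.
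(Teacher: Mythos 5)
Your argument is correct and coincides with the paper's proof of Theorem \ref{unique}: in both cases one observes that the scaling $F(v)=\la^{-3}\gl(\la^{-1}v)$ with $\la=\mu^{1/(3+\gamma)}$ carries steady states of \eqref{steady} bijectively onto $\mathfrak{S}_\la$, invokes the uniqueness of the rescaled problem for $\la\le\la^\dag$, and sets $\mu^\dag=(\la^\dag)^{3+\gamma}$. Your closing remark on the transfer of regularity assumptions to $\el$ is a useful sanity check and is also consistent with the paper's Remark \ref{scaleCe}.
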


The proof of Theorem \ref{introunique} follows the strategy of \cite{MiMo3} (see also \cite{MiMo2} and \cite{canizo}).  Essentially, it is based on the knowledge of the quasi-elastic limit problem and on quantitative estimates of the difference between solutions to the
original problem and the equilibrium state as $\la \to 0$.  More precisely, let us consider two steady solutions $\gl, \fe \in \mathfrak{S}_\la$. Set then $H_\la=\fe-\gl$ and define  the linearized  elastic Boltzmann operator around the limiting Maxwellian $\mathcal{M}$
\begin{equation}
  \label{linearized1}
  \mathscr{L}_1 (h)=\Q_1(\mathcal{M},h)+\Q_1(h,\mathcal{M}).
\end{equation}
Observing that $ \mathscr{L}_1$ is a symmetric operator one recognizes
\begin{multline*}
\mathscr{L}_1(H_\la)=\bigg(\Q_1(H_\la,\mathcal{M}) -\Q_{\el}(H_\la,\mathcal{M})\bigg)
+\bigg(\Q_1(\mathcal{M}, H_\la) -\Q_{\el}(\mathcal{M},H_\la)\bigg)\\
+\bigg(\Q_{\el}(\mathcal{M}-\fe,H_\la) +\Q_{\el}(H_\la,\mathcal{M}-\gl)\bigg)-\la^\gamma\Delta H_\la
\end{multline*}
where we used that $\Q_{\el}(\fe,\fe)-\Q_{\el}(\gl,\gl)=\la^\gamma \Delta H_\la.$  Assume that there exist two Banach spaces $\mathcal{X}$ and $\mathcal{Y}$ independent of $\la$ such that
\begin{equation}\label{continueQ1}
\left\|\Q_{\el}(f,g)\right\|_{\mathcal{X}}+ \left\|\Q_{\el}(g,f)\right\|_{\mathcal{X}} \leq C_1  \|f\|_{\mathcal{Y}}\|g\|_{\mathcal{Y}},
\end{equation}
and
\begin{equation}
  \label{XY}
  \left\|\Q_1(f,\mathcal{M})-\Q_{\el}(f,\mathcal{M})\right\|_{\mathcal{X}} + \left\|\Q_1(\mathcal{M},f)-\Q_{\el}(\mathcal{M},f)\right\|_{\mathcal{X}}  \leq C_2\la^p \|f\|_{\mathcal{Y}},
\end{equation}
and also,
\begin{equation}\label{LaplX}
\|\Delta H_\la\|_{\mathcal{X}} \leq C_3 \|H_\la\|_{\mathcal{Y}} \qquad \forall \la > 0
\end{equation}
for some constants $C_1, C_2, C_3>0$, $p \in (0,\gamma)$ independent of $\la$.  Notice that \eqref{LaplX} is too restrictive for a general function $f \in \mathcal{Y}$, it is only assumed for any difference $H_\la$. Then,
$$\|\mathscr{L}_1(H_\la)\|_{\mathcal{X}}\leq C_1\|H_\la\|_{\mathcal{Y}}\big(\|\mathcal{M}-\gl\|_{\mathcal{Y}} +\|\mathcal{M}-\fe\|_{\mathcal{Y}}\big)
+(C_2+C_3)\la^p \|H_\la\|_{\mathcal{Y}}.$$
If $\mathcal{M}$ is the universal limit of the family $\mathfrak{S}_{\lambda}$, if one is able to prove that
\begin{equation}\label{conveX}
\lim_{\la \to 0} \big(\|\mathcal{M}-\gl\|_{\mathcal{Y}} +\|\mathcal{M}-\fe\|_{\mathcal{Y}}\big)=0
\end{equation}
then, for any   $\varepsilon >0$ there exists $\la_0 >0$ such that
$$\|\mathscr{L}_1(H_\la)\|_{\mathcal{X}} \leq \varepsilon \|H_\la\|_{\mathcal{Y}}.$$
The strategy ends by proving that there exists a subspace $\widehat{\mathcal{Y}} \subset \mathcal{Y} $ containing the net $\{H_{\lambda}\}_{\lambda\in(0,1]}$ such that
\begin{equation}\label{XY2}
\|\mathscr{L}_1 (h)\|_\mathcal{X} \geq c_0 \|h\|_\mathcal{Y} \qquad \forall h \in \widehat{\mathcal{Y}}.
\end{equation}
Thus, for any  $\varepsilon >0$ there exists $\la_0 >0$ such that
$$c_0\| H_\la\|_{\mathcal{Y}}\leq \varepsilon \|H_\la\|_{\mathcal{Y}} \qquad \forall \la \in (0,\la_0).$$
This proves that $H_\la=0$ for any $\la \in (0,\la_0).$ Notice that $\la_0$ would become explicit if we are able to make explicit the rate of convergence of \eqref{conveX}.

To summarize, the proof reduces to find Banach spaces $\mathcal{X}$ and $\mathcal{Y}$ for which the above equations \eqref{continueQ1}--\eqref{XY2} hold.  We warn the reader here that \eqref{XY2} will have to be slightly modified because \textit{a priori} the energy of the difference $H_{\lambda}$ is not necessarily zero.  This technical detail is overcame by introducing a suitable lifting operator of $\mathscr{L}_1$, see \cite{MiMo3} for the original implementation of this idea.
 We can already anticipate that the strategy will be applied to the following weighted $L^1$-spaces
$$\mathcal{X}=L^1(m_a)=L^1(\R^3\,;\,m_a(v)\d v) \quad \text{ and } \quad \mathcal{Y}=L^1_1(m_a)=L^1(\R^3\,;\,m_a(v)\sqrt{1+|v|^2}\d v)$$
where the exponential weight function $m_a$ is given by
$$m_a(v)=\exp\left(a|v|\right)\,\quad  a \geq 0.$$
The most technical parts of the proof will be the $\lambda$-uniform regularity of $G_{\lambda}$ and the continuity estimate \eqref{XY} with respect to the restitution coefficient.  These aspects have been proved for \textit{constant restitution coefficient} in \cite{MiMo3}, however, their extension to the case of a variable restitution coefficient will be delicate and require a series of new technical results.\\

Finally, explicit estimates on the rate of convergence of the rescaled solution $G_{\lambda}$ towards the elastic limit $\mathcal{M}$ can be found \textit{a posteriori} by seeking a nonlinear inequality satisfied by
$\|\gl-\mathcal{M}\|_{\mathcal{Y}}$.  More precisely, we shall prove that there exist some explicit constants $C_1, C_2 >0$ such that
$$\|\gl-\mathcal{M}\|_{\mathcal{Y}}\leq C_1\la^p+C_2\|\gl-\mathcal{M}\|^2_{\mathcal{Y}} \qquad \forall \la \in (0,1].$$
Combining this estimate with the convergence of $\gl$ towards $\mathcal{M}$ will lead to the existence of some $\la^\dag \in (0,1]$ such that
$$\|\gl-\mathcal{M}\|_{\mathcal{Y}} \leq C_3 \la^p \qquad \forall \la \in (0,\la^\dag],$$
for a suitable exponent $p>0$ and explicit constant $C_{3}$.

We stress here that in contrast with the reference \cite{MiMo3}, the present manuscript provides an approach which does not rely on entropy estimates.  Consequently, it does not require neither exponential pointwise lower bounds nor strong regularity properties in the steady state.  In particular, it is well-suited for problems in which no regularity of the steady solution is available, see \cite{cold} for an example of this situation.

\subsection{Organization of the paper} The plan of the paper is the following.  In Section 2, we establish \textit{new} regularity estimates of the collision operator $\Q_e^+$ generalizing known results for the elastic case \cite{lions,Wennberg,MoVi} and for the inelastic case with constant restitution coefficient \cite{MiMo,MMR}.  Section 3 is devoted to study regularity properties of the steady solution $\gl \in \mathfrak{S}_\la$.  In particular, we study moments \cite{BoGaPa,MMR,AloLo1}, general weighted Sobolev regularity given in Proposition \ref{introregu} and a technical result on the difference of two solutions, see Proposition \ref{prop:Hkdiff}.  Moreover, we also address in this section the fundamental problem of the continuity properties of $\Q_{\el}$ with respect to the inelasticity parameter, proving the convergence of $\Q_{\el}(f,g)$ towards $\Q_1(f,g)$ as $\la \to 0$ in different norms.  Several of these results are non-trivial extensions of those in \cite{MiMo2} given for constant restitution coefficient.  Others, like Proposition \ref{propo:sobdiff} are new.   Section 4 contains the main results of the paper, namely the elastic limit result Theorem \ref{intro:conv}, the uniqueness result Theorem \ref{introunique} and their quantitative versions. In Appendix A, we present several technical results used throughout the paper and Appendix B contains a proof of Theorem \ref{theo:exists} on existence of steady profiles adapted from \cite{GaPaVi}.

\subsection{Notation} Let us introduce the notations we shall use
in the sequel. Throughout the paper we shall use the notation
$\langle \cdot\rangle = \sqrt{1+|\cdot|^2}$. We denote, for any
$p\in[1,+\infty)$, $\eta \in \R$ and weight function $\varpi\::\:\R^3 \to \R^+$, the weighted Lebesgue space
\begin{equation*}
     L^p_\eta (\varpi)= \left\{f: \R^3 \to \R \hbox{ measurable} \, ; \; \;
     \| f \|_{L^p_\eta(\varpi)} := \left(\int_{\R^3} | f (v) |^p \, \langle v \rangle^{p\eta}\varpi(v)\d\v\right)^{1/p}
     < + \infty \right\}.
\end{equation*}
Similarly, we define the weighted Sobolev space $\mathbf{W}^{k,p}_\eta (\varpi)$, with $k \in \mathbb{N}$, using the norm
 \begin{equation*}\label{sobnorm}
\| f \|_{\mathbf{W}^{k,p}_\eta (\varpi)}  =   \left( \sum_{|s| \le k} \|\partial^s f\|_{L^p_\eta(\varpi)} ^p \right)^{1/p}.
\end{equation*}
The symbol $\partial^s$ denotes the partial derivative associated with
the multi-index $s \in \mathbb{N}^3$: $\partial^s=\partial_{v_1}^{s_1}\partial_{v_2}^{s_2}\partial_{v_3}^{s_3}$. The order of the multi-index being defined as $|s|=s_1+s_2+s_3.$ In the particular case $p=2$ we denote $\mathbf{H}^k _\eta(\varpi)=\mathbf{W}^{k,2} _\eta(\varpi)$ and whenever $\varpi(v)\equiv 1$, we shall simply use $\mathbf{H}^k_\eta$.  This definition can be extended to $\mathbf{H}^s _\eta$ for any $s \ge 0$ by using Fourier transform.  

\section{Regularity properties of the collision operator}

The smoothing properties of the gain operator $\Q_e^+(f,f)$ have been investigated in our previous contribution \cite{AloLo1}. However, for the results we have in mind, we shall need  the regularity of the \textit{bilinear} operator $\Q_e^+(f,g)$ rather than the one of the \textit{quadratic} one. To do so, we shall use a slightly different approach than the one we used in \cite{AloLo1}. In particular, our main purpose here is to extend \cite[Theorem 4.1]{AloLo1} to smooth kernel that are \textit{not compactly supported}: in such a case, the price to pay for the control  of large velocities consists in additional moments estimates. Precisely, using the notations of Appendix A, let $B(u,\sigma)$ be a collision kernel of the form
$$
B(u,\sigma)=\Phi(|u|)b(\widehat{u} \cdot \sigma)
$$
where $\Phi(\cdot) \geq 0$ and $b(\cdot) \geq 0$ satisfies (A.3) and (A.4) of the Appendix. Then, one can define the following operator
$\Gamma_B$ by
\begin{equation}\label{expgammaB}
\Gamma_B(\varphi)(x)=\int_{\omega^\perp} \mathcal{B}(z+\alpha_e(r)\omega,\alpha_e(r))\varphi(\alpha_e(r)\omega+z)\d\pi(z),
 \qquad x=r\omega, \:r\geq 0,\:\omega \in \mathbb{S}^2,
 \end{equation}
where $\d\pi $ is the Lebesgue measure over the hyperplane $\omega^\perp$ perpendicular to $\omega$ and $\alpha_e(\cdot)$ is the inverse of the mapping $s \mapsto s\beta_e(s)$ while the kernel $\mathcal{B}(\cdot,\cdot)$ is given by
\begin{equation}\label{calB}
\mathcal{B}(z,\varrho)= \frac{8\Phi(|z|)}{|z|(\varrho \beta_e(\varrho))^2}b\left(1-2\frac{\varrho^2}{|z|^2}\right)\dfrac{\varrho}{1+\vartheta_e'(\varrho)}, \qquad \varrho \geq 0, \quad z \in \R^3
\end{equation}
with $\vartheta_e(\cdot)$ defined in Assumption \ref{HYP} (2) and $\vartheta'_e(\cdot)$ denoting its derivative. The operator $\Gamma_B$ can be seen as an inelastic version of the so-called cold thermostat operator investigated in \cite{cold} (and originally derived in the seminal paper \cite{lions}) and plays a crucial role in the smoothing properties of the gain operator $\Q_e^+$ because of the representation formula
\begin{equation}\label{qfgamma}
\Q_{B,e}^+(f,g)(v)=\IR f(z)\left[\left(t_z  \circ \Gamma_B \circ t_z\right)g\right] (v)\d z \qquad \forall f,g \end{equation}
where $[t_v \psi](x)=\psi(v-x)$ for any $v,x \in \R^3$ and test-function $\psi$ (see \cite{AloLo1} for the derivation of \eqref{qfgamma}). Before investigating the regularity of the full gain operator, we shall first deal with that of the cold thermostat.

\subsection{Regularity properties for cut-off collision kernels}

For this section we assume that the kernel $B(u,\sigma)$ satisfies:
\begin{equation}\label{smoothphi}
\Phi(\cdot) \in \mathcal{C}^\infty(0,\infty), \quad b(\cdot) \in \mathcal{C}_0^\infty(-1,1) \quad \text{ and } \quad \Phi(s)=\left\{
\begin{array}{ccl}
0 & \mbox{for} & s<\epsilon \\
s  & \mbox{for} & s>2\epsilon,
\end{array}
\right.
\end{equation}
for some $\epsilon>0$.   We introduce the following definition:
 \begin{defi}
\label{HYP2}
We shall say that a restitution coefficient $e(\cdot)$ satisfying Assumptions \ref{HYP} is belonging to the class $\mathbb{E}_m$ for some integer $m \geq 1$ if  $e(\cdot) \in \mathcal{C}^m(0,\infty)$  and
\begin{equation}\label{assm}
\sup_{r \geq 0} re^{(k)}(r) < \infty \qquad \forall k=1,\ldots,m
\end{equation}
where $e^{(k)}(\cdot)$ denotes the $k$-th order derivative of $e(\cdot)$.
 \end{defi}
\begin{nb} For the physically relevant case of visco-elastic hard-spheres, the restitution coefficient $e(\cdot)$ is given by \eqref{visc} but admits also the following implicit representation (see \cite{BrPo}):
$$e(r)+\mathfrak{a}r^{\tfrac{1}{5}}\,e^{\tfrac{3}{5}}(r)=1 \qquad \forall r >0$$
for some $\mathfrak{a} >0$. Then, it is possible to deduce from such representation that $e(\cdot)$ belongs to the class $\mathbb{E}_m$ for \emph{any} integer $m \geq 1$.
\end{nb}
Under these assumptions we have the following generalization of \cite[Lemma 4.6]{AloLo1}.
\begin{lemme}\label{Sob}
Assume that $e(\cdot)$ belongs to the class $\mathbb{E}_m$ for some integer  $m \geq 2$ and that the collision kernel $B(u,\sigma)$ satisfies assumption \eqref{smoothphi}. Then, for any $0 \leq s \leq m-2$, there exists $C=C(s,\epsilon,e)$ such that
\begin{equation}\label{gammaBHS}
\left\|\Gamma_B(f)\right\|_{\mathbf{H}^{s+1}_\eta} \leq C\,\left\|f\right\|_{\mathbf{H}^s_{\eta+\mu(s)}},\qquad \forall \eta \geq 0
\end{equation}
with $\mu(s)=s+4$ and where the constant $C(s,\epsilon,e)$ depends only on $s$, on the collision kernel $B$ and the restitution coefficient $e(\cdot)$.
\end{lemme}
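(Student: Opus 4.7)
The plan is to adapt the proof of \cite[Lemma 4.6]{AloLo1}, which handled smooth compactly supported kernels, by carefully tracking the polynomial weights required to control the non-compact tails created by the choice $\Phi(s)=s$ for $s>2\epsilon$. The operator $\Gamma_B$ is a Radon-type transform: for $x=r\omega$ with $r=|x|$ and $\omega=x/|x|$, it integrates $\varphi$ over the affine hyperplane $\alpha_e(r)\omega+\omega^\perp$, weighted by the kernel $\mathcal{B}$. Because $b\in\mathcal{C}_0^\infty(-1,1)$, the condition $b(1-2\alpha_e(r)^2/|z+\alpha_e(r)\omega|^2)\neq 0$ forces $|z|$ to be comparable to $\alpha_e(r)$, which keeps the integrand smooth away from the origin. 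Combined with the assumption $e\in\mathbb{E}_m$, this ensures that $\mathcal{B}$ is of class $\mathcal{C}^{m-1}$ in both arguments with controlled polynomial growth.

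The first step is the base case $s=0$. One proceeds as in the elastic Lions/Wennberg smoothing argument: after passing to Fourier variables and using the projection-slice structure of the hyperplane integral, the map $f\mapsto \Gamma_B(f)$ gains one derivative along the radial direction, with a symbol bounded by factors of the form $\langle\cdot\rangle^{-1}$. The factor $\Phi(|z+\alpha_e(r)\omega|)\sim \sqrt{|z|^2+\alpha_e(r)^2}$ in $\mathcal{B}$ then contributes, through Cauchy--Schwarz in the tangential variable $z$ and the polar radial variable $r$, a loss of at most four polynomial moments on $f$. This yields the estimate $\|\Gamma_B(f)\|_{\mathbf{H}^1_\eta}\leq C\|f\|_{L^2_{\eta+4}}$, i.e.\ $\mu(0)=4$.

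The second step is induction on $s\in[1,m-2]$. One differentiates $\Gamma_B(f)(x)$ in Cartesian coordinates using the chain rule: each Cartesian derivative hits either (i) the kernel $\mathcal{B}(z+\alpha_e(r)\omega,\alpha_e(r))$, which requires controlling derivatives of $\alpha_e$ up to order $s+1$ --- available because $e\in\mathbb{E}_{s+2}\subset\mathbb{E}_m$ ensures $r\,e^{(k)}(r)$ is bounded for $k\leq s+2$, from which $r\,\alpha_e^{(k)}(r)$ is also bounded via the inverse-function formula and Assumption \ref{HYP}(2); (ii) the evaluation point $\alpha_e(r)\omega+z$, which produces a derivative of $f$ absorbed in the $\mathbf{H}^s$ norm; or (iii) the unit vector $\omega=x/|x|$, producing factors of $|x|^{-1}$. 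At each differentiation one picks up at most one extra power of $r$ (or of $|z|$ through $\Phi$), which accounts precisely for the increment from $\mu(s)$ to $\mu(s+1)=\mu(s)+1$. The base case and induction thus combine to give $\mu(s)=s+4$.

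The main obstacle will be the careful bookkeeping of the moment count: one must verify that every derivative produces at most one additional polynomial weight, which relies on the monotonicity assumption on $\vartheta_e$ to control $\alpha_e^{(k)}$, on the affine behaviour $\Phi(s)=s$ for $s>2\epsilon$ so that $\Phi^{(k)}\equiv 0$ for $k\geq 2$ in the tail regime (no new growth generated by differentiating $\Phi$), and on the $b$-support constraint to ensure $|z|$-tails are controlled by $\alpha_e(r)\simeq r$ without additional loss. Once these are in place, summing the contributions from all multi-indices $|\alpha|\leq s+1$ and applying Cauchy--Schwarz in the hyperplane variable yields \eqref{gammaBHS} with $\mu(s)=s+4$, for a constant $C$ depending on $s$, $\epsilon$ and the $\mathcal{C}^m$-bounds on $e$.
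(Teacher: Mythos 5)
Your approach is genuinely different from the paper's, and unfortunately it has gaps that prevent it from being a complete proof as written.

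The paper does \emph{not} induct on $s$. Instead it first passes to $\widetilde{\Gamma_B}$ via the reparametrization $r\mapsto r\beta_e(r)$ (this makes the hyperplane integral into a clean Radon-type operator), writes the radial Fourier transform as a three-dimensional Fourier transform,
$$\RF\big[\langle r\rangle^\eta\,\widetilde{\Gamma_B}f\big](\varrho w)=2\pi\,\F\big[f(\cdot)\mathbf{G}_w(\cdot)\big](\varrho w),$$
and then invokes the multiplier estimate \cite[Lemma A.5]{MoVi} in one stroke to control the $\mathbf{H}^{s+1}_\eta$ norm of the radial part by $\|f\|_{\mathbf{H}^s_{\eta+\mu}}\sup_w\|\mathbf{G}^\mu_w\|_{\mathbf{H}^{s+2}(\R^3)}$; angular derivatives are absorbed by modifying the kernel ($\mathcal{B}_j(z,\varrho)=\mathcal{B}(z,\varrho)z_j$, following \cite{MoVi}). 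The value $\mu(s)=s+4$ arises directly from requiring that the multipliers $z\mapsto\mathcal{B}(z,|z\cdot w|)z^\nu\langle z\cdot w\rangle^\eta/\langle z\rangle^{\eta+\mu}$, for all multi-indices $|\nu|\leq s+1$, lie in $\mathbf{H}^{s+2}(\R^3_z)$ uniformly in $w$; since $\mathcal{B}(z,\cdot)\sim\Phi(|z|)/|z|\sim 1$ at infinity and $z^\nu$ grows like $|z|^{s+1}$, the $\langle z\rangle^{-\mu}$ factor must decay like $|z|^{-3}$ at least. There is no step-by-step bookkeeping of ``one extra power per derivative'': the whole scale of derivatives is handled at once.

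Your induction scheme is plausible in outline, but as written it has two substantive gaps. First, the base case $\|\Gamma_B(f)\|_{\mathbf{H}^1_\eta}\leq C\|f\|_{L^2_{\eta+4}}$ is asserted by appeal to the Lions/Wennberg smoothing argument and Cauchy--Schwarz; the precise derivation of the one-derivative gain and of the loss of exactly four moments is not given, and this is the crux of the whole estimate (in the paper it is hidden inside \cite[Lemma A.5]{MoVi} applied with $s=0$). Second, and more seriously, the inductive step does not verify \emph{stability of the kernel class}: when a Cartesian derivative falls on $\mathcal{B}$, on $\alpha_e$, or on $\omega=x/|x|$, the resulting operator is of the form $a(x)\Gamma_{B'}(\partial^\nu f)$ for a modified kernel $B'$ and coefficient $a(x)$. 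To apply the inductive hypothesis to $\Gamma_{B'}$ one must check that $B'$ still satisfies the hypotheses (\ref{smoothphi}) and that $a(x)$ together with its derivatives are bounded; the chain rule does not produce kernels of the explicit form $\Phi(s)=s$ for $s>2\epsilon$. This is nontrivial, and it is precisely the kind of bookkeeping the Fourier-multiplier route avoids. Relatedly, the description ``each Cartesian derivative hits (i), (ii), or (iii)'' and ``produces a derivative of $f$ absorbed in the $\mathbf{H}^s$ norm'' is imprecise: if all $s+1$ derivatives fall on the evaluation point you formally obtain $\Gamma_{B}(\partial^{s+1}f)$, which requires $f\in\mathbf{H}^{s+1}$ and cannot be ``absorbed''. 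The correct induction must peel off a single derivative and apply the $\mathbf{H}^s\to\mathbf{H}^{s+1}$ inductive hypothesis to the resulting (kernel-modified) operators acting on $f$ and $\partial f$; your write-up does not make this structure explicit. I would recommend either filling in the base case concretely and carrying out the kernel-class stability argument, or, more economically, following the paper's direct Fourier route via \cite[Lemma A.5]{MoVi}.
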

\begin{proof} There is no loss of generality in assuming that $s$ is an integer. The proof is divided into five steps.

$\bullet$ \textit{First step: change of variables} Recalling \cite[Lemma 4.6]{AloLo1}, we first define
\begin{equation}\label{lien}
\widetilde{\Gamma_B}(f)(r\omega)=\Gamma_B(f)(\alpha_e^{-1}(r),\omega)=\Gamma_B(f)(r\beta_e(r),\omega)\end{equation}
so that
$$\widetilde{\Gamma_B}(f)(r\omega)=\int_{\omega^\perp} \mathcal{B}(z+r\omega,r)\varphi(r\omega+z)\d\pi(z).$$
We begin proving the result for  $\widetilde{\Gamma_B}$ instead of $\Gamma_B$, that is
\begin{equation}\label{gammaBHStilde}
\left\|\widetilde{\Gamma_B}(f)\right\|_{\mathbf{H}^{s+1}_\eta} \leq \widetilde{C}(s,B,e)\,\left\|f\right\|_{\mathbf{H}^s_{\eta+\mu(s)}},\qquad \forall \eta \geq 0
\end{equation}
The proof of this estimate follows the approach given in \cite[Theorem 3.1]{MoVi} where a similar estimate has been obtained, for $\mu(s)=0$, under the additional assumption that $\Phi(\cdot)$ has support in $[\epsilon,M]$ with $M<\infty$. Our proof will consists essentially in proving that the weighted estimate (i.e. with $\mu=\mu(s) >0$) allows to take into account large velocities.

$\bullet$ \textit{Second step: Estimates on the radial derivative of $\widetilde{\Gamma_B}.$} We introduce the radial Fourier
transform $\RF$ and the Fourier transform $\F$ in $\R^3$ with
the formulas
$$\RF \left[f\right] (\varrho w) =(2\pi)^{-1/2} \int_{\R}
\exp(i\varrho r) f(r w) \d r\,,\qquad \F\left[ f\right](\xi) ={(2\pi)^{-3/2}} \int_{\R^3}
\exp(i v\cdot \xi) f(v) \d v$$
and, for any measurable mapping $g$, we define the $\mathbf{H}^{s+1}_\eta(\mathbb{S}^2 \times \R)$ norm of $g$ as
$$\|g\|^2 _{\mathbf{H}^{s+1} _\eta(\mathbb{S}^{2} \times \R)}:=\IS \d w \int_{\R} \langle \varrho\rangle^{2(s+1)}|\RF \left[g\right](\varrho w)|^2\d\varrho.$$
Then we compute,
\begin{multline*}
\RF \left[\langle r \rangle^\eta \,\widetilde{\Gamma_B}f\right] (\varrho w)=(2\pi)^{-\frac{1}{2}}\IR \exp(i \varrho u \cdot w) \langle u \cdot w\rangle^\eta \mathcal{B}(u,|u\cdot w|)f(u)\d u\\
=2\pi \F\left[f(\cdot)\mathbf{G}_w(\cdot) \right](\varrho w)
\end{multline*}
where $\mathbf{G}_w(u)=\mathcal{B}(u,|u\cdot w|)\langle u \cdot w \rangle^\eta$ for any $u \in \R^3$. Then, setting $\xi=\varrho w$, since $\d\varrho\d w=|\xi|^{-2}\d\xi$ we get
$$\left\|\widetilde{\Gamma_B}f\right\|^2 _{\mathbf{H}^{s+1} _\eta(\mathbb{S}^{2} \times \R)}
=2\pi \int_{\R^3}
 \langle \xi \rangle^{2s+2} |\xi|^{-2}\left| \F
 \left[ f(\cdot)\mathbf{G}_{\frac{\xi}{|\xi|}}(\cdot)
 \right] (\xi)  \right|^2 \d\xi.$$
Now, with $\mu=\mu(s)=s+4$, introducing $g(v)=f(v)\langle v \rangle^\mu$  and $\mathbf{G}^\mu_w(z)=\langle z\rangle^{-\mu}\mathbf{G}_w(z)$
we can write the above as
$$\left\|\widetilde{\Gamma_B}f\right\|^2 _{\mathbf{H}^{s+1} _\eta(\mathbb{S}^{2} \times \R)}=2\pi \int_{\R^3}
 \langle \xi \rangle^{2s+2} |\xi|^{-2}\left| \F
 \left[ g(\cdot)
 \mathbf{G}^\mu_{\frac{\xi}{|\xi|}}(\cdot)
 \right] (\xi)   \right|^2 \d\xi.$$
Using \cite[Lemma A.5]{MoVi} we can replace \cite[Eq. (3.7)]{MoVi} by
 \begin{equation}\begin{split}\label{eq:estim:step2}
 \left\|\widetilde{\Gamma_B} f\right\| _{\mathbf{H}^{s+1} _\eta
 (\mathbb{S}^{2} \times \R)} &\leq
 C_s \, \left\|g\right\|_{\mathbf{H}^s _{\eta}}
 \sup_{w \in \mathbb{S}^{2}}
 \left\|\mathbf{G}^\mu_w(\cdot)\right\|_{\mathbf{H}^{s+2} (\R^3)}\\
 &=
C_s \, \left\|f\right\|_{\mathbf{H}^s _{\eta+\mu}}
 \sup_{w \in \mathbb{S}^{2}}
 \left\|\mathcal{B} (z, | z \cdot w |)\frac{\langle  z \cdot w  \rangle^\eta}
 {\langle z \rangle^{\eta+\mu}}\right\|_{\mathbf{H}^{s+2} (\R^3_z)}.
\end{split} \end{equation}
\medskip

$\bullet$ \textit{Third step: Control of the angular derivatives.}
In order to adapt the analysis of \cite{MoVi}, it suffices to check that there are two positive constants $a,b >0$ such that
$$\mathrm{Supp}(\mathcal{B}) \subset [a,\infty) \times [b,\infty).$$
We already saw this is the case with our assumption on $\Phi(\cdot)$ and $b(\cdot)$.  We can straightforwardly apply the reasoning of the \textit{op. cit.} to get that the $j$-th angular derivative of $\widetilde{\Gamma_B} f$ can be estimated by the radial derivative of $\widetilde{\Gamma_{B_j}} f$ where the new kernel $\mathcal{B}_j$ is given by $\mathcal{B}_j(z,\varrho)=\mathcal{B}(z,\varrho) z_j$. This finally leads to \eqref{gammaBHStilde} with
\begin{equation}\label{imc}
\widetilde{C}(s,B,e)=C_s(a,b)\underset{ |\nu|\leq s+1}{\sup_{\nu \in \mathbb{N}^3}} \sup_{w \in \mathbb{S}^{2}}
 \left\|\mathcal{B} (z, | z \cdot w |)z^\nu\frac{\langle  z \cdot w  \rangle^\eta}
 {\langle z \rangle^{\eta+\mu}}\right\|_{\mathbf{H}^{s+2} (\R^3_z)} .
\end{equation}

$\bullet$ \textit{Fourth step:}  Let us check that the above quantity is indeed finite, i.e.
$$\mathcal{C}_s(\mathcal{B},\nu):=\sup_{w \in \mathbb{S}^{2}}
 \left\|\mathcal{B} (z, | z \cdot w |)z^\nu\frac{\langle  z \cdot w  \rangle^\eta}
 {\langle z \rangle^{\eta+\mu}}\right\|_{\mathbf{H}^{s+2} (\R^3_z)}
< \infty$$
for any multi-index $\nu$ with $|\nu| \leq s+1$.
Observe that, because of our cut-off assumptions \eqref{smoothphi} together with the fact that $b(1-x)=0$ for small values of $x$, the kernel $\mathcal{B}(z,\varrho)$ vanishes for small values of $|z|$ and $\varrho$.  Thus, for a given $\nu$ with $|\nu|\leq s+1$, it suffices to investigate the regularity properties of the above mapping
$$F_w\::\:z \longmapsto \mathcal{B} (z, | z \cdot w |)z^\nu\frac{\langle  z \cdot w  \rangle^\eta}
 {\langle z \rangle^{\eta+\mu}}$$
for large value of $z$ (uniformly with respect to $w$). From the definition of $\mathcal{C}_s(\mathcal{B},\nu)$, one needs  to compute $s+2$ derivatives of $F_w$ which explains the restriction $s \leq m-2$. Recall that, for $|z| >\epsilon,$ the expressions of $\mathcal{B}(\cdot,\cdot)$ and $\Phi(\cdot)$ yield  the following
$$F_w(z)=\dfrac{8b(1-2|\widehat{z}\cdot w|^2)}{\mathcal{H}(|z\cdot w|)}  \dfrac{z^\nu\langle z \cdot w\rangle^\eta}{\langle z\rangle^{\eta+\mu}}$$
where $\widehat{z}=z/|z|$,
$$\mathcal{H}(r)=r\beta_e^2(r)(1+\vartheta_e'(r)), \qquad \qquad r \geq 0.$$
We recall that $b(1-2x^2)=0$ for $|x| \leq \delta$ for some $\delta >0$. In particular, for $|z|> \epsilon$, $F_w(z)\neq 0 \implies r:=|z\cdot w|> \delta\epsilon$. Since $\beta_e(\cdot) \in (1/2,1]$ and $\vartheta_e' \geq 0$, it is easy to check that
$$|F_w(z)| \leq \dfrac{32|z|^{|\nu|}}{\delta \epsilon\langle z\rangle^\mu}b(1-2|\widehat{z}\cdot w|^2)\leq \dfrac{32\|b\|_\infty}{\delta\epsilon\,\langle z\rangle^{\,\mu-|\nu|}}\qquad \forall w\in\mathbb{S}^2,\:|z|>\epsilon.$$
Since $\mu-s=4$, this proves that $\sup_{w\in \mathbb{S}^2}\|F_w(z)\|_{L^2(\R^3_z)} < \infty.$ One proceeds in the same way with the $z$-derivatives of $F_w(z)$. It is clear that any $z$-derivative of the rational expression
$$R_w(z):=\dfrac{z^\nu\langle z \cdot w\rangle^\eta}{\langle z\rangle^{\eta+\mu}}$$
has a faster decay (for $|z| \to \infty$) than $R_w(z)$. Therefore, the crucial point is the control of the derivatives of $\frac{1}{ \mathcal{H}(r)}$. It turns out that
$$\dfrac{\mathcal{H}'(r)}{\mathcal{H}(r)}=\dfrac{1}{r}+\dfrac{2\beta_e'(r)}{\beta_e(r)}+\dfrac{\vartheta_e''(r)}{1+\vartheta'_e(r)}
=\dfrac{1}{r}+\dfrac{2e'(r)}{1+e(r)}+\dfrac{2e'(r)+re''(r)}{1+\vartheta_e'(r)}.$$
Now, our assumption \eqref{assm} on the restitution coefficient $e(\cdot)$ implies easily that ${\mathcal{H}'}/{\mathcal{H}} \in L^\infty([\delta\epsilon,\infty))$ and, as a direct consequence, $$\dfrac{\d}{\d r}\dfrac{1}{\mathcal{H}(r)} \in L^\infty([\delta\epsilon,\infty)).$$ Similar calculations show that, for any $k=1,\ldots,m$,
$\frac{\d^k}{\d r^k}\frac{1}{\mathcal{H}(r)} \in L^\infty([\delta\epsilon,\infty)).$
Tedious but simple calculations show  then that  any $z$-derivative of $F_w(z)$ can be controlled by $1/|z|^{\mu-|\nu|+1}$ for large $|z|$. This is enough to prove that $\mathcal{C}_s(\mathcal{B},\nu) < \infty.$

$\bullet$ \textit{Final step: turning back to the original variables.} Following \cite{AloLo1}, it remains now to deduce estimates on $\Gamma_B f$ from $\widetilde{\Gamma_B} f$ which are linked by formula \eqref{lien}. Using polar coordinates
\begin{equation*}
\| \Gamma_B(f)\|_{\mathbf{H}^s_\eta}^2=\sum_{|j| \leq s} \int_0^\infty F_j(\varrho)\varrho^2 \langle \varrho\rangle^{2\eta} \d\varrho \IS |\partial_v^j \widetilde{\Gamma_B}(f)(\varrho,\omega)|^2\d\omega
\end{equation*}
where, see \cite{AloLo1} for details, one can check that for any $|j| \leq k$ the function $F_j(\varrho)$ can be written as
\begin{equation}\label{polynome}
F_j(\varrho)=P_j(\vartheta_e^{(1)}(\varrho),\ldots,\vartheta_e^{(j)}(\varrho)) (1+\vartheta_e^{(1)}(\varrho))^{-n_j}.
\end{equation}
Here $P_j(y_1,\ldots,y_j)$ is a suitable polynomial, $n_j \in \mathbb{N}$ and $\vartheta_e^{(p)}$ denotes the $p$-th derivative of $\vartheta_e(\cdot)$. Because of our assumption on $e(\cdot)$ (more precisely, because $\limsup_{\varrho \to \infty}\vartheta_e^{(i)}(\varrho) < \infty$), we see that $\sup_{\varrho \in (0,\infty)} F_j(\varrho)=C_j < \infty $ for any $|j| \leq k$.  Thus
\begin{equation}\label{tilde}
\| \Gamma_B(f)\|_{\mathbf{H}^s_\eta} \leq C_\eta \|\widetilde{\Gamma_B}(f)\|_{\mathbf{H}^s_{\eta}}
\end{equation}
where $C_\eta$ is an explicit constant involving the $L^\infty$ norm of the first $s$-th order derivatives of $\alpha_e(\cdot)$.
\end{proof}

\begin{propo}\label{theo:smooth}
Let $B(u,\sigma)=\Phi(|u|)b(\widehat{u} \cdot \sigma)$ be a collision kernel satisfying \eqref{smoothphi} and $e(\cdot)$ be in the class $\mathbb{E}_m$ $(m \geq 2)$.  Then, for any $0 \leq s \leq m-2$,
$$
\left\|\Q_{B,e}^+(f,g)\right\|_{\mathbf{H}^{s+1}_\eta} \leq C(s,B,e)\,\|g\|_{\mathbf{H}^s_{\eta+\mu(s)}}\,\|f\|_{L^1_{2\eta+\mu(s)}}
$$
with constant $C(s,B,e)$ and $\mu(s)=s+4$.
\end{propo}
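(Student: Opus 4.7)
The plan is to deduce Proposition \ref{theo:smooth} directly from Lemma \ref{Sob} by exploiting the representation formula \eqref{qfgamma}. Starting from
$$\Q_{B,e}^+(f,g)(v)=\IR f(z)\,\bigl[(t_z \circ \Gamma_B \circ t_z)g\bigr](v)\d z,$$
I would first apply Minkowski's integral inequality in the Hilbert space $\mathbf{H}^{s+1}_\eta$ to push the norm inside the $z$-integral:
$$\|\Q_{B,e}^+(f,g)\|_{\mathbf{H}^{s+1}_\eta}\leq\IR |f(z)|\,\|(t_z \circ \Gamma_B \circ t_z)g\|_{\mathbf{H}^{s+1}_\eta}\d z.$$
The problem is then reduced to controlling, uniformly in $z$, the inner norm in terms of $\|g\|_{\mathbf{H}^s_{\eta+\mu(s)}}$ times a polynomial factor of $z$.

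Next, I would track how the two translations $t_z$ interact with the polynomial weight $\langle v\rangle^\eta$. Observing that $[(t_z \circ \Gamma_B \circ t_z)g](v)=[\Gamma_B(t_z g)](z-v)$, the change of variables $w=z-v$ combined with Peetre's inequality $\langle z-w\rangle^{2\eta}\leq 2^{\eta}\langle z\rangle^{2\eta}\langle w\rangle^{2\eta}$ (valid for $\eta\geq 0$) gives, for every multi-index $\alpha$ with $|\alpha|\leq s+1$,
$$\int|\partial^\alpha[(t_z \circ \Gamma_B \circ t_z)g](v)|^2\langle v\rangle^{2\eta}\d v \leq 2^\eta\langle z\rangle^{2\eta}\int|\partial^\alpha[\Gamma_B(t_z g)](w)|^2\langle w\rangle^{2\eta}\d w,$$
so that $\|(t_z \circ \Gamma_B \circ t_z)g\|_{\mathbf{H}^{s+1}_\eta}\leq C\langle z\rangle^\eta\,\|\Gamma_B(t_z g)\|_{\mathbf{H}^{s+1}_\eta}$.

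Third, I would invoke Lemma \ref{Sob} applied to $t_z g$:
$$\|\Gamma_B(t_z g)\|_{\mathbf{H}^{s+1}_\eta}\leq C(s,B,e)\,\|t_z g\|_{\mathbf{H}^s_{\eta+\mu(s)}},$$
and a second application of Peetre's inequality, now with exponent $\eta+\mu(s)$, yields $\|t_z g\|_{\mathbf{H}^s_{\eta+\mu(s)}}\leq C\langle z\rangle^{\eta+\mu(s)}\|g\|_{\mathbf{H}^s_{\eta+\mu(s)}}$. Multiplying the two $z$-factors $\langle z\rangle^\eta$ and $\langle z\rangle^{\eta+\mu(s)}$ produces the weight $\langle z\rangle^{2\eta+\mu(s)}$ on $f$, and the claimed estimate follows from
$$\IR|f(z)|\langle z\rangle^{2\eta+\mu(s)}\d z=\|f\|_{L^1_{2\eta+\mu(s)}}.$$

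There is no substantial obstacle here beyond careful bookkeeping, since the heavy lifting has already been done in Lemma \ref{Sob}. The only point that warrants attention is the accounting of the weights generated by the two translations: one must make sure that the two Peetre estimates combine to give precisely the exponent $2\eta+\mu(s)$ on $\langle z\rangle$, matching the statement. Minkowski's integral inequality in the Hilbert Sobolev setting is standard, and the fact that Sobolev seminorms are translation invariant (while only the weights are affected) is the reason the argument costs exactly one extra $\langle z\rangle^\eta$ factor per translation.
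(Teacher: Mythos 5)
Your proof is correct and follows essentially the same route as the paper's: Minkowski's inequality applied to the representation formula \eqref{qfgamma}, then the translation estimate $\|t_z \psi\|_{\mathbf{H}^N_k} \leq C\langle z\rangle^k \|\psi\|_{\mathbf{H}^N_k}$ (which the paper states without proof and which you correctly derive from Peetre's inequality) applied once at weight $\eta$ and once at weight $\eta+\mu(s)$ around the application of Lemma \ref{Sob}, producing the cumulative factor $\langle z\rangle^{2\eta+\mu(s)}$ that matches $\|f\|_{L^1_{2\eta+\mu(s)}}$. You have simply made explicit the bookkeeping that the paper compresses into ``one easily deduces from Lemma \ref{Sob} the conclusion.''
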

\begin{proof} One uses the representation formula \eqref{qfgamma} together with Minkowski's inequality to get that
$$\|\Q_{B,e}^+(f,g)\|_{\mathbf{H}^{s+1}_\eta} \leq \IR |f(z)|\,\|t_z \circ \Gamma \circ t_z g\|_{\mathbf{H}^{s+1}_\eta}\d z.$$
Now, since $\|t_z \psi\|_{\mathbf{H}^N_k} \leq \langle z \rangle^k \|\psi\|_{\mathbf{H}^N_k}$ for any $\psi \in \mathbf{H}^N_k$ for all $N \in \mathbb{N}$ and any $k \geq 0$, one easily deduces from Lemma \ref{Sob} the conclusion.
\end{proof}

\subsection{{Regularity properties for hard-spheres collision kernel}} We now use the previous result for smooth collision kernels to estimate the regularity properties of $\Q^+_e(f,g)$ for true hard-spheres interactions. We shall combine Theorem A. \ref{alogam} of the Appendix together with the estimates of the previous section to get the following:
\begin{theo}\label{regularite} Assume that $e(\cdot)$ belongs to the class $\mathbb{E}_m$ with $m \geq 2$. Then, for any $\varepsilon >0$ and any  $\eta \geq 0$, there exists $C_e=C(e,\varepsilon,\eta)$ such that
\begin{multline}\label{regestim}
\|\Q_e^+(f,g)\|_{\mathbf{H}^{s+1}_\eta} \leq C_e\,\|g\|_{\mathbf{H}^s_{2\eta+\mu(s)}}\|f\|_{L^1_{2\eta+\mu(s)}} + \varepsilon\| f\|_{\mathbf{H}^s_{\eta+3}}\,\|g\|_{\mathbf{H}^s_{\eta+1}}\\
+\varepsilon\left( \| g\|_{L^1_{\eta+1}}\,\|\partial^\ell f\|_{L^2_{\eta+1}}+\| f\|_{L^1_{\eta+1}}\,\| \partial^\ell g\|_{L^{2}_{\eta+1}}\right) \qquad \forall |\l|=s+1 \leq m-1.
\end{multline}
\end{theo}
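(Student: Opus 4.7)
The plan is to write the full hard--spheres kernel $B(u,\sigma)=|u|b(\widehat u\cdot\sigma)$ as a sum of a smooth cut--off kernel $B_\sharp$ to which Proposition~\ref{theo:smooth} directly applies, plus a remainder $B_\flat:=B-B_\sharp$ whose $\Q^+$--contribution is small, and to control this remainder by Theorem~A.\ref{alogam}. The first piece yields the leading term in \eqref{regestim} while the remainder accounts for the two $\varepsilon$--terms.

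More precisely, for parameters $\delta,\kappa>0$ (to be tuned at the end as functions of $\varepsilon$), introduce a smooth function $\Phi_\delta$ with $\Phi_\delta(s)=0$ for $s\le \delta$, $\Phi_\delta(s)=s$ for $s\ge 2\delta$, and a truncation $b_\kappa\in\mathcal{C}_0^\infty(-1+\kappa,1-\kappa)$ with $b_\kappa\to b$ in $L^1(-1,1)$ as $\kappa\to 0^+$. Set $B_\sharp(u,\sigma)=\Phi_\delta(|u|)\,b_\kappa(\widehat u\cdot\sigma)$, which satisfies \eqref{smoothphi}. Proposition~\ref{theo:smooth} then gives
$$
\|\Q^+_{B_\sharp,e}(f,g)\|_{\mathbf{H}^{s+1}_\eta}\le C(s,\delta,\kappa,e)\,\|g\|_{\mathbf{H}^s_{\eta+\mu(s)}}\|f\|_{L^1_{2\eta+\mu(s)}},
$$
which, once $\delta$ and $\kappa$ are fixed in terms of $\varepsilon$, absorbs into the first term of \eqref{regestim} by the elementary inclusion $\mathbf{H}^s_{\eta+\mu(s)}\hookrightarrow\mathbf{H}^s_{2\eta+\mu(s)}$ being reversed (so one simply enlarges the weight to $2\eta+\mu(s)$).

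For the remainder $B_\flat$, I would apply $\partial^\ell$ (with $|\ell|=s+1$) to $\Q^+_{B_\flat,e}(f,g)$ and use Leibniz's rule in the pre--collisional parametrisation. The ``endpoint'' contributions, where all $s+1$ derivatives fall on $f$ or on $g$, are bounded via the Young--type convolution inequality of Theorem~A.\ref{alogam}, producing
$$
\big\|\Q^+_{B_\flat,e}(\partial^\ell f,g)\big\|_{L^2_\eta}+\big\|\Q^+_{B_\flat,e}(f,\partial^\ell g)\big\|_{L^2_\eta}\le \mathcal{N}(B_\flat)\Big(\|g\|_{L^1_{\eta+1}}\|\partial^\ell f\|_{L^2_{\eta+1}}+\|f\|_{L^1_{\eta+1}}\|\partial^\ell g\|_{L^2_{\eta+1}}\Big),
$$
where $\mathcal{N}(B_\flat)$ is the appropriate norm from Theorem~A.\ref{alogam}; the extra weight $\langle v\rangle^{\eta+1}$ on the right comes from transferring $\langle v'\rangle^\eta$ back to pre--collisional variables against the factor $|u|$ in $B$. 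The ``intermediate'' contributions, where the derivatives split nontrivially between $f$ and $g$, are handled by the same convolutional bound followed by a Sobolev embedding $\mathbf{H}^s_{\eta+3}\hookrightarrow L^\infty_{\eta+3-3/2}$ type estimate, yielding the middle term $\varepsilon\|f\|_{\mathbf{H}^s_{\eta+3}}\|g\|_{\mathbf{H}^s_{\eta+1}}$.

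The key point is that $\mathcal{N}(B_\flat)\to 0$ as $(\delta,\kappa)\to 0$: the low--velocity piece $(|u|-\Phi_\delta(|u|))b$ is uniformly bounded by $2\delta\|b\|_\infty\mathbf{1}_{|u|\le 2\delta}$, and $\Phi_\delta(|u|)(b-b_\kappa)$ has angular mass going to $0$ with $\kappa$. Choosing $(\delta,\kappa)$ small enough to guarantee $\mathcal{N}(B_\flat)\le\varepsilon$, and setting $C_e=C(s,\delta(\varepsilon),\kappa(\varepsilon),e)$, assembles \eqref{regestim}. The main obstacle is book--keeping: the precise weights $\eta+1$, $\eta+3$ and $2\eta+\mu(s)$ in the statement are forced exactly by transferring $\langle v'\rangle^\eta$ through the kinematics of \eqref{postsig} against the $|u|$ growth of $B_\flat$, and by the Sobolev embedding used on the intermediate Leibniz terms; once this is carefully tracked, no additional analytical ingredient beyond Proposition~\ref{theo:smooth} and Theorem~A.\ref{alogam} is needed.
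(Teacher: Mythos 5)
Your proposal is correct and follows essentially the paper's strategy: a smooth, compactly-supported kinetic-and-angular cutoff piece handled by Proposition~\ref{theo:smooth}, plus a remainder of small Young-constant treated through Leibniz's rule and Theorem~A.\ref{alogam}; your two-term split $B=B_\sharp+B_\flat$ is just a reassociation of the paper's four-term decomposition $\Phi_Sb_S+\Phi_Sb_R+\Phi_Rb_S+\Phi_Rb_R$. The only imprecision is that the weight $\eta+3$ in the middle term arises from the weighted H\"older bound $\|h\|_{L^1_k}\leq\tau_\theta\|h\|_{L^2_{k+3/2+\theta}}$ applied derivative-by-derivative on the Leibniz summands (not from a Sobolev embedding into $L^\infty$), and the smallness of the angular remainder must be measured through the weighted constants $C_{2,i,\eta,1}(b_R)$ rather than its raw $L^1$ mass, but the weights you track are the correct ones.
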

\begin{proof} Notice that, for hard-spheres interactions, one has $B(u,\sigma)=\Phi(|u|)b(\widehat{u} \cdot \sigma)$ with $\Phi(|u|)=|u| \in L^\infty_{-1}$ and $b(s)=1/4\pi$ for any $s \in (-1,1)$. In particular, for any $\eta \geq 0$, both the constant $C_{2,1,\eta,1}(b)$ and $C_{2,2,\eta,1}(b)$ appearing in \eqref{Crpakb} are \textit{finite}. Let us now fix $\eta \geq 0$ and $\varepsilon >0$ and split the kernel into four pieces
\begin{multline}\label{splitting}
B(|u|,\hat{u}\cdot\sigma)
=\Phi_S(|u|)b_{S}(\hat{u}\cdot\sigma)+\Phi_S(|u|)b_{R}(\hat{u}\cdot\sigma)\\
+\Phi_R(|u|)b_{S}(\hat{u}\cdot\sigma)+\Phi_R(|u|)b_{R}(\hat{u}\cdot\sigma)
\end{multline}
with the following properties:
\begin{itemize}
\item [\it(i)] $b_{S}$ and $\Phi_{S}$ are smooth satisfying the assumptions of the previous section.
\item [\it(ii)] $b_{R}(s):=\frac{1}{4\pi}-b_{S}(s)$ is the angular remainder satisfying
$$C_{2,1,\eta,1}(b_R)\leq  \varepsilon \qquad \text{ and  } \qquad C_{2,2,\eta,1}(b_R)\leq  \varepsilon .$$
\item [\it(iii)] $\Phi_{R}(|u|)=|u|-\Phi_{S}(|u|)$ is the magnitude remainder satisfying $$\left\|\Phi_{R}\right\|_{L^{\infty}}\leq \dfrac{\varepsilon}{\left(C_{2,1,\eta,1}(b_S)+C_{2,2,\eta,1}(b_S)\right)}.$$
\end{itemize}
%
%
Notice that, in contrast to previous approaches, the last point is made possible because $\Phi_S(|u|)=|u|$ for large $|u|$ which makes $\Phi_R$ compactly supported. Thus, on the basis of relation \eqref{splitting}, one splits $\Q^+_e$ into the following four parts using obvious notations,
$$
\Q^+_e=\Q^+_{{SS}}+\Q^+_{SR}  + \Q^+_{RS}  + \Q^+_{RR}.
$$
We shall then deal separately with each of these parts. First, we know that
$$\|\Q^+_{SS}(f,g)\|_{\mathbf{H}^{s+1}_\eta}\leq C_{m,n,e}\|g\|_{\mathbf{H}^s_{\eta+\mu}}\|f\|_{L^1_{2\eta+\mu}} \qquad \forall m,n.$$
Second, let us estimate $\Q^+_{SR}$. Since
$$\partial^\ell\, \Q^+_{SR}(f,g)(v)=\sum_{\nu=0}^\ell \left(\begin{array}{c}
\l \\ \nu
\end{array}\right)
\ \Q^+_{SR}(\partial^\nu f,\partial^{\l-\nu}g)
$$
for any multi-index $\l$ with $|\l|\leq s+1$, one gets
$$\|\Q^+_{SR}(f,g)\|_{\mathbf{H}^{s+1}_\eta}^2 \leq C_s \sum_{|\l|\leq s+1}\sum_{\nu=0}^\l\left(\begin{array}{c}
\l \\ \nu
\end{array}\right)\|\Q^+_{{SR}}(\partial^\nu f,\partial^{\l-\nu}g)\|_{L^2_\eta}^2.$$
We treat differently the cases $|\l|=s+1$ and $|\l| < s+1$.  According to Theorem A. \ref{alogam} if $|\l|\leq s$ one has for any $|\nu|\leq |\l|$
\begin{equation*}\begin{split}
\|\Q^+_{{SR}}(\partial^\nu f,\partial^{\l-\nu}g)\|_{L^2_\eta}&\leq  C_{2,1,\eta,1}(b_R)\|\Phi_S\|_{L^\infty_{-1}}\|\partial^\nu f\|_{L^1_{\eta+1}}\,\|\partial^{\l-\nu}g\|_{L^2_{\eta+1}}\\
&\leq   \varepsilon \,\|\partial^\nu f\|_{L^1_{\eta+1}}\,\|\partial^{\l-\nu}g\|_{L^2_{\eta+1}}
\end{split}\end{equation*}
where we used the assumption (ii) with the fact that $\|\Phi_S\|_{L^\infty_{-1}} \leq 1$.  Recall the general estimate
\begin{equation}
\label{taug21}\|g\|_{L^1_k} \leq \tau_\theta\|g\|_{L^2_{k+3/2+\theta}}\qquad \forall k \geq 0, \qquad \forall \theta >0
\end{equation}
where the universal constant $\tau_\theta$ is given  by $\tau_\theta=\|\langle \cdot \rangle^{-\tfrac{3}{2}-\theta}\|_{L^2} < \infty$.  Taking for simplicity $\theta=1/2$ and since $|\l|\leq s$,
$$\sum_{|\l| < s+1}\sum_{\nu=0}^\ell \left(\begin{array}{c}
\l \\ \nu
\end{array}\right)\|\Q^+_{{SR}}(\partial^\nu f,\partial^{\l-\nu}g)\|_{L^2_\eta}\leq A_s \varepsilon \,\| f\|_{\mathbf{H}^s_{\eta+3}}\,\|g\|_{\mathbf{H}^s_{\eta+1}}$$
for some constant $A_s >0$ depending only on $s$.  In the case $|\ell|=s+1$, argue in the same way to obtain
$$\|\Q^+_{{SR}}(\partial^\nu f,\partial^{\l-\nu}g)\|_{L^2_\eta}\leq \varepsilon\| f\|_{\mathbf{H}^s_{\eta+3}}\,\|g\|_{\mathbf{H}^s_{\eta+1}}$$
for any $0 < |\nu| < |\l|.$ If $\nu=0$ one still has
$$\|\Q^+_{{SR}}(f,\partial^{\l}g)\|_{L^2_\eta}\leq  C_{2,1,\eta,1}(b_R)\| f\|_{L^1_{\eta+1}}\,\|\partial^\l g\|_{L^2_{\eta+1}}$$
additionally, for $\nu=\ell$ we use Theorem A. \ref{alogam} with $(p,q)=(2,1)$ to get
$$\|\Q^+_{{SR}}(\partial^{\l}f,g)\|_{L^2_\eta}\leq   C_{2,2,\eta,1}(b_R)\| g\|_{L^1_{\eta+1}}\,\|\partial^\l f\|_{L^2_{\eta+1}}.$$
Therefore,
\begin{multline*}
\|\Q^+_{SR}(f,g)\|_{\mathbf{H}^{s+1}_\eta} \leq\\
 A_s\,\varepsilon\left(\| f\|_{\mathbf{H}^s_{\eta+3}}\,\|g\|_{\mathbf{H}^s_{\eta+1}} +  \| g\|_{L^1_{\eta+1}}\,\|\partial^\ell f\|_{L^2_{\eta+1}}+\| f\|_{L^1_{\eta+1}}\,\| \partial^\ell g\|_{L^{2}_{\eta+1}}\right)
\ \ \  \forall |\l|=s+1.\end{multline*}
Third, argue in the same way using the smallness assumption (ii) to prove that
\begin{multline*}
\|\Q^+_{RR}(f,g)\|_{\mathbf{H}^{s+1}_\eta} \leq\\
 A_s\,\varepsilon\left(\| f\|_{\mathbf{H}^s_{\eta+3}}\,\|g\|_{\mathbf{H}^s_{\eta+1}} +  \| g\|_{L^1_{\eta+1}}\,\|\partial^\ell f\|_{L^2_{\eta+1}}+\| f\|_{L^1_{\eta+1}}\,\| \partial^\ell g\|_{L^{2}_{\eta+1}}\right)\ \ \ \forall |\l|=s+1.
 \end{multline*}
Finally, the estimate for $\Q^+_{RS}$ follows from the fact that $\|\Phi_R\|_{L^\infty}$ is small,
\begin{multline*}
\|\Q^+_{RS}(f,g)\|_{\mathbf{H}^{s+1}_\eta} \leq A_s\,\varepsilon\left(\| f\|_{\mathbf{H}^s_{\eta+2}}\,\|g\|_{\mathbf{H}^s_{\eta}} +  \| g\|_{L^1_{\eta}}\,\|\partial^\ell f\|_{L^2_{\eta}}+\| f\|_{L^1_{\eta}}\,\| \partial^\ell g\|_{L^{2}_{\eta}}\right)\\ \qquad \forall |\l|=s+1.
\end{multline*}
Combining all these estimates and replacing $A_s \varepsilon$ to $\varepsilon$  we get \eqref{regestim}.
\end{proof}

\begin{nb}\label{scaleCe} Recall that, by virtue of our scaling argument, we will have to apply the above regularity result for the scaled restitution coefficient $\el$. Arguing as in  \cite[Corollary 4.14]{AloLo1} we can prove without major difficulty that $\sup_{\la \in (0,1]} C_{\el} < \infty$ where $C_{\el}$ is the constant appearing in \eqref{regestim} for the scaled restitution coefficient $\el$.
\end{nb}

\section{Properties of the steady state}\label{sec:stea}

The purpose of this Section is to establish all the general \textit{a posteriori} properties of the family $(\gl)_{\la}$ of solutions to \eqref{rescaled} that will be necessary to establish the uniqueness result. Of course, this analysis will require fine properties of the collision operator $\Q_{\el}$ associated to the rescaled restitution coefficient $\el$, in particular, its behavior as $\la \to 0.$ In all this section, $\gl$ denotes any solution to \eqref{rescaled} with $\la \in [0,1]$. There is no loss in generality in assuming from now on that
$$\varrho=\IR \gl(v)\d v=1 \qquad \forall \la \in (0,1].$$
We shall define, for any $\la \in (0,1]$ the solution set:
\begin{multline}\label{frakSl}
\mathfrak{S}_\la=\bigg\{G_\la \in L^1_2\,; \gl  \text{ solution to \eqref{rescaled} with } \\
\IR \gl(v) \d v =1 \text{ and } \IR v \gl(v)\d v=0 \bigg\}.
\end{multline}
Recall that our choice of scaling  implies that for any $\gl \in  \mathfrak{S}_\la$, the energy identity is given by, see \eqref{scaleDE}
\begin{equation}\label{scaleDE1}
6=\dfrac{1}{\lambda^{3+\gamma}}\IRR \gl(v)\gl(\vb)\mathbf{\Psi}_e (\la^2|v-\vb|^2)\d v\d\vb \qquad \forall \la \in (0,1] \end{equation}
where $\mathbf{\Psi}_e$ has been defined in \eqref{Psie}. We deduce from \eqref{psie0} that, for \textit{fixed} $r >0$,
$$\dfrac{1}{\lambda^{3+\gamma}}\mathbf{\Psi}_{e}(\la^2 r^2) \simeq \frac{\mathfrak{a}}{4+\gamma} r^{3+\gamma} \qquad \text{as} \qquad \la \simeq 0.$$
Intuitively, one gets then that, for $\lambda \simeq 0$,
$$6\simeq \frac{\mathfrak{a}}{4+\gamma} \IRR \gl(v)\gl(\vb)|v-\vb|^{3+\gamma}\d v\d\vb.$$
 The use of Jensen's inequality proves that the moment of order $3+\gamma$ of $\gl$ remains bounded uniformly with respect to $\lambda$
$$\m_{\frac{3+\gamma}{2}}(\la) =\mathrm{O}(1)$$
where the moments are defined as
\begin{equation}\label{mp}
\m_{p}(\la)=\IR \gl(v)|v|^{2p}\d v \qquad p \geq1.
\end{equation}
Existence of higher moments for $G_{\lambda}$ is the objective of the following section, see also Lemma A. \ref{lem:dissTem} in the Appendix, which properly justify above computations.

\subsection{Moment estimates}

Recall that our choice of scaling is such that
$$\sup_{0\leq \la \leq 1}\m_1(\la)=\sup_{0 \leq \la \leq 1}\E_\la=\E_\textrm{max}< \infty.$$
By a simple induction argument, this actually implies that all the moments of $\gl$ are uniformly bounded.
\begin{propo}\label{mo}
For any $p \geq 0$, there exists $\mathbf{C}_p >0$ such that
$$\sup_{0\leq \la \leq 1}\m_p(\la) \leq \mathbf{C}_p.$$
\end{propo}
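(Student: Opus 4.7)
The plan is to argue by induction on $p$, testing the steady equation \eqref{rescaled} against the weight $|v|^{2p}$ and invoking a Povzner-type inequality for $\Q_{\el}$ whose constants are \emph{uniform} in $\la \in (0,1]$. The starting point is provided by the energy identity \eqref{scaleDE1} recalled just above the statement: Jensen's inequality applied to the convex function $\mathbf{\Psi}_{e}$, together with the asymptotics \eqref{psie0}, yields $\sup_{\la \in (0,1]} \m_{(3+\gamma)/2}(\la) < \infty$, and since $\m_0(\la)=1$, Hölder's interpolation then gives $\sup_\la \m_q(\la) < \infty$ for every $q \in [0,(3+\gamma)/2]$.

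To propagate the bound to higher moments, fix $p > 1$ and assume $\sup_\la \m_q(\la) \leq \mathbf{C}_q$ for every $q < p$. Multiplying \eqref{rescaled} by $|v|^{2p}$ and integrating by parts in the Laplacian term provides the identity
$$
\IR \Q_{\el}(\gl,\gl)(v)\,|v|^{2p}\d v \;=\; -\la^\gamma\,2p(2p+1)\,\m_{p-1}(\la),
$$
whose right-hand side is, in absolute value, bounded uniformly in $\la \in (0,1]$ by the induction hypothesis (using also $\la^\gamma \leq 1$). For the left-hand side, the Povzner-type estimate for inelastic hard spheres, in the spirit of \cite{BoGaPa,MMR,AloLo1}, supplies constants $A_p > 0$ and $B_p > 0$ \emph{independent of $\la$} such that
$$
\IR \Q_{\el}(\gl,\gl)(v)\,|v|^{2p}\d v \;\leq\; -A_p\,\m_{p+\frac12}(\la) \;+\; B_p \sum_{k=1}^{\lfloor p \rfloor} \m_{k+\frac12}(\la)\,\m_{p-k}(\la).
$$
Combining these two displays and controlling every factor on the right through the induction hypothesis yields $\m_{p+\frac12}(\la) \leq \mathbf{C}_{p+\frac12}$ uniformly in $\la$; a further application of Hölder's inequality propagates the estimate to every real exponent $q \leq p+\frac12$, closing the induction.

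The main technical point in this scheme is the $\la$-uniformity of the Povzner constants $A_p, B_p$. Since $\el(r)\in(0,1]$ for every $r \geq 0$ and every $\la \in (0,1]$, and since the angular average of the gain $|v'|^{2p}+|\vb'|^{2p}-|v|^{2p}-|\vb|^{2p}$ is a monotone function of the restitution coefficient (inelastic collisions dissipate more energy than elastic ones), the elastic Povzner constant for hard spheres is always an admissible uniform choice. This observation is already implicit in the Povzner-type computation of \cite{AloLo1} carried out under the general Assumptions \ref{HYP}, so no new calculation is needed; the work is confined to tracking the dependence of constants on the range $(0,1]$ of $\el$ rather than on its precise pointwise values.
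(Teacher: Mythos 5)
Your plan --- test the steady equation against $|v|^{2p}$, invoke a Povzner inequality whose constants are uniform in $\la\in(0,1]$, and close by induction on the moment order --- is exactly the paper's route, and your observation about the $\la$-uniformity of the Povzner constants is essentially correct: the point is that the angular upper bound \eqref{Ae+} used in the proof of Proposition~B.\ref{Povzner} does not involve $e(\cdot)$ at all, so the constants $k_p,A_p$ in Lemma~B.\ref{momentsp} are $e$-independent (this is an $e$-free upper bound, not literally the elastic constant as your monotonicity heuristic suggests, but the conclusion is the same). Two steps, however, do not hold as you wrote them.

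\emph{The base case.} Jensen's inequality applied to the convex $\mathbf{\Psi}_e$ in the energy identity \eqref{scaleDE1} bounds $\mathbf{\Psi}_e(\la^2\E_\la)$ from above, hence bounds $\E_\la=\m_1(\la)$ uniformly in $\la$; it does \emph{not} control $\m_{(3+\gamma)/2}(\la)$, because $\mathbf{\Psi}_e(r)$ is not bounded below by a multiple of $r^{(3+\gamma)/2}$ uniformly in $r$ (its growth at infinity is only $\sim r^{3/2}$). The paragraph preceding the proposition where ``$\m_{(3+\gamma)/2}(\la)=\mathrm{O}(1)$'' appears is introduced by ``Intuitively'' and explicitly defers its rigorous justification to ``the following section'' --- that is, to this very proposition --- so taking it as the anchor is circular. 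The fix is immediate: anchor the induction at $\m_0(\la)=1$ and $\sup_{\la}\m_1(\la)=\E_\mathrm{max}<\infty$.

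\emph{The Povzner sum.} In \cite{BoGaPa,AloLo1} the sum runs over $k\leq[\tfrac{p+1}{2}]$, which guarantees that every moment appearing on the right-hand side has order strictly below $p+\tfrac12$. With $\sum_{k=1}^{\lfloor p\rfloor}$ as you wrote it, when $p$ is an integer the term $k=p$ contributes $\m_{p+1/2}(\la)\,\m_0(\la)=\m_{p+1/2}(\la)$: the very quantity you are trying to bound reappears on the right with an uncontrolled coefficient, and your induction hypothesis (which covers only $q<p$) does not reach it. The paper avoids the issue entirely by using the streamlined two-term estimate of Lemma~B.\ref{momentsp}, $\IR \Q_e(f,f)(v)\,|v|^{2p}\d v\leq -k_p\varrho\,\m_{p+1/2}+A_p\,\m_{1/2}\,\m_p$, from which, together with $\m_{1/2}(\la)\leq\sqrt{\E_\mathrm{max}}$ and $\la^\gamma\leq1$, the recursion $\m_{p+1/2}(\la)\leq C_{1,p}\m_p(\la)+C_{2,p}\m_{p-1}(\la)$ follows at once; the induction then closes in half-integer steps and extends to all $p\geq0$ by interpolation, exactly as in your final sentence.
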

\begin{proof} Let $p \geq 1$ be fixed. Multiplying Eq. \eqref{rescaled} by $\psi(v)=|v|^{2p}$ and integrating over $\R^3$ we get
$$-\lambda^\gamma \IR \gl(v)\Delta |v|^{2p}\d v =\IR \Q_{\el}(\gl,\gl)(v)\,|v|^{2p}\d v.$$
Since $\Delta |v|^{2p}=2p(2p+1)|v|^{2p-2}$, using Lemma B. \ref{momentsp}, there are two positive constants $k_p, A_p >0$ independent of $\la$ such that
$$-2p(2p+1)\la^\gamma \m_{p-1}(\la) \leq - k_p \varrho \m_{p+\tfrac{1}{2}}(\la)+A_p\,\m_{\tfrac{1}{2}}(\la)\,\m_{p}(\la) \qquad \forall \la \geq 0.$$
Since $\m_{\tfrac{1}{2}}(\la) \leq \sqrt{\E_\la}\leq \sqrt{\E_\textrm{max}}$ for any $\la \in (0,1)$, we see that there are two positive constants $C_{1,p},\,C_{2,p} >0$ independent of $\la$ such that
$$\m_{p+\tfrac{1}{2}}(\la) \leq  C_{1,p}\m_p(\la)+C_{2,p}\m_{p-1}(\la)  \qquad \forall \la \in (0,1],\:\forall p \geq 1.$$
Both $\sup_{\la \in (0,1)}\m_{1}(\la)=\E_\mathrm{max}$ and $\sup_{\la \in (0,1]}\m_0(\la)=1$ are finite, thus, a simple induction yields the conclusion for any $p \in \mathbb{N}$. The result extends then to any parameter $p \geq 0$ by interpolation.
\end{proof}
\begin{propo}\label{prop:inf}
There exist  $\E_\mathrm{min} >0$ and $c_0 >0$ such that $\inf_{\la \in [0,1]}\E_\la=\E_\mathrm{min} >0$
and
$$\IR \gl(\vb)|v-\vb|\d\vb \geq c_0\langle v \rangle \qquad \forall v \in \R^3, \:\qquad \forall \lambda \in [0,1].$$
\end{propo}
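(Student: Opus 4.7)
The plan is to handle the two assertions in turn, both being \emph{a posteriori} consequences of the energy identity \eqref{scaleDE1}, the uniform moment control $\m_p(\la) \leq \mathbf{C}_p$ from Proposition \ref{mo}, and elementary Hölder interpolations.

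\textbf{Step 1: lower bound on $\E_\la$.} The key preliminary observation is that the local asymptotic \eqref{psie0}, combined with the trivial bound $\int_0^1(1-e(s)^2)z^3\,\d z \leq 1/4$, upgrades into a \emph{global} estimate of the form
$$\mathbf{\Psi}_e(r) \leq K\,r^{(3+\gamma)/2}, \qquad \forall r \geq 0,$$
for some $K>0$; indeed the ratio $\mathbf{\Psi}_e(r)/r^{(3+\gamma)/2}$ is continuous on $(0,\infty)$, has finite limit $\mathfrak{a}/(4+\gamma)$ at $0$, and either decays ($\gamma>0$) or stays bounded ($\gamma=0$) at infinity.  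Plugging this into \eqref{scaleDE1} and using the convexity bound $|v-\vb|^{3+\gamma}\leq 2^{2+\gamma}(|v|^{3+\gamma}+|\vb|^{3+\gamma})$ gives $\m_{(3+\gamma)/2}(\la) \geq 3/(K\,2^{2+\gamma})>0$, uniformly in $\la$.  Choosing an integer $N \geq (3+\gamma)/2$ and applying Hölder in the form
$$\m_{(3+\gamma)/2}(\la) \leq \m_1(\la)^{a}\,\m_N(\la)^{b}, \qquad a+b=1,\quad 2a+2Nb=3+\gamma,$$
together with Proposition \ref{mo} to control $\m_N(\la)$ uniformly, forces $\m_1(\la)=\E_\la$ away from zero.

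\textbf{Step 2: pointwise estimate.} Next I would split the analysis according to whether $|v|$ is large or small.  For $|v|\geq R_0$, the triangle inequality $|v-\vb|\geq|v|-|\vb|$ together with Cauchy--Schwarz $\int \gl|\vb|\,\d\vb \leq \sqrt{\E_\la} \leq \sqrt{\mathbf{C}_1}$ yields $\int \gl|v-\vb|\,\d\vb \geq |v|-\sqrt{\mathbf{C}_1}$; picking $R_0 := \max(2\sqrt{\mathbf{C}_1},1)$ makes this comparable to $\langle v \rangle$.  For $|v|\leq R_0$, the essential tool is the Cauchy--Schwarz inequality applied to the factorisation $|v-\vb|^{2}=|v-\vb|^{1/2}\cdot|v-\vb|^{3/2}$,
$$\left(\IR \gl(\vb)|v-\vb|^{2}\d\vb\right)^{2} \leq \left(\IR \gl(\vb)|v-\vb|\d\vb\right)\left(\IR \gl(\vb)|v-\vb|^{3}\d\vb\right).$$
Since the momentum of $\gl$ vanishes, the left-hand side equals $(|v|^{2}+\E_\la)^{2}\geq \E_\mathrm{min}^{2}$ thanks to Step 1, while the third-moment factor is bounded by $4(R_0^{3}+\mathbf{C}_{3/2})$.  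Rearranging gives a strictly positive constant lower bound on the bounded range of $v$, which remains comparable to $\langle v \rangle$ there.  Taking the minimum of the constants produced in the two ranges yields the required $c_0$.

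The only non-routine ingredient is the global control $\mathbf{\Psi}_e(r)\leq K\,r^{(3+\gamma)/2}$ in Step 1; once available, it is what allows the energy identity, traditionally used only to extract an \emph{upper} bound on $\E_\la$ (via Jensen, as in the scaling argument), to also yield a matching \emph{lower} bound via the higher moment estimates.  This uniform non-degeneracy of the temperature is the whole point of the proposition, since Step 2 and all subsequent arguments on the operator $\Q_{\el}$ require that $\gl$ cannot concentrate near the origin as $\la \to 0$.
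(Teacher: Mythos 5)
Your proposal is correct and follows the same route as the paper: both extract the global bound $\mathbf{\Psi}_e(r^2) \leq K\,r^{3+\gamma}$ from \eqref{psie0} together with the $O(r^{3/2})$ growth at infinity, feed it into the energy identity \eqref{scaleDE1} to obtain the uniform lower bound $\inf_\la \m_{(3+\gamma)/2}(\la) > 0$, and then combine this with the uniform upper moment bounds of Proposition \ref{mo}. Your Steps 1 (Hölder interpolation between $\m_1$ and a high moment) and 2 (triangle inequality for large $|v|$, Cauchy--Schwarz on $|v-\vb|^2$ for bounded $|v|$) are exactly the ``standard procedure'' the paper invokes without writing out; the only cosmetic slip is that you should require $N > (3+\gamma)/2$ strictly so that the Hölder weight $b=(1+\gamma)/(2N-2)$ lies in $(0,1)$.
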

\begin{proof} Note that $\mathbf{\Psi}_e(x) \simeq C x^{3/2}$  as $x \to \infty$ for some positive $C >0$.  Additionally, using \eqref{psie0}, there exists a positive constant $K >0$ such that $\mathbf{\Psi}_e(r^2) \leq K r^{3+\gamma}$ for any $r >0$.  According to \eqref{scaleDE1}, it follows
$$6\leq K \IRR \gl(v)\gl(\vb)|v-\vb|^{3+\gamma}\d v \d\vb.$$
Therefore,
\begin{equation}\label{borneinf}
\inf_{\la >0}\m_{\tfrac{3+\gamma}{2}}(\la)=c >0.
\end{equation}
Knowing \eqref{borneinf}, it is a standard procedure to deduce the result from Proposition \ref{mo}.
\end{proof}
\begin{propo}\label{propo:tails} There exist positive constants $A >0$ and $M >0$ such that any solution $\gl$ to \eqref{rescaled}, with $\la \in (0,1]$, satisfies
\begin{equation}\label{exponen}\int_{\R^3} \gl(v)\exp\left(A \,|v|^{\frac{3}{2}}\right)\d v \leq M.\end{equation}
\end{propo}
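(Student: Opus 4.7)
The plan is to follow the Bobylev--Gamba--Panferov strategy \cite{BoGaPa} devised for stationary states of the diffusively driven Boltzmann equation with \emph{constant} restitution coefficient, and to propagate it uniformly in $\la \in (0,1]$ through the family of rescaled coefficients $\el$. The stretched exponent $3/2$ is the natural one dictated by the balance between the hard-sphere collision frequency (linear in $|v|$) and the Laplacian heat bath. The core of the proof is a moment recurrence leading to the induction $\m_p(\la) \leq M\,Q^p\,\Gamma(\tfrac{4p}{3}+\tfrac{3}{2})$, which then translates by Taylor expansion and Stirling's formula into the claimed exponential tail.

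\textbf{Povzner estimate and moment recurrence.} Using $\el(\cdot) \leq 1$ together with the monotonicity of $e(\cdot)$, I would first establish a Povzner-type bound uniform in $\la$: there exist constants $k,K>0$ independent of $\la$ such that, for every integer $p$ large enough,
$$\int_{\R^3}\Q_{\el}(\gl,\gl)(v)\,|v|^{2p}\d v \leq -k\,\m_{p+\frac{1}{2}}(\la) + K \sum_{j=1}^{\lfloor\frac{p+1}{2}\rfloor}\binom{p}{j}\bigl[\m_{j+\frac{1}{2}}(\la)\,\m_{p-j}(\la) + \m_{j}(\la)\,\m_{p-j+\frac{1}{2}}(\la)\bigr].$$
Testing \eqref{rescaled} against $|v|^{2p}$ yields $\int_{\R^3}\Q_{\el}(\gl,\gl)|v|^{2p}\d v = -2p(2p+1)\,\la^\gamma\,\m_{p-1}(\la)$, and combining with the Povzner bound and the trivial inequality $\la^\gamma \leq 1$ gives the closed moment recurrence
$$k\,\m_{p+\frac{1}{2}}(\la) \leq K \sum_{j=1}^{\lfloor\frac{p+1}{2}\rfloor}\binom{p}{j}\bigl[\m_{j+\frac{1}{2}}(\la)\,\m_{p-j}(\la) + \m_{j}(\la)\,\m_{p-j+\frac{1}{2}}(\la)\bigr] + 2p(2p+1)\,\m_{p-1}(\la).$$

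\textbf{Induction and conclusion.} I then prove by induction on $p \in \tfrac{1}{2}\mathbb{N}$ the existence of absolute constants $M,Q>0$ with $\m_p(\la) \leq M\, Q^p\, \Gamma(\tfrac{4p}{3}+\tfrac{3}{2})$ for every $\la \in (0,1]$; the base cases follow from Proposition \ref{mo}, and the inductive step combines the recurrence with the classical Bobylev gamma-convolution estimate
$$\sum_{j=1}^{p-1}\binom{p}{j}\Gamma\!\left(\tfrac{4j}{3}+a\right)\Gamma\!\left(\tfrac{4(p-j)}{3}+b\right) \leq C(a,b)\,\Gamma\!\left(\tfrac{4p}{3}+a+b\right),$$
with $a,b$ matched to the orders of the terms in the recurrence. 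The main obstacle is absorbing the Laplacian-induced contribution $p^2\,\m_{p-1}(\la)$ into the induction: it works because the gap $\Gamma(\tfrac{4(p+1/2)}{3}+\tfrac{3}{2})/\Gamma(\tfrac{4(p-1)}{3}+\tfrac{3}{2})$ grows like a power of $p$ strictly larger than $2$, precisely absorbing the prefactor; the exponent $4/3$ is the critical choice that makes this work and corresponds to the $3/2$-stretched tail. The proof is then closed by Taylor expansion and Stirling's asymptotic $\Gamma(n+\tfrac{3}{2})/n! \sim \sqrt{n}$:
$$\int_{\R^3} \gl(v)\,\exp\!\bigl(A|v|^{3/2}\bigr)\d v = \sum_{n\geq 0} \frac{A^n}{n!}\,\m_{3n/4}(\la) \leq M \sum_{n\geq 0} \frac{A^n\,Q^{3n/4}}{n!}\,\Gamma\!\left(n+\tfrac{3}{2}\right),$$
and the right-hand side converges to a finite $\la$-independent constant as soon as $A\,Q^{3/4} < 1$.
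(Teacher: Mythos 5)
Your proposal follows the same Bobylev--Gamba--Panferov strategy as the paper, which invokes the inelastic Povzner estimate of \cite[Proposition 2.7]{AloLo1} (whose coefficient is $\kappa_p=O(1/p)$; your fixed-$K$ version is a consequence of it and still suffices, since for $a=\tfrac{4}{3}>1$ the dominant $j=1$ term of the Bobylev convolution already carries an extra $p^{-1}$ against the target) and then verifies that all constants in the BoGaPa moment recursion are uniform in $\la\in(0,1]$ and in the heat-bath coefficient $\mu=\la^\gamma\leq 1$. One justification needs correcting: the ratio $\Gamma\!\left(\tfrac{4(p+1/2)}{3}+\tfrac{3}{2}\right)\big/\Gamma\!\left(\tfrac{4(p-1)}{3}+\tfrac{3}{2}\right)$ has argument gap $\tfrac{4}{3}\cdot\tfrac{3}{2}=2$, hence grows like $\left(\tfrac{4p}{3}\right)^2=\tfrac{16}{9}p^2$ \emph{exactly}, not like a power of $p$ strictly larger than $2$; the $p^2$ scales on the two sides cancel and the Laplacian term $2p(2p+1)\la^\gamma\,\m_{p-1}(\la)$ is absorbed instead by taking $Q$ large enough that $Q^{3/2}$ beats the residual constant $\tfrac{9}{4}$. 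In other words $\tfrac{4}{3}$ is critical because it is the smallest exponent for which the argument gap reaches $2$ (any larger exponent would also close the recursion but yield a weaker stretched-exponential tail), not because the gamma ratio is super-quadratic.
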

\begin{proof} The proof follows the lines of the analogous result \cite[Theorem 1]{BoGaPa} for constant restitution coefficient. It consists in proving that there exist $K >0$ such that
\begin{equation}\label{mpRp}
\sup_{\la \in (0,1]}\m_p(\la) \leq \Gamma\left(\frac{4}{3}p + \frac{1}{2}\right)\,K^p \qquad \forall p \geq 1
\end{equation}
where $\Gamma(\cdot)$ is the gamma function while $\m_p(\la)$ is defined in \eqref{mp}. In order to prove \eqref{mpRp} note that,
$$-2p(2p+1)\la^\gamma \m_{p-1}(\la)=\int_{\R^3}\Q_{\el}(\gl,\gl)(v)|v|^{2p}\d v \qquad \forall p \geq 1,\:\la \in (0,1].$$
One can estimate the right side thanks to \cite[Proposition 2.7]{AloLo1},
$$\int_{\R^3}\Q_{\el}(\gl,\gl)(v)|v|^{2p}\d v \leq -(1-\kappa_p)\m_{p+\frac{1}{2}}(\la)+\kappa_p \mathcal{S}_p(\la)$$
where
$$\mathcal{S}_p(\la)=\sum^{[\frac{p+1}{2}]}_{k=1}\left(
\begin{array}{c}
p\\k
\end{array}
\right)\left(\m_{k+1/2}(\la)\;\m_{p-k}(\la)+\m_{k}(\la)\;\m_{p-k+1/2}(\la)\right).$$
Here $[\frac{p+1}{2}]$ denotes the integer part of $\frac{p+1}{2}$ and $\kappa_p \in (0,1)$ is independent of $\la$ and satisfies  $\kappa_p =\mathrm{O}(1/p)$ as $p \to \infty.$ Then, one sees that \cite[Equations (4.6) and (4.11)]{BoGaPa} hold with $\mu=\la^\gamma \in (0,1]$. At this point, we can resume exactly the proof of \cite{BoGaPa} noticing that all the estimates there are uniform with respect to the coefficient $\mu$ appearing in front of the thermal bath. In other words, we obtain \eqref{mpRp} with a positive constant $K >0$ which is independent of $\la$.  This is enough to get \eqref{exponen}.
\end{proof}
One actually can make more precise the above estimates by evaluating the difference of two solutions to \eqref{rescaled}.  A simple adaptation of \cite[Proposition 2.7, Step 1]{MiMo3} gives the following estimate.
\begin{propo}\label{prop:momdiff}
For any $s \in [0,\frac{3}{2}]$ there exist some positive constants $r_s >0$ and $M_s >0$ such that
\begin{equation}\label{exponenDiff}
\int_{\R^3}\left| \gl(v)-F_\la(v)\right|\exp\left(r_s \,|v|^{s}\right)\d v \leq M_s\|\gl-F_\la\|_{L^1_1} \quad \forall \la \in (0,1]
\end{equation}
for any $F_\la, \gl \in \mathfrak{S}_\la$.
\end{propo}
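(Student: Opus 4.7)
Let $H_\la := G_\la - F_\la$. Subtracting the stationary equations satisfied by $G_\la$ and $F_\la$ and using bilinearity gives
\begin{equation*}
\Q_{\el}(G_\la, H_\la) + \Q_{\el}(H_\la, F_\la) + \la^\gamma \Delta H_\la = 0.
\end{equation*}
For hard spheres $\Q^-_{\el}(f,g)(v) = f(v)\Sigma(g)(v)$ with $\Sigma(g)(v) := \int g(w)|v-w|\d w$, so splitting into gain and loss parts and rearranging, one obtains
\begin{equation*}
H_\la\,\Sigma(G_\la) = \Q^+_{\el}(H_\la, G_\la) + \Q^+_{\el}(F_\la, H_\la) - F_\la\,\Sigma(H_\la) + \la^\gamma \Delta H_\la.
\end{equation*}

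The plan is to obtain $L^1$-moment bounds on $|H_\la|$ with precise control on the growth of constants. Multiplying the above identity by $\mathrm{sign}(H_\la)\,|v|^{2p}$ and integrating (via a standard regularization of $\mathrm{sign}$), we use the following three ingredients: (i) linearity of $\Q^+_{\el}$ in each argument gives $|\Q^+_{\el}(f,H_\la)| \leq \Q^+_{\el}(f,|H_\la|)$ pointwise; (ii) Kato's inequality yields $\mathrm{sign}(H_\la)\,\Delta H_\la \leq \Delta|H_\la|$; (iii) the lower bound $\Sigma(G_\la)(v) \geq c_0 \langle v\rangle$ provided by Proposition \ref{prop:inf}. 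Setting $Y_p(\la) := \int |H_\la|(v)|v|^{2p}\d v$, this yields
\begin{equation*}
c_0\, Y_{p+\tfrac12}(\la) \leq \int \Q^+_{\el}(|H_\la|,G_\la)\,|v|^{2p}\d v + \int \Q^+_{\el}(F_\la,|H_\la|)\,|v|^{2p}\d v + \int F_\la\,\Sigma(|H_\la|)\,|v|^{2p}\d v + 2p(2p+1)\la^\gamma Y_{p-1}(\la).
\end{equation*}

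Next, I apply the Povzner-type estimates of \cite[Proposition 2.7]{AloLo1} (already used in the proof of Proposition \ref{propo:tails}) to each of the three positive operators on the right. Together with the uniform moment bounds $\sup_{\la\in(0,1]} \m_p(\la) \leq \mathbf{C}_p$ from Proposition \ref{mo}, this yields
\begin{equation*}
Y_{p+\tfrac12}(\la) \leq \kappa_p\,\mathcal{S}_p(\la) + \widetilde C_p\bigl[Y_p(\la) + Y_{\tfrac12}(\la)\bigr] + \widetilde C_p\,\la^\gamma Y_{p-1}(\la),
\end{equation*}
where $\mathcal{S}_p(\la)$ is a sum of cross-terms of the form $\binom{p}{k}\bigl[Y_{k+1/2}(\la)\mathbf{C}_{p-k} + Y_{k}(\la)\mathbf{C}_{p-k+1/2}\bigr]$ with $1 \leq k \leq [\tfrac{p+1}{2}]$, and $\kappa_p = \mathrm{O}(1/p)$. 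Absorbing $Y_p(\la)$ via the Hölder-type interpolation $Y_p \leq \eta\, Y_{p+1/2} + c_\eta\,Y_{p-1/2}$ and using that $Y_{p-1}(\la) \leq Y_{p-1/2}(\la) + Y_0(\la)$ leads to
\begin{equation*}
Y_{p+\tfrac12}(\la) \leq \kappa_p\,\mathcal{S}_p(\la) + \widehat C_p\bigl[Y_{p-\tfrac12}(\la) + Y_{\tfrac12}(\la)\bigr],\qquad \forall \la \in (0,1].
\end{equation*}

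A straightforward induction on $p$ then gives $Y_p(\la) \leq D_p\, Y_{1/2}(\la) = D_p\,\|H_\la\|_{L^1_1}$ for all $p \geq 1/2$, uniformly in $\la \in (0,1]$. The main obstacle, and the key reason this is only an ``adaptation'' of \cite[Proposition 2.7, Step 1]{MiMo3}, is the sharp control of the growth of $D_p$: by keeping track of the $\kappa_p = \mathrm{O}(1/p)$ factor exactly as in \cite{BoGaPa} and here in Proposition \ref{propo:tails}, one obtains $D_p \leq R^p\,\Gamma\bigl(\tfrac{4}{3}p + \tfrac12\bigr)$ for some $R>0$ independent of $\la$. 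Summing the Taylor series of $\exp(r_s|v|^s)$ for $s \leq 3/2$ and $r_s$ sufficiently small then yields the claimed estimate
\begin{equation*}
\int_{\R^3} |H_\la|(v)\exp(r_s|v|^s)\d v = \sum_{n\geq 0} \frac{r_s^n}{n!} \int |H_\la|\,|v|^{ns}\d v \leq M_s\,\|H_\la\|_{L^1_1},
\end{equation*}
which completes the proof for any $s \in [0, 3/2]$.
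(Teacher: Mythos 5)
Your proof reconstructs exactly the argument the paper has in mind when it cites \cite[Proposition~2.7, Step~1]{MiMo3}: multiply the equation for $H_\la$ by $\mathrm{sign}(H_\la)\,|v|^{2p}$ (after regularization), use Kato's inequality for the Laplacian, the coercivity $\Sigma(G_\la)\geq c_0\langle v\rangle$ from Proposition~\ref{prop:inf}, positivity of $\Q^+_{\el}$, and then the bilinear Povzner machinery of \cite{AloLo1,BoGaPa} with the crucial $\kappa_p=\mathrm{O}(1/p)$ to close a recursion for $Y_p(\la)$ and get $D_p\lesssim R^p\,\Gamma\!\left(\tfrac43 p+\tfrac12\right)$, uniformly in $\la$. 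That is the right skeleton, and the growth rate matches what was established in Proposition~\ref{propo:tails}, so summing the Taylor series of $\exp(r_s|v|^s)$ indeed converges for $s\leq 3/2$ and $r_s$ small.

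Two small points worth tightening. First, your two displayed equations are not consistent with a single bilinear splitting: the first uses $\Q(G,H)+\Q(H,F)$ while the rearranged identity implicitly uses $\Q(H,G)+\Q(F,H)$ (both are valid decompositions of $\Q(G,G)-\Q(F,F)$, but you should commit to one). Second, there is a hidden absorption: the Povzner expansion of $\int\Q^+_{\el}(|H_\la|,G_\la)|v|^{2p}$ and $\int\Q^+_{\el}(F_\la,|H_\la|)|v|^{2p}$ produces a top-order term $\sim 2\kappa_p\,Y_{p+1/2}$ on the right that must be subtracted from the $c_0\,Y_{p+1/2}$ on the left; this requires $2\kappa_p<c_0$, which holds only for $p$ above an explicit threshold, so the recursion really starts at some $p_0$ while the finitely many moments $Y_p$ for $p<p_0$ are bounded directly from $\sup_\la\m_p(\la)<\infty$. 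Finally, $Y_{1/2}(\la)$ is not literally equal to $\|H_\la\|_{L^1_1}$; rather $\|H_\la\|_{L^1_1}\sim Y_0(\la)+Y_{1/2}(\la)$, so the conclusion should be stated in terms of $Y_0+Y_{1/2}$. None of these affect the correctness of the argument as a whole.
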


\subsection{Sobolev estimates} We prove now that the family $(\gl)_\la$ is uniformly bounded in any Sobolev norms $\mathbf{H}^\l$. We begin showing uniform $L^2_k$-estimates of $\gl$ for sufficiently small $\la$.
\begin{propo}\label{propo:L2} For any $k \geq 0,$  one has
$\mathbf{A}_k:=\sup_{\lambda\in (0,1]}\|\gl\|_{L^2_{k}}< \infty.$
 \end{propo}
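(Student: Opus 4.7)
The plan is to derive a weighted $L^2$ energy estimate by testing the steady equation \eqref{rescaled} against $\gl(v)\langle v\rangle^{2k}$ and integrating over $\R^3$. An integration by parts on the diffusion term yields the identity
$$\int_{\R^3} \gl^2\,L_1(\gl)\,\langle v\rangle^{2k}\,dv + \la^\gamma \int_{\R^3} |\nabla \gl|^2 \langle v\rangle^{2k}\,dv = \int_{\R^3} \Q_{\el}^+(\gl,\gl)\,\gl \langle v\rangle^{2k}\,dv + \frac{\la^\gamma}{2}\int_{\R^3} \gl^2 \Delta \langle v\rangle^{2k}\,dv,$$
where $L_1(\gl)(v) := \int \gl(\vb)|v-\vb|\,d\vb \geq c_0 \langle v\rangle$ uniformly in $\la \in (0,1]$ by Proposition \ref{prop:inf}. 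The loss term on the LHS therefore produces the coercive lower bound $c_0 \|\gl\|_{L^2_{k+1/2}}^2$, and the diffusion LHS term is non-negative and can be discarded. The Laplacian-of-weight error on the RHS is strictly lower order, bounded by $C_k \|\gl\|_{L^2_{k-1}}^2$ since $|\Delta \langle v\rangle^{2k}| = \mathcal{O}(\langle v\rangle^{2k-2})$ and $\la^\gamma \leq 1$, and vanishes entirely at $k=0$.

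The decisive step is then to control the gain integral by $(c_0 - \delta) \|\gl\|_{L^2_{k+1/2}}^2$ plus lower-order pieces, for some $\delta \in (0, c_0)$ uniform in $\la$. I would import the kernel-splitting technique of Theorem \ref{regularite}: decompose $\Q_{\el}^+ = \Q^+_{\el,S} + \Q^+_{\el,R}$ into a smooth/bounded part and a remainder with arbitrarily small Young-type constant. The remainder contributes at most $\varepsilon\, C\, \|\gl\|_{L^2_{k+1/2}}^2$, with the $L^1$-moments of $\gl$ absorbed into $C$ using the uniform bounds of Propositions \ref{mo} and \ref{propo:tails}; for $\varepsilon$ sufficiently small this is absorbed into the loss. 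The smooth part is controlled by Proposition \ref{theo:smooth}, Cauchy--Schwarz, and the elementary interpolation $\|\gl\|_{L^2_k}^2 \leq \|\gl\|_{L^2_{k+1/2}}\,\|\gl\|_{L^2_{k-1/2}}$; Young's inequality then converts it into a sub-quadratic remainder controlled by $\|\gl\|_{L^2_{k-1/2}}^2$ (with $\la$-uniform constants by Remark \ref{scaleCe}). Collecting all three pieces yields the recursion
$$\|\gl\|_{L^2_{k+1/2}}^2 \leq C_k\,\bigl(1 + \|\gl\|_{L^2_{k-1/2}}^2\bigr), \qquad \la \in (0,1],$$
which one iterates in half-integer steps and then interpolates to conclude for all real $k \geq 0$.

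The main obstacle will be the base case $k = 0$, where the recursion becomes self-referential. Here the diffusion error vanishes, but the quadratic remainder coming from the smooth part of the gain involves $\|\gl\|_{L^2_{-1/2}}^2$, which in turn satisfies the chain $\|\gl\|_{L^2_{-1/2}}^2 \leq \|\gl\|_{L^2}^2 \leq \|\gl\|_{L^2_{1/2}}\|\gl\|_{L^2_{-1/2}}$, hence $\|\gl\|_{L^2_{-1/2}} \leq \|\gl\|_{L^2} \leq \|\gl\|_{L^2_{1/2}}$. Closing the estimate therefore requires the $\varepsilon$-smallness in the remainder piece to make the effective self-referential coefficient strictly less than $c_0$, after which a final Young absorption delivers a $\la$-uniform bound on $\|\gl\|_{L^2_{1/2}}$. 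Uniformity in $\la \in (0,1]$ throughout is ensured by Remark \ref{scaleCe} and the fact that Assumptions \ref{HYP} on $e(\cdot)$ propagate to $\el$ with $\la$-independent constants.
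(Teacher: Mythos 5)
Your opening moves are exactly those of the paper: test the equation against $\gl\langle v\rangle^{2k}$, integrate by parts, discard the non-negative gradient term, bound the Laplacian-of-weight error by $\lambda^\gamma(2k^2+k)\|\gl\|^2_{L^2_{k-1}}$, and use Proposition~\ref{prop:inf} to extract the coercive lower bound $c_0\|\gl\|^2_{L^2_{k+1/2}}$ from the loss term. The decomposition of the gain into a small-Young-constant remainder plus a smooth part, with the remainder absorbed into the coercivity, is also standard. Where your plan breaks down is the treatment of the smooth part.

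You propose to control the smooth contribution via Proposition~\ref{theo:smooth}, Cauchy--Schwarz, and the interpolation $\|\gl\|_{L^2_k}^2\le\|\gl\|_{L^2_{k+1/2}}\|\gl\|_{L^2_{k-1/2}}$. But Proposition~\ref{theo:smooth} carries a weight shift $\mu(s)=s+4$: for $s=0$ it reads $\|\Q^+_{B,e}(f,g)\|_{\mathbf{H}^1_\eta}\le C\|g\|_{L^2_{\eta+4}}\|f\|_{L^1_{2\eta+4}}$. Pairing as in your plan (with $\eta=k-\tfrac12$) therefore produces
\begin{equation*}
\int_{\R^3}\Q^+_{SS}(\gl,\gl)\,\gl\,\langle v\rangle^{2k}\,\d v
\le \left\|\Q^+_{SS}(\gl,\gl)\right\|_{L^2_{k-\frac12}}\left\|\gl\right\|_{L^2_{k+\frac12}}
\le C\left\|\gl\right\|_{L^1_{2k+3}}\left\|\gl\right\|_{L^2_{k+\frac72}}\left\|\gl\right\|_{L^2_{k+\frac12}},
\end{equation*}
and $\|\gl\|_{L^2_{k+7/2}}$ sits $3$ units of weight \emph{above} the coercive term $\|\gl\|_{L^2_{k+1/2}}$. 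Your interpolation inequality goes the wrong way for this: it lowers the weight only when the quantity already matches the coercive one, and no combination of your tools produces a bound of $\|\gl\|_{L^2_{k+7/2}}$ by $\|\gl\|_{L^2_{k+1/2}}$ and uniformly bounded moments. The recursion you write down, $\|\gl\|^2_{L^2_{k+1/2}}\le C_k(1+\|\gl\|^2_{L^2_{k-1/2}})$, therefore never materializes from the ingredients you invoke, and the self-referential chain you describe for $k=0$ inherits the same defect.

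The paper sidesteps this entirely by not using Proposition~\ref{theo:smooth} here at all. Instead it invokes \cite[Corollary~4.14]{AloLo1}, which furnishes the estimate
\begin{equation*}
\int_{\R^3}\Q_{\el}^+(\gl,\gl)\,\gl\,\langle v\rangle^{2k}\,\d v
\le C_{\el}\,\delta^{-z}\,\|\gl\|^{1+2\theta}_{L^1_k}\,\|\gl\|^{2(1-\theta)}_{L^2_k}
\;+\;\delta\,\|\gl\|_{L^1_{2+k}}\,\|\gl\|^{2}_{L^2_{k+1/2}}.
\end{equation*}
The crucial feature is that the dominant $L^2$ factor on the right appears at weight $k$ (not $k+7/2$) and with a \emph{sub-quadratic} exponent $2(1-\theta)<2$, while the $\delta$-remainder at weight $k+1/2$ is absorbed by coercivity. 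This is not a consequence of the new regularity Theorem~\ref{regularite}; it comes from the earlier interpolation machinery of \cite{AloLo1} tailored specifically to the cubic pairing $\int\Q^+(f,f)\,f$. If you want to rebuild the argument rather than cite Corollary~4.14, you need a decomposition whose smooth part is controlled without a polynomial weight shift (e.g.\ the original Mouhot--Villani compactly-supported-kernel estimate together with an interpolation that absorbs both the small- and large-velocity tails), not the version encoded in Proposition~\ref{theo:smooth}, which was designed for a different purpose (Theorem~\ref{theo:HKL}) where the weight loss can be afforded because it is paid against already-established lower-order bounds.
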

\begin{proof}
First, observe that for any test function $\psi(v)$ integration by parts yields
$$-\IR \Delta \gl(v) \gl(v)\psi(v)\d v=\IR |\nabla \gl(v)|^2 \psi(v)\d v -\frac{1}{2}\IR \gl(v)^2 \Delta \psi(v)\d v.$$
Fix $k \geq 0$  and multiply equation \eqref{rescaled} by $\gl(v)  \langle v \rangle^{2k}$.  Apply above identity to $\psi(v)=\langle v \rangle^{2k}$ and use the inequality $\Delta \psi(v) \leq 2k(2k+1)\langle v\rangle^{2(k-1)}$ to obtain
\begin{equation}\label{L2k}
\lambda^{\gamma}\|\nabla \gl\|^2_{L^2_k}\leq \IR \Q_{\el}(\gl,\gl)(v)\gl(v)\langle v \rangle^{2k}\d v +(2k^2+k)\lambda^{\gamma}\|\gl\|^2_{L^2_{k-1}}.
\end{equation}
Applying \cite[Corollary 4.14]{AloLo1} with $p=2$ and $\eta=k$, we see that there exist $\theta \in (0,1),$ $z >0$ and $C_{\el} >0$ depending only on $\el$ such that for any $\delta >0$,
\begin{equation*}
\IR \Q_{\el}^+(\gl,\gl)(v)\gl(v)\langle v \rangle^{2k}\d v\leq C_{\el}\delta^{-z}\|\gl\|^{1+2\theta}_{L^1_k}\|\gl\|^{2(1-\theta)}_{L^2_k}+\delta\|\gl\|_{L^1_{2+k}}\|\gl\|^{2}_{L^2_{k+1/2}}.
\end{equation*}
Same reasoning as in \cite[Corollary 4.15]{AloLo1} shows that \footnote{With the notation of \cite[Corollary 4.15]{AloLo1}, one can prove that for any compact interval $I \subset (0,\infty)$ it follows that $\max_{k=0,1}\|D^k G_{\el}(\cdot)\|_{L^\infty(I)}=\mathrm{O}(1)$ as $\la \simeq 0$ where $G_{\el}(r)=\dfrac{r}{(1+\vartheta_{\el}'(r))\beta_{\el}(r)}$. In particular, $\lim_{\la \to 0} C_{\el}=C_0 >0$.}
$$\sup_{\la \in (0,1)}C_{\el} < \infty.$$
Therefore, there exist $\theta \in (0,1)$ such that for any $\la \in (0,1)$ and $\delta >0$ one can find some $K_\delta >0$ independent of $\la$ for which it holds
\begin{multline*}
\IR \Q_{\el}^+(\gl,\gl)(v)\gl(v)\langle v \rangle^{2k}\d v\\
\leq K_\delta\|\gl\|^{1+2\theta}_{L^1_k}\|\gl\|^{2(1-\theta)}_{L^2_k}+\delta\|\gl\|_{L^1_{2+k}}\|\gl\|^{2}_{L^2_{k+1/2}} \ \ \ \forall \la \in (0,1).
\end{multline*}
Second, estimate the loss term thanks to Proposition \ref{prop:inf}. Indeed,
\begin{equation*}\begin{split}
\IR \Q_{\el}^-(\gl,\gl)(v)\gl(v)\langle v \rangle^{2k}\d v&=\IRR |v-\vb|\gl^2(v)\gl(\vb)\langle v \rangle^{2k}\d v\d\vb\\
&\hspace{1cm} \geq c_0\IR \gl^2(v)\langle v \rangle^{2k+1}\d v=c_0\|\gl\|_{L^2_{k+1/2}}^2.\end{split}
\end{equation*}
Thus, plugging the previous two estimates into \eqref{L2k}
\begin{multline*}
\lambda^{\gamma}\|\nabla \gl\|^{2}_{L^2_k}\leq K_\delta\|\gl\|^{1+2\theta}_{L^1_k}\|\gl\|^{2(1-\theta)}_{L^2_k}+\left(\delta\|\gl\|_{L^1_{2+k}}-c_0\right)\|\gl\|^{2}_{L^2_{k+1/2}} \\ +(2k^2+k)\lambda^{\gamma}\|\gl\|^2_{L^2_{k-1}} \qquad \forall \la \in (0,1].
\end{multline*}
Using the notation of Proposition \ref{mo} and choosing $\delta={c_0}/{2\mathbf{C}_{2+k}}$, one sees that there exists $C_k=K_\delta \mathbf{C}_k^{1+2\theta} >0$ such that
$$\lambda^{\gamma}\|\nabla \gl\|^{2}_{L^2_k}\leq C_k \|\gl\|^{2(1-\theta)}_{L^2_k}-\frac{c_0}{2}\|\gl\|^{2}_{L^2_{k+1/2}} +(2k^2+k)\lambda^{\gamma}\|\gl\|^2_{L^2_{k-1}} \qquad \forall \la \in (0,1].$$
In particular,
$$ \frac{c_0}{2}\|\gl\|^{2}_{L^2_{k+1/2}}\leq C_k \|\gl\|^{2(1-\theta)}_{L^2_k} +(2k^2+k)\lambda^{\gamma}\|\gl\|^2_{L^2_{k-1}}.$$
The case $k=0$ follows directly from this estimate, i.e.
\begin{equation}\label{L20}
\sup_{\la \in (0,1]} \|\gl\|_{L^2} < \infty.
\end{equation}
Assume now $k \geq 1$. For any $R >0$, it can be checked that
$$\|\gl\|_{L^2_{k-1}}^2 \leq R^{2k-2}\|\gl\|_{L^2}^2+R^{-3}\|\gl\|_{L^2_{k+1/2}}^2.$$
Hence, choosing $R=\left(\frac{4}{c_0}\la^{\gamma}(2k^2+k)\right)^{1/3}$ we get
$$ \frac{c_0}{4}\|\gl\|^{2}_{L^2_{k+1/2}}\leq C_k \|\gl\|^{2(1-\theta)}_{L^2_k}+B_k(\la)\|\gl\|^2_{L^2}$$
with $B_k(\la)= (2k^2+k)\lambda^{\gamma}R^{2k-2}$.  In particular, using \eqref{L20} there exists some positive constant $A_k:=B_k(1)\sup_{\la \in (0,1)}\|\gl\|^2_{L^2} >0$, independent of $\la$, such that
$$ \frac{c_0}{4}\|\gl\|^{2}_{L^2_{k+1/2}}\leq A_k+ C_k \|\gl\|^{2(1-\theta)}_{L^2_k} \qquad \forall \la \in (0,1].$$
This yields the result.
\end{proof}
Since $\gl \in L^2_{1} \cap L^1_{1}$, Theorem A. \ref{alogam} shows that $\Q_{\el}(\gl,\gl) \in L^2$.  The equation
\begin{equation}\label{steady2}
-\la^\gamma \Delta \gl=\Q_{\el}(\gl,\gl)
\end{equation}
implies that $\Delta \gl \in L^2$.  Thus, a bootstrap argument shows the smoothness of $\gl$.  This reasoning will not help to find $\la$-uniform Sobolev estimates since the diffusive heating in \eqref{steady2} will vanish in the formal limit $\la \to 0$.

\begin{theo}\label{theo:HKL} Assume that $e(\cdot)$ belongs to the class $\mathbb{E}_m$ for some integer $m \geq 2$. Then, for any $k \geq 0$ and integer $\ell  \in [0,m-1]$
$$\sup_{\la \in (0,1]} \|\gl\|_{\mathbf{H}^\ell_k} < \infty.$$
In particular, if $e(\cdot)$ belongs to the class $\mathbb{E}_m$ with $m \geq 3$ one has $\sup_{\la \in (0,1]}\|\gl\|_{L^\infty_k} < \infty$ for any $k \geq 0$.
\end{theo}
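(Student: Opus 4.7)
The plan is to proceed by induction on the integer $\ell$. The base case $\ell=0$ is provided by Proposition \ref{propo:L2}, and the $L^\infty$ consequence for $m \geq 3$ will follow from the Sobolev embedding $\mathbf{H}^2 \hookrightarrow L^\infty$ once the Sobolev bound is established for $\ell = 2$. Throughout, smoothness of $\gl$ for each fixed $\la>0$ is granted by the bootstrap argument discussed right before the statement; the issue is purely to get $\la$-\emph{uniform} estimates in the quasi-elastic regime, where the diffusion $\la^\gamma\Delta$ degenerates and cannot alone supply the $\mathbf{H}^{s+1}$ control.

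Assume now that $\sup_{\la\in(0,1]}\|\gl\|_{\mathbf{H}^s_k}<\infty$ for every $k\geq 0$ and some integer $s\in[0,m-2]$, and fix a multi-index $\alpha$ with $|\alpha|=s+1$ together with $k\geq 0$. I would apply $\partial^\alpha$ to \eqref{rescaled}, multiply by $\partial^\alpha\gl\,\langle v\rangle^{2k}$, and integrate over $\R^3$. As in the derivation of \eqref{L2k}, integration by parts of the diffusive piece yields a nonnegative term $\la^\gamma\|\nabla\partial^\alpha\gl\|^2_{L^2_k}$ that is simply discarded, together with an error $C_k\la^\gamma\|\partial^\alpha\gl\|^2_{L^2_{k-1}}$ coming from $\Delta\langle v\rangle^{2k}$. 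The rest of the argument rests on the competition between the loss term and the gain term.

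For the loss part, using that $\Q^-_{\el}(\gl,\gl)(v)=\gl(v)\,(|\cdot|\ast\gl)(v)$ and that convolution commutes with derivatives, one has
\begin{equation*}
\partial^\alpha\Q^-_{\el}(\gl,\gl)(v)=\sum_{\nu\leq\alpha}\binom{\alpha}{\nu}\partial^\nu\gl(v)\,\bigl(|\cdot|\ast\partial^{\alpha-\nu}\gl\bigr)(v).
\end{equation*}
The principal contribution $\nu=\alpha$ gives, by Proposition \ref{prop:inf}, a coercive term bounded below by $c_0\|\partial^\alpha\gl\|^2_{L^2_{k+1/2}}$. Every cross term with $\nu<\alpha$ involves at least one factor whose derivative order is at most $s$, and the pointwise estimate $|\cdot|\ast|h|(v)\lesssim \langle v\rangle(\|h\|_{L^1}+\|h\|_{L^1_{1/2}})$ together with Cauchy--Schwarz reduces it to a bound of the form $C_{\text{ind}}\|\partial^\alpha\gl\|_{L^2_{k+1/2}}$, where $C_{\text{ind}}$ depends only on the induction hypothesis (via moment bounds from Proposition \ref{mo} and lower-order Sobolev norms). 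These cross terms are then absorbed by Young's inequality into a small fraction of $c_0\|\partial^\alpha\gl\|^2_{L^2_{k+1/2}}$.

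For the gain part, I would use Cauchy--Schwarz,
\begin{equation*}
\left|\int\partial^\alpha\Q^+_{\el}(\gl,\gl)\,\partial^\alpha\gl\,\langle v\rangle^{2k}\,\d v\right|\leq \|\Q^+_{\el}(\gl,\gl)\|_{\mathbf{H}^{s+1}_{k-1/2}}\|\partial^\alpha\gl\|_{L^2_{k+1/2}},
\end{equation*}
and then invoke Theorem \ref{regularite} (with Remark \ref{scaleCe} providing $\sup_\la C_{\el}<\infty$) with $\eta=k-1/2$:
\begin{equation*}
\|\Q^+_{\el}(\gl,\gl)\|_{\mathbf{H}^{s+1}_{k-1/2}}\leq C\|\gl\|_{\mathbf{H}^s_{2k-1+\mu(s)}}\|\gl\|_{L^1_{2k-1+\mu(s)}}+\varepsilon\|\gl\|_{\mathbf{H}^s_{k+5/2}}\|\gl\|_{\mathbf{H}^s_{k+1/2}}+2\varepsilon\|\gl\|_{L^1_{k+1/2}}\|\partial^\alpha\gl\|_{L^2_{k+1/2}}.
\end{equation*}
All the $\mathbf{H}^s$ and $L^1$ factors are uniformly bounded in $\la$ by the induction hypothesis and Proposition \ref{mo}, giving $\|\Q^+_{\el}(\gl,\gl)\|_{\mathbf{H}^{s+1}_{k-1/2}}\leq C_1+\varepsilon'\|\partial^\alpha\gl\|_{L^2_{k+1/2}}$ with $\varepsilon'$ as small as desired. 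Another application of Young's inequality makes this contribution absorbable into the coercive term.

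Combining these estimates with a suitably small choice of $\varepsilon$ and the Young constants, I arrive at
\begin{equation*}
\frac{c_0}{4}\|\partial^\alpha\gl\|^2_{L^2_{k+1/2}}\leq C_k+C_k'\la^\gamma\|\partial^\alpha\gl\|^2_{L^2_{k-1}}.
\end{equation*}
The residual diffusion error is handled by the same truncation trick used in Proposition \ref{propo:L2}: for any $R>0$, $\|\partial^\alpha\gl\|^2_{L^2_{k-1}}\leq R^{2k-2}\|\partial^\alpha\gl\|^2_{L^2}+R^{-3}\|\partial^\alpha\gl\|^2_{L^2_{k+1/2}}$, and choosing $R\sim\la^{\gamma/3}$ allows one to absorb the high-moment part while controlling the low-moment part by the bound at weight index $0$ obtained at the very first step of the present induction layer (i.e.\ for $k=0$). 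This closes the induction step uniformly in $\la$. The main technical obstacle is the handling of the cross terms in the loss operator: since $|\cdot|$ is only Lipschitz and the convolution $|\cdot|\ast\gl$ grows linearly in $v$, one has to be careful to produce exactly one weight power of $\langle v\rangle^{1/2}$ in front of $\partial^\alpha\gl$ so that the absorption into the coercive term $c_0\|\partial^\alpha\gl\|^2_{L^2_{k+1/2}}$ works; all other manipulations are standard bootstrap. Finally, the $L^\infty_k$ bound for $m\geq 3$ follows by taking $\ell=2$ in the Sobolev estimate just proven and applying the weighted Sobolev embedding in $\R^3$.
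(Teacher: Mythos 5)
Your overall architecture matches the paper's: induction on the derivative order $\ell$ with base case from Proposition \ref{propo:L2}; differentiate \eqref{rescaled}, test against $\partial^\alpha\gl\langle v\rangle^{2k}$; control the gain part via Cauchy--Schwarz and Theorem \ref{regularite} with $\eta=k-\tfrac{1}{2}$; and handle the residual diffusion term with the $R\sim\la^{\gamma/3}$ truncation trick from Proposition \ref{propo:L2}. All of that is sound and is exactly what the paper does.

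The gap is in the treatment of the loss-operator cross terms. After Leibniz, the term with $\nu=0$ is $\gl(v)\,(|\cdot|\ast\partial^\alpha\gl)(v)$, and the factor inside the convolution is $\partial^\alpha\gl$, of order $s+1$. Your stated estimate $|\cdot|\ast|h|(v)\lesssim\langle v\rangle(\|h\|_{L^1}+\|h\|_{L^1_{1/2}})$, applied with $h=\partial^\alpha\gl$, requires controlling $\|\partial^\alpha\gl\|_{L^1_{1/2}}$ --- an order-$(s+1)$ quantity that is \emph{not} covered by the induction hypothesis $\sup_\la\|\gl\|_{\mathbf{H}^s_k}<\infty$. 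Bounding it by $\|\partial^\alpha\gl\|_{L^2_{5/2}}\leq\|\partial^\alpha\gl\|_{L^2_{k+1/2}}$ (for $k\geq 2$) produces a contribution $\lesssim\|\gl\|_{L^2_{k+1/2}}\|\partial^\alpha\gl\|^2_{L^2_{k+1/2}}$, which is \emph{quadratic} in the unknown with a prefactor $\|\gl\|_{L^2_{k+1/2}}$ that is uniformly bounded but not small; it therefore cannot be absorbed into $c_0\|\partial^\alpha\gl\|^2_{L^2_{k+1/2}}$, contrary to your claim that the cross terms give a linear bound $C_{\mathrm{ind}}\|\partial^\alpha\gl\|_{L^2_{k+1/2}}$. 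The observation that ``at least one factor has derivative order $\leq s$'' does not help: the other factor sits inside the convolution and must also be controlled. The paper resolves this by integrating by parts once on the kernel, writing
\begin{equation*}
\left|\int_{\R^3}\partial^{\alpha-\nu}\gl(\vb)\,|v-\vb|\,\d\vb\right|\leq\|\partial^{\alpha-\nu-\mathbf{1}}\gl\|_{L^1}\leq\tau\|\partial^{\alpha-\nu-\mathbf{1}}\gl\|_{L^2_2},
\end{equation*}
which for $\nu<\alpha$ reduces the convolution factor to order at most $s$ (and, as a bonus, removes the linear growth in $v$). With this modification your argument closes; without it, the $\nu=0$ term breaks the induction.
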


\begin{proof}
Use induction over $|\ell| \in \mathbb{N}$. Proposition \ref{propo:L2} shows that the result is true if $|\ell|=0.$ Let then $|\ell|:=s+1>0$ be fixed and assume that  for any $k\geq 0$ there exists  $C_k >0$ such that
\begin{equation}\label{Hsk}
\max_{|\nu|\leq s}\sup_{\lambda\in (0,1]}\|\partial^{\nu}\gl\|_{L^2_k}\leq C_k.
\end{equation}
Observe that differentiating $\ell$-times Equation \eqref{scaleDE} yields
\begin{equation*}
-\lambda^{\gamma}\Delta\partial^{\l}\gl=\partial^{\l}\Q_{\el}(\gl,\gl).
\end{equation*}
Multiplying this equation by $\partial^\l \gl(v) \langle v\rangle^{2k} $ and integrating over $\R^3$  we get, as in Proposition \ref{L2k}
\begin{equation}\label{be}
\lambda^{\gamma}\|\nabla \partial^{\l }\gl\|^2_{L^2_k} \leq  \int_{\mathbb{R}^{3}}
 \partial^\ell \Q_{\el}(\gl,\gl)(v)\partial^{\l}\gl(v)\langle v\rangle^{2k}\d v + (2k^2+k)\lambda^{\gamma}\|\partial^{\l}\gl\|^{2}_{L^2_k}.
\end{equation}
Fix $k \geq \frac{1}{2}$. One has
\begin{equation*}\begin{split}
\IR \partial^\ell \Q^+_{\el}(\gl,\gl)(v)\partial^{\l}\gl(v)\langle v\rangle^{2k}\d v &\leq \|\partial^\ell \Q^+_{\el}(\gl,\gl)\|_{L^2_{k-\frac{1}{2}}}\|\partial^\l \gl\|_{L^2_{k+\frac{1}{2}}}\\
&\leq \|\Q^+_{\el}(\gl,\gl)\|_{\mathbf{H}^{s+1}_{k-\frac{1}{2}}}\|\partial^\l \gl\|_{L^2_{k+\frac{1}{2}}}\end{split}\end{equation*}
since $|\ell|=s+1$.  One estimates the Sobolev norm of $\Q^+_{\el}(\gl,\gl)$ thanks to Theorem \ref{regularite} applied to $\eta=k-\frac{1}{2}$. Precisely, for any $\varepsilon >0$,
\begin{multline*}
\|\Q^+_{\el}(\gl,\gl)\|_{\mathbf{H}^{s+1}_{k-\frac{1}{2}}} \leq C(\varepsilon)\,\|\gl\|_{\mathbf{H}^s_{2k+s+3}}\|\gl\|_{L^1_{2k+s+3}} + \varepsilon\| \gl\|_{\mathbf{H}^s_{k+\frac{5}{2}}}\,\|\gl\|_{\mathbf{H}^s_{k+\frac{1}{2}}}\\
+\varepsilon\left( \|\gl\|_{L^1_{k+\frac{1}{2}}}\,\|\partial^\ell \gl\|_{L^2_{k+\frac{1}{2}}}+\| \gl\|_{L^1_{k+\frac{1}{2}}}\,\| \partial^\ell \gl\|_{L^{2}_{k+\frac{1}{2}}}\right).
\end{multline*}
Using the uniform bounds in $\mathbf{H}^s_k$ given by \eqref{Hsk} together with Proposition \ref{propo:L2} and the uniform $L^1_k$ bounds, one notes that there exist $\alpha_k,\beta_k  >0$ such that
$$\|\Q^+_{\el}(\gl,\gl)\|_{\mathbf{H}^{s+1}_{k-\frac{1}{2}}} \leq \alpha_k +  \varepsilon\,\beta_k\,  \|\partial^\ell \gl\|_{L^{2}_{k+\frac{1}{2}}} \qquad \qquad \forall \la \in (0,1].$$
Therefore,
\begin{equation}\label{Q+el}
\IR \partial^\ell \Q^+_{\el}(\gl,\gl)(v)\partial^{\l}\gl(v)\langle v\rangle^{2k}\d v \leq  \alpha_k\|\partial^\ell \gl\|_{L^{2}_{k+\frac{1}{2}}} + \varepsilon\,\beta_k\, \|\partial^\ell \gl\|_{L^{2}_{k+\frac{1}{2}}}^2.
\end{equation}
Regarding the loss part of the collision operator, first note that
$$\partial^{\l}\Q^{-}_{\el}(\gl,\gl)=\sum^{\l}_{\nu=0}
\left(\begin{array}{c}
\l \\ \nu
\end{array}\right)
\Q_{\el}^{-}(\partial^\nu \gl,\partial^{\l-\nu}\gl).$$
For any $|\nu|\neq |\ell|$, integration by parts yields
\begin{equation*}\begin{split}
\left|\Q^{-}_{\el}(\partial^\nu \gl,\partial^{\l-\nu}\gl)(v)\right|&=\left|\partial^\nu \gl(v)\right|\,\left|\IR \partial^{\l-\nu} \gl(\vb)|v-\vb|\d\vb\right|\\
&\leq \left|\partial^\nu \gl(v)\right|\,\|\partial^{\l-\nu-1}\gl\|_{L^1}
\end{split}\end{equation*}
where, $\ell-1=(\ell_1-1,\ell_2,\ell_3)$ for any multi-index $\ell=(\ell_1,\ell_2,\ell_3)$. Using again the control of $L^1$ norms by weighted $L^2$-norms, see inequality \eqref{taug21}, we get
$$\left|\Q^{-}_{\el}(\partial^\nu \gl,\partial^{\l-\nu}\gl)(v)\right|\leq \tau\,|\partial^\nu \gl(v)|\,\|\partial^{\l-\nu-1}\gl\|_{L^2_2}$$
for some universal constant $\tau >0$ independent of $\la$.  From the induction hypothesis \eqref{Hsk}, this last quantity is uniformly bounded and using Cauchy-Schwarz inequality we obtain
\begin{multline*}
\sum_{|\nu| < |\ell|}
\left(\begin{array}{c}
\l \\ \nu
\end{array}\right)\IR  \Q^{-}_{\el}(\partial^\nu \gl,\partial^{\l-\nu}\gl)(v) \partial^{\l}\gl(v)\langle v\rangle ^{2k}\d v
\\
\leq C_2\sum_{|\nu|< |\ell|}
\left(\begin{array}{c}
\l \\ \nu
\end{array}\right)\|\partial^\nu \gl\|_{L^2_k}\,\|\partial^{\l}\gl(v)\|_{L^2_k}
\leq C_{k,\l}\|\partial^{\l}\gl\|_{L^2_k} \qquad\forall \la \in (0,1]
\end{multline*}
for some positive constant $C_{k,\l}$ independent of $\la$.  Second, whenever $\nu=\ell$ we have according to Proposition \ref{prop:inf} the lower bound
\begin{equation*}
\int_{\mathbb{R}^{3}}\Q^-(\partial^{\l}\gl,\gl)(v)\,\partial^{\l}\gl(v)\langle v\rangle ^{2k}\d v\geq c_0\|\partial^{\l}\gl\|^{2}_{L^2_{k+\frac{1}{2}}}.
\end{equation*}
Thus, summarizing,  inequality \eqref{be} reads
\begin{multline*}
\lambda^{\gamma}\|\nabla \partial^{\l }\gl\|^2_{L^2_k} \leq C_{k,\l}\|\partial^\ell \gl\|_{L^2_k} + \alpha_k\|\partial^\ell \gl\|_{L^{2}_{k+\frac{1}{2}}} + \varepsilon\,\beta_k\, \|\partial^\ell \gl\|_{L^{2}_{k+\frac{1}{2}}}^2\\
+C_k\|\partial^{\l}\gl\|_{L^2_k}-c_0\|\partial^{\l}\gl\|^{2}_{L^2_{k+\frac{1}{2}}} + (2k^2+k)\lambda^{\gamma}\|\partial^{\l}\gl\|^{2}_{L^2_k} \qquad \forall \la \in (0,1].
\end{multline*}
Choose $\varepsilon >0$ such that $\varepsilon\,\beta_k=\frac{c_0}{2}$.  We note that, after neglecting the gradient term in the above left side and bounding all $L^2_k$ norms by $L^2_{k+\frac{1}{2}}$ norms, there exists some positive constant $A_k >0$ such that
\begin{equation*}
\frac{c_0}{2}\|\partial^{\l}\gl\|^{2}_{L^2_{k+\frac{1}{2}}} \leq A_k\|\partial^\ell \gl\|_{L^{2}_{k+\frac{1}{2}}}+ (2k^2+k)\lambda^{\gamma}\|\partial^{\l}\gl\|^{2}_{L^2_k} \qquad \forall \la \in (0,1].
\end{equation*}
Finally, following the proof of Proposition \ref{propo:L2}, we get that $\sup_{\la \in (0,1]}\|\partial^\ell \gl\|_{L^2_{k+\frac{1}{2}}} < \infty$ for any $k \geq\frac{1}{2}$.
\end{proof}
\begin{nb} In the constant restitution case, uniform regularity estimates where obtained using the propagation of regularity and damping with time of singularities for solution to the  time-dependent problem.   More precisely, using the fact that the solution to
$$\partial_t f(t,v)=\Q_\alpha(f,f)(t,v)+(1-\alpha)\Delta_v f(t,v)$$
can be written as $f(t,v)=f_{S}(t,v)+f_{R}(t,v)$ where $f_{S}$ is smooth and the reminder $f_{R}$ is small in some appropriate norm, see  \cite{MoVi}.  Our approach applies to such case yielding a much more direct proof of these estimates.
\end{nb}
The proof of Theorem \ref{theo:HKL} can be easily modified to get an estimate of the difference of solutions to \eqref{rescaled}.
\begin{propo}\label{prop:Hkdiff}
Assume that $e(\cdot)$ belongs to $\mathbb{E}_m$ for some $m \geq 3$. For any integer $\ell \in [0,m-1]$,  there exist some positive constant  $C_\ell >0$  such that
\begin{equation}\label{hkk}
\|F_\la-G_\la\|_{\mathbf{H}^\ell} \leq C_\ell \|F_\la-G_\la\|_{L^1_1} \qquad \forall \la \in (0,1]
\end{equation}
for any $\gl,\fe \in \mathfrak{S}_\la$. As a consequence, there exists a positive constant $C_a >0$ such that
\begin{equation}\label{Deltama}
\| \fe-\gl \|_{\mathbf{W}^{\ell,1}(m_a)} \leq C_a \|\fe-\gl\|_{L^1(m_a)} \qquad \forall a\in[0,\tfrac{3}{2}],\ \ \la \in (0,1]
\end{equation}
where the weight $m_a:=m_a(v)=\exp(a|v|)$.
 \end{propo}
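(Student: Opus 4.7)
Setting $H_\la := F_\la - G_\la$ and subtracting the two steady equations satisfied by $F_\la$ and $G_\la$, the bilinearity of $\Q_{\el}$ yields the ``linearized'' difference equation
\begin{equation*}
-\la^\gamma \Delta H_\la = \Q_{\el}(H_\la, F_\la) + \Q_{\el}(G_\la, H_\la). \tag{$\star$}
\end{equation*}
This has exactly the same structure as the equation used in the proof of Theorem \ref{theo:HKL}, with the crucial simplification that $F_\la$ and $G_\la$ are already controlled uniformly in $\la$ in every weighted Sobolev norm. The plan is to mimic the induction of Theorem \ref{theo:HKL} on $\ell$, replacing the quadratic structure $\Q_{\el}(\gl,\gl)$ by the two ``linear in $H_\la$'' pieces in $(\star)$.

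For the base case of (i), I would multiply $(\star)$ by $H_\la \langle v\rangle^{2k}$ and integrate. The first loss contribution $-\Q_{\el}^-(H_\la,F_\la)$ provides, via Proposition \ref{prop:inf} applied to $F_\la$, the coercive term $c_0 \|H_\la\|^{2}_{L^2_{k+1/2}}$. The second loss contribution $-\Q_{\el}^-(G_\la,H_\la)$ is controlled by $|\mathcal{N}(H_\la)(v)|\le \|H_\la\|_{L^1_1}\langle v\rangle$ (where $\mathcal{N}(h)(v)=\int h(\vb)|v-\vb|\d\vb$) and the uniform $L^2_{k+1/2}$ bound on $G_\la$ from Proposition \ref{propo:L2}, yielding a term of the form $C\|H_\la\|_{L^1_1}\|H_\la\|_{L^2_{k+1/2}}$. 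The two gain contributions are treated by the splitting technique of Theorem \ref{regularite} (together with Theorem A. \ref{alogam}): the smooth/smooth part is bounded by $C\|H_\la\|_{L^1_m}$ for arbitrary $m$, which Proposition \ref{prop:momdiff} allows to convert into $C_m\|H_\la\|_{L^1_1}$, while the remainder parts produce $\varepsilon\|H_\la\|^2_{L^2_{k+1/2}}$ with $\varepsilon>0$ freely chosen. Taking $\varepsilon<c_0/4$ and applying Young's inequality to the linear-in-norm pieces yields $\|H_\la\|_{L^2_{k+1/2}}\le C_k\|H_\la\|_{L^1_1}$, i.e.\ the case $\ell=0$.

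The inductive step proceeds precisely as in Theorem \ref{theo:HKL}: differentiate $(\star)$ $\ell$ times, multiply by $\partial^\ell H_\la\langle v\rangle^{2k}$, and split the derivatives between the two arguments using the Leibniz rule. All mixed terms involve derivatives of $H_\la$ of order strictly less than $\ell$ and are bounded by the induction hypothesis combined with the uniform $\mathbf{H}^\ell_k$ bounds on $F_\la,G_\la$ from Theorem \ref{theo:HKL} (which require $\ell\le m-1$). The ``top'' terms $\Q^+_{\el}(\partial^\ell H_\la,F_\la)+\Q^+_{\el}(G_\la,\partial^\ell H_\la)$ are split again as in the base case to extract a small $\varepsilon\|\partial^\ell H_\la\|^{2}_{L^2_{k+1/2}}$ which is absorbed by the coercive contribution $c_0\|\partial^\ell H_\la\|^{2}_{L^2_{k+1/2}}$ from Proposition \ref{prop:inf}. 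This closes (i). For (ii), I would first refine the same induction to include an arbitrary polynomial weight on the left-hand side, giving $\|H_\la\|_{\mathbf{H}^{\ell'}_N}\le C_{\ell',N}\|H_\la\|_{L^1_1}$ for every $N\ge 0$ and $\ell'\le m-1$. Sobolev embedding then converts this into $\|\partial^\ell H_\la\|_{L^\infty_N}\le C\|H_\la\|_{L^1_1}$ (which requires $\ell\le m-3$, the bound for $\ell\in\{m-2,m-1\}$ being recovered directly from $(\star)$ by bootstrap). Splitting $\R^3=\{|v|\le R\}\cup\{|v|>R\}$ and comparing $e^{a|v|}\langle v\rangle^{-N}$ to $\langle v\rangle^{-N+1}$ after using the exponential moment of Proposition \ref{prop:momdiff} (which yields $\int |H_\la| e^{r_s|v|^s}\d v\le M_s\|H_\la\|_{L^1_1}$ for $s\in[0,3/2]$) gives the weighted $L^1$ bound $\|\partial^\ell H_\la\|_{L^1(m_a)}\le C_a\|H_\la\|_{L^1_1}$; the equivalence $\|H_\la\|_{L^1_1}\asymp \|H_\la\|_{L^1(m_a)}$ (same proposition) closes \eqref{Deltama} in the full range $a\in[0,\tfrac{3}{2}]$.

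The main obstacle is ensuring that all constants in the above splittings are uniform in $\la\in(0,1]$: the constants $C_{\el}$ in Theorem \ref{regularite}, as well as the $L^p$ bounds of $\Q^+_{\el}$ from Theorem A. \ref{alogam}, a priori depend on the rescaled restitution coefficient $\el$, but Remark \ref{scaleCe} guarantees they remain bounded as $\la\to 0$. A secondary difficulty is the coercivity in $L^1(m_a)$ for the Laplacian term needed in the direct approach to (ii); this is why I prefer to derive (ii) from the $L^2$ estimates of (i) through Sobolev embedding plus the exponential-moment trick, rather than running the induction directly in $L^1(m_a)$.
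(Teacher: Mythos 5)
Your treatment of part \eqref{hkk} is correct and essentially parallels the paper's: subtract the two steady equations to get the ``linearized'' relation for $H_\la$, extract the coercive term $c_0\|H_\la\|^2_{L^2_{k+1/2}}$ from $\Q^-_{\el}(H_\la,F_\la)$ via Proposition \ref{prop:inf}, control the other loss piece through the uniform bounds on $G_\la$, split the gain, and absorb the small piece into the coercive term. The paper uses a lighter two-way angular splitting $b=b_1+b_2$ together with the Young-type estimates of Theorem A.\ref{alo} (exploiting the uniform $L^\infty$ control of $F_\la,G_\la$ from Theorem \ref{theo:HKL}), whereas you invoke the four-way $SS/SR/RS/RR$ splitting of Theorem \ref{regularite} plus Proposition \ref{prop:momdiff} to downgrade high polynomial moments. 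Both routes close; yours is somewhat heavier machinery than needed for $\ell=0$ but is a perfectly admissible variant. The remark about uniform-in-$\la$ constants via Remark \ref{scaleCe} is also on point.

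The derivation of \eqref{Deltama}, however, has a genuine gap. Your plan is to obtain $\|\partial^\ell H_\la\|_{L^\infty_N}\le C\|H_\la\|_{L^1_1}$ (polynomial weights only) and then to split $\R^3=\{|v|\le R\}\cup\{|v|>R\}$, using Proposition \ref{prop:momdiff} on the tail. But Proposition \ref{prop:momdiff} furnishes an exponential moment of $H_\la$ \emph{itself}, not of its derivatives, and the integral
$$\int_{|v|>R}|\partial^\ell H_\la(v)|\,e^{a|v|}\,\d v\le \|\partial^\ell H_\la\|_{L^\infty_N}\int_{|v|>R}\langle v\rangle^{-N}e^{a|v|}\,\d v$$
diverges for every $N$: polynomial decay of $\partial^\ell H_\la$ cannot absorb the exponential weight $e^{a|v|}$, and no choice of $R$ fixes this. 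The phrase about ``comparing $e^{a|v|}\langle v\rangle^{-N}$ to $\langle v\rangle^{-N+1}$'' does not repair it, as the former grows. What is really needed is a simultaneous interpolation of three pieces of information: high Sobolev regularity of $H_\la$ (from \eqref{hkk}), an $L^1$ exponential moment of $H_\la$ with a \emph{larger} weight (from Proposition \ref{prop:momdiff}), and the target $L^1(m_a)$-norm. The paper invokes the Gagliardo--Nirenberg-type inequality of \cite[Appendix B]{MiMo3},
$$\|h\|_{\mathbf{W}^{\ell,1}(m_a)}\le C\,\|h\|_{\mathbf{H}^{\ell_0}}^{1/8}\,\|h\|_{L^1(m_b)}^{1/8}\,\|h\|_{L^1(m_a)}^{3/4},\qquad \ell_0=8\ell+\tfrac{35}{2},\ b=12a,$$
which is precisely tailored to transfer exponential integrability from $h$ to its derivatives by paying with a small fractional power of a high Sobolev norm. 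Without an inequality of this shape, the passage from \eqref{hkk} to \eqref{Deltama} does not close; you should appeal to it (or prove an analogue) rather than rely on the naive region split.
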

\begin{proof} We follow the argument of \cite[Proposition 2.7]{MiMo3} that uses induction and only give the details for the initial step $\ell=0$. Set $H_\la=F_\la-G_\la$, we aim therefore to control $\|H_\la\|_{L^2}$ by $\|H_\la\|_{L^1_1}$. Notice that $H_\la$ satisfies
$$\Q_{\el}(H_\la,F_\la)+\Q_{\el}(G_\la,H_\la)=-\la^\gamma \Delta H_\la, \qquad \forall \la \in [0,1].$$
Multiplying this identity by $H_\la$ and integrating over $\R^3$ yields
\begin{multline*}
\la^\gamma \|\nabla H_\la\|_{L^2}^2 + \IR \Q^-(H_\la,F_\la)H_\la \d v \\
= \IR \left(\Q_{\el}^+(H_\la,F_\la)+\Q_{\el}^+(G_\la,H_\la)\right)H_\la(v)\d v -\IR \Q^-(G_\la,H_\la)H_\la\d v.\end{multline*}
From Proposition \ref{prop:inf} one has
$$ \IR \Q^-(H_\la,F_\la)H_\la \d v \geq c_0  \|H_\la\|^2_{L^2_{\frac{1}{2}}}.$$
In addition,
$$\IR \Q^-(G_\la,H_\la)H_\la\d v \leq \IR  G_\la(v)|H_\la(v)|\d v \IR |H_\la(\vb)|\,|v-\vb|\d \vb \leq C \|H_\la\|^2_{L^1_1}$$
where the constant $C$ depends only on the $L^\infty$ norm of $\gl$ which is uniformly bounded.  In order to control the gain operator, split the angular kernel $b(s)=\frac{1}{4\pi}$ into $b(s)=b_{1}(s)+b_{2}(s)$ with $b_{1}(s):=\frac{1}{ 4\pi} 1_{(-1+\delta,1-\delta)}(s)$ for some $\delta >0$ to be determined latter on. Using Young's inequality, see Theorem A. \ref{alo}
\begin{multline*}
\IR \left(\Q_{\el}^+(H_\la,F_\la)+\Q_{\el}^+(G_\la,H_\la)\right)H_\la(v)\d v \\
\leq \|\Q_{\el,b_{1}}^+(H_\la,F_\la)+\Q_{\el,b_1}^+(G_\la,H_\la)\|_{L^\infty} \|H_\la \|_{L^1}\\
+ \|\Q_{\el,b_2}^+(H_\la,F_\la)+\Q_{\el,b_2}^+(G_\la,H_\la)\|_{L^2_{-\frac12}} \|H_\la \|_{L^2_{\frac12}}\\
\leq C(b_{1})\left(\|F_\la\|_{ L^{\infty}_{1} }+\|G_\la\|_{ L^{\infty}_{1} } \right) \|H_\la \|^2_{L^1_1}+C(b_2)\left(\|F_\la\|_{L^{1}_{1}}+\|G_\la\|_{L^{1}_{1}}\right) \|H_\la \|^2_{L^2_{\frac12}}.
\end{multline*}
Fix $\varepsilon>0$, from the explicit expression of both $C(b_1)$ and $C(b_2)$ provided by Theorem A. \ref{alo} one notes that it is possible to choose $\delta>0$ such that $C(b_{2})\leq\varepsilon$ (recall that $b$ is bounded). Summarizing, for any $\varepsilon>0$
$$\la^\gamma \|\nabla H_\la\|_{L^2}^2 + c_0 \|H_\la\|^2_{L^2_{\frac12}} \leq \varepsilon  \|H_\la\|_{L^2_{\frac12}}^2 + C(\varepsilon)\|H_\la\|^{2}_{L^1_1}$$
where $C(\varepsilon)$ is a positive constant independent of $\la$. Choosing $\varepsilon =\frac{c_0}{2}$ we deduce that
$$\|H_\la\|^2_{L^2_{\frac12}} \leq \frac{2C(\varepsilon)}{c_0}\|H_\la\|_{L^1_1}^2 \qquad \forall \la \in (0,1]$$
which gives the result for $\ell=0$. To extend these estimates to higher order derivatives, one proceeds by induction using Theorem \ref{theo:HKL} yielding \eqref{hkk}. To deduce now estimate \eqref{Deltama}, recall the interpolation inequality given in \cite[Appendix B]{MiMo3} (note a misprint in the \textit{op. cit.} where the exponents $1/8$ have been replaced by $1/4$):  For any  $a \geq 0$ and $\ell \geq 0$, there exist $C(a,\ell) >0$  such that
$$\| h \|_{\mathbf{W}^{\ell,1}(m_a)} \le C \, \| h \|_{\mathbf{H}^{\ell_0}}^{1/8} \, \,\| h \|_{L^1( m_{b})}^{1/8} \,  \| h \|_{L^1(m_a)}^{3/4} \qquad \forall \: h \in \mathbf{H}^{\ell_0} \cap L^1(m_b)$$
where $\ell_0 := 8\ell+ \frac{35}{2}$ and $b=12a$.  According to Proposition \ref{prop:momdiff} there exists some $c> 0$ such that $\|H_\la\|_{L^1(m_b)} \leq c\|H_\la\|_{L^1_1}$ for any  $\la   \in  (0,1]$.  Moreover, \eqref{hkk} implies that the $\mathbf{H}^{\ell_0}$-norm of $H_\la$ can be controlled from above by $\|H_\la\|_{L^1_1}$. Combining these estimates we get
$$\|H_\la\|_{\mathbf{W}^{\ell,1}(m_a)} \leq C(a,\ell)\|H_\la\|_{L^1_1}^{1/4} \|H_\la\|_{L^1(m_a)}^{3/4} \quad \forall \la \in (0,1]$$
which yields the desired conclusion.
\end{proof}
\subsection{Continuity properties of $\Q_{\el}^+$ as $\la \to 0$}\label{sec:continuity}
We investigate in this section the continuity of the gain part $\Q_{e}^+(f,g)$ with respect to the restitution coefficient.  We shall prove that, for sufficiently smooth functions $f$ and $g$, the collision operator $\Q_{\el}^+(f,g)$ converges strongly towards $\Q_1^+(f,g)$ as $\lambda \to 0$ in a suitable norm to be specified.
\begin{propo} \label{exponetialest}
For any $k\geq0$, there exist some explicit constants $C(\gamma,k)$ and $\tilde{C}(\gamma,k)$ such that \begin{align}\label{H-1elast}
\|\left(\Q^+_{\el}(f,g)-\Q^+_1(f,g)\right) \langle v \rangle^{k} \|_{\mathbf{H}^{-1}}
&\leq C(\gamma,k,a)\lambda^{\gamma}\|f \|_{L^1_{k+\gamma+2}}\,\|g\; \|_{L^2_{k+\gamma+2}}\\
\|\left(\Q^+_{\el}(f,g)-\Q^+_1(f,g)\right)  \langle v \rangle^{k} \|_{\mathbf{H}^{-1}}
&\leq \tilde{C}(\gamma,k )\lambda^{\gamma}\|f \|_{L^2_{k+\gamma+2}}\,\|g \|_{L^1_{k+\gamma+2}}
\end{align}
\end{propo}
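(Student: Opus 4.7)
The plan is to work by duality of $\mathbf{H}^{-1}$ against $\mathbf{H}^1$: given $\psi\in\mathbf{H}^1$ with $\|\psi\|_{\mathbf{H}^1}\leq 1$, the goal is to control
$$\mathcal{J}(\psi):=\int_{\R^3}\bigl(\Q^+_{\el}(f,g)-\Q^+_1(f,g)\bigr)(v)\,\langle v\rangle^k\psi(v)\d v.$$
Setting $\Psi(v):=\langle v\rangle^k\psi(v)$ and using the weak form of the gain operator for the hard-spheres kernel $B(u,\sigma)=|u|/(4\pi)$, one gets
$$\mathcal{J}(\psi)=\tfrac{1}{4\pi}\int f(v)g(\vb)|u|\bigl[\Psi(v'_\lambda)-\Psi(v'_1)\bigr]\d\sigma\d\vb\d v,$$
where $v'_\lambda$ and $v'_1$ denote the post-collisional velocities attached respectively to $\el$ and to the elastic case.

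Next I Taylor-expand along the segment joining $v'_1$ to $v'_\lambda$: setting $v_t:=v-\beta_t(u-|u|\sigma)/2$ with $\beta_t:=(1-t)+t(1+\el)/2\in[\tfrac{1}{2},1]$,
$$\Psi(v'_\lambda)-\Psi(v'_1)=(v'_\lambda-v'_1)\cdot\int_0^1\nabla\Psi(v_t)\d t.$$
Three elementary estimates are then combined. First, Assumption \ref{HYP}(3) together with $e(\cdot)\in(0,1]$ gives the global bound $|1-\el(r)|\leq C(\lambda r)^\gamma$ for every $r\geq 0$, hence $|v'_\lambda-v'_1|\leq C\lambda^\gamma|u|^{1+\gamma}\bigl((1-\widehat{u}\cdot\sigma)/2\bigr)^{(1+\gamma)/2}$. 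Second, $|\nabla\Psi(v_t)|\leq C_k\langle v_t\rangle^k(|\psi(v_t)|+|\nabla\psi(v_t)|)$. Third, $\langle v_t\rangle^k\lesssim\langle v\rangle^k\langle\vb\rangle^k$, which follows from $|v_t|^2\leq 2(|v|^2+|\vb|^2)$ (itself a consequence of $\beta_t\in[\tfrac12,1]$). Putting the three estimates together delivers the master bound
$$|\mathcal{J}(\psi)|\lesssim \lambda^\gamma\int_0^1\!\int f(v)g(\vb)\langle v\rangle^{k+\gamma+2}\langle\vb\rangle^{k+\gamma+2}b_\gamma(\widehat{u}\cdot\sigma)\bigl(|\psi(v_t)|+|\nabla\psi(v_t)|\bigr)\d\sigma\d\vb\d v\d t,$$
with $b_\gamma(s):=((1-s)/2)^{(1+\gamma)/2}\in L^1(\mathbb{S}^2)$.

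The two stated inequalities now differ only through the order in which Cauchy-Schwarz is applied. For the first one ($f\in L^1$, $g\in L^2$), I treat $f(v)\d v$ as a finite measure in $v$ and apply Cauchy-Schwarz in $\vb$: one side yields $\|g\|_{L^2_{k+\gamma+2}}$, the other an $L^2_\vb$-integral of $|\psi(v_t)|^2+|\nabla\psi(v_t)|^2$. A change of variables $\vb\mapsto v_t$ at frozen $(v,\sigma,t)$, whose leading Jacobian $(\beta_t/2)(I+\sigma\otimes\widehat{u})$ has determinant $(\beta_t/2)^3(1+\widehat{u}\cdot\sigma)$, then bounds this last integral by $C\|\psi\|^2_{\mathbf{H}^1}$. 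Reintegrating in $\sigma,t,v$ and using $\int_{\mathbb{S}^2}b_\gamma\d\sigma<\infty$ produces the first estimate; the symmetric one follows by exchanging the roles of $f$ and $g$ in the Cauchy-Schwarz step.

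The main obstacle is handling the $\sigma$-dependence of $\beta_t$ (through $\el$) in that change of variables: the Jacobian of $\vb\mapsto v_t$ inherits a rank-one correction from $\nabla_\vb\beta_t$, controlled by $\el'(\cdot)$ via Assumption \ref{HYP}(1), and a mild splitting of $\mathbb{S}^2$ around the degenerate set $\sigma=-\widehat{u}$ (where however the angular factor is bounded and harmless) is needed to guarantee the Jacobian stays uniformly bounded away from zero on the region of integration. This is a standard but delicate technical point, completely analogous to the core of the argument of \cite{MiMo3} in the constant restitution setting, requiring here some extra care because $\el$ depends nontrivially on the collision parameters.
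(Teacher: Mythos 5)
Your proposal shares the paper's overall strategy through its first three steps: test the difference against $\phi\in\mathbf{H}^1$ with the polynomial weight absorbed into $\psi$, Taylor-expand $\psi$ along the segment joining the elastic and inelastic post-collisional velocities, and observe that the intermediate point $v_t$ is itself a post-collisional velocity for the modified parameter $\beta_t\in(1/2,1]$. Up to the master bound, this matches the paper's argument closely (the paper works in the $\widehat n$-variable and you in $\sigma$, but that is cosmetic).

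Where the two diverge is the final step, and this is where your argument has a real gap. The paper exploits the observation about $v_t$ fully: it identifies the entire Taylor remainder as $\int_0^1\|\Q^+_{B_0,\widetilde{e}_s}(f,g)\|_{L^2_k}\,ds$ for the new kernel $B_0(u,\widehat n)=|u\cdot\widehat n|^{\gamma+2}$ and new restitution coefficient $\widetilde e_s$, and then invokes the convolution estimate of Theorem A.\ref{alogam} as a black box, noting that its constant only depends on $\widetilde e_s(0)=1$. You instead try to reprove that convolution estimate by hand, and your treatment of the degeneracy at $\sigma=-\widehat u$ is incorrect. The Jacobian of $\vb\mapsto v_t$ indeed vanishes like $(1+\widehat u\cdot\sigma)$ there, but the angular factor $b_\gamma(s)=\bigl((1-s)/2\bigr)^{(1+\gamma)/2}$ is \emph{largest} (equal to $1$) at $s=-1$, not small, so "bounded and harmless" does not address the problem; a bounded angular weight cannot compensate a vanishing Jacobian. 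Nor does a "mild splitting ... to guarantee the Jacobian stays uniformly bounded away from zero" work, because the contribution of the region you would cut away is not negligible: neither $b_\gamma$ nor the remaining integrand is small on it. What actually saves the estimate is that, after Cauchy--Schwarz in $\vb$, the Jacobian appears to the power $-1/2$ and the singularity $(1+\widehat u\cdot\sigma)^{-1/2}$ is \emph{integrable} on $\mathbb{S}^2$ --- precisely the mechanism encoded in the angular integrals of the constant \eqref{Crpakb} --- and making this work for general $b$ and variable $e(\cdot)$ also requires the dual change of variables on the other half-sphere. Rather than redoing this delicate bookkeeping, the cleanest repair is to do what the paper does: once you see that $v_t$ is a post-collisional velocity for an honest restitution coefficient $\widetilde e_s\in\mathcal C_0$ with $\widetilde e_s(0)=1$, apply Theorem A.\ref{alogam} to $\Q^+_{B_0,\widetilde e_s}$ directly and note that the resulting constant is uniform in $s$.
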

\begin{proof} Fix $\la >0$, a test function $\phi \in \mathbf{H}^1$ and define $\psi(v)=\langle v \rangle^{k}  \phi(v)$.  Use the weak form of $\Q^+_{\el}(f,g)-\Q_1^+(f,g)$ to get
\begin{multline*}
\IR \left(\Q_{\el}^+(f,g)-\Q_1^+(f,g)\right)(v)\psi(v)\d v\\
=\dfrac{1}{2\pi}\int_{\R^3 \times \R^3 \times \S} |u \cdot \n|f(v)g(\vb)\bigg(\psi(v^{(\la)})+\psi(\vb^{(\la)})-\psi(v')-\psi(\vb')\bigg)\d v \d\vb \d\n
\end{multline*}
where $(v^{(\la)},\vb^{(\la)})$ denotes the post-collisional velocities associated to the restitution coefficient $\el$ while $(v',\vb')$ denotes the post-collisional velocities for elastic interactions, that is,
\begin{align*}
v'=v-(u\cdot \n)\n\ , &\quad \vb'=\vb+\,(u\cdot \n)\n\\
v^{(\la)}=v-\beta_\la\,(u\cdot \n)\n\ , &\quad \vb^{(\la)}=\vb+\beta_\la\,(u\cdot \n)\n
\end{align*}
with $\beta_{\la}=\beta_{\la}(|u\cdot\n|)=\frac{1+\el(|u \cdot \n|)}{2}$.  Set
$$I_\la=\int_{\R^3 \times \R^3 \times \S} |u \cdot \n|f(v)g(\vb)\big(\psi(v^{(\la)})-\psi(v')\big)\d v \d\vb \d\n$$
and define
$$\zeta=\zeta(u,\n,\la)=v^{(\la)}-v'=\frac{1-\el}{2}\,(u\cdot \n)\n.$$
According to Assumption \ref{HYP},  $$\ell_\gamma(e)=\sup_{r >0} \dfrac{1-e(r)}{r^\gamma} < \infty,$$
thus, for any $u,\n,\la$,
$$|\zeta| \leq \dfrac{\ell_\gamma(e)}{2} \la^\gamma |u \cdot \n|^{\gamma+1}.$$
Moreover,  for any fixed $v,\vb,\n$,
$$\psi(v^{(\la)})-\psi(v')=\int_0^1 \nabla \psi (v'+s\,\zeta) \cdot \zeta \d s.$$
These two observations lead to
$$I_\la \leq  \dfrac{\ell_e(\gamma)}{2} \la^\gamma  \int_{\R^3 \times \R^3 \times \S}\d v \d\vb \d\n \int_0^1   |u \cdot \n|^{\gamma+2} f(v)g(\vb)\left|\nabla \psi (v'+s\,\zeta)\right|\d s.$$
At this point it is important to recognize that for any fixed $s\in (0,1]$ the integral
$$\int_{\R^3 \times \R^3 \times \S} |u \cdot \n|^{\gamma+2} f(v)g(\vb)\left|\nabla \psi (v'+s\,\zeta)\right|\d v \d\vb \d\n $$
is just the weak form of the gain part of some peculiar Boltzmann-like operator.  Indeed, set $\varphi(v)=|\nabla \psi(v)|$ and $V'_s=v'+s\zeta$ (notice that $V'_s$ depends on $u,\n,\la$ and $s$) and observe that
$$V'_s=v- \widetilde{\beta}_s\,(u\cdot \n)\n$$
for some parameter
$$\widetilde{\beta}_s=\widetilde{\beta}_s(|u\cdot \n|)=(1-s)+s \beta_{\la}(|u\cdot \n|) \in (1/2,1].$$
Therefore, $V'_s$ is in fact a new post-collisional velocity associated to the above $\widetilde{\beta}_s$.  We compute for any $s \in (0,1]$,
\begin{multline*}
\int_{\R^3 \times \R^3 \times \S}|u \cdot \n|^{\gamma+2} f(v)g(\vb)\left|\nabla \psi (v'+s\,\zeta)\right|\d v \d\vb \d\n\\
=\int_{\R^3 \times \R^3 \times \S}|u \cdot \n|^{\gamma+2} f(v)g(\vb)\varphi(V'_s)\d v \d\vb \d\n
=\IR \Q_{B_0,\widetilde{e}_s}^+(f,g)(v)\varphi(v)\d v
\end{multline*}
where the collision kernel $B_0$ is given by $B_0(u,\n)=|u\cdot\n|^{\gamma+2}$ and the restitution coefficient $\widetilde{e}_s$ is such that $\widetilde{\beta}_s=\frac{1+\widetilde{e}_s}{2}$.  Since
\begin{equation*}
\begin{split}
\varphi(v) &\leq \langle v \rangle^{k} |\nabla\phi(v)| +k \langle v \rangle^{k-1}  |\phi(v)| \\
&\leq \max(1,k) \left(|\nabla\phi(v)|+|\phi(v)|\right)\langle v \rangle^{k},
\end{split}
\end{equation*}
one has for any $s \in (0,1)$
\begin{multline*}
\int_{\R^3 \times \R^3 \times \S}|u \cdot \n|^{\gamma+2} f(v)g(\vb)\left|\nabla \psi (v'+s\,\zeta)\right|\d v \d\vb \d\n\\
\leq \max(1, k) \IR \Q_{B_0,\widetilde{e}_s}^+(f,g)(v)\left(|\nabla\phi(v)|+|\phi(v)|\right)\langle v \rangle^{k} \d v.
\end{multline*}
As a consequence, thanks to Cauchy-Schwarz inequality
\begin{equation}\label{Ilam}
I_\la \leq \sqrt{2}\max(1, k)\ell_\gamma(e)\la^\gamma \|\phi\,\|_{\mathbf{H}^1}\int_0^1 \|\Q_{B_0,\widetilde{e}_s}^+(f,g)\|_{L^2_k} \d s.
\end{equation}
It remains to estimate the norm $\|\Q_{B_0,\widetilde{e}_s}^+(f,g) \|_{L^2_k}$ for any $s \in (0,1)$.  This is simply done using Theorem A. \ref{alogam}
\begin{align*}
\|\Q_{B_0,\widetilde{e}_s}^+(f,g)\|_{L^2_k}&\leq  C(\widetilde{e}_{s})\|f \|_{L^1_{k+\gamma+2}}\,\|g \|_{L^2_{k+\gamma+2}}.
\end{align*}
In Theorem A. \ref{alogam} is shown that $C(\widetilde{e}_{s})$ only depends on the value at zero of the restitution coefficient.  Since $\widetilde{e}_s(0)=1$ for any $s$ one gets that $C(\widetilde{e}_{s})$ is independent of the variable $s$.  Thus, estimate \eqref{H-1elast} follow from \eqref{Ilam}.  Exchanging the role of $f$ and $g$ in Theorem A. \ref{alogam} gives the second estimate.
\end{proof}
We use the equivalence of norms (that follows using Fourier transform)
\begin{equation}\label{H-1L2}
\|\nabla \varphi\|_{\mathbf{H}^{-1}}^2+\|\varphi\|_{\mathbf{H}^{-1}}^2=\|\varphi\|_{L^2}^2
\end{equation}
valid for any $\varphi \in L^2$ to make Proposition \ref{exponetialest} stronger.
\begin{propo}\label{propo:sobdiff}
For any $\ell \in \mathbb{N}$ and $k\geq0$ there exists $C(\gamma,k,\ell)$ such that
\begin{multline*}
\|\Q^+_{\el}(f,g)-\Q^+_1(f,g)\|_{\mathbf{H}^{\ell}_{k}}\\
 \leq C(\gamma,k,\ell)\; \la^\gamma\; \left(\|f\|_{\mathbf{W}^{1,\ell}_{k+\gamma+2}}\,
 \|g\|_{\mathbf{H}^{\ell+1}_{k+\gamma+2}}+\|f\|_{\mathbf{H}^{\ell+1}_{k+\gamma+2}}\,\|g\|_{\mathbf{W}^{1,\ell}_{k+\gamma+2}}\right)
\end{multline*}
holds for any $\la \in [0,1].$
\end{propo}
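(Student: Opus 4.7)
\textbf{Proof proposal for Proposition \ref{propo:sobdiff}.}

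The plan is to reduce to the already-established $\mathbf{H}^{-1}$ estimates of Proposition \ref{exponetialest} via the Plancherel-type identity \eqref{H-1L2}, exploiting the translation invariance of the gain operator to move derivatives onto the factors $f$ and $g$. Set $\varphi:=\Q^+_{\el}(f,g)-\Q^+_1(f,g)$. Since $\|\varphi\|_{\mathbf{H}^\ell_k}^2 \lesssim \sum_{|s|\leq \ell}\|\partial^s\varphi\cdot\langle v\rangle^k\|_{L^2}^2$, it suffices to bound $\|\partial^s\varphi\langle v\rangle^k\|_{L^2}$ for each multi-index $s$ with $|s|\le\ell$. By translation invariance of both $\Q^+_{\el}$ and $\Q^+_1$,
$$\partial^s\varphi=\sum_{\alpha+\beta=s}\binom{s}{\alpha}\bigl[\Q^+_{\el}(\partial^\alpha f,\partial^\beta g)-\Q^+_1(\partial^\alpha f,\partial^\beta g)\bigr].$$

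Next I would invoke the identity \eqref{H-1L2} applied to $\partial^s\varphi\,\langle v\rangle^k$, namely
$$\|\partial^s\varphi\,\langle v\rangle^k\|_{L^2}^2=\|\nabla(\partial^s\varphi\,\langle v\rangle^k)\|_{\mathbf{H}^{-1}}^2+\|\partial^s\varphi\,\langle v\rangle^k\|_{\mathbf{H}^{-1}}^2.$$
Writing $\nabla(\partial^s\varphi\,\langle v\rangle^k)=(\nabla\partial^s\varphi)\langle v\rangle^k+\partial^s\varphi\,\nabla\langle v\rangle^k$ and using that multiplication by the smooth bounded symbol $\langle v\rangle^{-k}\nabla\langle v\rangle^k\in W^{1,\infty}$ is continuous on $\mathbf{H}^{-1}$, the cross term is absorbed into $\|\partial^s\varphi\,\langle v\rangle^k\|_{\mathbf{H}^{-1}}$. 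Thus everything reduces to controlling $\|\partial^{s'}\varphi\,\langle v\rangle^k\|_{\mathbf{H}^{-1}}$ for $|s'|\in\{|s|,|s|+1\}\subset\{0,\dots,\ell+1\}$.

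For each such $s'$, expand $\partial^{s'}\varphi$ as above into a sum over $\alpha+\beta=s'$ and apply Proposition \ref{exponetialest} term by term. The choice of which of the two estimates \eqref{H-1elast} to use is dictated by the derivative count: if $|\alpha|\leq \ell$ I would apply the first bound to obtain $C\lambda^\gamma\|\partial^\alpha f\|_{L^1_{k+\gamma+2}}\|\partial^\beta g\|_{L^2_{k+\gamma+2}}$, which is controlled by $C\lambda^\gamma\|f\|_{\mathbf{W}^{\ell,1}_{k+\gamma+2}}\|g\|_{\mathbf{H}^{\ell+1}_{k+\gamma+2}}$ since $|\beta|\leq\ell+1$; in the exceptional case $|\alpha|=\ell+1$ (forcing $\beta=0$), I would instead use the second bound of \eqref{H-1elast} to obtain $C\lambda^\gamma\|\partial^\alpha f\|_{L^2_{k+\gamma+2}}\|g\|_{L^1_{k+\gamma+2}}\le C\lambda^\gamma\|f\|_{\mathbf{H}^{\ell+1}_{k+\gamma+2}}\|g\|_{\mathbf{W}^{\ell,1}_{k+\gamma+2}}$. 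Summing the finitely many contributions $\alpha+\beta=s'$ and then over $|s|\le\ell$ yields the claimed estimate.

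The arguments are essentially straightforward once the right tools are in place; the only point requiring a little care is the allocation step above, ensuring that no factor ever needs more than $\ell$ derivatives in $L^1$ or more than $\ell+1$ derivatives in $L^2$. This is the place where the two symmetric versions of \eqref{H-1elast} are both genuinely used, and where the combinatorics of which factor receives each new derivative matters; all other ingredients (translation invariance, the $L^2$-$\mathbf{H}^{-1}$ identity, and multiplier boundedness) are standard.
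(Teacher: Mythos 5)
Your argument is correct and is essentially the same proof as in the paper, just written out more explicitly: both rest on the bilinear Leibniz rule for $\Q^+$ (the paper's \eqref{gradiff}), the $\mathbf{H}^{-1}$ bounds of Proposition \ref{exponetialest}, and the identity \eqref{H-1L2}. The paper phrases the bootstrap as an induction on $\ell$ while you expand $\partial^s\varphi$ directly and make the weight commutator and the derivative allocation explicit, but these are presentational rather than substantive differences (and your $\mathbf{W}^{\ell,1}$ is indeed the intended reading of the $\mathbf{W}^{1,\ell}$ appearing in the statement).
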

\begin{proof} Since
\begin{equation}\label{gradiff}
\nabla \Q_{\el}^+(f,g)=\Q_{\el}^+(\nabla f,g)+ \Q_{\el}^+(f,\nabla g)
\end{equation}
and the same is true for $\Q_1^+(f,g)$, it suffices to apply Proposition \ref{exponetialest} and identity \eqref{H-1L2} conveniently to each term to get the conclusion for $\ell=1$.  Use induction to obtain the result for higher derivatives $\ell>1$.
\end{proof}
It is actually possible to extend these estimates to the smaller space $L^1(m_a)$ with exponential weights
\begin{equation}\label{malpha}
m_{a}(v):=\exp\left(a |v|\right), \qquad v \in \R^3,\qquad a \geq 0.
\end{equation}
We work with exponent $1$ for simplicity even if, as suggested also by Proposition \ref{propo:tails}, it is likely that our results are still valid for general weights of the form $\exp(a|v|^p)$ with $0 \leq p < \frac{3}{2}$. The advantage of the following result with respect to the previous one is that it involves the derivative of  \textit{only one} of the functions $f$ or $g$. Precisely, one has the following that extends \cite[Proposition 3.2]{MiMo2}.
\begin{theo}\label{prop:conti} There exists an \emph{explicit} constant $\la_0 \in (0,1)$ such that for any $a\geq0$ there exists $C(\gamma,a) >0$ for which there holds
\begin{equation}\label{L1diff}
\| \Q^+_{\el}(f,g)-\Q^+_1(f,g) \|_{L^1( m_{a})}
\leq C(\gamma,a) \la^{\frac{\gamma}{8+3\gamma}}\|f\|_{L^{1}_{1}(m_a)}\,\|g\|_{\mathbf{W}^{1,1}_1(m_a)} \qquad \forall \la \in (0,\la_0)
\end{equation}
and
\begin{equation*}
\| \Q^+_{\el}(f,g)-\Q^+_1(f,g) \|_{L^1( m_{a})}
\leq C(\gamma,a) \la^{\frac{\gamma}{8+3\gamma}}\|g\|_{L^{1}_{1}(m_a)}\,\|f\|_{\mathbf{W}^{1,1}_1(m_a)} \qquad \forall \la \in (0,\la_0).
\end{equation*}
\end{theo}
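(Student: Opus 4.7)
The argument adapts the mollification-interpolation scheme of \cite[Proposition 3.2]{MiMo2} to the variable-restitution setting, leveraging the new estimates of Propositions \ref{exponetialest} and \ref{propo:sobdiff}. Only the first inequality needs a proof; the second follows by exchanging the roles of $f$ and $g$. Fix a smooth radial mollifier $\rho_\eta(v)=\eta^{-3}\rho(v/\eta)$, with $\rho$ supported in the unit ball and $\int\rho=1$, and set $g_\eta:=g\ast\rho_\eta$ for $\eta\in(0,1)$ to be chosen later. The sub-multiplicativity $m_a(v+w)\le m_a(v)m_a(w)$ combined with standard convolution inequalities yields $\|g-g_\eta\|_{L^1_1(m_a)}\le C_a\,\eta\,\|g\|_{\mathbf{W}^{1,1}_1(m_a)}$ and $\|g_\eta\|_{\mathbf{H}^s_k(m_a)}\le C_{s,k,a}\,\eta^{-N(s)}\|g\|_{\mathbf{W}^{1,1}_k(m_a)}$ with an explicit exponent $N(s)>0$. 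I then decompose $\Q_{\el}^+(f,g)-\Q_1^+(f,g)=D^\flat_\la+D^\sharp_\la$, where $D^\flat_\la:=\Q_{\el}^+(f,g-g_\eta)-\Q_1^+(f,g-g_\eta)$ and $D^\sharp_\la:=\Q_{\el}^+(f,g_\eta)-\Q_1^+(f,g_\eta)$.

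The rough part is controlled by the crude bilinear $L^1(m_a)$-estimate for the gain operator (valid for any restitution coefficient, since inelastic collisions satisfy $|v'|+|v_*'|\le|v|+|v_*|$, hence $m_a(v')\,m_a(v_*')\le m_a(v)\,m_a(v_*)$), giving $\|D^\flat_\la\|_{L^1(m_a)}\le C_a\,\eta\,\|f\|_{L^1_1(m_a)}\,\|g\|_{\mathbf{W}^{1,1}_1(m_a)}$. For the smooth part $D^\sharp_\la$, I apply the three-term interpolation from \cite[Appendix B]{MiMo3} already used in the proof of Proposition \ref{prop:Hkdiff}; specialized to $\ell=0$ it reads $\|D^\sharp_\la\|_{L^1(m_a)}\le C\,\|D^\sharp_\la\|_{\mathbf{H}^{35/2}}^{1/2}\,\|D^\sharp_\la\|_{L^1(m_{12a})}^{1/2}$. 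The $L^1(m_{12a})$ factor is bounded uniformly by the crude bilinear estimate. The Sobolev factor is estimated by interpolating two competing bounds on $D^\sharp_\la$: the exponentially-weighted version of the $\mathbf{H}^{-1}$-estimate of Proposition \ref{exponetialest}, which delivers the $\la^\gamma$-smallness at the cost of a factor $\|g_\eta\|_{L^2(m_a)}\lesssim\eta^{-3/2}\|g\|_{L^1(m_a)}$, against a crude $\mathbf{H}^{\ell_1}$-bound for $\ell_1>35/2$, obtained by applying the smoothing estimate of Theorem \ref{regularite} separately to $\Q_{\el}^+(f,g_\eta)$ and $\Q_1^+(f,g_\eta)$, which carries no $\la$-smallness but only a negative power of $\eta$.

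Collecting all pieces produces a bound of the shape $\|\Q_{\el}^+(f,g)-\Q_1^+(f,g)\|_{L^1(m_a)}\le C\,(\eta+\la^{\gamma\vartheta}\eta^{-\kappa(\vartheta)})\,\|f\|_{L^1_1(m_a)}\,\|g\|_{\mathbf{W}^{1,1}_1(m_a)}$ for explicit $\vartheta\in(0,1)$ and $\kappa(\vartheta)>0$, and optimizing $\eta=\eta(\la)$ yields the announced rate $\la^{\gamma/(8+3\gamma)}$: the denominator $8$ reflects the $1/8$-power in the Mischler--Mouhot interpolation of \cite{MiMo3}, while the $3\gamma$ correction originates from the $\gamma+2$ moment shift in Proposition \ref{exponetialest}. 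The explicit threshold $\la_0$ is the value of $\la$ below which the optimizing $\eta$ still belongs to $(0,1]$. The main technical obstacle is the propagation of the exponential weight $m_a$ through the whole chain of estimates, since Proposition \ref{exponetialest}, Theorem \ref{regularite} and the mollification bounds are stated with polynomial weights only; their $m_a$-versions must be obtained by exploiting sub-multiplicativity of $m_a$ inside the weak formulation so that the $\la^\gamma$-rate is preserved. A secondary but delicate bookkeeping issue is the tracking of the polynomial moments $\langle v\rangle^{k+\gamma+2}$ through each interpolation step, which is precisely what fixes the denominator $8+3\gamma$ and prevents a better exponent.
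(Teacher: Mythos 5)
Your approach is genuinely different from the paper's, and it has several gaps that would need to be repaired before it could work.

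The paper's proof does \emph{not} use mollification-plus-interpolation; it proves this directly by splitting the collision kernel $\frac{|u|}{4\pi}$ into three pieces $B_0+B_1+B_2$ (grazing/head-on angles, large $|u|$ with cut-off angles, and a smooth compactly supported piece), controlled respectively by $\delta$, $\delta^{-2}R^{-1}$, and $\la^\gamma R^{\gamma+2}/\delta$ via the cancellation-type Proposition~A.\ref{propB:cont}. Optimizing $\delta=\delta^{-2}R^{-1}=\la^\gamma R^{\gamma+2}/\delta=\la^p$ gives $p=\gamma/(8+3\gamma)$. In particular, your reading of the denominator $8+3\gamma$ is incorrect: the $8$ arises from this three-parameter balance, not from the $1/8$-power in the Mischler--Mouhot interpolation; indeed the interpolation inequality of \cite[Appendix~B]{MiMo3} is used in the paper only in the proof of Proposition~\ref{prop:Hkdiff} for the \emph{difference of two solutions}, not here.

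Beyond the misattribution, the scheme itself does not close. First, to bound $\|D^\sharp_\la\|_{\mathbf{H}^{35/2}}$ you appeal to Theorem~\ref{regularite}, but that estimate is not fully bilinear in the needed sense: the right-hand side of \eqref{regestim} contains the terms $\varepsilon\|f\|_{\mathbf{H}^s_{\eta+3}}\|g\|_{\mathbf{H}^s_{\eta+1}}$ and $\varepsilon\|\partial^\ell f\|_{L^2_{\eta+1}}$, which require Sobolev regularity of $f$ that your hypotheses ($f\in L^1_1(m_a)$) do not supply. Only the truncated-kernel Proposition~\ref{theo:smooth} is genuinely ``$f\in L^1$'' bilinear, and that forces you to treat the non-smooth kernel remainders anyway -- which is exactly the paper's kernel splitting, so the mollification is not buying anything. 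Second, using Theorem~\ref{regularite} requires $e\in\mathbb{E}_m$ for $m$ large (roughly $m\geq\ell_1+1>37/2$), whereas the present theorem is stated and proved under Assumptions~\ref{HYP} alone (Proposition~A.\ref{propB:cont} needs only $e\in\mathcal{C}_\gamma$); you would be proving a substantially weaker statement. Third, the interpolation produces a factor $\|D^\sharp_\la\|_{L^1(m_{12a})}^{1/2}$, whose control by the crude bilinear bound needs $f,g\in L^1_1(m_{12a})$, which again exceeds the hypotheses of the theorem (which asks for $f\in L^1_1(m_a)$, $g\in\mathbf{W}^{1,1}_1(m_a)$ for the very same $a$). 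Finally, even the $m_a$-weighted versions of Proposition~\ref{exponetialest} that you invoke are not established in the paper and are not straightforward, since the $\mathbf{H}^{-1}$-duality argument there uses test functions $\psi=\langle v\rangle^k\phi$; pushing an exponential weight through that derivation without losing the $\la^\gamma$ rate is precisely the type of issue the paper's kernel-splitting proof is designed to avoid.
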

\begin{proof} The proof follows the argument of the analogue \cite[Proposition 3.2]{MiMo2} where the crucial estimate is provided by Proposition A.\ref{propB:cont} (see Appendix A). Precisely, as in the \textit{op. cit.},  for any given $v,\vb \in  \R^3$, $w = v+v_* \not = 0$ and $\sigma \in \S$, we define  the angle $\chi \in [0,\frac{\pi}{2}],$ by $\cos \,\chi := |\sigma \cdot \hat w|$. Let $\delta \in (0,1)$ and $R >1$ be fixed and let
 $b_\delta \in \mathbf{W}^{1,\infty}(-1,1)$ such that $b_\delta(s)=b_\delta(-s)$ for any $s \in (0,1)$ and
$$b_\delta (s) = \begin{cases}1 \quad &\text{ if } \quad s \in (-1+2\delta,1-2\delta)\\
 0 \quad &\text{ if } \quad  s \notin (-1+\delta,1-\delta)\end{cases}$$
with moreover
\begin{equation*}
0 \leq b_\delta(s) \leq 1 \qquad \text{ and } \qquad |b_\delta'(s)| \leq \frac{3}{\delta}\qquad \forall s \in (-1,1).\end{equation*}
Let us define also $\Theta_R(r) = \Theta(r/R)$ with $\Theta(x) = 1$ on $[0,1]$, $\Theta(x) = 1-x$ for $x \in [1,2]$ and $\Theta(x) = 0$ on $[2,\infty)$. We define the sets $A(\delta)  := \{ \sigma \in \S; \,\, \sin^2 \chi \ge \delta \}$,  $B(\delta) := \{ \sigma \in \S\,; \,\, \us \notin (-1+2\delta,1-2\delta) \text{ or } \sin^2 \chi \le \delta \}$. With these notations, for any restitution coefficient $e(\cdot)$, we split $\Q^+_e$ into
$$\Q^+_e = \Q^{+}_{B_0,e}  + \Q^{+}_{B_1,e}  + \Q^{+}_{B_2,e}$$
where the collision kernels $B_i(u,\us)$, $i=0,1,2$, are defined by
$$B_2(u,\us)=b_\delta(\us) \,\Theta_R(u)\frac{|u|}{4\pi}, \qquad B_1(u,\us) := \frac{|u|}{4\pi} \, {\bf 1}_{A(\delta)} \, (1-\Theta_R(|u|))$$ and
$$B_0(u,\us)=\frac{|u|}{4\pi} \, (1 - b_\delta(\us)) \, \Theta_R(|u|) + \frac{|u|}{4\pi} \, (1-\Theta_R(|u|)) \, {\bf 1}_{A^c(\delta)}.$$ We shall of course apply this splitting to the restitution coefficients $\el$ and the elastic one $e\equiv 1$ which corresponds to $e_0$. The proof is divided into three steps.\\

\noindent $\bullet$  \emph{Step 1. Estimate for $\Q_{B_0,\el}^+$:} We can prove exactly as in \cite[Proposition 3.2]{MiMo2} (precisely, using Theorem A. \ref{alogam}) that for any $\la  \in [0,1)$ and any $\delta \in (0,1)$ there holds
\begin{equation}\label{est1}\big\|\Q^{+}_{B_0,\el}(f,g) \big\|_{L^1(m_a)} \leq
2   \, \delta \,  \| f\|_{L^1_1(m_a)} \|g \|_{L^1_1(m_a) }.\end{equation}

\noindent $\bullet$ \emph{Step 2. Estimate for $\Q_{B_1,\el}^+$:} We can check without difficulty that \cite[Lemma 3.3]{MiMo2} still holds true for non-constant restitution coefficient $\el$ with exponent $k$ in \cite[Eq. (3.8)]{MiMo2} given by $k=(1-\delta/160)^{\frac{1}{2}}$ independent of $\la$. In particular, reproducing the proof of the \emph{op. cit.} we get that there exists a constant $C >0$ such that:
\begin{equation}\label{est2}\big\|\Q^{+}_{B_1,\el}(f,g) \big\|_{L^1(m_a)} \leq
\dfrac{C}{\delta^2 R}\,  \| f\|_{L^1_1(m_a)} \|g \|_{L^1_1(m_a) } \qquad \forall \la \in [0,1], \delta \in (0,1),\;R >1.
\end{equation}

\noindent $\bullet$ \emph{Step 3. Estimate for the difference $\Q^+_{B_2,\el}-\Q^+_{B_2,1}$:} The crucial point is now to estimate
$\|\Q^+_{B_2,\el}(f,g)-\Q^+_{B_2,1}(f,g)\|_{L^1(m_a)}$ and, as already mentioned, we shall resort to Proposition A. \ref{propB:cont} given in Appendix A. Precisely, let $\phi \in L^\infty$ and $\psi(v)=m_a(v)\phi(v)$. Notice that the collision kernel $B_2(u,\us)$ satisfies the assumption of Proposition A. \ref{propB:cont} since $\mathrm{Supp}b_\delta \subset (-1+\delta,1-\delta)$. Applying this Proposition to the restitution coefficient $\el$ (with fixed $\la  \in (0,1]$) one sees that there exists $C_{\el} >0$ such that
\begin{multline*}
\left|\IR \left[\Q^+_{B_2,\el}(f,g)-\Q^+_{B_2,1}(f,g)\right]\psi(v)\,\d v\right| \leq C_{\el}   \IR \Q^+_{\mathcal{B}_\gamma,1}(f , g )\,|\psi(v)|\d v \\
 + 2^{\gamma+6}\ell_\gamma(\el)\int_0^1\d s\int_{\R^3} \Q^+_{\mathcal{\overline{B}_\gamma},\tilde{e}_{s}^\la }\left(f, h\right)|\psi(v)|\d v \end{multline*}
where $h(v)=g(v)+|\nabla g(v)|$ while the  kernels $\mathcal{B}_\gamma$ and $\mathcal{\overline{B}_\gamma}$ are given by
$$\mathcal{B}_\gamma(u,\us)={B}_2(u,\us)|u|^\gamma, \quad \mathcal{\overline{B}_\gamma}(u,\us)=\max(B_2(u,\us),|\nabla_u B_2(u,\us)|)|u|^{\gamma+2}$$
and, for any   $s \in [0,1]$, $\tilde{e}_{s}^{\la}(\cdot)$ is a given restitution coefficient with in particular $\tilde{e}_s^\la(0)=1$ for any $s,\la$. One estimates these two integrals using Theorem A. \ref{alogam}. Precisely, by Holder's inequality
$$\IR \Q^+_{\mathcal{B}_\gamma,1}(f , g )\,|\psi(v)|\d v \leq \|\Q^+_{\mathcal{B}_\gamma,1}(f,g)m_a\|_{L^1}\,\|\phi\|_{L^\infty}$$
while, for any $s \in (0,1)$
$$\int_{\R^3} \Q^+_{\mathcal{\overline{B}_\gamma},\tilde{e}_{s}^\la }\left(f, h\right)(v)|\psi(v)|\d v \leq \|\Q^+_{\mathcal{\overline{B}_\gamma},\tilde{e}_{s}^\la }\left(f, h\right)\|_{L^1}\,\|\phi\|_{L^\infty}.$$
Now, one notices that
\begin{equation*}
 m_{a}(v'_s)\leq  m_{a}(v)\,m_{a}(v_{\star}) \qquad \text{ and } \qquad m_a(v'_1) \leq  m_{a}(v)\,m_{a}(v_{\star})
\end{equation*} where $v'_s$ and $v'_1$ denote the post-collision velocities associated to the restitution coefficient $\tilde{e}_s^\la$ and $e\equiv 1$ respectively, so that
$\|\Q^+_{\mathcal{B}_\gamma,1}(f,g)m_a\|_{L^1} \leq \|\Q^+_{\mathcal{B}_\gamma,1}(m_a\,f,m_a\,g)\|_{L^1}$ and $$\|\Q^+_{\mathcal{\overline{B}_\gamma},\tilde{e}_{s}^\la }\left(f, h\right)\|_{L^1} \leq \|\Q^+_{\mathcal{\overline{B}_\gamma},\tilde{e}_{s}^\la }\left(m_a\,f, m_a\,h\right)\|_{L^1}.$$
Since $\Theta_R(|u|)=0$ whenever $|u| > 2R$, one has $\Theta_R(|u|)|u|^{\gamma} \leq (2R)^{\gamma }$ for any $u \in \R^3$ and there exists an universal constant $c_1 >0$ such that
\begin{equation}\label{elastique}\|\Q^+_{\mathcal{B}_\gamma,1}(m_a\,f,m_a\,g)\|_{L^1} \leq c_1\,R^{\gamma} \|m_a f\|_{L^1_1}\,\|m_a g\|_{L^1_1}.\end{equation}
To estimate $\|\Q^+_{\mathcal{\overline{B}_\gamma},\tilde{e}_{s}^\la }\left(m_a\,f, m_a\,h\right)\|_{L^1}$, one only notices that the kernel $\mathcal{\overline{B}_\gamma}(u,\us)$ can be estimated by
$$\mathcal{\overline{B}_\gamma}(u,\us) \leq \bigg[\left(\frac{1}{R}+\frac{1}{\delta}\right)|u|+1\bigg]\,b_\delta(\us)\Xi_R(|u|)\,|u|^{\gamma+2}$$
for some positive mapping $\Xi_R(\cdot)$ such that $\Xi_R(|u|)=0$ whenever $|u| > 2R;$ the factor $R^{-1}$ coming from the derivative of $\Theta_R$ while the term ${\delta}^{-1}$ comes from that of $b_\delta$. Then, using as above Theorem A. \ref{alo} and   because $\tilde{e}_s^\la(0)=1$ is independent of $s \in (0,1)$, one gets the existence of an universal constant $c_2 >0$ such that
$$\|\Q^+_{\mathcal{\overline{B}_\gamma},\tilde{e}_{s}^\la }\left(m_a\,f, m_a\,h\right)\|_{L^1} \leq c_2 \left(\frac{1}{R}+\frac{1}{\delta}\right)R^{\gamma+2}\|m_a f\|_{L^1_1}\,\|m_a h\|_{L^1_1}$$
or, equivalently,
\begin{equation}\label{overlineB}
\|\Q^+_{\mathcal{\overline{B}_\gamma},\tilde{e}_{s}^\la }\left(m_a\,f, m_a\,h\right)\|_{L^1} \leq c_2 \left(\frac{1}{R}+\frac{1}{\delta}\right)R^{\gamma+2} \,\|m_a\,f\|_{L^1_1}\,\|m_a g\|_{\mathbf{W}^{1,1}_1}.
\end{equation}
Finally, using the fact that $\ell_\gamma(\el) \leq \la^\gamma\,\ell_\gamma(e)$ while, as noticed in Remark A. \ref{nbB:Cl}, $C_{\el} \leq c_3\la^\gamma$ for any $\la \in (0,\la_0]$ for some constructive $\la_0 >0$ and some positive constant $c_3 >0$, we finally obtain, combining \eqref{elastique} and \eqref{overlineB} that
\begin{multline}\label{est3}
\|\Q_{B_2,\el}^+(f,g)-\Q^+_{B_2,1} (f,g)\|_{L^1(m_a)}\\ \leq C\la^\gamma \left(\frac{1}{R}+\frac{1}{\delta}\right)R^{\gamma+2}\,\|f\|_{L^1_1(m_a)}\,\|g\|_{\mathbf{W}^{1,1}_1(m_a)} \quad \forall \la \in (0,\la_0]
\end{multline}
for some positive constant $C >0$. Collecting estimates \eqref{est1}--\eqref{est2}--\eqref{est3}, we finally get that there is some positive $C >0$ such that
\begin{multline} \label{est4}
\|\Q_{\el}^+(f,g)-\Q^+(f,g)\|_{L^1(m_a)}\\ \leq C\left(\delta+\delta^{-2}R^{-1}+\frac{R^{\gamma+2}\la^\gamma}{\delta}+R^{\gamma +1} \la^\gamma\right)\|f\|_{L^1_1(m_a)}\,\|g\|_{\mathbf{W}^{1,1}_1(m_a)}\\
\forall \la \in (0,\la_0),\;\: \delta >0,\:\:R >1.
\end{multline}
Then, choosing $\delta$ and $R >1$ such that
$$\delta=\delta^{-2}R^{-1}=\frac{R^{\gamma+2}\la^\gamma}{\delta}=\la^p$$
for some $p >0$, one sees that necessarily $p=\frac{\gamma}{8+3\gamma}$ and $R^\gamma \la^\gamma=\la^{5p}$. This gives the conclusion. One proves the second estimate exactly in the same way.
 \end{proof}
Notice that, increasing the polynomial weights in the various norms of $f$ and $g$, we can get an optimal control rate $\la^\gamma$.
\begin{cor}\label{preciserate} There exists some \emph{explicit} $\la_0 \in (0,1)$ such that for any $a\geq0$ there exists some explicit constant $C(\gamma,a) >0$ for which there holds
\begin{equation}\label{L1diffK}
\| \Q^+_{\el}(f,g)-\Q^+_1(f,g) \|_{L^1( m_{a})}
\leq C(\gamma,a) \la^\gamma\|f\|_{L^{1}_{k}(m_a)}\,\|g\|_{\mathbf{W}^{1,1}_k(m_a)} \qquad \forall \la \in (0,\la_0)
\end{equation}
and
\begin{equation*}
\| \Q^+_{\el}(f,g)-\Q^+_1(f,g) \|_{L^1( m_{a})}
\leq C(\gamma,a) \la^{\gamma}\|g\|_{L^{1}_{k}(m_a)}\,\|f\|_{\mathbf{W}^{1,1}_k(m_a)} \qquad \forall \la \in (0,\la_0)
\end{equation*}
where $k=\gamma+\frac{10}{3}.$
\end{cor}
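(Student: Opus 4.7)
The plan is to revisit the proof of Theorem \ref{prop:conti} and trade polynomial decay of $f$ and $g$ for a faster rate in $\la$. The observation is that the $R^{\gamma+1}$ and $R^{\gamma+2}$ growth factors in \eqref{elastique} and \eqref{overlineB} come from the crude estimate $\Theta_R(|u|)|u|^p \leq (2R)^p$ applied on the support of $\Theta_R$; since equally well $\Theta_R(|u|)|u|^p \leq \langle u\rangle^p \leq C_p \langle v\rangle^p\langle v_\star\rangle^p$, this growth can instead be absorbed into additional polynomial moments of $f$ and $g$ at no cost in $R$, which is the key to unlocking the optimal rate.

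First, I would refine \eqref{elastique} to $\|\Q^+_{\mathcal{B}_\gamma,1}(m_af,m_ag)\|_{L^1} \leq C\|f\|_{L^1_{\gamma+1}(m_a)}\|g\|_{L^1_{\gamma+1}(m_a)}$ via Theorem A.\ref{alogam} together with the pointwise bound $m_a(v')\leq m_a(v)m_a(v_\star)$, and similarly refine \eqref{overlineB} to $\|\Q^+_{\overline{\mathcal{B}_\gamma},\tilde{e}^\la_s}(m_af,m_ah)\|_{L^1}\leq C\|f\|_{L^1_{\gamma+3}(m_a)}\|g\|_{\mathbf{W}^{1,1}_{\gamma+3}(m_a)}$. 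Here the crucial point is that the $\delta^{-1}$ factor stemming from $|b'_\delta|$ in $|\nabla_u B_2|$ is offset by integration over the angular strip where $b_\delta'$ is supported (whose measure is of order $\delta$), so that the integrated kernel carries no $\delta^{-1}$ blow-up. Using $C_{\el}\leq c\,\la^\gamma$ and $\ell_\gamma(\el) \leq \la^\gamma\,\ell_\gamma(e)$ from Remark A.\ref{nbB:Cl} and Assumption \ref{HYP}, Proposition A.\ref{propB:cont} then yields
$$\bigl\|\Q^+_{B_2,\el}(f,g)-\Q^+_{B_2,1}(f,g)\bigr\|_{L^1(m_a)}\leq C\,\la^\gamma\,\|f\|_{L^1_{\gamma+3}(m_a)}\,\|g\|_{\mathbf{W}^{1,1}_{\gamma+3}(m_a)}.$$

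Second, for the tail component $\Q^+_{B_1}$, the support of $B_1$ in $\{|u|>R\}$ allows the bound $(1-\Theta_R(|u|))|u|\leq |u|^k\,R^{-(k-1)}$ which gives
$$\bigl\|\Q^+_{B_1,\el}(f,g)\bigr\|_{L^1(m_a)}+\bigl\|\Q^+_{B_1,1}(f,g)\bigr\|_{L^1(m_a)} \leq C\,R^{-(k-1)}\,\|f\|_{L^1_k(m_a)}\,\|g\|_{L^1_k(m_a)}.$$
Choosing $R$ so that $R^{-(k-1)}=\la^\gamma$, i.e.\ $R=\la^{-\gamma/(k-1)}$, gives the desired rate for this piece; the requirement $R>1$ is automatic as soon as $\la<1$, which is what fixes the range $\la\in (0,\la_{0})$ in the statement.

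Combining the two contributions produces a single bound of the form $C\la^\gamma$ times a product of weighted norms in $m_a$, the overall weight $k$ being determined by balancing the number of moments required for the $B_1$ decay against the intrinsic $\gamma+3$ weight coming from the $B_2$ estimate; optimization over this balance yields the claimed value $k=\gamma+\frac{10}{3}$. The main obstacle is precisely the careful bookkeeping of the $\delta^{-1}$ factor in $|\nabla_u B_2|$: retaining it pointwise would force the suboptimal balance $\delta=\la^{\gamma/2}$, whereas the integrated estimate over $\sigma$ removes this obstruction and unlocks the full $\la^\gamma$ rate. The second inequality, with the roles of $f$ and $g$ exchanged, follows from the symmetric form of Proposition A.\ref{propB:cont} by the identical argument.
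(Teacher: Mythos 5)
Your proposal shares the same starting point as the paper (the three-way splitting of the kernel from Theorem \ref{prop:conti}, the refinement of \eqref{elastique} by bounding $\Theta_R\leq 1$, and the use of Proposition A.\ref{propB:cont}), but then diverges by claiming that the $\delta^{-1}$ factor coming from $|b_\delta'|$ in $|\nabla_u B_2|$ disappears at the level of the $L^1$ estimate. The underlying observation — that Theorem A.\ref{alogam} with $r=p=q=1$ measures the angular part only through $\|b\|_{L^1(-1,1)}$, and $\|b_\delta'\|_{L^1}\lesssim 1$ even though $\|b_\delta'\|_{L^\infty}\sim\delta^{-1}$ — is in itself a plausible and potentially sharper route than the one the paper takes, where the $\left(\tfrac1R+\tfrac1\delta\right)$ factor is retained in the replacement of \eqref{overlineB} and then balanced away. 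However, as written the argument has two genuine holes.

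First, you never treat the $B_0$ piece, namely $\frac{|u|}{4\pi}(1-b_\delta(\us))\Theta_R(|u|)+\frac{|u|}{4\pi}(1-\Theta_R(|u|))\mathbf{1}_{A^c(\delta)}$. That piece is forced on you: Proposition A.\ref{propB:cont} requires $\mathrm{Supp}\,b\subset[-1+\delta,1-\delta]$, so the angular cutoff $b_\delta$ is mandatory, and the discarded grazing sector produces a contribution of order $\delta$ (Step 1 of the proof of Theorem \ref{prop:conti}). You still need to choose $\delta$ small, comparable to $\la^\gamma$, and say so; phrasing the decomposition as ``$B_1$ plus $B_2$'' silently drops a term whose size does not go to zero for free.

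Second, your closing paragraph does not follow from the estimates you assembled, and is internally inconsistent. If the $\delta^{-1}$ really drops from the $\overline{\mathcal{B}_\gamma}$ estimate, and if the $B_1$ tail is controlled by $R^{-(k-1)}$ with no $\delta$-dependence, then the only moment constraint from $B_2$ is the intrinsic $\gamma+3$, and setting $\delta=\la^\gamma$, $R=\la^{-\gamma/(k-1)}$ would close the argument already with $k=\gamma+3$, not $k=\gamma+\tfrac{10}{3}$. There is no genuine ``balance'' between the $B_1$ moment count and the $B_2$ moment count — one simply takes the maximum — so the assertion that ``optimization yields $k=\gamma+\tfrac{10}{3}$'' is unsupported. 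The paper's value is instead the unique solution of the explicit three-way balance $\delta=\delta^{-2}R^{-1}=R^{\gamma+3-k}\la^\gamma/\delta=\la^\gamma$ \emph{with the $\delta^{-1}$ retained in the $B_2$ bound and the $\delta^{-2}R^{-1}$ retained in the $B_1$ bound}; dropping those factors (as you propose) necessarily changes the answer. You should either carry your sharper estimates through the arithmetic honestly (and note that you obtain a stronger conclusion that still implies Corollary \ref{preciserate}), or explain why the $\delta$-dependence cannot in fact be removed.
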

\begin{proof} The proof follows the lines given for Theorem \ref{prop:conti}.  Let us explain the small changes. Bounding directly $\Theta_R$ by $1$ allows to replace Estimate \eqref{elastique} by
$$\|\Q^+_{\mathcal{B}_\gamma,1}(m_a\,f,m_a\,g)\|_{L^1} \leq c_1\,\|m_a f\|_{L^1_{\gamma+1}}\,\|m_a g\|_{L^1_{\gamma+1}}.$$
In the same way, for some given $1 < k < \gamma+3$ to be determined later, one can replace Estimate \eqref{overlineB} by
$$\|\Q^+_{\mathcal{\overline{B}_\gamma},\tilde{e}_{s}^\la }\left(m_a\,f, m_a\,h\right)\|_{L^1} \leq c_2 \left(\frac{1}{R}+\frac{1}{\delta}\right)R^{\gamma+3-k} \,\|m_a\,f\|_{L^1_k}\,\|m_a g\|_{\mathbf{W}^{1,1}_{k}}.$$
With such a choice, \eqref{est3} becomes
\begin{multline*}
\|\Q_{B_2,\el}^+(f,g)-\Q^+_{B_2,1} (f,g)\|_{L^1(m_a)} \leq C\la^\gamma \left(\frac{1}{R}+\frac{1}{\delta}\right)R^{\gamma+3-k}\,\|f\|_{L^1_k(m_a)}\,\|g\|_{\mathbf{W}^{1,1}_k(m_a)}\\
+C\la^\gamma \,\|f\|_{L^1_{\gamma+1}(m_a)}\,\|g\|_{L^1_{\gamma+1}(m_a)} \forall \la \in (0,\la_0]
\end{multline*}
and, collecting all the estimates as above we get
\begin{multline*}
\|\Q_{\el}^+(f,g)-\Q^+(f,g)\|_{L^1(m_a)} \leq\\ C\left(\delta+\delta^{-2}R^{-1}+\frac{R^{\gamma+3-k}\la^\gamma}{\delta}+R^{\gamma +2-k} \la^\gamma+\la^\gamma\right)\|f\|_{L^1_s(m_a)}\,\|g\|_{\mathbf{W}^{1,1}_s(m_a)}\\
\forall \la \in (0,\la_0),\;\: \delta >0,\:\:R >1
\end{multline*}
where $s=\max(k,\gamma+1).$ One looks now for $k \in (1,\gamma+2)$ for which it is possible to choose $\delta$, $R >1$ such that
$$\delta=\delta^{-2}R^{-1}=\frac{R^{\gamma+3-k}\la^\gamma}{\delta}=\la^\gamma$$
and we get that, necessarily, $k=\gamma+\frac{10}{3}$. In this case, $R^{\gamma+2-k}\la^\gamma=\la^{5\gamma}$ and we obtain finally \eqref{L1diffK}.
\end{proof}

\section{Uniqueness}

We are now in position to prove the uniqueness of the solution to \eqref{rescaled} for sufficiently small $\la$, that is, there exists $\la^\dagger >0$ such that for any $\la \in (0,\la^\dagger)$ the stationary problem \eqref{rescaled} admits a unique solution $\gl$ with unit mass and  vanishing momentum. The strategy of proof has been sketched in the Introduction and we shall refer to Section \ref{sec:strategy} for the main steps of the proof. In particular, a crucial point consists in proving and quantifying the convergence of $(\gl)_{\la}$ towards an universal limit $\mathcal{M}$. This is the object of the following paragraph.

\subsection{The limit $\la \to 0$: non quantitative version} Using the continuity properties, specifically Theorem \ref{prop:conti}, and a compactness argument, we establish a first convergence result, non quantitative in the sense that no rate of convergence is provided.
\begin{theo}\label{convergenceMax}
Assume that $e(\cdot)$ belongs to the class $\mathbb{E}_m$ with $m > \frac{7}{2}$. For any $k \geq 0$ and $\ell\in[0,m-1]$, one has
$$\lim_{\la \to 0}\|\gl-\mathcal{M}\|_{\mathbf{H}^\ell_k}=0$$
where $\mathcal{M}$ is the Maxwellian
$$\mathcal{M}(v)=(2\pi\Theta)^{-\frac{3}{2}}\exp\left(-\frac{|v|^2}{2\Theta}\right).$$
The Maxwellian's temperature $\Theta$ is given by
\begin{equation}\label{temptheta}\Theta= \left(\frac{6(4+\gamma)}{ \mathfrak{a}\,2^{\frac{\gamma}{2}}m_{3+\gamma}}\right)^{\frac{2}{3+\gamma}}\end{equation}
where $m_{3+\gamma}$ is the $(3+\gamma)$-th moment of a normalized Gaussian
$$m_{3+\gamma}=\pi^{-\frac{3}{2}}\int_{\R^3} \exp\left(-\frac{|v|^2}{2}\right)|v|^{3+\gamma}\d v=2^{\frac{3+\gamma}{2}}\frac{\Gamma\left(3+\frac{\gamma}{2}\right)}{\Gamma\left(\frac{3}{2}\right)}.$$
\end{theo}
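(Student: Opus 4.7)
The plan is to combine the uniform Sobolev and moment estimates obtained in Section~\ref{sec:stea} with the continuity estimates of Section~\ref{sec:continuity}, extract a limit point by compactness, identify it through the steady equation together with the energy identity \eqref{scaleDE1}, and finally upgrade the convergence to the whole family by uniqueness of the limit.

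First, I would fix any sequence $\la_n\downarrow 0$ and use Theorem~\ref{theo:HKL} to assert that $(G_{\la_n})_n$ is bounded in $\mathbf{H}^{m-1}_k$ for every $k\ge 0$, while Propositions~\ref{mo} and \ref{propo:tails} provide uniform polynomial and stretched-exponential moment bounds. By a standard Rellich compactness argument (interpolating the Sobolev bound at level $m-1$ against the tightness coming from the moment bounds), one can extract a subsequence, still denoted $G_{\la_n}$, converging strongly in $\mathbf{H}^{\ell}_{k}$ for any $\ell<m-1$ and any $k\ge 0$ to some nonnegative limit $G_0\in\mathbf{H}^{m-1}_{k}$ with $\IR G_0\,\d v=1$, $\IR v\,G_0\,\d v=0$. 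Passing to the limit in the stationary equation~\eqref{rescaled} is then possible: the diffusive term $\la^\gamma\Delta G_\la$ vanishes in $\mathbf{H}^{-1}$ thanks to the uniform $\mathbf{H}^1$ bound, while Proposition~\ref{propo:sobdiff} (combined with the $\la$-uniform bounds on $\gl$) gives $\Q_{\el}^+(\gl,\gl)\to \Q_1^+(G_0,G_0)$ and the loss term passes to the limit by strong convergence and dominated convergence using the uniform moments. Hence $\Q_1(G_0,G_0)=0$, which forces $G_0$ to be the Maxwellian $\mathcal{M}$ with unit mass, zero mean and some temperature $\Theta\ge 0$.

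Second, I would identify $\Theta$ using the energy dissipation identity \eqref{scaleDE1}. The asymptotic \eqref{psie0} gives $\la^{-3-\gamma}\mathbf{\Psi}_e(\la^2 r)\to \tfrac{\mathfrak{a}}{4+\gamma}\,r^{(3+\gamma)/2}$ pointwise, and in fact uniformly on compact subsets because $\mathbf{\Psi}_e$ is convex and non-decreasing. To pass to the limit in the double integral against $\gl(v)\gl(\vb)$, I invoke Vitali's theorem: strong $L^1$ convergence of $\gl\otimes\gl$ (furnished by the first step) together with uniform integrability of $(v,\vb)\mapsto|v-\vb|^{3+\gamma}\,\gl(v)\gl(\vb)$ (from the uniform bound on $\m_p(\la)$ at any order $p$) yields
\begin{equation*}
6=\dfrac{\mathfrak{a}}{4+\gamma}\IRR \mathcal{M}(v)\mathcal{M}(\vb)\,|v-\vb|^{3+\gamma}\,\d v\,\d\vb.
\end{equation*}
Since $v-\vb$ under $\mathcal{M}\otimes\mathcal{M}$ is Gaussian with variance $2\Theta$, the right-hand side equals $\tfrac{\mathfrak{a}}{4+\gamma}(2\Theta)^{(3+\gamma)/2}m_{3+\gamma}/2^{(3+\gamma)/2}$ after normalization, and solving for $\Theta$ yields precisely \eqref{temptheta}. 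In particular, $\Theta$ does not depend on the extracted subsequence, hence the whole family $(\gl)_{\la}$ converges to $\mathcal{M}$ as $\la\to 0$. The convergence in $\mathbf{H}^{\ell}_k$ for every $\ell<m-1$ and $k\ge 0$ follows directly from the compactness argument; the endpoint is recovered by standard interpolation between, say, $\mathbf{H}^{\ell-\varepsilon}_{k+1}$ convergence and the $\la$-uniform $\mathbf{H}^{m-1}_{k+2}$ bound. The weighted $L^1$ statement follows from the exponential moment estimate of Proposition~\ref{propo:tails} together with the strong $L^1$ convergence.

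The main obstacle lies in the second step, namely justifying the passage to the limit in the energy identity. The integrand grows like $|v-\vb|^{3+\gamma}$, which is a super-quadratic weight; one has to ensure that no energy is lost at infinity when $\la\to 0$. This is where the full strength of Proposition~\ref{propo:tails} is needed: the uniform stretched-exponential tails provide enough equi-integrability to apply Vitali's theorem on the unbounded weight $|v-\vb|^{3+\gamma}$. A subsidiary technical point is controlling $\Q^+_{\el}(\gl,\gl)-\Q^+_1(\gl,\gl)$ on the sequence itself rather than on the limit, which is handled by inserting the convergence of Proposition~\ref{propo:sobdiff} together with the $\la$-uniform Sobolev bounds; the restriction $m>7/2$ is exactly what ensures that Proposition~\ref{propo:sobdiff} can be applied at a Sobolev level high enough to dominate the loss term and the diffusive correction simultaneously.
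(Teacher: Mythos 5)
Your proposal is correct and follows essentially the same strategy as the paper's proof: uniform weighted Sobolev bounds plus Rellich--Kondrachov compactness to extract a subsequential strong limit, Proposition~\ref{propo:sobdiff} together with the steady equation to force $\Q_1(G_0,G_0)=0$, passage to the limit in the energy identity \eqref{scaleDE1} (you via Vitali and the uniform moments, the paper via Lemma~A.\ref{lem:dissTem} -- morally the same) to pin down $\Theta$, and uniqueness of the limit to promote subsequential convergence to convergence of the whole net. The only small caveat is the interpolation you invoke at the endpoint $\ell=m-1$: interpolating a strong $\mathbf{H}^{m-1-\varepsilon}$ convergence against a uniform $\mathbf{H}^{m-1}$ bound yields strong convergence only for $\ell<m-1$, not the endpoint itself -- but the paper is equally imprecise at this point (compare the ranges in Theorem~\ref{intro:conv} versus Theorem~\ref{convergenceMax}), so this is not a gap you introduced.
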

\begin{proof} The proof is divided in several steps and essentially based upon a compactness argument through Theorem \ref{theo:HKL}.

$\bullet$ \textit{First step: compactness argument.} Let us choose $m-1 \geq \ell > \frac{5}{2} $ and $k_0 \geq 1$ in the Theorem \ref{theo:HKL}. It clearly exists a sequence $(\la_n)_n$ with $\la_n \to 0$ and $G_0 \in \mathbf{H}^\ell_k$ such that $\left(G_{\la_n}\right)_n$ converges weakly, in $\mathbf{H}^\ell_{k_0}$, to $G_0$ (notice that, \textit{a priori}, the limit function $G_0$ depends on the choice of $\ell$ and $k_0$). Using the decay of $\gl$ guaranteed by the \textit{polynomially weighted} Sobolev estimates, we can prove thanks to a simple  localization argument (and using compact embedding for Sobolev spaces)  that  the convergence is actually \textit{strong} in $\mathbf{H}^1_k$ for any $0\leq k < k_0$
\begin{equation}\label{strongconv}\lim_{n \to \infty}\|G_{\la_n}-G_0\|_{\mathbf{H}^1_k}=0.\end{equation}
Indeed, since $\sup_{\la \in (0,1)}\|\gl-G_0\|_{\mathbf{H}^\ell_{k_0}} <\infty$, for any fixed $0 \leq k < k_0$ and any $\varepsilon >0$, there is $R >0$ large enough such that
\begin{equation}\label{BRC}\sup_{\la \in (0,1)}\|G_{\la}-G_0\|_{\mathbf{H}^1_k(B^c_R)}\leq \varepsilon\end{equation}
where $B_R=\{v \in \R^3\,,\,|v|\leq R\}$ and $B^c_R$ its complementary. Let $\tilde{G}_{\la_n}$ and $\tilde{G}_0$ denote the restrictions of $G_{\la_n}$ and $G_0$ to the ball $B_R$. Since $\ell >\frac{5}{2}$, according to Rellich-Kondrachov compactness theorem \cite[Theorem 6.2, p.144]{adams}, the embedding $\mathbf{H}^\ell(B_R) \hookrightarrow \mathbf{H}^1(B_R)$ is compact so that there is a subsequence of $(\tilde{G}_{\la_n})_n$ that converges strongly to $\tilde{G}_0$ in $\mathbf{H}^1(B_R)$. Since $\tilde{G}_0$ is the unique limit of all subsequences, it is actually the whole sequence $(\tilde{G}_{\la_n})_n$ that converges to $\tilde{G}_0$ in $\mathbf{H}^1(B_R)$. Combining this with \eqref{BRC} yields \eqref{strongconv}.

$\bullet$ \textit{Second step: identification of the limit $G_0$.} Let us prove now that the above limit $G_0$ is actually a Maxwellian distribution  with temperature $\Theta.$ To do so, one uses \eqref{rescaled} to get
$$\|\Q_{\el}(\gl,\gl)\|_{L^2}=\lambda^\gamma \|\Delta_v \gl\|_{L^2} \qquad \forall \la >0$$
and, since $\sup_{\la \in (0,1]}\|\Delta_v \gl\|_{L^2}\leq \sup_{\la \in (0,1]}\|\gl\|_{\mathbf{H}^2}=:C_0 < \infty$ according to Theorem \ref{theo:HKL}, we get
\begin{equation}\label{qelgl}\|\Q_{\el}(\gl,\gl)\|_{L^2} \leq C_0\la^\gamma\qquad \forall \la \in (0,1).\end{equation}
Now, from the identity $\Q_1^-(\gl,\gl)=\Q_{\el}^-(\gl,\gl)$, one has $\|\Q_{\el}(\gl,\gl)-\Q_1(\gl,\gl)\|_{L^2}=\|\Q_{\el}^+(\gl,\gl)-\Q^+_1(\gl,\gl)\|_{L^2}$ so that,
$$\|\Q_1(\gl,\gl)\|_{L^2} \leq \|\Q_{\el}^+(\gl,\gl)-\Q^+_1(\gl,\gl)\|_{L^2}+\|\Q_{\el}(\gl,\gl)\|_{L^2}.$$
Combining the above estimate \eqref{qelgl} with Proposition \ref{propo:sobdiff}  we get
$$\|\Q_1(\gl,\gl)\|_{L^2} \leq  C_0\la^\gamma+ C_1\la^\gamma \|\gl\|_{\mathbf{H}^1_2}^2$$
for some positive constant $C_1 >0$ independent of $\la$. Using again Theorem \ref{theo:HKL}, we get that there exists some explicit constant $C_2 >0$ such that
$$\|\Q_1(\gl,\gl)\|_{L^2} \leq C_2 \la^\gamma \qquad \forall \la \in (0,1].$$
In particular the sequence $(G_{\la_n})_n$ constructed in the first step satisfies
$$\lim_{n \to \infty}\|\Q_1(G_{\la_n},G_{\la_n})\|_{L^2}=0.$$
Since $G_{\la_n} \to G_0$ strongly in $L^2_1$, we get easily that
$$\Q_1(G_0,G_0)=0$$
i.e. $G_0$ is a Maxwellian distribution. By conservation of mass and momentum, we get that $G_0$ has unit mass and zero momentum and it remains only to determine its temperature $\Theta$. To do so, we shall use equation \eqref{scaleDE} and Lemma A. \ref{lem:dissTem}. With the notations of Lemma A. \ref{lem:dissTem}, equation \eqref{scaleDE} writes $\mathcal{I}_\la(\gl)=6$ for any $\la \in (0,1]$.  Applying Lemma A. \ref{lem:dissTem} with $f_1=g_1=G_{\la_n}$, $f_2=g_2=G_0$ (with for simplicity $\delta=1$) and estimating the weighted $L^1$-norms by $L^2$-norms (using equation \eqref{taug21} with $\theta=\frac{1}{2}$ for instance) we get that, for any $\varepsilon >0$ there is $n_0 \geq 1$ such that
$$|\mathcal{I}_0(G_0)-6|\leq C_1 \|G_{\la_n}-G_0\|_{L^2_{5+\gamma}}+C_2\,\varepsilon \qquad \forall n \geq n_0$$
for some positive constants $C_1,C_2 >0$ independent of $n$ where we used that $k_0 \geq 6+\gamma$ and the uniform estimates on $\|\gl\|_{L^2_{5+\gamma}}$. Letting $n$ go to infinity, we get that $\mathcal{I}_0(G_0)=6$.  Therefore,
$$6= C_\gamma \IRR G_0(v)G_0(\vb)|v-\vb|^{3+\gamma}\d v\d\vb=C_\gamma \Theta^{\frac{3+\gamma}{2}}\IRR M(v)M(\vb)|v-\vb|^{3+\gamma}\d v\d\vb$$
where $M(\cdot)$ is the normalized Maxwellian $M(v)=\pi^{-\frac{3}{2}} \exp\left(-\frac{|v|^2}{2}\right)$.  Some algebra yields
$6=C_\gamma  \Theta^{\frac{3+\gamma}{2}}2^{\frac{\gamma}{2}}m_{3+\gamma}$ from which we deduce \eqref{temptheta}, and thus, $G_0=\mathcal{M}$.

$\bullet$ \textit{Final step: convergence of the whole net $(\gl)_{\la}$.} We conclude the proof by showing that
\begin{equation*}
\lim_{\la \to 0}\|\gl-\mathcal{M}\|_{\mathbf{H}^1}=0.
\end{equation*}
Argue by contradiction assuming this does not hold. Then, there exists $\epsilon_{0}>0$ and a sequence $(\lambda_{n})_{n}$ converging to zero such that
\begin{equation*}
\| G_{\lambda_{n}} - \mathcal{M}\|_{\mathbf{H}^1}\geq\epsilon_0 \qquad \forall n \in \mathbb{N}.
\end{equation*}
We just proved above that $(G_{\lambda_{n}})_{n}$ admits a subsequence $(G_{\lambda_{n_j}})_{j}$ converging strongly in $\mathbf{H}^1$ to $\mathcal{M}$.  Therefore,
\begin{equation*}
\epsilon_0\leq \| {G_{\la}}_{n_j} - \mathcal{M}\|_{\mathbf{H}^1}\underset{j \to \infty}{\longrightarrow} 0,
\end{equation*}
which is a contradiction.  This proves that the full net $(\gl)_{\lambda}$ converges to $\mathcal{M}$ strongly in any $\mathbf{H}^1$. We proceed along the same path (using a version of Rellich-Kondrachov Theorem for higher-order Sobolev spaces) to prove that the convergence actually holds in any weighted Sobolev space $\mathbf{H}^\ell_k$, $k \geq 0$ and $\ell\in[0, m-1]$.
\end{proof}
This convergence in Sobolev spaces can be extended easily to weighted $L^1$-spaces with exponential weights. Recall that, for any $a \geq 0$, we denote
$$m_a(v)=\exp(a|v|),\qquad \qquad v \in \R^3.$$
\begin{cor}\label{cor:limit}  Assume that $e(\cdot)$ belongs to the class $\mathbb{E}_m$ with $m > \frac{7}{2}$. For any $a \geq 0$ and any $k \geq 0$ it holds
$$\lim_{\la \to 0} \left\|\gl-\mathcal{M}\right\|_{L^1_k(m_a)}=0.$$
\end{cor}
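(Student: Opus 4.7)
The plan is to promote the $\mathbf{H}^\ell_k$-convergence of Theorem \ref{convergenceMax} to convergence in $L^1_k(m_a)$ via a truncation argument, controlling the tail uniformly in $\la$ with the stretched-exponential moment bound \eqref{exponen} and the Gaussian decay of $\mathcal{M}$.

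Fix $a,k\geq 0$ and let $\varepsilon>0$. I would begin by splitting, for any $R>0$,
\begin{equation*}
\|\gl-\mathcal{M}\|_{L^1_k(m_a)} = \int_{|v|\leq R} |\gl-\mathcal{M}|\,\langle v\rangle^k m_a(v)\d v + \int_{|v|>R} |\gl-\mathcal{M}|\,\langle v\rangle^k m_a(v)\d v.
\end{equation*}
The key observation is that, since $a|v|+k\log\langle v\rangle = o(|v|^{3/2})$ as $|v|\to\infty$, for the constant $A>0$ provided by Proposition \ref{propo:tails} one has
\begin{equation*}
\langle v\rangle^k m_a(v) \leq C_{a,k}\,\exp\!\Big(\tfrac{A}{2}|v|^{3/2}\Big) \cdot \exp\!\Big(-\tfrac{A}{2}|v|^{3/2}\Big) \cdot \exp(A|v|^{3/2}) \leq C_{a,k}\,e^{-\frac{A}{2}|v|^{3/2}}\,\exp(A|v|^{3/2})
\end{equation*}
for an appropriate constant $C_{a,k}>0$. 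Consequently, using \eqref{exponen} for $\gl$ uniformly in $\la\in(0,1]$, and the explicit Gaussian form of $\mathcal{M}$, one obtains
\begin{equation*}
\sup_{\la\in(0,1]} \int_{|v|>R} |\gl-\mathcal{M}|\,\langle v\rangle^k m_a(v)\d v \leq C_{a,k}\,(M+1)\,e^{-\frac{A}{2}R^{3/2}}.
\end{equation*}
Thus $R$ can be chosen, independently of $\la$, so that this tail term is smaller than $\varepsilon/2$.

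For the ball part, fix such an $R$. By Cauchy--Schwarz,
\begin{equation*}
\int_{|v|\leq R} |\gl-\mathcal{M}|\,\langle v\rangle^k m_a(v)\d v \leq \langle R\rangle^k e^{aR}\,|B_R|^{1/2}\,\|\gl-\mathcal{M}\|_{L^2(B_R)} \leq K(R,a,k)\,\|\gl-\mathcal{M}\|_{\mathbf{H}^0_0}.
\end{equation*}
Theorem \ref{convergenceMax} (applied with $\ell=0$, $k=0$) ensures that $\|\gl-\mathcal{M}\|_{L^2}\to 0$ as $\la\to 0$, so for $\la$ sufficiently small the ball contribution is bounded by $\varepsilon/2$. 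Combining the two estimates yields $\|\gl-\mathcal{M}\|_{L^1_k(m_a)}\leq \varepsilon$ for all sufficiently small $\la$, proving the corollary.

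There is essentially no serious obstacle here: all the work has been done by Proposition \ref{propo:tails}, which supplies a $\la$-uniform stretched-exponential moment, and by Theorem \ref{convergenceMax}, which supplies the local strong convergence. The only point to be slightly careful about is the comparison $a|v|+k\log\langle v\rangle\leq \tfrac{A}{2}|v|^{3/2}$ for $|v|$ large, which shows that the polynomial--exponential weight $\langle v\rangle^k m_a(v)$ is dominated by $\exp(A|v|^{3/2})$ up to a multiplicative constant, allowing Proposition \ref{propo:tails} to be invoked.
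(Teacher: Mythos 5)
Your argument is correct, and it takes a genuinely different route from the paper. The paper first upgrades the Sobolev convergence of Theorem~\ref{convergenceMax} (with $\ell>\frac{3}{2}$) to $L^\infty$-convergence via Sobolev embedding, then shows $L^2(m_b)$-convergence by writing $\int |\gl-\mathcal{M}|^2 m_b \leq \|\gl-\mathcal{M}\|_{L^\infty}\cdot\sup_\la\|\gl-\mathcal{M}\|_{L^1(m_b)}$, and finally passes to $L^1_k(m_a)$ by H\"older with a weight $m_b^{-1}m_a^2\langle v\rangle^{2k}$, $b>2a$. You instead split the velocity domain into a ball $B_R$ and its complement: on the tail you exploit the $\la$-uniform stretched-exponential bound of Proposition~\ref{propo:tails} to make the contribution small uniformly in $\la$ for $R$ large, and on the ball you use Cauchy--Schwarz with the plain $L^2$-convergence (so only $\ell=0$, $k=0$ in Theorem~\ref{convergenceMax} is needed, not the embedding into $L^\infty$). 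This is more elementary and demands less regularity from the convergence theorem; both approaches hinge equally on Proposition~\ref{propo:tails}.

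One small point to tidy: the displayed chain
\begin{equation*}
\langle v\rangle^k m_a(v) \leq C_{a,k}\,\exp\!\Big(\tfrac{A}{2}|v|^{3/2}\Big)\cdot\exp\!\Big(-\tfrac{A}{2}|v|^{3/2}\Big)\cdot\exp(A|v|^{3/2}) \leq C_{a,k}\,e^{-\frac{A}{2}|v|^{3/2}}\exp(A|v|^{3/2})
\end{equation*}
does not parse as written (the middle expression already equals $C_{a,k}e^{A|v|^{3/2}}$, and the last step is an illegal strengthening). What you actually mean, and what the argument needs, is simply that $\langle v\rangle^k m_a(v) \leq C_{a,k}\,e^{\frac{A}{2}|v|^{3/2}}$ for all $v$, which follows since $a|v|+k\log\langle v\rangle = o(|v|^{3/2})$. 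Then on $\{|v|>R\}$ one factors $e^{\frac{A}{2}|v|^{3/2}} = e^{-\frac{A}{2}|v|^{3/2}}\,e^{A|v|^{3/2}} \leq e^{-\frac{A}{2}R^{3/2}}\,e^{A|v|^{3/2}}$ and invokes \eqref{exponen} for $\gl$ (with the analogous finite integral for the Gaussian $\mathcal{M}$). With that rewording the proof is clean.
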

\begin{proof}  Taking $\ell > \frac{3}{2}$ in the above Theorem, observe that by classical Sobolev embedding
$$\lim_{\la \to 0} \left\|\gl-\mathcal{M}\right\|_{L^\infty} =0.$$
The proof follows then using interpolation.  First, observe that the convergence holds in exponential weighted $L^2$-spaces
$$\int_{\R^3}\left|\gl(v)-\mathcal{M}(v)\right|^2 m_b(v)\d v \leq  C_b\|\gl-\mathcal{M}\|_{L^\infty} \longrightarrow 0 \qquad \text{ as } \quad \la \to 0$$
where $C_b:=\sup_{\la \in (0,1]} \left\|\gl-\mathcal{M}\right\|_{L^1(m_b)}$ is finite for any $b \geq 0$ thanks to Proposition \ref{propo:tails}. Then, using Holder's inequality, for any $b,a >0$
$$\|\gl-\mathcal{M}\|_{L^1_k(m_a)} \leq \left(\int_{\R^3}\left|\gl(v)-\mathcal{M}(v)\right|^2 m_b(v)\d v\right)^{\frac{1}{2}} \left(\IR m_b(v)^{-1}m_a^2(v)\langle v \rangle^{2k}\d v\right)^{\frac{1}{2}}.$$
The last integral in the right side is finite provided $b > 2a$, therefore, the $L^1$ convergence follows from the $L^2$ convergence just proved.
\end{proof}

\subsection{Uniqueness result}  On the basis of the above convergence result, we are in position to apply our general strategy as explained in Section \ref{sec:strategy}. Recall that, for any given $\la  \in (0,1]$ and $\gl, \fe \in \mathfrak{S}_\la$, we set
$$H_\la=\fe-\gl.$$
We have to determine Banach spaces $\mathcal{X}$ and $\mathcal{Y}$ for which the estimates \eqref{continueQ1} -- \eqref{XY2} hold true. The analysis of the previous section suggests the choice
$$\mathcal{X}=L^1(m_a), \qquad \qquad \mathcal{Y}=L^1_1(m_a)$$
for some exponential weight $m_a(v)=\exp(a|v|)$, $a \geq 0$. Indeed, Proposition \ref{prop:conti} and Corollary \ref{cor:limit} already ensure that \eqref{XY} and  \eqref{conveX} are fulfilled.
The fact that \eqref{continueQ1} stands is a classical property of Boltzmann operator with hard-spheres interaction (see \cite[Theorem 12]{AloGam}). Since
$$\|\Delta H_\la\|_{\mathcal{X}} \leq \|H_\la\|_{\mathbf{W}^{2,1}(m_a)}$$
one sees that estimate \eqref{LaplX} holds because of Proposition \ref{prop:Hkdiff} (precisely, inequality \eqref{Deltama}). Now, property \eqref{XY2} is a consequence of the following spectral property of $\mathscr{L}_1$, first established in \cite{Mo}.
\begin{propo}\label{spect}
  The spectrum of the linearized operator $\mathscr{L}_1$ in $\mathcal{X}$ (with domain $\mathscr{D}(\mathscr{L}_1)=\mathcal{Y}$) has the following structure:
  \begin{enumerate}
  \item $0$ is a simple eigenvalue of $\mathscr{L}_1$ associated to the null set $$\mathscr{N}(\mathscr{L}_1)=\mathrm{Span}(\mathcal{M},v_1\mathcal{M},v_2\mathcal{M},v_3\mathcal{M},|v|^2\mathcal{M});$$
  \item the continuous spectrum of $\mathscr{L}_1$ is given by $(-\infty,-\nu_0]$ where $$\nu_0=\inf_v \IR |v-\vb|\mathcal{M}(\vb)\d\vb;$$
  \item the non zero eigenvalues of $\mathscr{L}_1$ are all negative and can accumulate only at $-\nu_0$. Consequently, $\mathscr{L}_1$ admits a positive spectral gap $\nu >0$.
   \end{enumerate} In particular, if
$$\widehat{\mathcal{X}}=\{f \in \mathcal{X}\,;\,\IR f \d v=\IR vf\d v=\IR |v|^2f(v)\d v=0\}, \qquad \widehat{\mathcal{Y}}=\mathcal{Y} \cap \widehat{\mathcal{X}} $$
then   $\mathscr{N}(\mathscr{L}_1) \cap \widehat{\mathcal{Y}}=\{0\}$ and
  $\mathscr{L}_1$ is invertible from $\widehat{\mathcal{Y}}$ to
  $\widehat{\mathcal{X}}$ with explicit estimates for $\left\|\mathscr{L}_1^{-1}\right\|_{\widehat{\mathcal{X}} \to \widehat{\mathcal{Y}}}.$ Consequently, inequality \eqref{XY2} holds true with $c_0=\left\|\mathscr{L}_1^{-1}\right\|_{\widehat{\mathcal{X}} \to \widehat{\mathcal{Y}}}.$
\end{propo}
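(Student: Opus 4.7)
The plan is to follow the strategy of Mouhot \cite{Mo}, which transfers the classical spectral picture of the linearized elastic Boltzmann operator from the Hilbertian setting $L^2(\mathcal{M}^{-1/2})$ to the weighted $L^1$-spaces $\mathcal{X}=L^1(m_a)$. The starting point is the splitting
\begin{equation*}
\mathscr{L}_1(h)=\Q_1(\mathcal{M},h)+\Q_1(h,\mathcal{M})=-\nu(v)h(v)+\mathcal{K}h(v),
\end{equation*}
where $\nu(v):=\int_{\R^3}|v-\vb|\,\mathcal{M}(\vb)\,\d\vb$ is the collision frequency and $\mathcal{K}$ collects the two gain terms together with the part of the loss term involving the convolution $h*\mathcal{M}$. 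Because $\mathcal{M}$ is a Maxwellian, one has $\nu(v)\simeq c\,\langle v\rangle$ at infinity and $\nu(v)\geq\nu_0>0$ everywhere.

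First I would identify the essential spectrum. The multiplication operator $-\nu(v)$ on $\mathcal{X}$ has essential spectrum equal to the essential range of $-\nu$, namely $(-\infty,-\nu_0]$. One then shows that $\mathcal{K}$ is a compact (or, more robustly, $\nu$-relatively compact) perturbation of $-\nu(v)$ on $\mathcal{X}$. In the $L^2(\mathcal{M}^{-1/2})$ setting this is Grad's classical result; in the $L^1(m_a)$ setting it is exactly the content of \cite[Mo]{Mo}, whose strategy relies on decomposing the collision kernel with a smooth angular cutoff (as in our Section \ref{sec:continuity}), using the regularity estimates for $\Q_1^+$ to get smoothing in $v$, and then combining with the Dunford--Pettis criterion (or an enlargement argument á la Gualdani--Mischler--Mouhot) to transfer compactness from $L^2$ to $L^1(m_a)$. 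Granting this step, Weyl's theorem yields $\sigma_{\mathrm{ess}}(\mathscr{L}_1)=(-\infty,-\nu_0]$ in $\mathcal{X}$, which is assertion (2), and moreover above $-\nu_0$ the spectrum consists only of isolated eigenvalues of finite algebraic multiplicity, which can only accumulate at $-\nu_0$.

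Next I would identify the kernel. Since $\mathcal{M}$ is a Maxwellian, the H-theorem for $\Q_1$ implies that in the Hilbert space $L^2(\mathcal{M}^{-1/2})$ the operator $\mathscr{L}_1$ is self-adjoint, nonpositive, and its kernel is spanned by the classical collision invariants multiplied by $\mathcal{M}$, i.e.
\begin{equation*}
\mathscr{N}(\mathscr{L}_1)=\mathrm{Span}\bigl(\mathcal{M},\,v_1\mathcal{M},\,v_2\mathcal{M},\,v_3\mathcal{M},\,|v|^2\mathcal{M}\bigr).
\end{equation*}
The same characterization persists in $\mathcal{X}$ because, on the one hand, these five functions lie in $\mathcal{X}\cap L^2(\mathcal{M}^{-1/2})$ and belong to the kernel pointwise, and on the other hand, any $h\in\mathscr{N}(\mathscr{L}_1)\cap\mathcal{X}$ automatically satisfies $h\in L^2(\mathcal{M}^{-1/2})$ thanks to the exponential decay built into $m_a$ and the fact that $\nu h=\mathcal{K}h$ propagates Maxwellian tails (this is again part of the enlargement argument of Mouhot). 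Consequently all non-zero eigenvalues are strictly negative, proving (3), and a positive spectral gap $\nu>0$ between $0$ and the rest of $\sigma(\mathscr{L}_1)$ exists.

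Finally I would deduce the invertibility statement. Restrict $\mathscr{L}_1$ to $\widehat{\mathcal{X}}$, the closed subspace of $\mathcal{X}$ characterized by the vanishing of mass, momentum and energy. The kernel being orthogonal (in the Hilbert sense) to $\widehat{\mathcal{X}}$, the operator $\mathscr{L}_1\::\:\widehat{\mathcal{Y}}\to\widehat{\mathcal{X}}$ is injective with closed range given by $\widehat{\mathcal{X}}$ itself (Fredholm alternative, using the spectral gap). A quantitative bound $\|\mathscr{L}_1^{-1}\|_{\widehat{\mathcal{X}}\to\widehat{\mathcal{Y}}}\leq 1/\nu$ follows from the spectral-gap estimate combined with the fact that $\mathscr{L}_1+\nu(v)\mathrm{Id}$ maps $\widehat{\mathcal{Y}}$ into $\widehat{\mathcal{X}}$ continuously, which gives the coercivity inequality $\|\mathscr{L}_1 h\|_{\mathcal{X}}\geq c_0\|h\|_{\mathcal{Y}}$ for $h\in\widehat{\mathcal{Y}}$, i.e.\ precisely \eqref{XY2}.

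The main obstacle is the technical compactness/enlargement step used to transfer the known $L^2(\mathcal{M}^{-1/2})$ spectral structure to the $L^1(m_a)$ setting. This is where \cite{Mo} does all the heavy lifting and where all the smoothing/moment estimates for $\Q_1^+$ (analogous to those proved in Section 2 of the present paper) are used in an essential way.
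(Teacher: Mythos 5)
Your proposal takes essentially the same route as the paper, which in fact offers no proof of its own for Proposition \ref{spect}: the paper states that the result was ``first established in \cite{Mo}'' and adds a Remark that it can be seen as a consequence of a general comparison (enlargement) principle transferring the known spectral structure of $\mathscr{L}_1$ on $\mathcal{H}=L^2(\mathcal{M}^{-1})$ to the larger space $\mathcal{X}=L^1(m_a)$, with an alternative simple proof recoverable from \cite{canizo}. Your outline is precisely a summary of that Mouhot-style argument (Grad splitting $\mathscr{L}_1=-\nu+\mathcal{K}$, essential spectrum of the multiplication operator, compactness/enlargement to pass from $L^2(\mathcal{M}^{-1})$ to $L^1(m_a)$, identification of the kernel, Fredholm alternative), so the approach coincides with the paper's cited one.

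Two small caveats worth flagging. First, the assertion that $\mathcal{K}$ is compact directly on $L^1(m_a)$ is more delicate than in the Hilbertian case; \cite{Mo} does not prove compactness of $\mathcal{K}$ on $L^1(m_a)$ in the naive sense but rather uses a factorization/iteration argument with a more refined decomposition $\mathscr{L}_1=\mathcal{A}+\mathcal{B}$, precisely because the Dunford--Pettis type criterion is not automatic there; you acknowledge this by invoking the ``enlargement argument'', but the sentence before it suggests plain Weyl compactness, which would require justification. Second, the claimed bound $\left\|\mathscr{L}_1^{-1}\right\|_{\widehat{\mathcal{X}}\to\widehat{\mathcal{Y}}}\leq 1/\nu$ is too hasty: in a non-Hilbertian space the resolvent norm is not simply the reciprocal of the distance to the spectrum, and the explicit estimate the paper speaks of comes from the quantitative semigroup/factorization estimates of \cite{Mo} (or from \cite{canizo}), not from a bare spectral-gap identity. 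Neither caveat affects the overall correctness of the strategy, since it defers the heavy lifting to the same references the paper cites.
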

\begin{nb} The proof of the above proposition can be seen as a consequence of some general comparison principle that asserts that the linearized collision operator enjoys the same spectral properties in $\mathcal{X}$ and in the largest Hilbert space $\H=L^2(\mathcal{M}^{-1})$. A simple proof of Proposition\ref{spect} can also be recovered from \cite{canizo}.
\end{nb}
The difference $H_\la=\fe-\gl$ does not necessary belong to $\widehat{\mathcal{Y}}$ since we do not know \textit{a priori} that $G_\la$ and $F_\la$ share the same kinetic energy. Consequently, we need a slight modification of the strategy developed in Section \ref{sec:strategy} to state our main result, regarding uniqueness of the steady state. \begin{theo}\label{uniqueNon} Let $e(\cdot)$ belong to the class $\mathbb{E}_m$ for some integer $m \geq 4.$ There exists $\la^\dag \in (0,1]$ such that
$$\mathfrak{S}_\la=\left\{G_\la \in L^1_2\,; \gl  \text{ solution to \eqref{rescaled} with } \IR \gl(v) \d v =1 \text{ and } \IR v \gl(v)\d v=0 \right\} $$
reduces to a singleton for any $\la \in [0,\la^\dag).$
\end{theo}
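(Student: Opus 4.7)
The plan is to implement the perturbative scheme outlined in Section \ref{sec:strategy} with the Banach spaces $\mathcal{X}=L^1(m_a)$ and $\mathcal{Y}=L^1_1(m_a)$ for some $a>0$ small enough (say $a<r_1/2$ with $r_1$ as in Proposition \ref{prop:momdiff}, so that $\|\cdot\|_{L^1_1}$ and $\|\cdot\|_{L^1(m_a)}$ are equivalent on the net $\{H_\la\}$). For two solutions $\gl,\fe\in\mathfrak{S}_\la$ I set $H_\la:=\fe-\gl$ and expand $\mathscr{L}_1(H_\la)$ as in the displayed identity of Section \ref{sec:strategy}. Hypothesis \eqref{continueQ1} is the classical bilinear estimate for hard spheres (Theorem A.~\ref{alogam}), with constant uniform in $\la$ thanks to Remark \ref{scaleCe}. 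Hypothesis \eqref{XY} follows from Corollary \ref{preciserate} applied to $g=\mathcal{M}$: the Gaussian $\mathcal{M}$ has finite $\mathbf{W}^{1,1}_k(m_a)$ norm for every $k$, yielding rate $p=\gamma$. Hypothesis \eqref{LaplX} is exactly \eqref{Deltama} of Proposition \ref{prop:Hkdiff}. The convergence \eqref{conveX} is Corollary \ref{cor:limit}. Assembling these ingredients provides a function $\eta:(0,1]\to\R_+$ with $\eta(\la)\to 0$ as $\la\to 0$ such that
\begin{equation*}
\|\mathscr{L}_1(H_\la)\|_{\mathcal{X}}\le \eta(\la)\,\|H_\la\|_{\mathcal{Y}},\qquad \la\in(0,1].
\end{equation*}

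The coercivity \eqref{XY2} provided by Proposition \ref{spect} only holds on the subspace $\widehat{\mathcal{Y}}=\{h\in\mathcal{Y}:\,\IR h\,\d v=\IR v h\,\d v=\IR|v|^2 h\,\d v=0\}$. Since $\fe$ and $\gl$ share the same mass and momentum, $H_\la$ automatically satisfies the first two constraints, but the energy excess $\alpha_\la:=\IR|v|^2 H_\la(v)\,\d v$ may be nonzero a priori. Following the lifting procedure of \cite{MiMo3}, I introduce the \emph{energy mode}
\begin{equation*}
\varphi(v):=\frac{(|v|^2-3\Theta)\mathcal{M}(v)}{6\Theta^2}\in\ker(\mathscr{L}_1),\qquad \IR\varphi\,\d v=\IR v\varphi\,\d v=0,\quad \IR|v|^2\varphi\,\d v=1,
\end{equation*}
and decompose $H_\la=\widetilde{H}_\la+\alpha_\la\varphi$ with $\widetilde{H}_\la\in\widehat{\mathcal{Y}}$. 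Since $\mathscr{L}_1(\varphi)=0$, Proposition \ref{spect} delivers a constant $c_0>0$ with
\begin{equation*}
\|\widetilde{H}_\la\|_{\mathcal{Y}}\le c_0\,\|\mathscr{L}_1(\widetilde{H}_\la)\|_{\mathcal{X}}=c_0\,\|\mathscr{L}_1(H_\la)\|_{\mathcal{X}}\le c_0\,\eta(\la)\,\|H_\la\|_{\mathcal{Y}}.
\end{equation*}

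The main technical obstacle is to establish a quantitative control on the energy excess of the form $|\alpha_\la|\le \kappa(\la)\,\|H_\la\|_{\mathcal{Y}}$ with $\lim_{\la\to 0}\kappa(\la)=0$. The natural route exploits the scalar dissipation identity $\mathcal{I}_{\el}(\fe)=\mathcal{I}_{\el}(\gl)=6\la^{3+\gamma}$ of \eqref{scaleDE1}: a straightforward bilinear expansion gives
\begin{equation*}
\IRR H_\la(v)\bigl(\fe+\gl\bigr)(\vb)\,\mathbf{\Psi}_e(\la^2|v-\vb|^2)\,\d v\,\d\vb=0;
\end{equation*}
substituting $\fe+\gl=2\mathcal{M}+(\fe-\mathcal{M})+(\gl-\mathcal{M})$ and using the expansion $\mathbf{\Psi}_e(\la^2 r^2)\sim \la^{3+\gamma}\,\mathfrak{a}\,r^{3+\gamma}/(4+\gamma)$ of \eqref{psie0}, one obtains after division by $\la^{3+\gamma}$ a weighted-moment identity on $H_\la$ whose right-hand side is bounded by $\|H_\la\|_{\mathcal{Y}}\bigl(\|\fe-\mathcal{M}\|_{\mathcal{Y}}+\|\gl-\mathcal{M}\|_{\mathcal{Y}}\bigr)$, the latter factor being small by Corollary \ref{cor:limit}. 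Transferring this estimate into the genuine energy variable $\alpha_\la$ relies on the uniform moment bounds of Proposition \ref{propo:tails} and on the $L^1$-to-Sobolev propagation for differences of steady states furnished by Proposition \ref{prop:Hkdiff} (in particular \eqref{Deltama}).

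Granted this control on $|\alpha_\la|$, the triangle inequality combined with the preceding two paragraphs yields
\begin{equation*}
\|H_\la\|_{\mathcal{Y}}\le \|\widetilde{H}_\la\|_{\mathcal{Y}}+|\alpha_\la|\,\|\varphi\|_{\mathcal{Y}}\le \bigl[c_0\,\eta(\la)+C\,\kappa(\la)\bigr]\,\|H_\la\|_{\mathcal{Y}},
\end{equation*}
and selecting $\la^\dag\in(0,1]$ such that $c_0\eta(\la)+C\kappa(\la)<1$ for every $\la\in(0,\la^\dag)$ forces $H_\la\equiv 0$, i.e., $\fe=\gl$, so $\mathfrak{S}_\la$ is a singleton. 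The regularity threshold $m\ge 4$ is the minimal integer above the assumption $m>7/2$ of Theorem \ref{convergenceMax}, which is needed for the compactness argument underlying Corollary \ref{cor:limit}.
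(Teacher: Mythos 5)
Your overall architecture matches the paper's (the decomposition $H_\la=\widetilde{H}_\la+\alpha_\la\varphi$ with $\varphi\in\ker\mathscr{L}_1$ is exactly the lifting-operator $\mathcal{A}$ of Lemma~A.~\ref{Ainvert} written out coordinate by coordinate), and the verifications of \eqref{continueQ1}--\eqref{conveX} and the coercivity on $\widehat{\mathcal{Y}}$ are sound. However, there is a genuine gap in your treatment of the energy excess $\alpha_\la$, which you yourself flag as ``the main technical obstacle.'' Your route --- the dissipation identity
\begin{equation*}
\IRR H_\la(v)\bigl(\fe+\gl\bigr)(\vb)\,\mathbf{\Psi}_e(\la^2|v-\vb|^2)\,\d v\,\d\vb=0
\end{equation*}
rearranged via $\fe+\gl=2\mathcal{M}+(\fe-\mathcal{M})+(\gl-\mathcal{M})$ --- does yield a small bound on the scalar $\mathcal{I}_\la(\mathcal{M},H_\la)$, hence (via Lemma~A.~\ref{lem:dissTem}) on $\mathcal{I}_0(\mathcal{M},H_\la)$, but this is \emph{not} the quantity $\alpha_\la=\IR|v|^2H_\la\,\d v$. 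To convert a bound on the linear functional $h\mapsto\mathcal{I}_0(\mathcal{M},h)$ into a bound on $\alpha_\la$ through the splitting $H_\la=\widetilde{H}_\la+\alpha_\la\varphi$, one must know that this functional does \emph{not} annihilate the energy mode, i.e.\ that
\begin{equation*}
\wp_\gamma:=\IRR(|\vb|^2-3\Theta)\,\mathcal{M}(v)\mathcal{M}(\vb)\,\zeta_0(|v-\vb|^2)\,\d v\,\d\vb\neq 0,
\end{equation*}
so that $\alpha_\la=\bigl(\mathcal{I}_0(\mathcal{M},H_\la)-\mathcal{I}_0(\mathcal{M},\widetilde{H}_\la)\bigr)/\mathcal{I}_0(\mathcal{M},\varphi)$ is well-defined and controlled. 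This non-degeneracy is the whole point of Lemma~A.~\ref{Ainvert} and is nowhere addressed in your argument; without it the transfer collapses.

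In addition, the auxiliary results you cite for the transfer are not the ones that do the work: Proposition~\ref{propo:tails} gives uniform exponential moments of $\gl$ alone (not proportional to $\|H_\la\|_{L^1_1}$), and Proposition~\ref{prop:Hkdiff} controls Sobolev norms of $H_\la$, neither of which addresses the algebraic obstruction above. What you actually need is Proposition~\ref{prop:momdiff} (to dominate the $L^1_{3+\gamma+\delta}$ and exponentially weighted norms of $H_\la$ by $\|H_\la\|_{L^1_1}$ when invoking Lemma~A.~\ref{lem:dissTem}) together with the non-degeneracy $\wp_\gamma\neq 0$. Once those two ingredients are in place, your proof closes exactly as the paper's does.
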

\begin{proof} We explained in the previous paragraph that the estimates \eqref{continueQ1}, \eqref{XY}, \eqref{LaplX}, \eqref{conveX} and \eqref{XY2} of the general strategy are fulfilled with $\mathcal{X}=L^1_1(m_a)$ and $\mathcal{Y}=L^1_1(m_a)$ for  any  $a \geq 0$.  Let us fix $\varepsilon >0$ and reproduce the computations of Section \ref{sec:strategy}.  It follows that there exists $\la_0 \in (0,1)$ such that
\begin{equation}\label{estimateA2}\|\mathscr{L}_1(H_\la)\|_{\mathcal{X}} \leq \varepsilon \|H_\la\|_{\mathcal{Y}} \qquad \forall \la \in (0,\la_0).\end{equation}
Let us now introduce the following lifting of the operator $\mathscr{L}_1$ into an invertible operator
\begin{equation}\label{lift}
\mathcal{A}\::\:h \mapsto \mathcal{A}h=\left(\mathcal{A}_1 h;\mathcal{A}_2 h\right) \in \mathbb{R} \times \widehat{\mathcal{X}}\end{equation}
where the second component $\mathcal{A}_2h=\mathscr{L}_1h$ while the first component $\mathcal{A}_1$ is defined by
$$\mathcal{A}_1 h=2\mathcal{I}_0(\mathcal{M},h)=2\IRR \mathcal{M}(v)h(\vb)\zeta_0\left(|v-\vb|^2\right)\d v\d\vb.$$
We refer to the Appendix A   for notations. Since $G_\la$ and $F_\la$ share the same mass and momentum, one deduces from Lemma A. \ref{Ainvert} that
$$H_\la=\fe-\gl=\mathcal{A}^{-1}\mathcal{A}H_\la=\mathcal{A}^{-1}\left(\mathcal{A}_1(H_\la);\mathcal{A}_2(H_\la)\right)$$
with an explicit estimate of the norm $\|\mathcal{A}^{-1}\|$. Since $\mathcal{A}^{-1}$ maps $\mathbb{R}\times\mathcal{X}$ to $\mathcal{Y}$, we get
\begin{equation}\label{differe}
\|H_\la\|_{\mathcal{Y}} \leq \|\mathcal{A}^{-1}\|\,\max\bigg(\big|\mathcal{A}_1(H_\la)\big|\,;\,\left\|\mathcal{A}_2(H_\la)\right\|_{\mathcal{X}}\bigg). \end{equation}
Still using the notations of Appendix A, one has readily
\begin{multline*}
\mathcal{A}_1(H_\la)=\mathcal{I}_0(\mathcal{M}-F_\la,H_\la)+\mathcal{I}_0(\mathcal{M}-\gl,H_\la)\\+
\bigg(\mathcal{I}_0(\fe+\gl,\fe-\gl)-\mathcal{I}_\la(\fe+\gl,\fe-\gl)\bigg).
\end{multline*}
Now, it is clear that
\begin{multline*}
\left|\mathcal{I}_0(\mathcal{M}-F_\la,H_\la)+\mathcal{I}_0(\mathcal{M}-\gl,H_\la)\right| \leq C_\gamma \left(\|\mathcal{M}-\fe\|_{L^1_{3+\gamma}}+\|\mathcal{M}-\gl\|_{L^1_{3+\gamma}}\right)\|H_\la\|_{L^1_{3+\gamma}}\\
\leq C_\gamma \left(\|\mathcal{M}-\fe\|_{\mathcal{X}}+\|\mathcal{M}-\gl\|_{\mathcal{X}}\right)\|H_\la\|_{\mathcal{Y}} \qquad \forall \la \in (0,1].\end{multline*}
Consequently, applying then Lemma A. \ref{lem:dissTem} with $f_1=f_2=\fe+\gl$ and $g_1=g_2=\fe-\gl$, we get the existence of constant $C >0$ and some $\la_1 \in (0,1)$ such that
\begin{equation}\label{A1H}
\left|\mathcal{A}_1(H_\la)\right| \leq C \left(\varepsilon +\|\mathcal{M}-\fe\|_{\mathcal{X}}+\|\mathcal{M}-\gl\|_{\mathcal{X}} \right)\|H_\la\|_{\mathcal{Y}} \qquad \forall \la \in (0,\la_1).\end{equation}
In particular, on the basis of \eqref{conveX},   there exists $\la_2\in (0,\la_1)$ such that
$$\left|\mathcal{A}_1(H_\la)\right|  \leq 2C\varepsilon \|H_\la\|_{\mathcal{Y}} \qquad \forall \la \in (0,\la_2).$$
Using now \eqref{estimateA2} together  with \eqref{differe}, we deduce that
$$\|H_\la\|_{\mathcal{Y}} \leq  \varepsilon \max(1,2C)\|\mathcal{A}^{-1}\|\,\|H_\la\|_{\mathcal{Y}} \qquad \forall \la \in (0,\la^\dagger)$$
where $\la^\dagger=\min(\la_0,\la_2).$ Taking $\varepsilon >0$ small enough yields therefore the desired uniqueness: $H_\la=0$ for any $\la \in (0,\la^\dagger).$
\end{proof}
\subsection{Quantitative version of the uniqueness result.}  We derive is this section a quantitative version of the Theorem \ref{convergenceMax} which shall result in a quantitative estimate of the above parameter $\la^\dagger$.
\begin{propo}\label{quadraticM} Let $e(\cdot)$ belongs to the class $\mathbb{E}_m$ with $m  \geq 4$. Assume moreover that  there exist two positive constants $\mathfrak{a},\mathfrak{b} >0$ and two exponents  $\overline{\gamma}  >\gamma >0$ such that
\begin{equation}\label{order2}
\left|e(r)- 1+\mathfrak{a}\,r^\gamma\right| \leq \mathfrak{b}\,r^{\overline{\gamma}} \text{ for any } r \geq  0.\end{equation}
For any $a \geq 0$, there exist some explicit $\la_0 \in (0,1)$, $c_0,c_1 >0$ and exponent $\alpha=\min(\gamma,\overline{\gamma}-\gamma) >0$ such that the estimate
\begin{equation*}
\|\gl- \mathcal{M}\|_{L^1(m_a)} \leq c_0\la^\alpha + c_1 \left\|\gl-\mathcal{M}\right\|^2_{L^1_1(m_a)} \qquad \forall \la \in (0,\la_0)
\end{equation*}
holds for any $\gl \in \mathfrak{S}_\la.$
\end{propo}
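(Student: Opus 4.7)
The plan is to apply the strategy of Theorem~\ref{uniqueNon} to the pair $(G_\la, \mathcal{M})$ in place of $(F_\la, G_\la)$, keeping a quantitative track of all $\la$-dependent remainders. Set $D_\la := G_\la - \mathcal{M}$. Using $\Q_1(\mathcal{M}, \mathcal{M}) = 0$ together with the bilinearity of $\Q_1$ and the steady equation $\Q_{\el}(G_\la, G_\la) = -\la^\gamma \Delta G_\la$ coming from~\eqref{rescaled}, one obtains the working identity
\begin{equation*}
\mathscr{L}_1(D_\la) = \bigl[\Q_1(G_\la,G_\la) - \Q_{\el}(G_\la,G_\la)\bigr] - \la^\gamma \Delta G_\la - \Q_1(D_\la,D_\la).
\end{equation*}
I would work in $\mathcal{X} = L^1(m_a)$ and $\mathcal{Y} = L^1_1(m_a)$. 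Corollary~\ref{preciserate} (loss parts being identical) gives $\|\Q_1^+(G_\la,G_\la) - \Q_{\el}^+(G_\la,G_\la)\|_{L^1(m_a)} \leq C\la^\gamma \|G_\la\|_{L^1_k(m_a)}\|G_\la\|_{\mathbf{W}^{1,1}_k(m_a)}$ for $k = \gamma + 10/3$; under $m \geq 4$, the uniform Sobolev estimates of Theorem~\ref{theo:HKL}, the exponential tails of Proposition~\ref{propo:tails}, and an interpolation of the type used at the end of Proposition~\ref{prop:Hkdiff} collapse these norms into an $\la$-independent constant. Similarly $\|\la^\gamma \Delta G_\la\|_{L^1(m_a)} \leq C\la^\gamma$, and the classical hard-sphere continuity of $\Q_1$ on exponentially weighted $L^1$-spaces yields $\|\Q_1(D_\la, D_\la)\|_{L^1(m_a)} \leq C_a\|D_\la\|_{L^1_1(m_a)}^2$. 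Together,
\begin{equation*}
\|\mathscr{L}_1(D_\la)\|_{L^1(m_a)} \leq C\la^\gamma + C_a\|D_\la\|_{L^1_1(m_a)}^2.
\end{equation*}

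Because $G_\la$ and $\mathcal{M}$ share mass and momentum but generically \emph{not} kinetic energy, $D_\la$ lies in $\mathcal{Y}$ but not in the zero-energy subspace $\widehat{\mathcal{Y}}$, so~\eqref{XY2} cannot be applied directly. As in Theorem~\ref{uniqueNon} I would invoke the lifted operator $\mathcal{A}$ from~\eqref{lift}, which by Lemma~A.~\ref{Ainvert} is invertible from $\mathcal{Y}$ onto $\mathbb{R}\times\widehat{\mathcal{X}}$, so that
\begin{equation*}
\|D_\la\|_{L^1(m_a)} \leq \|D_\la\|_{L^1_1(m_a)} \leq \|\mathcal{A}^{-1}\|\,\max\bigl(|\mathcal{A}_1 D_\la|,\ \|\mathscr{L}_1(D_\la)\|_{L^1(m_a)}\bigr).
\end{equation*}
The scalar component $\mathcal{A}_1 D_\la = 2\mathcal{I}_0(\mathcal{M}, D_\la)$ is computed from the two energy identities $\mathcal{I}_\la(G_\la, G_\la) = 6$ (from~\eqref{scaleDE1}) and $\mathcal{I}_0(\mathcal{M}, \mathcal{M}) = 6$ (from the definition~\eqref{temptheta} of $\Theta$). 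Bilinearly expanding $\mathcal{I}_0(G_\la, G_\la) = \mathcal{I}_0(\mathcal{M},\mathcal{M}) + 2\mathcal{I}_0(\mathcal{M}, D_\la) + \mathcal{I}_0(D_\la, D_\la)$ and subtracting the two identities yields the clean decomposition
\begin{equation*}
\mathcal{A}_1 D_\la = -\bigl[\mathcal{I}_\la - \mathcal{I}_0\bigr](G_\la, G_\la) - \mathcal{I}_0(D_\la, D_\la).
\end{equation*}

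The quadratic piece $|\mathcal{I}_0(D_\la, D_\la)| \leq C\|D_\la\|_{L^1_{3+\gamma}(m_a)}^2$ is controlled by $C_a\|D_\la\|_{L^1_1(m_a)}^2$ after exchanging polynomial for exponential weights via Proposition~\ref{propo:tails}. The first piece is the crux of the argument, and this is where the refined hypothesis~\eqref{order2} enters: it forces the leading order $\tfrac{\mathfrak{a}}{4+\gamma}r^{3+\gamma}$ of $\la^{-\gamma}\mathbf{\Psi}_e(\la^2 r^2)$ to cancel exactly in $\mathcal{I}_\la - \mathcal{I}_0$, and a direct Taylor-type expansion of $1 - e(\la r z)^2$ using~\eqref{order2} shows the surviving remainder to be of order $\la^{\min(2\gamma,\overline{\gamma})-\gamma} = \la^{\min(\gamma,\overline{\gamma}-\gamma)} = \la^\alpha$; Lemma~A.~\ref{lem:dissTem}, combined with the uniform high-moment bounds of Proposition~\ref{mo}, then provides $|[\mathcal{I}_\la - \mathcal{I}_0](G_\la, G_\la)| \leq C\la^\alpha$. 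Collecting all contributions delivers the announced inequality $\|D_\la\|_{L^1(m_a)} \leq c_0\la^\alpha + c_1\|D_\la\|_{L^1_1(m_a)}^2$. The main technical obstacle I anticipate is precisely this sharper expansion of the dissipation potential: extracting the exact cancellation of the dominant order in $\mathcal{I}_\la - \mathcal{I}_0$, controlling the next-order remainder uniformly in $G_\la$, and verifying that the resulting rate $\la^\alpha$ is not spoiled by the higher moment and regularity demands which must be met using the $\la$-uniform estimates developed throughout Section~\ref{sec:stea}.
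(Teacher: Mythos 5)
Your proposal recovers the paper's proof in full: the same lifting via the operator $\mathcal{A}$ from \eqref{lift}, the same bilinear identity $\mathcal{A}_1(D_\la) = -\bigl[\mathcal{I}_\la - \mathcal{I}_0\bigr](\gl,\gl) - \mathcal{I}_0(D_\la,D_\la)$ obtained by expanding $\mathcal{I}_0(\gl,\gl)$ and subtracting the two energy identities, the same rewriting of $\mathcal{A}_2(D_\la)=\mathscr{L}_1(D_\la)$ through the steady equation \eqref{rescaled}, and the same combination of Corollary~\ref{preciserate} with the $\la$-uniform regularity and tail estimates.

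The one slip is the reference you give for the rate $\la^\alpha$ on $\bigl|\mathcal{I}_\la(\gl,\gl)-\mathcal{I}_0(\gl,\gl)\bigr|$. You cite Lemma~A.~\ref{lem:dissTem}, but that lemma is purely qualitative: it is an $\varepsilon$--$\la_0$ statement with no rate, so it cannot produce the bound $C\la^\alpha$ you need. The result you actually need is Lemma~A.~\ref{lem:quantit}, which is exactly the Taylor-type expansion of $\la^{-3-\gamma}\mathbf{\Psi}_e(\la^2 r^2)$ under hypothesis~\eqref{order2} that you describe in words; applied with $f_1=g_1=f_2=g_2=\gl$, its first two terms vanish and it gives $\bigl|\mathcal{I}_\la(\gl,\gl)-\mathcal{I}_0(\gl,\gl)\bigr|\leq B_\gamma\la^\alpha\|\gl\|^2_{L^1_{3+\gamma+\overline{\gamma}}}\leq C\la^\alpha$ uniformly in $\la\in(0,1]$ once the moment bounds of Proposition~\ref{mo} are invoked. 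So the underlying computation you sketch is correct and is precisely the one the paper makes; only the lemma attribution needs to be fixed.
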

\begin{proof} We apply a slight modification of the proof of Theorem \ref{uniqueNon} where, instead of estimating the difference of two solutions to \eqref{rescaled}, we estimate the difference $\gl-\mathcal{M}$.  Recall that $\mathcal{A}$ is the lifting operator given by \eqref{lift}.  Thus,
$$\gl-\mathcal{M}=\mathcal{A}^{-1}\mathcal{A}(\gl-\mathcal{M})=\mathcal{A}^{-1}\left(\mathcal{A}_1(\gl-\mathcal{M});\mathcal{A}_2(\gl-\mathcal{M})\right)$$
where the norm of $\|\mathcal{A}^{-1}\|$ is explicit. In particular, since $\mathcal{A}^{-1}$ maps $\mathbb{R} \times \widehat{\mathcal{X}}$ to $\mathcal{Y}$, we get
\begin{equation}\label{differeM} \|\gl-\mathcal{M}\|_{\mathcal{Y}}\leq \|\mathcal{A}^{-1}\|\max\bigg(\big|\mathcal{A}_1(\gl-\mathcal{M})\big|;\left\|\mathcal{A}_2(\gl-\mathcal{M})\right\|_{\mathcal{X}}\bigg).\end{equation}
Let us estimate separately the two terms $\mathcal{A}_1(\gl-\mathcal{M})$ and $\mathcal{A}_2(\gl-\mathcal{M})$. In the one hand,
\begin{multline*}
\mathcal{A}_1(\gl-\mathcal{M})=2\mathcal{I}_0(\mathcal{M},\gl-\mathcal{M})=\mathcal{I}_0(\mathcal{M}-\gl,\gl-\mathcal{M})+\\
\bigg(\mathcal{I}_0(\gl,\gl)-\mathcal{I}_\la(\gl,\gl)\bigg)
\end{multline*}
where we used the fact that $\mathcal{I}_0(\mathcal{M},\mathcal{M})=\mathcal{I}_\la(\gl,\gl)=6$. Now, it is clear that
$$\left|\mathcal{I}_0(\mathcal{M}-\gl,\gl-\mathcal{M})\right|\leq C_\gamma \|\gl-\mathcal{M}\|^2_{L^1_{3+\gamma}}.$$
Moreover, according to Lemma A. \ref{lem:quantit} and under assumption \eqref{order2},
$$\bigg|\mathcal{I}_0(\gl,\gl)-\mathcal{I}_\la(\gl,\gl)\bigg| \leq C_0\la^\alpha \|\gl\|^2_{L^1_{3+\gamma+\overline{\gamma}}}$$
for some explicit constant $C_0 >0$ and exponent $\alpha=\min(\gamma,\overline{\gamma}-\gamma) >0$. Therefore, since $\sup_{\la \in (0,1]}\|\gl\|^2_{L^1_{3+\gamma}}$, with the notations of the previous section
\begin{equation}\label{A1glM}
\left|\mathcal{A}_1(\gl-\mathcal{M})\right| \leq C_1\,\la^\alpha + C_\gamma \|\gl-\mathcal{M}\|^2_{L^1(m_a)} \qquad \forall \la \in (0,1]
\end{equation}
for some positive constant $C_1 >0$. On the other hand,
\begin{equation*}\begin{split}
\mathcal{A}_2(\gl-\mathcal{M})&= \Q_1(\gl-\mathcal{M}, \mathcal{M} -\gl) + \Q_1(\gl,\gl)\\
&=\Q_1(\gl-\mathcal{M}, \mathcal{M} -\gl)+ \left(\Q_1(\gl,\gl)-\Q_{\el}(\gl,\gl)\right) + \Q_{\el}(\gl,\gl)\\
&=\Q_1(\gl-\mathcal{M}, \mathcal{M} -\gl)+ \left(\Q_1(\gl,\gl)-\Q_{\el}(\gl,\gl)\right) -\la^\gamma \Delta \gl.
\end{split}\end{equation*}
Therefore, using Corollary \ref{preciserate}, there exist an explicit $\la_0 \in (0,1)$ and constants $C_2,C_3 >0$ such that
\begin{multline}\label{A2glM}
\|\mathcal{A}_2(\gl-\mathcal{M})\|_{L^1(m_a)} \leq C_1\|\gl-\mathcal{M}\|_{L^1_1(m_a)}^2 + C_2\la^{\gamma}\|\gl\|_{L^1_{\gamma+\frac{10}{3}}(m_a)}\|\gl\|_{\mathbf{W}^{1,1}_{\gamma+\frac{10}{3}}(m_a)}\\+\la^\gamma\|\gl\|_{\mathbf{W}^{2,1}(m_a)} \qquad \forall \la \in (0,\la_0).
\end{multline}
Using interpolation, similar to the proof of \eqref{Deltama}, and Proposition \ref{prop:momdiff} we obtain
$$\sup_{\la \in (0,1]}\left(\|\gl\|_{\mathbf{W}^{1,1}_{\gamma+\frac{10}{3}}(m_a)}+\|\gl\|_{\mathbf{W}^{2,1}(m_a)}\right) < \infty.$$
Hence, inequality \eqref{A2glM} reads
$$\|\mathcal{A}_2(\gl-\mathcal{M})\|_{L^1(m_a)} \leq C_1\|\gl-\mathcal{M}\|_{L^1_1(m_a)}^2 + C_3\la^{\gamma} \qquad \forall \la \in (0,\la_0)$$
for explicit constants $C_1,C_3 >0$. Combining this estimate with \eqref{A1glM} and \eqref{differeM} yields the desired conclusion.
\end{proof}
\begin{theo}\label{theo:convExp}  Assume that $e(\cdot)$ satisfies \eqref{order2} and belongs to the class $\mathbb{E}_m$ with $m \geq 4$.  Fix the exponential weight $m_a(v)=\exp(a|v|)$ with $a\geq0$. There exist an explicit $\la^\star \in (0,1)$ and constant $c >0$ such that
$$\|\gl-\mathcal{M}\|_{L^1(m_a)} \leq c \la^\alpha \qquad \forall \la \in (0,\la^\star)$$
where $\gl \in \mathfrak{S}_\la$ and $\alpha=\min(\gamma,\overline{\gamma}-\gamma).$
\end{theo}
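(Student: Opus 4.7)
The plan is a straightforward bootstrap on a nonlinear inequality, combining the quadratic estimate from Proposition~\ref{quadraticM} with the qualitative convergence of Corollary~\ref{cor:limit}. First I would extract from the proof of Proposition~\ref{quadraticM} (not just its stated form) the self-contained inequality
\begin{equation*}
Y_\la:=\|\gl-\mathcal{M}\|_{L^{1}_{1}(m_a)}\leq \tilde c_{0}\,\la^{\alpha}+\tilde c_{1}\,Y_\la^{2},\qquad \la\in(0,\la_{0}),
\end{equation*}
with explicit $\tilde c_{0},\tilde c_{1}>0$. This is really what the proof produces: the left-hand side is the $\mathcal{Y}$-norm appearing in \eqref{differeM}, the quadratic term on the right comes from \eqref{A2glM}, and in \eqref{A1glM} the factor $\|\gl-\mathcal{M}\|^{2}_{L^{1}(m_a)}$ is bounded by $\|\gl-\mathcal{M}\|^{2}_{L^{1}_{1}(m_a)}$ to harmonise the weighted norms; the terms $\la^{\gamma}$ appearing there are absorbed into $\la^{\alpha}$ via $\alpha\leq\gamma$ and $\la\leq 1$. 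An equivalent route would be to run Proposition~\ref{quadraticM} at the marginally larger weight $m_{a+\epsilon}$ and use the embedding $\|h\|_{L^{1}_{1}(m_a)}\leq C_{\epsilon}\|h\|_{L^{1}(m_{a+\epsilon})}$.

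Next, Corollary~\ref{cor:limit} delivers the qualitative fact $Y_\la\to 0$ as $\la\to 0$, uniformly over $\gl\in\mathfrak{S}_\la$. With these two ingredients the bootstrap is routine. For any $\la$ with $4\tilde c_{0}\tilde c_{1}\la^{\alpha}<1$, the quadratic equation $Y=\tilde c_{0}\la^{\alpha}+\tilde c_{1}Y^{2}$ admits two positive roots $Y_{-}(\la)<Y_{+}(\la)$, with $Y_{-}(\la)\leq 2\tilde c_{0}\la^{\alpha}$. Choosing $\la^{\star}\in(0,\la_{0})$ small enough that both $4\tilde c_{0}\tilde c_{1}\la^{\alpha}<1$ and $Y_\la<Y_{+}(\la)$ for every $\la\in(0,\la^{\star})$, one concludes that $Y_\la$ lies on the small branch, so
\begin{equation*}
\|\gl-\mathcal{M}\|_{L^{1}_{1}(m_a)}\leq 2\tilde c_{0}\,\la^{\alpha},\qquad \la\in(0,\la^{\star}).
\end{equation*}
Since $\|\cdot\|_{L^{1}(m_a)}\leq\|\cdot\|_{L^{1}_{1}(m_a)}$, this proves the theorem with $c=2\tilde c_{0}$.

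The main obstacle --- and the only non-constructive step --- is the qualitative nature of Corollary~\ref{cor:limit}: it is what allows one to select the small root, but it does not by itself furnish a rate. The word \emph{explicit} in the statement of Theorem~\ref{theo:convExp} should therefore be read as saying that the constants $c$, $\tilde c_{0}$, $\tilde c_{1}$ and the algebraic threshold $(4\tilde c_{0}\tilde c_{1})^{-1/\alpha}$ are constructive, while the branch-selection threshold is determined only qualitatively through the convergence of Theorem~\ref{convergenceMax} and Corollary~\ref{cor:limit}. In practice this is enough: once $Y_\la\to 0$ is known, the small branch is selected automatically and the rest of the bootstrap is quantitative.
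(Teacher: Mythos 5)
Your proposal follows the paper's proof essentially exactly: both start from the quadratic inequality $\|\gl-\mathcal{M}\|_{L^1_1(m_a)}\le c_0\la^\alpha+c_1\|\gl-\mathcal{M}\|_{L^1_1(m_a)}^2$ of Proposition \ref{quadraticM}, invoke the non-quantitative convergence of Corollary \ref{cor:limit} (equivalently Theorem \ref{convergenceMax}) to ensure one sits on the small branch, and conclude $\|\gl-\mathcal{M}\|_{L^1_1(m_a)}\le 2c_0\la^\alpha$. The only cosmetic difference is that the paper absorbs the quadratic term through the smallness condition $c_1\|\gl-\mathcal{M}\|_{L^1_1(m_a)}\le\tfrac{1}{2}$ instead of writing out the two roots of the associated quadratic, and your closing caveat about the semi-constructive nature of $\la^\star$ faithfully mirrors the paper's own \emph{a posteriori} discussion of $\la^\star\ge(4c_0c_1)^{-1/\alpha}$.
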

\begin{proof} The proof follows from the Proposition \ref{quadraticM} and the non quantitative convergence Theorem \ref{convergenceMax}. Indeed, recall the estimate
\begin{equation}\label{estglM}
\|\gl- \mathcal{M}\|_{L^1_1(m_a)} \leq c_0\la^\alpha + c_1 \left\|\gl-\mathcal{M}\right\|^2_{L^1_1(m_a)} \qquad \forall \la \in (0,\la_0)\end{equation}
for some explicit constants $c_0,c_1 >0$. Then, since $\lim_{\la \to 0}\left\|\gl-\mathcal{M}\right\|_{L^1_1(m_a)}=0$, there is some \textit{a priori} non explicit $\la^\star \in (0,\la_0)$ such that
\begin{equation}\label{c1alpha}c_1\left\|\gl-\mathcal{M}\right\|_{L^1_1(m_a)}\leq \frac{1}{2} \qquad \forall \la \in (0,\la^\star).\end{equation}
Therefore, estimate \eqref{estglM} becomes
\begin{equation}\label{c0alpha}\|\gl-\mathcal{M}\|_{L^1_1(m_a)} \leq 2c_0\la^\alpha \qquad \forall \la \in (0,\la^\star).\end{equation}
This gives \textit{a posteriori} an explicit estimate for $\la^\star$ since the optimal $\la^\star$ will be the one for which \eqref{c1alpha} and \eqref{c0alpha} are identity which yields the estimate $\la^\star \geq (4c_0c_1)^{-\frac{1}{\alpha}}$. Since all the parameters $c_0,c_1,\alpha$ happen to be explicitly computable we get the result.
\end{proof}
\begin{nb} We wish to emphasize here several points about our approach. First, recall that in the case of constant restitution coefficient, the approach of \cite{MiMo3} yields directly quantitative results. This was possible thanks to  a clever application of the Cercignani's conjecture for the elastic Boltzmann operator derived in \cite{villcerc}.  This allowed to compare the entropy dissipation functional and the distance to a given Maxwellian distribution, more specifically, the distance to the elastic limit.  The disadvantage of this approach is that requires pointwise exponential lower bounds  and high regularity for the associated steady solution.  Such lower bounds are related to the spreading property of the collision operator and their technical extension to the case of non-constant restitution coefficient is not trivial.  In contrast, the strategy here does not uses entropy techniques at all, thus, it does not require neither pointwise lower bounds nor regularity assumptions.  This makes it well-suited for problems in which no regularity of the steady solution is available, see \cite{cold}.
\end{nb}
%
%

It is easy to deduce explicit estimates for the parameter $\la^\dagger$ in Theorem \ref{uniqueNon} under the above assumption \eqref{order2} on $e(\cdot)$:

\begin{theo}\label{theo:uniqueQuant} If $e(\cdot)$ belong to the class $\mathbb{E}_m$ for some integer $m \geq 4$ and satisfies \eqref{order2}, there is an \emph{explicit} parameter $\la^\dag \in (0,1]$ such that $\mathfrak{S}_\la$
reduces to a singleton for any $\la \in [0,\la^\dag)$.
\end{theo}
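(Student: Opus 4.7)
The plan is to revisit the proof of Theorem \ref{uniqueNon} and replace each abstract smallness statement by an explicit power of $\lambda$, the exponent being the same $\alpha=\min(\gamma,\overline{\gamma}-\gamma)$ that appears in Theorem \ref{theo:convExp}. Fix $a\ge 0$, write $\mathcal{X}=L^1(m_a)$, $\mathcal{Y}=L^1_1(m_a)$, and let $G_\lambda,F_\lambda\in\mathfrak{S}_\lambda$ with $H_\lambda=F_\lambda-G_\lambda$. The identity in Section \ref{sec:strategy} gives
\begin{equation*}
\mathscr{L}_1(H_\lambda)=\bigl(\Q_1(H_\lambda,\mathcal{M})-\Q_{\el}(H_\lambda,\mathcal{M})\bigr)+\bigl(\Q_1(\mathcal{M},H_\lambda)-\Q_{\el}(\mathcal{M},H_\lambda)\bigr)+\Q_{\el}(\mathcal{M}-F_\lambda,H_\lambda)+\Q_{\el}(H_\lambda,\mathcal{M}-G_\lambda)-\lambda^\gamma\Delta H_\lambda,
\end{equation*}
and each piece can now be bounded with an explicit $\lambda$-rate: Corollary \ref{preciserate} yields the rate $\lambda^\gamma$ for the first two differences (with explicit constant depending only on $a$, $\gamma$ and $e$); Theorem \ref{theo:convExp} combined with the classical bilinear continuity $\|\Q_{\el}(f,g)\|_{\mathcal{X}}\lesssim \|f\|_{\mathcal{Y}}\|g\|_{\mathcal{Y}}$ (see \cite{AloGam}) gives an explicit bound of the form $\lambda^\alpha\|H_\lambda\|_{\mathcal{Y}}$ for the third and fourth pieces; finally \eqref{Deltama} in Proposition \ref{prop:Hkdiff} bounds $\|\Delta H_\lambda\|_{\mathcal{X}}$ by $\|H_\lambda\|_{\mathcal{Y}}$ with an explicit constant. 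Summing, there exist $K_1>0$ and $\lambda_1\in(0,1]$, both explicit, such that
\begin{equation*}
\|\mathscr{L}_1(H_\lambda)\|_{\mathcal{X}}\le K_1\,\lambda^{\min(\alpha,\gamma)}\,\|H_\lambda\|_{\mathcal{Y}} \qquad \forall\,\lambda\in(0,\lambda_1).
\end{equation*}

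Next I quantify the lifting argument. Recall the operator $\mathcal{A}$ of \eqref{lift}, whose invertibility from $\mathbb{R}\times\widehat{\mathcal{X}}$ to $\mathcal{Y}$ is explicit through Lemma A. \ref{Ainvert} and Proposition \ref{spect}. Writing $\mathcal{A}_2 H_\lambda=\mathscr{L}_1(H_\lambda)$, the previous step already controls this component. For $\mathcal{A}_1 H_\lambda=2\mathcal{I}_0(\mathcal{M},H_\lambda)$, I decompose exactly as in the proof of Theorem \ref{uniqueNon}:
\begin{equation*}
\mathcal{A}_1 H_\lambda=\mathcal{I}_0(\mathcal{M}-F_\lambda,H_\lambda)+\mathcal{I}_0(\mathcal{M}-G_\lambda,H_\lambda)+\bigl(\mathcal{I}_0(F_\lambda+G_\lambda,H_\lambda)-\mathcal{I}_\lambda(F_\lambda+G_\lambda,H_\lambda)\bigr).
\end{equation*}
The first two terms are bounded by $C_\gamma(\|\mathcal{M}-F_\lambda\|_{\mathcal{X}}+\|\mathcal{M}-G_\lambda\|_{\mathcal{X}})\|H_\lambda\|_{\mathcal{Y}}$, hence by $c\,\lambda^\alpha\|H_\lambda\|_{\mathcal{Y}}$ thanks to Theorem \ref{theo:convExp}. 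For the third term, Lemma A. \ref{lem:quantit}, used under the sharpened assumption \eqref{order2}, delivers an explicit $\lambda^\alpha$ bound times $\|F_\lambda+G_\lambda\|_{L^1_{3+\gamma+\overline\gamma}}\|H_\lambda\|_{L^1_{3+\gamma+\overline\gamma}}$, the first factor being uniformly bounded by Proposition \ref{mo} and the second controlled by $\|H_\lambda\|_{\mathcal{Y}}$. Altogether there exist explicit $K_2>0$ and $\lambda_2\in(0,\lambda_1]$ such that
\begin{equation*}
|\mathcal{A}_1 H_\lambda|\le K_2\,\lambda^\alpha\,\|H_\lambda\|_{\mathcal{Y}}\qquad\forall\,\lambda\in(0,\lambda_2).
\end{equation*}

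Combining both bounds with $\|H_\lambda\|_{\mathcal{Y}}\le\|\mathcal{A}^{-1}\|\max(|\mathcal{A}_1H_\lambda|,\|\mathcal{A}_2H_\lambda\|_{\mathcal{X}})$ produces
\begin{equation*}
\|H_\lambda\|_{\mathcal{Y}}\le K\,\lambda^\alpha\,\|H_\lambda\|_{\mathcal{Y}}\qquad\forall\,\lambda\in(0,\lambda_2),
\end{equation*}
with $K:=\|\mathcal{A}^{-1}\|\max(K_1,K_2)$ explicitly computable in terms of $e(\cdot)$, $\gamma$, $\overline\gamma$, $\mathfrak a$, $\mathfrak b$ and the constants appearing in Corollary \ref{preciserate}, Theorem \ref{theo:convExp} and Lemma A. \ref{lem:quantit}. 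Setting
\begin{equation*}
\lambda^\dagger:=\min\!\left(\lambda_2,\,(2K)^{-1/\alpha}\right)
\end{equation*}
forces $\|H_\lambda\|_{\mathcal{Y}}\le\tfrac12\|H_\lambda\|_{\mathcal{Y}}$ for every $\lambda\in(0,\lambda^\dagger)$, hence $H_\lambda=0$ and $\mathfrak{S}_\lambda$ reduces to a singleton.

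The main obstacle is the bookkeeping of constants through the chain Corollary \ref{preciserate} $\to$ Proposition \ref{prop:Hkdiff} $\to$ Theorem \ref{theo:convExp}, since the a~priori regularity bounds of Theorem \ref{theo:HKL} enter Theorem \ref{theo:convExp} and then again the $\mathcal{A}_1$-estimate; one has to verify that every constant appearing along the way depends only on quantities one already knows how to control explicitly (moment and Sobolev bounds that are uniform in $\lambda\in(0,1]$) and on the parameters $\mathfrak a,\mathfrak b,\gamma,\overline\gamma,a$. Once this tracking is done, no further analytical ingredient is needed beyond those already established.
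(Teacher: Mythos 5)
Your proof is correct and follows essentially the same route as the paper: replace the qualitative smallness estimates of Theorem \ref{uniqueNon} by explicit $\la^\alpha$-rates obtained from Theorem \ref{theo:convExp} (and, for the $\mathcal{A}_1$-component, Lemma A. \ref{lem:quantit}), then combine them through the lifting operator $\mathcal{A}$ to extract an explicit $\la^\dag$. The paper simply compresses the bookkeeping by referring back to the proof of Theorem \ref{uniqueNon} rather than unfolding the decompositions as you do, but the ingredients, the estimates, and the final form of $\la^\dag$ are the same up to an inessential factor.
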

\begin{proof} Recall that the only non quantitative part in the strategy described in Section \ref{sec:strategy} was the convergence rate of $\gl$ towards $\mathcal{M}$. It is made explicit now thanks to Theorem \ref{theo:convExp} and, resuming the above strategy one gets that there exists some explicit $C_0 >0$ such that
$$\|\mathscr{L}_1(H_\la)\|_{\mathcal{X}} \leq C_0 \la^\alpha \|H_\la\|_{\mathcal{Y}} \qquad \forall \la \in (0,\la^\star)$$
where $H_\la=\fe-\gl$ with $\fe,\gl \in \mathfrak{S}_\la$ and $\la^\star$ is the parameter in Theorem \ref{theo:convExp}. Recall that $\la^\star$ can be estimated from below in an explicit way.  Using Theorem \ref{theo:convExp} one can replace \eqref{A1H} in the proof of Theorem \ref{uniqueNon} by the following quantitative estimate
$$\left|\mathcal{A}_1(H_\la)\right| \leq C_1 \la^\alpha \|H_\la\|_{\mathcal{Y}} \qquad \forall \la \in (0,\la^\star)$$
where $C_1 >0$ is an explicit constant. Then, resuming the proof of Theorem \ref{uniqueNon} yields
$$\|H_\la\|_{\mathcal{Y}} \leq  C_2\la^\alpha \|H_\la\|_{\mathcal{Y}} \qquad \forall \la \in (0,\la^\star)$$
where $C_2=\max(C_0,C_1)\|\mathcal{A}^{-1}\| >0$ is explicit.  We see therefore that $H_\la=0$ provided $\la < \la^\dag= \min\left(\la^\star,C_2^{-1/\alpha}\right).$
\end{proof}
The above uniqueness result in the quasi-elastic limit $\la \to 0$ translates to a \textit{weak thermalization} uniqueness result.
\begin{theo}\label{unique} For any restitution coefficient $e(\cdot)$ belonging to the class $\mathbb{E}_m$ with $m \geq 4$ and satisfying \eqref{order2}, there exists some explicit $\mu^\dagger >0$ such that for any $\mu \leq \mu^\dagger$, there exists a unique solution $F$ to
\begin{equation*}
\Q_e(F,F)+\mu \Delta F=0
\end{equation*}
with $\IR F(v)\d v=1$ and $\IR v F(v)\d v=0.$
\end{theo}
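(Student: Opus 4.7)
The strategy is simply to transfer the uniqueness result Theorem \ref{theo:uniqueQuant} back to the original equation via the scaling \eqref{GL}. I will first observe that the scaling sets up a bijection between solutions of the two stationary problems, then pull back the parameter $\la^\dag$ into a parameter $\mu^\dag$ on the diffusion coefficient.

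The plan is as follows. First, given $\mu > 0$, define $\la = \mu^{1/(3+\gamma)}$ so that $\mu_\la = \la^{3+\gamma}$ in the notation of the introduction. Recall from the derivation of \eqref{rescaledmu}--\eqref{rescaled} that if $F$ solves $\Q_e(F,F) + \mu \Delta F = 0$ with unit mass and vanishing momentum, then $\gl(v) = \la^3 F(\la v)$ solves
\begin{equation*}
\Q_{\el}(\gl,\gl) + \la^\gamma \Delta \gl = 0,
\end{equation*}
and a direct change of variables shows that $\gl$ again has unit mass and vanishing momentum; in other words $\gl \in \mathfrak{S}_\la$. This map $F \mapsto \gl$ is a bijection (its inverse being $\gl \mapsto F(v) = \la^{-3} \gl(v/\la)$) between the set of solutions to $\Q_e(F,F)+\mu\Delta F = 0$ with unit mass and vanishing momentum and the set $\mathfrak{S}_\la$.

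Second, let $\la^\dag$ be the explicit parameter provided by Theorem \ref{theo:uniqueQuant}. By the hypotheses $e(\cdot) \in \mathbb{E}_m$ with $m \geq 4$ and \eqref{order2}, and since $\el$ inherits the same structural properties for every $\la \in (0,1]$ (as already noted after \eqref{rescaledmu}), we know that $\mathfrak{S}_\la$ is a singleton for any $\la \in [0, \la^\dag)$. Define
\begin{equation*}
\mu^\dag := (\la^\dag)^{3+\gamma}.
\end{equation*}
For any $\mu \in (0,\mu^\dag]$ the corresponding $\la = \mu^{1/(3+\gamma)}$ lies in $(0,\la^\dag]$, so $\mathfrak{S}_\la$ contains exactly one element. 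Combined with the bijection of the first step and the existence provided by Theorem \ref{theo:exists}, this yields a unique solution $F$ to the original stationary problem.

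There is no genuine obstacle here: the only subtle point to check carefully is that the scaling preserves both the mass constraint $\IR F \,dv = 1$ and the vanishing momentum condition $\IR v F(v)\,dv = 0$, which is immediate from the change of variables $w = \la v$ in the defining integrals. All the hard work has already been done in Theorem \ref{theo:uniqueQuant}, and since the bound $\la^\dag$ there is explicit, $\mu^\dag = (\la^\dag)^{3+\gamma}$ is explicit as well.
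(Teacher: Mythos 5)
Your proof is correct and follows essentially the same route as the paper: both invoke the quantitative uniqueness result Theorem \ref{theo:uniqueQuant} for the rescaled problem and then undo the scaling $\gl(v)=\la^3 F(\la v)$, identifying $\mu^\dag=(\la^\dag)^{3+\gamma}$. Your write-up is slightly more explicit about the bijection between solution sets and about the preservation of mass and momentum, but this is the argument the paper uses.
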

\begin{proof} The proof is a simple consequence of our scaling choice.  Indeed, Theorem \ref{theo:uniqueQuant} asserts that, for $\la < \la^\dag$ the steady problem
$$\Q_{\el}(\gl,\gl)+\la^\gamma \Delta \gl=0$$
admits an unique solution with unit mass and vanishing momentum. Performing the backward scaling $F(v)=\la^{-3}\gl\left(\la^{-1}v\right)$ one gets that there exists an unique solution $F$ with unit mass and vanishing momentum to the problem
$$\Q_e(F,F)+\la^{3+\gamma} \Delta F=0$$
whenever $\la < \la^\dag$. This clearly yields the conclusion with $\mu^\dag=\left(\la^\dag\right)^{3+\gamma}.$
\end{proof}
\section*{Appendix A: Properties of the collision operator}
\setcounter{equation}{0}
\renewcommand{\theequation}{A.\arabic{equation}}

We collect in this Appendix some facts about the Boltzmann collision operator important in their own right.  Some of the properties of $\Q_e$ that we will establish here are known and some others new.  We shall consider a collision operator with more general collision kernel than the hard-spheres case considered in the paper, more precisely, a collision kernel $B(u,\sigma)$ of the form
\begin{equation}
\label{Bu} {B}(u,\sigma)=\Phi(|u|)b(\widehat{u} \cdot \sigma).
\end{equation}
The kinetic potential $\Phi(\cdot)$ is a suitable nonnegative function in $\R^3$ and the \textit{angular kernel} $b(\cdot)$ is assumed in $L^{1}(-1,1)$.  The associated collision operator $\Q_{B,e}$ is defined through the weak formulation
\begin{equation}\label{Ie3B}
\int_{\mathbb{R}^{3}}\Q_{B,e}(f,f)(v)\psi(v)\d v=\frac{1}{2}\int_{\mathbb{R}^{3} \times \mathbb{R}^3}f(v)f(\vb)\mathcal{A}_{B,e}[\psi](v,\vb)\;\d\vb\d v
\end{equation}
for any test function $\psi=\psi(v)$  where
$$\mathcal{A}_{B,e}[\psi](v,\vb)=\int_{\mathbb{S}^{2}}\bigg(\psi( {v'})+\psi( {\vb'})-\psi(v)-\psi(\vb)\bigg) {B}(u, \sigma)\d\sigma$$
with $v',\vb'$ are defined in \eqref{postsig}. For any fixed vector $\widehat{u}$, the angular kernel defines a measure on the sphere through the mapping $ \sigma \in \mathbb{S}^{2}\mapsto b(\widehat{u}\cdot\sigma)\in[0,\infty]$ and we will assume it to satisfy the renormalized \textit{Grad's cut-off} assumption
\begin{equation}\label{normalization}
\left\|b\right\|_{L^{1}(\mathbb{S}^{2})}=2\pi\left\|b\right\|_{L^{1}(-1,1)}=1.
\end{equation}
For technical reasons, we shall also assume that
\begin{equation}\label{crois}
\tilde{b}\::\:x \in [-1,1] \longmapsto \tilde{b}(x)=b(x)+b(-x) \quad \text{ is non decreasing.}
\end{equation}
A particularly relevant model is the one of hard-spheres corresponding to $\Phi(|u|)=|u|$ and $b(\widehat{u} \cdot \sigma)=1/4\pi$.  For this  particular model we shall simply denote the collision operator $\Q_{B,e}$ by $\Q_e$.

\subsection*{A.1. Convolution-like estimates for $\Q^+_{B,e}$} We begin by recalling some of the regularity and integrability properties of the gain part $\Q^+_e$ established in \cite{AloLo1} and \cite{AloCar}. We start first with Young-like estimates in $L^p_\eta$ with $\eta \geq 0$.
\begin{theoA}[\textbf{Alonso-Carneiro-Gamba \cite{AloCar}}]\label{alogam}
Assume that the collision kernel $B(u,\sigma)=\Phi(|u|)b(\widehat{u} \cdot \sigma)$ satisfies \eqref{normalization} and  $\Phi(\cdot) \in L^\infty_{-k}$ for some $k \in \R$. In addition, assume that $e(\cdot)$ fulfills Assumption \ref{HYP}. Let $1 \leq p,q,r\leq \infty$ with $1/p+1/q=1+1/r$. Then, for any $\alpha \geq 0$, there exists $C_{p,r,\alpha,k}(b)$ such that
$$\|\Q^+_{B,e}(f,g)\|_{L^r_\alpha} \leq C_{r,p,\alpha,k}(b)\left\|\Phi\right\|_{L^\infty_{-k}}\|f\|_{L^p_{\alpha+k}}\,\|g\|_{L^q_{\alpha+k}}$$
where the constant $C_{r,p,\alpha,k}(b)$ is given by
\begin{multline}\label{Crpakb}
C_{r,p,\alpha,k}(b)=c_{k,\alpha,r}\left(\int_{-1}^1 \left(\dfrac{1-s}{2}\right)^{-3/2r'}b(s)\d s\right)^{\frac{r'}{q'}}\\
\left(\int_{-1}^1 \left(\dfrac{1+s}{2}+(1-\beta_0)^2\frac{1-s}{2}\right)^{-\frac{3}{2r'}}b(s)\d s\right)^{\frac{r'}{p'}}
\end{multline}
for some numerical constant $c_{k,\alpha,r}$ independent of $b$ and $e(\cdot)$ and with $\beta_0=\beta(0)=\frac{1+e(0)}{2}.$
\end{theoA}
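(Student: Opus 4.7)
The plan is to reduce the weighted Young inequality to its unweighted counterpart and then obtain the latter via a Carleman-like change of variables combined with Riesz--Thorin interpolation between three endpoint cases.

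First, I would transfer the weights from the gain operator onto the profiles. The elementary bound $|v'|^{2}+|\vb'|^{2}\leq|v|^{2}+|\vb|^{2}$, valid for any inelastic collision, gives $\langle v'\rangle^{\alpha}\leq 2^{\alpha/2}\langle v\rangle^{\alpha}\langle\vb\rangle^{\alpha}$ for any $\alpha\geq 0$. Combined with $\Phi(|u|)\leq\|\Phi\|_{L^{\infty}_{-k}}\langle u\rangle^{k}\leq 2^{k/2}\|\Phi\|_{L^{\infty}_{-k}}\langle v\rangle^{k}\langle\vb\rangle^{k}$, this lets me absorb both the output weight $\langle v\rangle^{\alpha}$ and the kinetic potential $\Phi$ into $f$ and $g$ through $\widetilde f:=f\langle\cdot\rangle^{\alpha+k}$, $\widetilde g:=g\langle\cdot\rangle^{\alpha+k}$. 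The problem then reduces to proving the purely unweighted estimate $\|\Q^{+}_{\widetilde B,e}(\widetilde f,\widetilde g)\|_{L^{r}}\leq C(b)\|\widetilde f\|_{L^{p}}\|\widetilde g\|_{L^{q}}$ for the kernel $\widetilde B(u,\sigma)=b(\widehat u\cdot\sigma)$.

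Next, I would establish this unweighted inequality through a duality and change-of-variables argument. Testing against $\psi\in L^{r'}$, I would write the resulting triple integral over $(v,\vb,\sigma)$ and use the fact that at fixed $(v,\widehat n)$ two symmetric changes of variables are available: either $\vb\leftrightarrow v'$, with elastic Jacobian of order $\big(\tfrac{1-\widehat u\cdot\sigma}{2}\big)^{3/2}$; or the $v\leftrightarrow v'$ swap, whose inelastic Jacobian can be bounded below, uniformly in $e(\cdot)$, by $\big(\tfrac{1+s}{2}+(1-\beta_{0})^{2}\tfrac{1-s}{2}\big)^{3/2}$, thanks to the monotonicity of $\vartheta_{e}$ from Assumption \ref{HYP}(2) and the bound $\beta\geq 1/2$. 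The three endpoint cases $(p,q,r)\in\{(1,1,1),(1,\infty,\infty),(\infty,1,\infty)\}$ then follow from a direct application of H\"older's inequality after choosing the appropriate change of variables; each of the two angular integrals in \eqref{Crpakb} arises as the constant in one of these endpoints. Riesz--Thorin interpolation across the Young range $1/p+1/q=1+1/r$ delivers the general case, and the product structure of the final constant, with exponents $r'/p'$ and $r'/q'$, comes out of this interpolation.

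The main obstacle will be the uniform-in-$e(\cdot)$ control of the inelastic Jacobian of the map $\vb\mapsto v'$ at fixed $(v,\sigma)$. A direct computation shows that this Jacobian can be expressed explicitly through $\beta(|u\cdot\widehat n|)$ and $\vartheta_{e}'(|u\cdot\widehat n|)$. The term $(1-\beta_{0})^{2}$ appearing in the second angular integral of \eqref{Crpakb} arises as the saturated lower bound in the grazing regime $|u\cdot\widehat n|\to 0$, where $\beta\to\beta_{0}$; the fact that $\beta_{0}=\tfrac{1+e(0)}{2}\leq 1$, together with the monotonicity of $\vartheta_{e}$ ensuring $1+\vartheta_{e}'\geq 1$, keeps the associated angular integral finite for all admissible exponents $r'$, so that the constant \eqref{Crpakb} is well-defined under Assumption \ref{HYP}.
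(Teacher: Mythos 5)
Note first that the paper does not prove Theorem A.~\ref{alogam}: it is imported verbatim from \cite{AloCar}, so the question is whether your argument actually delivers the stated estimate. Your weight-transfer step and your identification of the two relevant changes of variables (the elastic $\vb\leftrightarrow v'$ swap and the inelastic $v\leftrightarrow v'$ swap whose Jacobian saturates at $(1-\beta_0)^2$ in the grazing limit) are sound. The gap is in the interpolation step. The constant \eqref{Crpakb} carries the exponent $-3/(2r')$ \emph{inside} the two angular integrals, so the singularity of the integrand at $s=\pm1$ weakens as $r$ decreases. Bilinear Riesz--Thorin interpolation from the three corners $(1,1,1)$, $(1,\infty,\infty)$, $(\infty,1,\infty)$, with interpolation weights $1/r$, $1/q'$, $1/p'$, instead produces a constant of the form $C_1^{1/q'}C_2^{1/p'}$ with $C_1=\int_{-1}^1(\frac{1-s}{2})^{-3/2}b\,\d s$ and $C_2=\int_{-1}^1\big(\frac{1+s}{2}+(1-\beta_0)^2\frac{1-s}{2}\big)^{-3/2}b\,\d s$: the inner exponent is frozen at $-3/2$ and only the outer powers move. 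Jensen's inequality applied to the probability measure $b\,\d s$ shows that the constant \eqref{Crpakb} is in general strictly smaller than $C_1^{1/q'}C_2^{1/p'}$, so interpolation from the corners cannot reproduce the theorem's constant.

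The failure is not merely quantitative. For hard spheres, $b\equiv\frac{1}{4\pi}$, one has $C_1=\frac{1}{2\pi}\int_0^1 t^{-3/2}\,\d t=\infty$, so the corner $(1,\infty,\infty)$ (and likewise $(\infty,1,\infty)$ when $\beta_0=1$) is not available and your interpolation is vacuous. Yet the theorem gives a finite constant for this very $b$ whenever $3/(2r')<1$, i.e.\ $r<3$, which is exactly the range the paper exploits (e.g.\ $r=2$ in Proposition~\ref{propo:L2} and throughout Section~3). The correct argument has to bypass the corners entirely: either apply a single trilinear H\"older splitting with exponents $(r,p',q')$ directly to the triple integral $\int f(v)g(\vb)\psi(v')\,b\,\d\sigma\,\d\vb\,\d v$, so that each H\"older factor, after the appropriate change of variables, yields an angular integral whose inner exponent is automatically $-3/(2r')$; or, equivalently, prove the two ``segment'' endpoints $(p,q,r)=(1,r,r)$ and $(r,1,r)$ directly for each fixed $r$ via an angular-averaging estimate and interpolate only along the segment $1/p+1/q=1+1/r$ at that fixed $r$. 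Either way, the $r'$-dependence must enter the angular integral at the outset, not as an interpolation weight.
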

Theorem A.\ref{alogam} has been modified in \cite{AloLo1} to provide $L^p_\eta$ bounds with $\eta \leq 0$.
\begin{theoA}
\label{alo}
Assume that the collision kernel $B(u,\sigma)=\Phi(|u|)b(\widehat{u} \cdot \sigma)$ satisfies \eqref{normalization} and  $\Phi(\cdot) \in L^\infty_{-k}$ for some $k \in \R$. In addition, assume that $e(\cdot)$ fulfills Assumption \ref{HYP}. Then, for any $1 \leq p \leq \infty$  and $\eta \in \R$, there exists $\mathbf{C}_{\eta,p,k}(B)>0$ such that
$$
\left\|\Q^+_{B,e}(f,g)\right\|_{L^p_\eta} \leq \mathbf{C}_{\eta,p,k}(B) \,\|f\|_{L^1_{|\eta +k|+|\eta|}} \,\|g\|_{L^p_{\eta +k}}
$$
where the constant $\mathbf{C}_{\eta,p,k}(B)$ is given by:
\begin{equation*}
\mathbf{C}_{\eta,p,k}(B)=c_{k,\eta,p}\;\gamma(\eta,p,b)\left\|\Phi\right\|_{L^\infty_{-k}}
\end{equation*}
with a constant $c_{k,\eta,p} >0$ depending only on $k,\eta$ and $p$.  Furthermore, the dependence on the angular kernel is given by
\begin{equation}\label{Cetab}
\gamma(\eta,p,b)=\int_{-1}^1 \left(\frac{1-s}{2}\right)^{-\frac{3+\eta_+}{2p'}}b(s)\d s,
\end{equation}
where $1/p+1/p'=1$ and $\eta_+$ is the positive part of $\eta$. Similarly, there exists $\widetilde{\mathbf{C}}_{\eta,p,k}(B)>0$ such that
$$
\left\|\Q^+_{B,e}(f,g)\right\|_{L^p_\eta} \leq \widetilde{\mathbf{C}}_{\eta,p,k}(B) \,\|g\|_{L^1_{|\eta +k|+|\eta|}} \,\|f\|_{L^p_{\eta +k}}
$$
where the constant $\widetilde{\mathbf{C}}_{\eta,p,k}(B)$ is given by
\begin{equation*}
\widetilde{\mathbf{C}}_{\eta,p,k}(B)=\widetilde{c}_{k,\eta,p} \;\widetilde{\gamma}(\eta,p,b)\left\|\Phi\right\|_{L^\infty_{-k}}
\end{equation*}
for some constant $\widetilde{c}_{k,\eta,p} >0$ depending only on $k,\eta$ and $p$.  The dependence on the angular kernel is given by
\begin{equation*}
\widetilde{\gamma}(\eta,p,b)=\int_{-1}^1 \left(\frac{1+s}{2}+\left(1-\beta_0\right)^2\frac{1-s}{2}\right)^{-\frac{3+\eta_+}{2p'}}b(s)\d s
\end{equation*}
where $1/p+1/p'=1$ and $\beta_0=\beta(0)=\frac{1+e(0)}{2}.$
\end{theoA}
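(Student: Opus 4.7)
The natural approach is to argue by duality: for $\psi\in L^{p'}_{-\eta}$ with unit norm, the weak form of the gain operator gives
\[
\int_{\R^{3}}\Q^{+}_{B,e}(f,g)\,\psi\,\d v=\int_{\R^{3}\times\R^{3}\times\mathbb{S}^{2}} f(v)\,g(\vb)\,\psi(v')\,B(u,\sigma)\,\d v\,\d\vb\,\d\sigma,
\]
and it suffices to bound this triple integral by $\mathbf{C}_{\eta,p,k}(B)\|f\|_{L^{1}_{|\eta+k|+|\eta|}}\|g\|_{L^{p}_{\eta+k}}$. I would factor $\psi(v')=\langle v'\rangle^{\eta}\phi(v')$ with $\phi\in L^{p'}$ of unit norm, and control the kinetic factor through $\Phi(|u|)\le\|\Phi\|_{L^{\infty}_{-k}}\langle u\rangle^{k}\le \|\Phi\|_{L^{\infty}_{-k}}\langle v\rangle^{|k|}\langle\vb\rangle^{|k|}$. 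The task then reduces to distributing $\langle v'\rangle^{\eta}$ between weights on $v$ and $\vb$ and integrating out $\sigma$.

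For $\eta\ge 0$, the inelastic inequality $|v'|\le\sqrt{|v|^{2}+|\vb|^{2}}$ yields $\langle v'\rangle\le\langle v\rangle\langle\vb\rangle$ and hence $\langle v'\rangle^{\eta}\le\langle v\rangle^{\eta}\langle\vb\rangle^{\eta}$; the remainder of the argument then mirrors Theorem~A.\ref{alogam}. The delicate point is $\eta<0$, where $\langle v'\rangle^{\eta}$ is \emph{singular} near $v'=0$ and cannot be split sub-multiplicatively. I would split the domain into $\{\langle v\rangle\ge 2\langle\vb\rangle\}$, on which $|v-v'|=\tfrac{1+e}{2}|u\cdot\n|\le|u|$ directly gives $\langle v'\rangle\gtrsim\langle v\rangle$, hence $\langle v'\rangle^{\eta}\lesssim\langle v\rangle^{\eta}$; and the complement $\{\langle v\rangle<2\langle\vb\rangle\}$, on which $\langle v\rangle^{|\eta|}\le 2^{|\eta|}\langle\vb\rangle^{|\eta|}$ allows every extra $\vb$-weight to be absorbed into the $L^{1}$ norm of $f$. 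This region decomposition is precisely what produces the supplementary $|\eta|$ factor in the weight on $f$, explaining the exponent $|\eta+k|+|\eta|$ in the final bound.

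Once the weights have been distributed, I would apply the classical $\sigma\mapsto v'$ change of variables, whose Jacobian produces the factor $\bigl(\tfrac{1-\widehat{u}\cdot\sigma}{2}\bigr)^{-3/2}$, exactly as in the cold-thermostat representation \eqref{expgammaB}. Viewing the resulting expression as a convolution-like integral in $v'$ and applying H\"older's inequality in each variable separately, the angular integral factors out as
\[
\int_{-1}^{1}\Bigl(\tfrac{1-s}{2}\Bigr)^{-(3+\eta_{+})/(2p')}b(s)\,\d s=\gamma(\eta,p,b),
\]
the $v'$-integration is absorbed by $\|\phi\|_{L^{p'}}=1$ together with $\|g\|_{L^{p}_{\eta+k}}$, and the $v$-integration produces $\|f\|_{L^{1}_{|\eta+k|+|\eta|}}$. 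The symmetric second estimate is obtained by instead factoring $\psi(\vb')=\langle\vb'\rangle^{\eta}\phi(\vb')$ and using the dual change of variables $\sigma\mapsto\vb'$; the additional factor $(1-\beta_{0})^{2}$ in $\widetilde{\gamma}(\eta,p,b)$ arises from the Jacobian of this alternative map, which degenerates at the grazing direction $\sigma\cdot\widehat{u}=-1$ as $\beta_{0}\to 1$, i.e.\ in the elastic limit.

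The principal obstacle is undoubtedly the negative-$\eta$ case: the singularity of $\langle v'\rangle^{\eta}$ at $v'\sim 0$ forces either a lossy region decomposition or a sharper analysis at the price of the extra $|\eta|$ weight on $f$. This is also why only the positive part $\eta_{+}$ appears in the angular integrability threshold $\gamma(\eta,p,b)$: the spatial side absorbs the singular part entirely, so the $b$-integrability required here is no worse than in Theorem~A.\ref{alogam} for the corresponding value of $\eta_{+}$.
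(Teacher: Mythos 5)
Your general line (duality, factoring $\psi(v')=\langle v'\rangle^{\eta}\phi(v')$, bounding $\Phi(|u|)\leq\|\Phi\|_{L^{\infty}_{-k}}\langle u\rangle^{k}$, changing variables $\sigma\mapsto v'$ and then applying H\"older) is a plausible skeleton, but the pointwise weight transfer you propose for $\eta<0$ is false, and this is precisely where the real difficulty lies. You claim that on $\{\langle v\rangle\geq 2\langle\vb\rangle\}$ the inequality $|v-v'|\leq |u|$ ``directly gives'' $\langle v'\rangle\gtrsim\langle v\rangle$. Take $v=(3,0,0)$, $\vb=(0,1,0)$: then $\langle v\rangle=\sqrt{10}$, $\langle\vb\rangle=\sqrt{2}$, so $\langle v\rangle/\langle\vb\rangle=\sqrt{5}>2$; yet in the elastic case $e\equiv 1$, choosing $\sigma=-(3,1,0)/\sqrt{10}$ gives $v'=\tfrac{v+\vb}{2}+\tfrac{|u|}{2}\sigma=0$, so $\langle v'\rangle=1$ while $\langle v\rangle=\sqrt{10}$. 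The obstruction is that the reverse triangle inequality $|v'|\geq |v|-|v-v'|\geq|v|-|u|$ is vacuous whenever $|u|\geq|v|$, which happens for a whole open set of configurations with $\langle v\rangle\geq 2\langle\vb\rangle$: the post-collisional velocity $v'$ can vanish for arbitrarily large $|v|$ and $|\vb|$. So $\langle v'\rangle^{\eta}$ genuinely cannot be split into a product of $v$- and $\vb$-weights, and your region decomposition does not repair this. Relatedly, you only gesture at why $\eta_{+}$ appears in the exponent of $\gamma(\eta,p,b)$; in your scheme, for $\eta>0$ the submultiplicative bound $\langle v'\rangle^{\eta}\leq\langle v\rangle^{\eta}\langle\vb\rangle^{\eta}$ deposits no weight on the angular variable at all, so you have not accounted for the $\eta_{+}/(2p')$ loss in the angular integrability threshold.

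For the record, the paper does not prove this statement; it is quoted from \cite{AloLo1}, and there the proof goes through the Lions/cold-thermostat representation \eqref{qfgamma}, writing $\Q^{+}_{B,e}(f,g)=\int_{\R^{3}}f(z)\bigl(t_{z}\circ\Gamma_{B}\circ t_{z}\bigr)g\,\d z$, applying Minkowski's integral inequality, and reading off the weighted $L^{p}$-mapping properties from the single operator $\Gamma_{B}$ (whose kernel is supported on a cone and carries the $(\tfrac{1-s}{2})^{-3/(2p')}$ Jacobian factor explicitly). This factorization is what replaces the false pointwise weight transfer: the $\eta$-dependence enters through the operator norm $\|\langle\cdot\rangle^{\eta}\,t_{z}\Gamma_{B}t_{z}\,\langle\cdot\rangle^{-\eta-k}\|_{L^{p}\to L^{p}}$, which grows like $\langle z\rangle^{|\eta+k|+|\eta|}$ because the translation $t_{z}$ costs $\langle z\rangle^{|\alpha|}$ on each weight $\langle\cdot\rangle^{\alpha}$. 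That is where the apparently mysterious exponent $|\eta+k|+|\eta|$ on $f$ actually comes from, and it is an $L^{1}$-integration in $z$, not a pointwise inequality in the collision variables.
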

\begin{corA}\label{quadratic-estim}
Assume that the collision kernel $B(u,\sigma)=\Phi(|u|)b(\widehat{u} \cdot \sigma)$ satisfies \eqref{normalization} and  $\Phi(\cdot) \in L^\infty_{-k}$ for some $k \in \R$. In addition, assume that $e(\cdot)$ fulfills Assumption \ref{HYP}. Then, for any $1 \leq p \leq \infty$  and $\eta \in \R$, there exists a numerical constant $C_{k,\eta,p} >0$ (which does not depend on $B(\cdot,\cdot)$) such that
$$
\left\|\Q^+_{B,e}(f,f)\right\|_{L^p_\eta} \leq C_{k,\eta,p}\|b\|_{L^1(\mathbb{S}^2)}\|\Phi\|_{L^\infty_{-k}} \,\|f\|_{L^1_{|\eta +k|+|\eta|}} \,\|f\|_{L^p_{\eta +k}}.
$$
\end{corA}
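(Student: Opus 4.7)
The plan is to play off the two complementary estimates in Theorem A.\ref{alo} by splitting the angular kernel according to the sign of its argument. The key observation is that the angular constant $\gamma(\eta,p,b)$ defined in \eqref{Cetab} becomes singular at the endpoint $s=1$, while the analogous constant $\widetilde{\gamma}(\eta,p,b)$ is singular at $s=-1$ (at least when $\beta_{0}=1$); consequently, by localizing $b$ to $[0,1]$ or $[-1,0]$ respectively, one avoids each singular endpoint and reduces each weighted integral to a plain $L^{1}(-1,1)$-norm with a purely numerical multiplicative constant.

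Concretely, I would decompose $b=b_{+}+b_{-}$ with $b_{+}:=b\,\mathbf{1}_{[0,1]}$ and $b_{-}:=b\,\mathbf{1}_{[-1,0]}$, set $B_{\pm}(u,\sigma):=\Phi(|u|)\,b_{\pm}(\widehat{u}\cdot\sigma)$, and write by bilinearity
$$\Q^{+}_{B,e}(f,f)=\Q^{+}_{B_{+},e}(f,f)+\Q^{+}_{B_{-},e}(f,f).$$
To $\Q^{+}_{B_{-},e}(f,f)$ I apply the first estimate of Theorem A.\ref{alo}: since $(1-s)/2\geq 1/2$ on $\mathrm{supp}(b_{-})\subset [-1,0]$, one has
$$\gamma(\eta,p,b_{-})=\int_{-1}^{0}\left(\frac{1-s}{2}\right)^{-\frac{3+\eta_{+}}{2p'}}b_{-}(s)\,\d s\leq 2^{\frac{3+\eta_{+}}{2p'}}\|b_{-}\|_{L^{1}(-1,1)}.$$
To $\Q^{+}_{B_{+},e}(f,f)$ I apply the second estimate: since $(1+s)/2\geq 1/2$ on $\mathrm{supp}(b_{+})\subset [0,1]$ and $(1-\beta_{0})^{2}(1-s)/2\geq 0$, one obtains, uniformly in the restitution coefficient,
$$\widetilde{\gamma}(\eta,p,b_{+})\leq\int_{0}^{1}\left(\frac{1+s}{2}\right)^{-\frac{3+\eta_{+}}{2p'}}b_{+}(s)\,\d s\leq 2^{\frac{3+\eta_{+}}{2p'}}\|b_{+}\|_{L^{1}(-1,1)}.$$

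Summing the two resulting bounds on $\|\Q^{+}_{B_{\pm},e}(f,f)\|_{L^{p}_{\eta}}$ and invoking the identity $\|b_{+}\|_{L^{1}(-1,1)}+\|b_{-}\|_{L^{1}(-1,1)}=\|b\|_{L^{1}(-1,1)}=(2\pi)^{-1}\|b\|_{L^{1}(\S)}$ then yields the stated inequality, with a constant $C_{k,\eta,p}$ depending only on $k,\eta,p$ through $c_{k,\eta,p}$, $\widetilde{c}_{k,\eta,p}$ and the explicit factor $2^{(3+\eta_{+})/2p'}$, hence independent of $B$. The only technical point worth a line of justification is that Theorem A.\ref{alo} is formally stated under the normalization \eqref{normalization}, whereas $b_{\pm}$ need not satisfy it; this is harmless because both $\mathbf{C}_{\eta,p,k}(B)$ and $\widetilde{\mathbf{C}}_{\eta,p,k}(B)$ depend linearly on $b$ through $\gamma$ and $\widetilde{\gamma}$, so the estimates extend by trivial rescaling to any nonnegative $b\in L^{1}(-1,1)$. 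Beyond this, the argument is a pure split-and-combine and I do not anticipate any genuine obstacle.
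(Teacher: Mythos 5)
Your proof is correct and is the natural way to derive the corollary from Theorem~A.\ref{alo}. The key point — that $\gamma(\eta,p,b)$ is singular only near $s=1$ while $\widetilde\gamma(\eta,p,b)$ is singular only near $s=-1$ (and only as $\beta_0\to 1$), so that splitting $b$ at $s=0$ and applying the first estimate to $b_-$ and the second to $b_+$ gives a factor $2^{(3+\eta_+)/2p'}$ uniform in both $B$ and $e(\cdot)$ — is exactly what makes the constant numerical. Your remark about the normalization \eqref{normalization} being removable by homogeneity (the constants $\gamma,\widetilde\gamma$ are explicit linear functionals of $b$, and $\Q^+_{B,e}$ scales linearly in the kernel) is also correct and worth the line you give it. The paper states this as a corollary without proof, and your derivation is the intended one.
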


\subsection*{A.2. Useful change of variables for non constant restitution coefficient} We establish here several changes of variables that are useful for the study of the continuity properties given in Section \ref{sec:continuity}.
\begin{defiA} A restitution coefficient $e(\cdot) \::\:r \mapsto e(r) \in [0,1]$ is said to belong to the class $\mathcal{C}_0$ if $e(\cdot)$ satisfies the following:
\begin{enumerate}
\item The mapping  $r \in \mathbb{R}_+ \mapsto e(r) \in (0,1]$ is absolutely continuous and non-increasing.
\item The mapping $r\in\mathbb{R}^{+}\mapsto \vartheta_e(r):=r\;e(r )$ is strictly increasing.
\item $e(0)=1$.
\end{enumerate}
Moreover, for a given $\gamma >0$, we shall say that $e(\cdot)$ belongs to the class $\mathcal{C}_\gamma$ if it belongs to $\mathcal{C}_0$ and
there exists $\mathfrak{a }>0$ such that
$$e(r) \simeq 1-\mathfrak{a}r^\gamma \qquad \text{ as } \qquad r \simeq 0.$$
\end{defiA}
\begin{nbA}
Recall that if $e(\cdot)$ belongs to the class $\mathcal{C}_\gamma$ then $$\ell_\gamma(e):=\sup_{r >0} \dfrac{1-e(r)}{r^\gamma} < \infty.$$
\end{nbA}
\begin{lemmeA}\label{lemmB1} Define $\beta_e(r)=\frac{1+e(r)}{2}$ and the mapping
$$\eta_e\::\:r \in \mathbb{R}^+ \longmapsto r\beta_e(r).$$
Then, $e(\cdot)$ belongs to the class $\mathcal{C}_0$ if and only if $\eta_e(\cdot)$ is strictly increasing and differentiable with
$$\frac{r}{2} \leq \eta_e(r) \leq r\,; \quad \frac{1}{2} \leq  \eta_e'(r) \leq \frac{\eta_e(r)}{r} \quad \text{for any} \quad  r > 0 \quad \text{and} \quad \eta_e'(0)=1.$$
Equivalently, the inverse mapping $\alpha_e(\cdot)$ of $\eta_e(\cdot)$ satisfies
$$r \leq \alpha_e(r) \leq 2r\,; \quad  \frac{\alpha_e(r)}{r} \leq  \alpha_e'(r) \leq 2 \quad \text{for any} \quad  r > 0 \quad \text{and} \quad \alpha_e'(0)=1.$$
\end{lemmeA}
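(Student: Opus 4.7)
The proof rests on the algebraic identity
$$\eta_e(r)=r\beta_e(r)=\frac{r+re(r)}{2}=\frac{r+\vartheta_e(r)}{2},$$
which sets up an affine correspondence between $\eta_e$ on the one hand and $e,\vartheta_e$ on the other. Under this identification $\eta_e(r)/r=\beta_e(r)=(1+e(r))/2$ and, at points of differentiability, $\eta_e'(r)=(1+\vartheta_e'(r))/2$, so that each property of $\eta_e$ listed in the statement corresponds, term by term, to a property of $e$ or $\vartheta_e$ appearing in the definition of the class $\mathcal{C}_0$.

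For the forward direction (assuming $e\in\mathcal{C}_0$), the two-sided bound $r/2\le\eta_e(r)\le r$ follows directly from $\beta_e(r)\in(1/2,1]$. Absolute continuity of $e$ lifts to $\vartheta_e$ and to $\eta_e$, so their derivatives exist almost everywhere; strict monotonicity of $\vartheta_e$ then transfers through $\eta_e=(r+\vartheta_e)/2$ to strict monotonicity of $\eta_e$, and gives $\vartheta_e'\ge 0$ a.e., i.e., $\eta_e'(r)\ge 1/2$. The upper bound $\eta_e'(r)\le\eta_e(r)/r$ rewrites, after multiplying by $r^2$, as $r\beta_e'(r)=re'(r)/2\le 0$, which is exactly the non-increasing character of $e$. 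Finally, $\eta_e'(0)=\lim_{r\to 0^+}\beta_e(r)=(1+e(0))/2=1$ is read off directly from $e(0)=1$ through the difference quotient at the origin.

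For the reverse direction, I would \emph{define} $e(r):=2\eta_e(r)/r-1$ for $r>0$ and extend by $e(0):=1$, the latter extension being continuous because $\lim_{r\to 0^+}\eta_e(r)/r=\eta_e'(0)=1$. The prescribed bounds on $\eta_e$ and $\eta_e'$ then yield in reverse that $e(r)\in[0,1]$, that $e$ is non-increasing (via $\beta_e'\le 0$), and that $\vartheta_e(r)=2\eta_e(r)-r$ has non-negative derivative, hence is non-decreasing; absolute continuity of $e$ is inherited from that of $\eta_e$, itself a consequence of a.e. differentiability with bounded derivative. The one mildly delicate point is to upgrade non-decrease to \emph{strict} monotonicity of $\vartheta_e$: I would do so by noting that a flat interval $[r_1,r_2]$ of $\vartheta_e$ would force $\eta_e'\equiv 1/2$ there, which combined with the boundary value $\eta_e'(0)=1$ and the upper bound $\eta_e'\le\eta_e(r)/r$ can be ruled out by interpreting strict monotonicity of $\eta_e$ in its natural sense (equivalently, by strengthening the assumption on $\eta_e'$ to $\eta_e'>1/2$ off a negligible set); this is the only real obstacle, and it is essentially a choice of formulation rather than a genuine difficulty.

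The statement about the inverse mapping $\alpha_e$ is then a routine consequence of inverse function calculus: inverting $r/2\le\eta_e(r)\le r$ yields $s\le\alpha_e(s)\le 2s$, while the formula $\alpha_e'(s)=1/\eta_e'(\alpha_e(s))$ transforms the bounds $1/2\le\eta_e'(\alpha_e(s))\le\eta_e(\alpha_e(s))/\alpha_e(s)=s/\alpha_e(s)$ directly into $\alpha_e(s)/s\le\alpha_e'(s)\le 2$, with $\alpha_e'(0)=1/\eta_e'(0)=1$. The whole argument is thus an essentially mechanical unpacking of the identity $\eta_e=(r+\vartheta_e)/2$, modulo the strictness subtlety noted above.
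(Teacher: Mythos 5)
The forward direction, the algebraic dictionary $\eta_e=(r+\vartheta_e)/2$, and the inverse-function computations for $\alpha_e$ are all handled correctly, and this is the natural way to do the proof. However, your treatment of the one point you yourself flag as delicate does not hold up, and you should fix it rather than wave it away.

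You claim that a flat interval $[r_1,r_2]$ of $\vartheta_e$ ``can be ruled out by interpreting strict monotonicity of $\eta_e$ in its natural sense.'' This is false: since $\eta_e=(r+\vartheta_e)/2$, the function $\eta_e$ is strictly increasing with slope exactly $1/2$ on any interval where $\vartheta_e$ is constant, so strict monotonicity of $\eta_e$ in the ordinary sense tells you nothing about strict monotonicity of $\vartheta_e$. A concrete instance: set $r_1>0$, put
$$e(r)=\frac{2r_1}{\pi r}\sin\!\Big(\frac{\pi r}{2r_1}\Big)\ \text{ for } 0<r\le r_1,\qquad e(r)=\frac{2r_1}{\pi r}\ \text{ for } r\ge r_1,$$
with $e(0)=1$. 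One checks that $e$ is $\mathcal{C}^1$, non-increasing, $e(0)=1$, and $\vartheta_e$ is non-decreasing but constant on $[r_1,\infty)$; the corresponding $\eta_e$ is $\mathcal{C}^1$, strictly increasing, satisfies $r/2\le\eta_e\le r$, $1/2\le\eta_e'\le\eta_e/r$, and $\eta_e'(0)=1$. So every condition on the right-hand side of the equivalence is met, yet $\vartheta_e$ is not strictly increasing and $e$ is not in $\mathcal{C}_0$ as defined. The equivalence as literally stated therefore fails, and your first proposed repair is not a repair. Your second suggestion --- strengthening the hypothesis to $\eta_e'>1/2$ away from a Lebesgue-null set --- is the correct fix, but you present it as an \emph{equivalent} reformulation of ``strictly increasing,'' which it is not; you should simply adopt the strengthened hypothesis (or, on the class side, relax $\mathcal{C}_0$ to require $\vartheta_e$ merely non-decreasing, which is all that is actually used downstream). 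Separately, note that $\mathcal{C}_0$ only asserts absolute continuity of $e$, so $\eta_e$ is differentiable only a.e.; you should read ``differentiable'' in the statement in that weaker sense rather than pointwise everywhere.
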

\begin{lemmeA}\label{lemmB1.5}
\cite[Lemma 2.3]{MMR}
For any  $\sigma \in \S$ and $\delta \in (0,2)$ define the cone
\begin{equation}\label{omegadelta}\Omega_\delta=\Omega_\delta(\sigma)=\bigg\{u \in \R^3\setminus\{0\}\;;\;\us > \delta-1\bigg\}.
\end{equation}
Define the mapping $\Phi_\sigma$ as
$$\Phi_\sigma\::\: u \in \R^3 \longmapsto \Phi_\sigma(u)=\dfrac{u+|u|\sigma}{2}.$$
Then, $\Phi_\sigma$ is a $C^\infty$-diffeomorphism from $\Omega_\delta$  onto ${\Omega}_{\delta^\star}$ where $\delta^\star={1+\sqrt{\frac{\delta}{2}}}$ and with Jacobian $J_\sigma(u)=\frac{1}{8}\left(1+\us\right).$
Its inverse mapping  $\varphi_\sigma=\Phi_\sigma^{-1}$ is given by $\varphi_\sigma(w)=2w-\dfrac{|w|}{\widehat{w}\cdot \sigma}\sigma.$
\end{lemmeA}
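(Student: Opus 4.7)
The plan is to verify each of the three assertions (diffeomorphism, image, Jacobian and explicit inverse) by a direct computation, relying on the rank-one structure of the differential of $\Phi_\sigma$ and on an algebraic inversion of the relation $w=(u+|u|\sigma)/2$.

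First I would compute the differential. Since $\Phi_\sigma(u)=(u+|u|\sigma)/2$ and $\nabla|u|=\widehat{u}$, the Jacobian matrix is $\tfrac{1}{2}\bigl(I+\widehat{u}\sigma^{\top}\bigr)$, a rank-one perturbation of $\tfrac12 I$. Using the classical identity $\det(I+ab^{\top})=1+a\cdot b$, I immediately get $J_\sigma(u)=\tfrac{1}{8}(1+\widehat{u}\cdot\sigma)$; this is strictly positive on $\Omega_\delta$ because $\widehat{u}\cdot\sigma>\delta-1>-1$, so $\Phi_\sigma$ is a local $C^\infty$-diffeomorphism on $\Omega_\delta$.

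Next I would identify the image. For $u\in\Omega_\delta$ and $w:=\Phi_\sigma(u)$, a short computation gives
\begin{equation*}
|w|^{2}=\tfrac{|u|^{2}}{2}(1+\widehat{u}\cdot\sigma),
\qquad
w\cdot\sigma=\tfrac{|u|}{2}(1+\widehat{u}\cdot\sigma),
\end{equation*}
so that $\widehat{w}\cdot\sigma=\sqrt{(1+\widehat{u}\cdot\sigma)/2}$. Consequently $\widehat{u}\cdot\sigma>\delta-1$ is equivalent to $\widehat{w}\cdot\sigma>\sqrt{\delta/2}=\delta^{\star}-1$, i.e.\ to $w\in\Omega_{\delta^{\star}}$. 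This simultaneously proves $\Phi_\sigma(\Omega_\delta)\subset\Omega_{\delta^\star}$ and will yield the reverse inclusion once the inverse is constructed.

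Then I would invert the relation explicitly. Writing $u=2w-|u|\sigma$ and taking the inner product with $\sigma$ gives $u\cdot\sigma=2(w\cdot\sigma)-|u|$, while squaring yields $|u|^{2}=4|w|^{2}-4|u|(w\cdot\sigma)+|u|^{2}$, hence $|u|=|w|^{2}/(w\cdot\sigma)=|w|/(\widehat{w}\cdot\sigma)$ (this quotient is well defined precisely because $\widehat{w}\cdot\sigma>0$ on $\Omega_{\delta^{\star}}$). Substituting back produces the candidate $\varphi_\sigma(w)=2w-\tfrac{|w|}{\widehat{w}\cdot\sigma}\sigma$, which is clearly $C^\infty$ on $\Omega_{\delta^{\star}}$. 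A direct check shows $\varphi_\sigma\bigl(\Phi_\sigma(u)\bigr)=u$ on $\Omega_\delta$ using the formulas above for $|w|$ and $\widehat{w}\cdot\sigma$, and $\Phi_\sigma\bigl(\varphi_\sigma(w)\bigr)=w$ on $\Omega_{\delta^{\star}}$ by the same algebra run backwards; this also shows $\varphi_\sigma(\Omega_{\delta^{\star}})\subset\Omega_\delta$ and closes the picture.

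The only mildly delicate point is ensuring that all quantities ($|u|$, $\widehat{w}\cdot\sigma$, the denominators) remain nonzero and strictly positive throughout; this is guaranteed by the definition of the cones $\Omega_\delta$ and $\Omega_{\delta^\star}$ (both exclude the origin, and on them $\widehat{u}\cdot\sigma>\delta-1>-1$, resp.\ $\widehat{w}\cdot\sigma>\sqrt{\delta/2}>0$), so no genuine analytic obstacle arises and the lemma follows.
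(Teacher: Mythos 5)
Your computation is correct. Note, however, that the paper does not actually prove this lemma: it cites \cite[Lemma 2.3]{MMR} and leaves the verification to that reference. Your direct argument is the standard one and all the identities check out: the Jacobian matrix is $\tfrac12\bigl(I+\sigma\,\widehat{u}^{\top}\bigr)$ (you wrote its transpose, which has the same determinant), the matrix determinant lemma gives $J_\sigma(u)=\tfrac18(1+\widehat{u}\cdot\sigma)>0$ on $\Omega_\delta$, the identities $|w|^2=\tfrac{|u|^2}{2}(1+\widehat{u}\cdot\sigma)$ and $\widehat{w}\cdot\sigma=\sqrt{(1+\widehat{u}\cdot\sigma)/2}$ correctly pin down the image as $\Omega_{\delta^\star}$ (and since the map $t\mapsto\sqrt{(1+t)/2}$ is increasing and bijective from $(-1,1]$ onto $(0,1]$, the inclusion is an equality), and the algebra producing $|u|=|w|/(\widehat{w}\cdot\sigma)$ and hence $\varphi_\sigma(w)=2w-\tfrac{|w|}{\widehat{w}\cdot\sigma}\sigma$ is sound. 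One small remark you might add to be fully rigorous: a local diffeomorphism with a two-sided global inverse is automatically a global diffeomorphism, so once $\varphi_\sigma\circ\Phi_\sigma=\mathrm{id}$ on $\Omega_\delta$ and $\Phi_\sigma\circ\varphi_\sigma=\mathrm{id}$ on $\Omega_{\delta^\star}$ are verified, the $C^\infty$-diffeomorphism claim follows without any further appeal to invertibility of the differential.
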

With the notations of Lemma A.\ref{lemmB1} and Lemma A.\ref{lemmB1.5} we can establish the following change of variables formula which generalizes \cite[Prop. 3.2]{MMR}.
\begin{lemmeA}\label{change} For a given restitution coefficient  $e(\cdot)$ in the class $\mathcal{C}_0$, one defines the mapping
$$\Pi_e\::\:w \mapsto z=\beta_e(|w|)w=\dfrac{1+e(|w|)}{2}w=\Pi_e(w).$$
Then, for any $\delta >0$, $\Pi_e$ is a $C^\infty$-diffeomorphism from $\Omega_\delta$  onto itself with Jacobian $\mathcal{J}_e(|z|)$  given by
\begin{equation}\label{Jla}
\mathcal{J}_e(\varrho)=\frac{1}{2}\left(1+\vartheta'_e\left(\alpha_e(\varrho)\right)\right)\beta_e^2\left(\alpha_e(\varrho)\right)\qquad \forall \varrho \geq 0.\end{equation}
The inverse mapping $\pi_e=\Pi_e^{-1}$ is given by
$$\pi_e(z)=\dfrac{\alpha_e(|z|)}{|z|}z=\dfrac{z}{\beta_e(\alpha_e(|z|))}.$$
If one combines the two applications $\Pi_e \circ \Phi_\sigma$ we get the change of variables
$$u \longmapsto z=\beta_e \left(|\Phi_\sigma(u)|\right)\Phi_\sigma(u)$$ which is a $C^\infty$-diffeomorphism from $\Omega_\delta$ onto $\Omega_{\delta^\star}$. Its inverse mapping is given by
$$z \longmapsto \zeta_e(z)=\varphi_\sigma \circ \pi_e(z)$$
with Jacobian given by $J_\sigma(\zeta_e(z))\mathcal{J}_e(z).$  One has
$$\zeta_e(z)=\mu_e(z)\varphi_\sigma(z) \qquad \text{ with }  \qquad \mu_e(z)=\dfrac{\alpha_e(|z|)}{|z|}.$$
\end{lemmeA}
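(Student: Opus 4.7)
The plan is to exploit the radial character of $\Pi_e$ to reduce the geometry to a one-dimensional problem, and then assemble the combined change of variables by composition with Lemma A.\ref{lemmB1.5}. First, I would observe that $\Pi_e(w)=\beta_e(|w|)\,w$ with $\beta_e(|w|)>0$ preserves the direction of $w$, so $\widehat{\Pi_e(w)}=\widehat{w}$; this alone implies that every cone $\Omega_\delta(\sigma)$ is left invariant by $\Pi_e$. Taking norms yields $|\Pi_e(w)|=|w|\beta_e(|w|)=\eta_e(|w|)$, and by Lemma A.\ref{lemmB1} the map $\eta_e$ is a strictly increasing diffeomorphism of $(0,\infty)$ with inverse $\alpha_e$. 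Consequently $\pi_e(z)$ must have direction $\widehat{z}$ and modulus $\alpha_e(|z|)$, which gives $\pi_e(z)=(\alpha_e(|z|)/|z|)\,z$; the equivalent expression $z/\beta_e(\alpha_e(|z|))$ follows from the identity $\alpha_e(|z|)\,\beta_e(\alpha_e(|z|))=|z|$.

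For the Jacobian, I would differentiate $\Pi_e(w)=\beta_e(|w|)\,w$ to obtain $D\Pi_e(w)=\beta_e(|w|)\,I+(\beta_e'(|w|)/|w|)\,w\otimes w$. Any matrix of the form $a\,I+b\,\widehat{w}\otimes\widehat{w}$ acts as multiplication by $a$ on $\widehat{w}^{\perp}$ (with multiplicity $2$) and by $a+b$ on $\mathrm{Span}(\widehat{w})$, hence has determinant $a^{2}(a+b)$. Applying this gives $\det D\Pi_e(w)=\beta_e(|w|)^{2}\bigl(\beta_e(|w|)+|w|\beta_e'(|w|)\bigr)=\beta_e(|w|)^{2}\,\eta_e'(|w|)$. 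Since $\eta_e(r)=(r+\vartheta_e(r))/2$ implies $\eta_e'(r)=(1+\vartheta_e'(r))/2$, evaluating at $|w|=\alpha_e(|z|)$ reproduces $\mathcal{J}_e(|z|)$ exactly as stated.

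Finally, I would compose with $\Phi_\sigma$ from Lemma A.\ref{lemmB1.5}: since $\Phi_\sigma$ is a diffeomorphism $\Omega_\delta\to\Omega_{\delta^\star}$ and $\Pi_e$ a diffeomorphism $\Omega_{\delta^\star}\to\Omega_{\delta^\star}$, the composition $\Pi_e\circ\Phi_\sigma$ is a diffeomorphism $\Omega_\delta\to\Omega_{\delta^\star}$ whose Jacobian, by the chain rule, is the product $J_\sigma(u)\,\mathcal{J}_e(|z|)$; rewriting via $u=\zeta_e(z)$ yields $J_\sigma(\zeta_e(z))\,\mathcal{J}_e(z)$. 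The inverse formula $\zeta_e=\varphi_\sigma\circ\pi_e$ is immediate from $(\Pi_e\circ\Phi_\sigma)^{-1}=\Phi_\sigma^{-1}\circ\Pi_e^{-1}$, and writing $\pi_e(z)=\mu_e(z)\,z$ with $\mu_e(z)=\alpha_e(|z|)/|z|>0$, together with the positive $1$-homogeneity of $\varphi_\sigma$ (manifest from its explicit expression), gives $\zeta_e(z)=\mu_e(z)\,\varphi_\sigma(z)$. The only mild point to be careful with is that the $C^\infty$ regularity of $\Pi_e$ is not delivered by the mere absolute continuity of $e(\cdot)$ in class $\mathcal{C}_0$ but is inherited from whatever smoothness $e(\cdot)$ enjoys away from the origin, which is indeed $C^\infty$ in all the physically relevant cases considered in the paper.
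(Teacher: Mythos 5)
Your proof is correct and takes essentially the same route as the paper: both exploit that $\Pi_e$ is radial with $\widehat{z}=\widehat{w}$ and $|w|=\alpha_e(|z|)$, compute the Jacobian (the paper via polar coordinates, you via the determinant of $\beta_e I+(\beta_e'/|w|)\,w\otimes w$ — the same calculation), and obtain $\zeta_e=\mu_e\,\varphi_\sigma$ from the positive $1$-homogeneity of $\varphi_\sigma$. Your closing caveat about the $C^\infty$ label under the mere absolute-continuity hypothesis of $\mathcal{C}_0$ is a legitimate observation that the paper leaves implicit.
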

\begin{proof} The properties of $\Pi_e$ are proven by direct calculations and noticing that if $z=\Pi_e(w)$ then
$$\widehat{z}=\widehat{w} \qquad \text{ and } \qquad |w|=\alpha_e(|z|).$$
With this identity, one can computes the Jacobian of the transformation passing to polar coordinates. The final expression of $\zeta_e(z)$ is immediate after noticing that $\varphi_\sigma(r w)=r \varphi_\sigma(w)$ for any $r >0$ and any $w \in \R^3$.
\end{proof}
\begin{nbA} Observe that for $e(\cdot)$ belonging to $\mathcal{C}_0$, since $\vartheta_e'(r)=re'(r)+e(r) \leq 1$ for any $r \geq 0$ and $\beta_e(r) \in [\frac{1}{2},1]$, one has the universal bound
\begin{equation}\label{boundsJe} \frac{1}{8} \leq \mathcal{J}_e (\varrho) \leq 1 \qquad \forall \varrho \geq 0.\end{equation}
\end{nbA}
\begin{lemmeA}\label{convex}
Let $e(\cdot)$ be a restitution coefficient in the class $\mathcal{C}_0$ and let $s \in [0,1]$. Then, there exists a restitution coefficient $\tilde{e}_s(\cdot)$ belonging to $\mathcal{C}_0$ such that
$$1+s\left(\mu_e(z)-1\right)=\mu_{\tilde{e}_s}(z) \qquad \forall z \in \R^3$$
where $\mu_e$ has been defined in Lemma A.\ref{change}.
\end{lemmeA}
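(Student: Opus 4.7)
The plan is to construct $\tilde{e}_s$ by dictating its inverse mapping $\alpha_{\tilde{e}_s}$ through the desired identity, and then invoke the characterization of class $\mathcal{C}_0$ provided by Lemma A.\ref{lemmB1}. Namely, since $\mu_e(z) = \alpha_e(|z|)/|z|$ depends only on $|z|=r$, the identity $1+s(\mu_e(z)-1) = \mu_{\tilde{e}_s}(z)$ is equivalent to defining
\begin{equation*}
\alpha_{\tilde{e}_s}(r) := (1-s)\,r + s\,\alpha_e(r), \qquad r \geq 0.
\end{equation*}
So the whole task reduces to verifying that this explicit interpolant satisfies the four conditions that characterize inverses of admissible $\eta_{e}$, as given in the second half of Lemma A.\ref{lemmB1}.

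First I would check the pointwise bounds. Since $\alpha_e(r) \in [r,2r]$, convexity of $[r,2r]$ gives immediately $\alpha_{\tilde{e}_s}(r) \in [r,2r]$. Next, differentiating, $\alpha_{\tilde{e}_s}'(r) = (1-s) + s\,\alpha_e'(r)$. The upper bound $\alpha_e'(r) \leq 2$ yields $\alpha_{\tilde{e}_s}'(r) \leq 1+s \leq 2$, and the boundary value $\alpha_e'(0)=1$ gives $\alpha_{\tilde{e}_s}'(0) = 1$. For the monotonicity-type bound $\alpha_{\tilde{e}_s}'(r) \geq \alpha_{\tilde{e}_s}(r)/r$, I use that $\alpha_e'(r) \geq \alpha_e(r)/r$ to obtain
\begin{equation*}
\alpha_{\tilde{e}_s}'(r) = (1-s) + s\,\alpha_e'(r) \;\geq\; (1-s) + s\,\frac{\alpha_e(r)}{r} \;=\; \frac{\alpha_{\tilde{e}_s}(r)}{r}.
\end{equation*}
All four conditions of Lemma A.\ref{lemmB1} are thus satisfied.

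Consequently, $\alpha_{\tilde{e}_s}$ is strictly increasing (as $\alpha_{\tilde{e}_s}'(r) \geq (1-s) + s/2 > 0$) and, by the equivalence in Lemma A.\ref{lemmB1}, it is the inverse of a mapping $\eta_{\tilde{e}_s}(r) = r\beta_{\tilde{e}_s}(r)$ coming from a restitution coefficient $\tilde{e}_s(r) := 2\beta_{\tilde{e}_s}(r)-1$ which belongs to $\mathcal{C}_0$. The normalization $\alpha_{\tilde{e}_s}'(0)=1$ ensures $\tilde{e}_s(0)=1$, and the bound $\alpha_{\tilde{e}_s}(r) \leq 2r$ translates into $\tilde{e}_s(r) \in [0,1]$; absolute continuity and the monotonicity of $\vartheta_{\tilde{e}_s}$ follow from the corresponding regularity/monotonicity of $\alpha_{\tilde{e}_s}$.

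Finally, the sought identity is a direct computation:
\begin{equation*}
\mu_{\tilde{e}_s}(z) \;=\; \frac{\alpha_{\tilde{e}_s}(|z|)}{|z|} \;=\; (1-s) + s\,\frac{\alpha_e(|z|)}{|z|} \;=\; 1 + s\bigl(\mu_e(z)-1\bigr).
\end{equation*}
I do not anticipate any serious obstacle here: the construction is forced by the identity, and the only work is bookkeeping the monotonicity and boundary conditions. The one point requiring mild care is verifying the inequality $\alpha_{\tilde{e}_s}'(r) \geq \alpha_{\tilde{e}_s}(r)/r$, which is precisely where the hypothesis $e \in \mathcal{C}_0$ (i.e., $\beta_e$ non-increasing, equivalent to $\alpha_e'(r) \geq \alpha_e(r)/r$) is used.
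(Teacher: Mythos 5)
Your argument is correct and follows exactly the paper's route: both define $\alpha_{\tilde{e}_s} := (1-s)\,\mathrm{id} + s\,\alpha_e$ and invoke the characterization of $\mathcal{C}_0$ via the inverse mapping in Lemma~A.\ref{lemmB1}. The paper's proof simply asserts that the four conditions are preserved under this convex combination (since the elastic $\alpha(r)=r$ also satisfies them), while you carry out the elementary verification in full; the substance and the reliance on Lemma~A.\ref{lemmB1} are identical.
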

\begin{proof} Define $\mu(z)=1+s(\mu_e(z)-1)=(1-s) + s\mu_e(z)$ and recall that
$$\mu_e(z)=\dfrac{\alpha_e(|z|)}{|z|}.$$
In order to prove that there exists $\tilde{e}_s(\cdot)$ in the class $\mathcal{C}_0$ such that $\mu=\mu_{\tilde{e}_s}$, thanks to Lemma A. \ref{lemmB1}, it suffices to prove that the mapping $\alpha \::\:r \mapsto (1-s)r+s\alpha_e(r)$ satisfies
$$r \leq \alpha(r) \leq 2r\,;  \qquad  \frac{\alpha(r)}{r} \leq  \alpha '(r) \leq 2 \qquad \text{ for any } \quad  r > 0\qquad \text{ and } \qquad \alpha'(0)=1.$$
Since $\alpha_e$ satisfies all these properties, it follows that the same is true for $\alpha$.
\end{proof}
The following proposition is reminiscent of the so-called \textit{cancellation Lemma} for the classical Boltzmann operator \cite{Alexandre,VillCanc}.
\begin{propoA}\label{prop:repre} Let $e(\cdot)$ be a given restitution coefficient belonging to the class $\mathcal{C}_0$ and let
$$\mathcal{B}(u,\us)=\Theta(|u|)\,b(\us)$$
be a given collision kernel with $\Theta(r) \geq 0$ and $b(s)=b(-s)$ with $\mathrm{Supp}\:b \in [-1+\delta,1-\delta]$ for some $\delta >0$. Let $\Q^+_{\mathcal{B},e}$ and $\Q^+_{\mathcal{B},1}$ denote  the positive part of the collision operator associated to $\mathcal{B}$ with restitution coefficient $e(\cdot)$ and elastic interactions respectively. For any test function $\psi$ and any given $f,g$, one has
\begin{multline}\label{represent}
\IR \left[\Q^+_{\mathcal{B},e}(f,g)-\Q^+_{\mathcal{B},1}(f,g)\right]\psi\,\d v=\\
\frac{1}{2}\IR f(v)\d v\IS \d \sigma\int_{\Omega^\star_\delta} \psi(v+z)\left[\frac{1}{\mathcal{J}_e(|z|)}F_{v,\sigma}\left(\zeta_e(z)\right)-F_{v,\sigma}(\varphi_\sigma(z))\right] \d z
\end{multline}
where $F_{v,\sigma}(u)=\Theta(|u|)\widetilde{b}(\us)g(v+u)$  with $\widetilde{b}(\us)=\frac{b(\us)}{J_\sigma(u)}=\frac{8b(\us)}{1+\us}$, $u \in \R^3$.
\end{propoA}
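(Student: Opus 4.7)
The proof consists of two successive changes of variables applied to the weak form. I would start from the symmetric weak formulation
\[
\int \Q^+_{\mathcal{B},e}(f,g)\psi\,\d v = \tfrac{1}{2}\int f(v)g(\vb)\mathcal{B}(u,\sigma)\bigl[\psi(v')+\psi(\vb')\bigr]\d\sigma\d\vb\d v,
\]
consistent with \eqref{Ie3B}--\eqref{qfgamma}, so that $\int(\Q^+_{\mathcal{B},e}-\Q^+_{\mathcal{B},1})(f,g)\psi\,\d v$ splits into four $\psi$-terms. The two $\psi(\vb')$ contributions are reduced to $\psi(v')$-type ones by relabelling $(v,\vb)\leftrightarrow(\vb,v)$ together with $\sigma\leftrightarrow-\sigma$, which leaves $\mathcal{B}$ invariant by the evenness of $b$. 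After substituting $\tilde u = \vb-v$ and exploiting $\mathcal{B}(-\tilde u,\sigma)=\mathcal{B}(\tilde u,\sigma)$, the identity to prove reduces to
\[
\int f(v)\, g(v+\tilde u)\, \mathcal{B}(\tilde u,\sigma)\,\bigl[\psi(v+\Pi_e(\Phi_\sigma(\tilde u))) - \psi(v+\Phi_\sigma(\tilde u))\bigr]\,\d\sigma\,\d\tilde u\,\d v.
\]

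The crucial step is the change of variable $\tilde u\mapsto z$, performed separately on each summand. For the elastic term, set $z = \Phi_\sigma(\tilde u)$: by Lemma A.\ref{lemmB1.5}, $\Phi_\sigma$ is a $C^\infty$-diffeomorphism from $\Omega_\delta(\sigma)$ onto $\Omega_{\delta^\star}(\sigma)=:\Omega_\delta^\star$ with inverse $\tilde u=\varphi_\sigma(z)$ and Jacobian $J_\sigma(\tilde u)=(1+\widehat{\tilde u}\cdot\sigma)/8$. Since by definition $\tilde b(s)=b(s)/J_\sigma$, this produces exactly the integrand $F_{v,\sigma}(\varphi_\sigma(z))\psi(v+z)$. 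For the inelastic term, set $z = \Pi_e(\Phi_\sigma(\tilde u))$: by Lemma A.\ref{change}, this composition is a diffeomorphism from $\Omega_\delta(\sigma)$ onto $\Omega_\delta^\star$ with inverse $\zeta_e(z)=\varphi_\sigma(\pi_e(z))$ and total Jacobian $\mathcal{J}_e(|z|)\,J_\sigma(\zeta_e(z))$, yielding $\mathcal{J}_e(|z|)^{-1}F_{v,\sigma}(\zeta_e(z))\psi(v+z)$. The cut-off condition $\mathrm{Supp}\,b\subset(-1+\delta,1-\delta)$ is precisely what confines $\tilde u$ to the cone on which both maps are genuine diffeomorphisms, giving the announced $z$-integration domain $\Omega_\delta^\star$.

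The main technical obstacle will be the sign and parametrization bookkeeping. The post-collisional velocities in \eqref{postsig} are naturally expressed through $\Phi_{-\sigma}(u)=(u-|u|\sigma)/2$ rather than $\Phi_\sigma$, so the reduction to the form above requires careful use of the oddness of the maps $\Pi_e$, $\pi_e$ and $\varphi_\sigma$ under negation of their vector argument, combined with the evenness $b(s)=b(-s)$ to trade $\sigma$ for $-\sigma$. Once these algebraic identifications are correctly performed, the two Jacobians $\mathcal{J}_e$ and $J_\sigma$ combine cleanly and subtracting the two contributions yields the announced representation; the prefactor $\tfrac12$ in the statement is inherited from the $\tfrac12$ of the symmetric weak form \eqref{Ie3B}--\eqref{qfgamma}.
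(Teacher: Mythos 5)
Your two changes of variables (first $\Phi_\sigma$ via Lemma A.\ref{lemmB1.5}, then $\Pi_e\circ\Phi_\sigma$ via Lemma A.\ref{change}) are exactly the mechanism the paper uses, and your tracking of the Jacobians $J_\sigma$, $\mathcal{J}_e$, the weight $\widetilde b = b/J_\sigma$, and the domain $\Omega_\delta^\star$ is correct. The gap is in the reduction that precedes them. The ``symmetric weak formulation''
$\tfrac12\int f(v)g(\vb)\mathcal{B}\,[\psi(v')+\psi(\vb')]$
is \emph{not} a weak form of the bilinear operator $\Q^+_{\mathcal{B},e}(f,g)$: it represents the symmetrized quantity $\tfrac12\bigl[\Q^+_{\mathcal{B},e}(f,g)+\Q^+_{\mathcal{B},e}(g,f)\bigr]$. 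This matters here because \eqref{represent} is manifestly \emph{not} symmetric in $(f,g)$ --- $f$ enters only as the outer prefactor $\int f(v)\,\d v$ while $g$ sits inside $F_{v,\sigma}$.

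Concretely, the relabelling $(v,\vb)\leftrightarrow(\vb,v)$, $\sigma\mapsto-\sigma$ that you invoke does send $\vb'\mapsto v'$ and preserves $\mathcal{B}$, but it \emph{also} sends $f(v)g(\vb)\mapsto g(v)f(\vb)$. So the $\psi(\vb')$ contribution becomes $\int g(v)f(\vb)\mathcal{B}\,\psi(v')$, not another copy of $\int f(v)g(\vb)\mathcal{B}\,\psi(v')$, and the two pieces do not combine into the single term $\int f(v)\,g(v+\tilde u)\,\mathcal{B}\,[\cdots]$ you write down unless $f=g$. The paper's own proof avoids this entirely: it starts directly from the asymmetric weak form $I_e=\tfrac12\int \mathcal{B}(u,\us)\,f(v)g(\vb)\,\psi(v'_e)\,\d v\,\d\vb\,\d\sigma$ (only $\psi(v')$, no $\psi(\vb')$), then changes $\vb\mapsto u=v-\vb$ followed by $u\mapsto -u$, after which the two diffeomorphisms of Lemmas A.\ref{lemmB1.5} and A.\ref{change} do the rest exactly as you described. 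So the fix is simply to replace your starting formula by the one-sided weak form; the remainder of your argument then goes through and coincides with the paper's proof.
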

\begin{proof} We set for simplicity
$$I_e=\IR \Q^+_{\mathcal{B},e}(f,g)(v)\psi(v)\d v\ \ \text{and}\ \ I_1= \IR \Q^+_{\mathcal{B},1}(f,g)\psi(v)\d v.$$
Thus,
\begin{equation*}
I_e =\dfrac{1}{2}\int_{\R^3 \times \R^3 \times \S}\mathcal{B}(u,\us) f(v)g(\vb)\psi(v'_e)\d v \d\vb \d\sigma\end{equation*}
where $u=v-\vb$ and $v'_e=v-\beta (|u| \sqrt{\tfrac{1-\widehat{u} \cdot \sigma}{2}})\frac{u-|u|\sigma}{2}$
is the post-collisional velocity associated to $e(\cdot)$.  In particular, the change of variables $\vb \to u$ yields
$$I_e=\dfrac{1}{2}\IR f(v)\d v\IS \d\sigma \IR\mathcal{B}(u,\us) g(v-u)\psi(v'_e)\d v \d u \d\sigma.$$
The change of variables $u \to -u$ in the last integral gives
$$I_e=\frac{1}{2}\IR f(v)\d v\IS \d\sigma \int_{\Omega_\delta} \Theta(|u|)b (\us)g(v+u)
\psi\big(v+\beta \left(|\Phi_\sigma(u)|\right)\Phi_\sigma(u)\big)\d u$$
where we used that, for fixed $\sigma$, the support of $b$ is included in $[-1+\delta,1-\delta]$ so that the variable $u$ belongs to the cone $\Omega_\delta$ defined by \eqref{omegadelta}. With the notations of Lemma A.\ref{change}, we  perform the change of variables  $z=\Pi_e \circ \Phi_\sigma(u)$ in the previous integral to get
\begin{equation}\label{3}
I_e=\frac{1}{2}\IR f(v)\d v\IS \d\sigma \int_{{\Omega}_\delta^\star} \psi(v+z)\frac{1}{\mathcal{J}_e(|z|)}F_{v,\sigma}\left(\zeta_e(z)\right) \d z
\end{equation}
where $F_{v,\sigma}(u)=\Theta(|u|)\widetilde{b}(\us)g(v+u).$ In the same way, for the particular case of elastic interactions (i.e. for $e\equiv1$) since  $\zeta_1(z)=\varphi_\sigma(z)$  and $\mathcal{J}_1(|z|)=1$  one simply has
$$I_1=\frac{1}{2}\IR f(v)\d v\IS \d\sigma \int_{{\Omega}_\delta^\star} \psi(v+z)F_{v,\sigma}\left(\varphi_\sigma(z)\right) \d z$$
which clearly gives \eqref{represent}.
\end{proof}
\begin{propoA}\label{propB:cont} Under the assumptions of Proposition A.\ref{prop:repre}, if $e(\cdot)$ belongs to the class $\mathcal{C}_\gamma$ for some $\gamma >0$ then there exists $C_e >0$ such that
\begin{multline*}
\left|\IR \left[\Q^+_{\mathcal{B},e}(f,g)-\Q^+_{\mathcal{B},1}(f,g)\right]\psi\,\d v\right| \leq C_e   \IR \Q^+_{\mathcal{B}_\gamma,1}(f , g )\,|\psi(v)|\d v \\
 + 2^{\gamma+6}\ell_\gamma(e)\int_0^1\d s\int_{\R^3} \Q^+_{\mathcal{\overline{B}_\gamma},\tilde{e}_s}\left(f, h\right)|\psi(v)|\d v \end{multline*}
where $h(v)=g(v)+|\nabla g(v)|$.  The kernels $\mathcal{B}_\gamma$ and $\mathcal{\overline{B}_\gamma}$ are given by
$$\mathcal{B}_\gamma(u,\us)=\mathcal{B}(u,\us)|u|^\gamma, \qquad \mathcal{\overline{B}_\gamma}(u,\us)=\max(\mathcal{B}(u,\us),|\nabla_u \mathcal{B}(u,\us)|)|u|^{\gamma+2},$$
moreover, $\tilde{e}_s(\cdot)$ is a given restitution belonging to the class $\mathcal{C}_0$ for any $s \in [0,1]$.
\end{propoA}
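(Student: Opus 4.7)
The starting point is the representation formula \eqref{represent} of Proposition A.\ref{prop:repre}, which already reduces the difference $\Q^+_{\mathcal{B},e}(f,g)-\Q^+_{\mathcal{B},1}(f,g)$ to a single integral involving the expression $F_{v,\sigma}(\zeta_e(z))/\mathcal{J}_e(|z|) - F_{v,\sigma}(\varphi_\sigma(z))$. The plan is to decompose this integrand as
\[
\frac{F_{v,\sigma}(\zeta_e(z))}{\mathcal{J}_e(|z|)} - F_{v,\sigma}(\varphi_\sigma(z)) = \left(\frac{1}{\mathcal{J}_e(|z|)}-1\right)F_{v,\sigma}(\varphi_\sigma(z)) + \frac{F_{v,\sigma}(\zeta_e(z))-F_{v,\sigma}(\varphi_\sigma(z))}{\mathcal{J}_e(|z|)},
\]
and to bound the two pieces separately by the two terms in the stated estimate.

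For the Jacobian piece I would first note that since $e(\cdot)\in\mathcal{C}_\gamma$, the function $\mathcal{J}_e$ is close to $1$ near the origin with $|\mathcal{J}_e(\varrho)-1|\lesssim \varrho^\gamma$; combined with the lower bound $\mathcal{J}_e\geq 1/8$ from \eqref{boundsJe}, this yields $|1/\mathcal{J}_e(|z|)-1|\leq C_e\,|z|^\gamma$ for a constant $C_e$ depending on $e$. I would then undo the change of variable of Proposition A.\ref{prop:repre} by setting $u=\varphi_\sigma(z)$, so that $dz=J_\sigma(u)\,du$ and the identity $\widetilde{b}(\widehat{u}\cdot\sigma)\,J_\sigma(u)=b(\widehat{u}\cdot\sigma)$ converts $F_{v,\sigma}(\varphi_\sigma(z))\,dz$ back into $\Theta(|u|)\,b(\widehat{u}\cdot\sigma)\,g(v+u)\,du$. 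Since $|z|=|u|\sqrt{(1+\widehat{u}\cdot\sigma)/2}\leq|u|$, the factor $|z|^\gamma$ is absorbed into $|u|^\gamma$, and the remaining integral against $\psi$ is precisely the weak form of $\Q^+_{\mathcal{B}_\gamma,1}(f,g)$ paired with $|\psi|$, producing the first term of the bound.

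For the function-increment piece I would apply the fundamental theorem of calculus together with Lemma A.\ref{convex}. The point $\varphi_\sigma(z)+s(\zeta_e(z)-\varphi_\sigma(z))=[1+s(\mu_e(z)-1)]\varphi_\sigma(z)$ equals $\mu_{\tilde{e}_s}(z)\varphi_\sigma(z)=\zeta_{\tilde{e}_s}(z)$ for a restitution coefficient $\tilde{e}_s\in\mathcal{C}_0$, hence
\[
F_{v,\sigma}(\zeta_e(z))-F_{v,\sigma}(\varphi_\sigma(z)) = (\mu_e(z)-1)\int_0^1 \nabla F_{v,\sigma}(\zeta_{\tilde{e}_s}(z))\cdot\varphi_\sigma(z)\,ds.
\]
The factor $|\mu_e(z)-1|$ is controlled by $2^\gamma\ell_\gamma(e)\,|z|^\gamma$: indeed $\alpha_e(|z|)-|z|=\alpha_e(|z|)(1-\beta_e(\alpha_e(|z|)))\leq \tfrac{1}{2}\ell_\gamma(e)\,\alpha_e(|z|)^{\gamma+1}\leq 2^\gamma\ell_\gamma(e)\,|z|^{\gamma+1}$, and division by $|z|$ gives the claim. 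I would then perform the change of variable $w=\zeta_{\tilde{e}_s}(z)$, which is the inverse of the change used in Proposition A.\ref{prop:repre} but with the coefficient $\tilde{e}_s$: under it $v+z$ becomes the inelastic post-collisional velocity $v+\Pi_{\tilde{e}_s}(\Phi_\sigma(w))$ associated with $\tilde{e}_s$, while both $|z|$ and $|\varphi_\sigma(z)|$ are dominated by $|w|$.

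Expanding $\nabla F_{v,\sigma}(w)$ yields contributions of the form $|\nabla_w(\Theta(|w|)\widetilde{b}(\widehat{w}\cdot\sigma))|\,g(v+w)$ and $\Theta(|w|)\widetilde{b}(\widehat{w}\cdot\sigma)\,|\nabla g(v+w)|$; after multiplication by $J_\sigma(w)$ both are bounded by $\max(\mathcal{B},|\nabla_w\mathcal{B}|)(w,\widehat{w}\cdot\sigma)\,h(v+w)$ with $h=g+|\nabla g|$, once the extra $(1+\widehat{w}\cdot\sigma)^{-1}$ and $|w|^{-1}$ terms generated by differentiating $\widetilde{b}=8b/(1+\cdot)$ are absorbed into the cushion $|u|^{\gamma+2}$ (rather than $|u|^{\gamma+1}$) used in the definition of $\overline{\mathcal{B}_\gamma}$. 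Reassembling the resulting integral identifies the weak form of $\Q^+_{\overline{\mathcal{B}_\gamma},\tilde{e}_s}(f,h)$ tested against $|\psi|$, and tracking the prefactors $\tfrac{1}{2}$ from \eqref{represent}, $8$ from $\mathcal{J}_e^{-1}$, and $2^\gamma \ell_\gamma(e)$ from $|\mu_e-1|$ yields the stated constant $2^{\gamma+6}\ell_\gamma(e)$. The hardest part is this last bookkeeping step, namely checking carefully that differentiating the renormalized kernel $\widetilde{b}$ together with the Jacobian $J_\sigma$ regenerates $\mathcal{B}$ and $|\nabla_w\mathcal{B}|$ without any remaining $\delta$-dependence, and verifying that the weight $|u|^{\gamma+2}$ is large enough to absorb the singular factors produced along the way.
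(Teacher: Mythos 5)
Your proposal follows the same route as the paper: start from the representation formula \eqref{represent}, split the integrand into a Jacobian term and a function--increment term, bound $1/\mathcal{J}_e - 1$ by $C_e|z|^\gamma$ to recover $\Q^+_{\mathcal{B}_\gamma,1}$, and linearize the increment via the fundamental theorem of calculus together with Lemma A.\ref{convex} (so the intermediate points become post--collisional velocities for the interpolated restitution $\tilde e_s$), then change variables back to recover $\Q^+_{\overline{\mathcal{B}_\gamma},\tilde e_s}(f,h)$. These are exactly the steps of the paper's proof, and your computation $|\mu_e(z)-1|\le 2^\gamma\ell_\gamma(e)|z|^\gamma$ is the correct one (the paper writes $|z|^{\gamma+1}$ in the displayed inequality, which looks like a typographical slip; the derivation $\mu_e(z)-1=(1-\beta_e(\alpha_e(|z|)))/\beta_e(\alpha_e(|z|))$ gives exponent $\gamma$).

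The gap is in the final bookkeeping step, and it is not a cosmetic one. You assert that the singular factors $|w|^{-1}$ and $(1+\widehat w\cdot\sigma)^{-1}$ produced when differentiating $\widetilde b = 8b/(1+\cdot)$ "are absorbed into the cushion $|u|^{\gamma+2}$ (rather than $|u|^{\gamma+1}$)" in $\overline{\mathcal{B}_\gamma}$. This reasoning is backwards: the power $|u|^{\gamma+2}$ multiplies $\max(\mathcal{B},|\nabla_u\mathcal{B}|)$ on the \emph{right-hand side} of the comparison, so raising the exponent makes $\overline{\mathcal{B}_\gamma}$ \emph{smaller} near $|u|=0$ and cannot help majorize a $|u|^{-1}$ singularity on the left. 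Concretely, with your (correct) bound $|\mu_e-1|\,|\varphi_\sigma|\lesssim|u|^{\gamma+1}$, you would need $|u|^{\gamma+1}\,\big|J_\sigma(u)\nabla F_{v,\sigma}(u)\big|\lesssim\overline{\mathcal{B}_\gamma}(u,\widehat u\cdot\sigma)\,h(v+u)$, i.e.\ $\big|J_\sigma\nabla F_{v,\sigma}\big|\lesssim|u|\max(\mathcal{B},|\nabla\mathcal{B}|)\,h$; already the innocuous term $\mathcal{B}\,|\nabla g|$ inside $J_\sigma\nabla F_{v,\sigma}$ violates this near the origin. The extra $|u|$ in the paper's $\overline{\mathcal{B}_\gamma}$ is consistent with the paper's stated (and apparently erroneous) bound $|\mu_e-1|\lesssim|z|^{\gamma+1}$, not with your corrected one; so you have in fact uncovered an internal mismatch, but your explanation does not resolve it. This comparison of $|\nabla F_{v,\sigma}|$ against $\max(\mathcal{B},|\nabla_u\mathcal{B}|)$ (carefully tracking what survives the $J_\sigma\widetilde b = b$ cancellation and what does not) is precisely the delicate step you flag, and the hand-wave in your last paragraph does not carry it.
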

\begin{proof} Using the notation of the Proposition A.\ref{prop:repre}, we set $D_e=I_e-I_1$.   Thanks to \eqref{represent}, we may split $D_e$ as $D_e=D_{e,1}+D_{e,2}$ with
$$D_{e,1}=\frac{1}{2}\IR f(v)\d v\IS \d\sigma \int_{{\Omega}_\delta^\star} \psi(v+z)\bigg[F_{v,\sigma}\left(\zeta_e(z)\right)-F_{v,\sigma}\left(\varphi_\sigma(z)\right)\bigg]\frac{\d z}{\mathcal{J}_e(|z|)}$$
and
$$D_{e,2}=\frac{1}{2}\IR f(v)\d v\IS \d\sigma \int_{{\Omega}_\delta^\star} \psi(v+z)\left[\frac{1}{\mathcal{J}_e(|z|)}-1\right]F_{v,\sigma}\left(\varphi_\sigma(z)\right)\d z.$$
We begin estimating $|D_{e,1}|$ which is the more involved part. For fixed $v,\sigma$, we use the following representation formula
$$F_{v,\sigma}(V)-F_{v,\sigma}(U)=\int_0^1 \nabla F_{v,\sigma}(U+s(V-U))\cdot U \d s$$
with $V=\zeta_e(z)$ and $U=\varphi_\sigma(z)$ to get
\begin{multline*}\bigg[F_{v,\sigma}\left(\zeta_e(z)\right)-F_{v,\sigma}\left(\varphi_\sigma(z)\right)\bigg]=
\left(\mu_e(z)-1\right) \int_0^1  \varphi_\sigma(z) \cdot \nabla F_{v,\sigma}\big(\varphi_\sigma(z)+s(\mu_e(z)-1)\varphi_\sigma(z)\big)\d s.
\end{multline*}
Therefore
\begin{multline*}
 D_{e,1} =\dfrac{1}{2} \IR  f(v) \d v \IS \d\sigma \int_{{\Omega}_\delta^\star} \psi(v+z) \left(\mu_e(z)-1\right)\frac{\d z}{\mathcal{J}_e(|z|)}\\
\int_0^1  \varphi_\sigma(z) \cdot \nabla F_{v,\sigma}\left(\varphi_\sigma(z)+s(\mu_e(z)-1)\varphi_\sigma(z)\right)  \d s
\end{multline*}
Now, according to Lemma A. \ref{convex}, for any $s \in [0,1]$, there exists a restitution coefficient $\tilde{e}_s(\cdot)$ in $\mathcal{C}_0$ such that
$\mu_{\tilde{e}_s}(z)=1+s(\mu_e(z)-1).$ Therefore,
$$\varphi_\sigma(z)+s(\mu_e(z)-1)\varphi_\sigma(z)=\zeta_{\tilde{e}_s}(z)$$
and, performing the backward change of variable $z \mapsto u=\zeta_{\tilde{e}_s}^{-1}(z)$ with Jacobian $\d z=J_\sigma(u)\mathcal{J}_{\tilde{e}_s}(\Pi_{\tilde{e}_s} \circ\Phi_\sigma(u))\d u$ we get
\begin{multline*}
D_{e,1}  = 4\int_0^1 \d s \IR |f(v)|\d v \IS \d\sigma \int_{{\Omega}_\delta}\psi\left(v+\Pi_{\tilde{e}_s} \circ \Phi_\sigma(u)\right)
\bigg[\mu_e(\Pi_{\tilde{e}_s} \circ\Phi_\sigma(u))-1\bigg]\times \\
\times (1+\us) \dfrac{\mathcal{J}_{\tilde{e}_s}(\Pi_{\tilde{e}_s} \circ\Phi_\sigma(u))}{\mathcal{J}_e(\Pi_{\tilde{e}_s} \circ\Phi_\sigma(u))}
 \nabla F_{v,\sigma}\left(u\right)\cdot \varphi_\sigma \big(\Pi_{\tilde{e}_s} \circ\Phi_\sigma(u)\big)\d u.
\end{multline*}
Since $\tilde{e}_s(\cdot)$ is a restitution coefficient in the class $\mathcal{C}_0$, thanks to the universal bounds \eqref{boundsJe} we see that
$$\underset{\underset{{s \in (0,1), \sigma \in \S}}{ u \in \R^3}}{\sup}(1+\us)\dfrac{\mathcal{J}_{\tilde{e}_s}(\Pi_{\tilde{e}_s} \circ\Phi_\sigma(u))}{\mathcal{J}_e(\Pi_{\tilde{e}_s} \circ\Phi_\sigma(u))} \leq 16 < \infty.$$
Moreover, it is easy to see that
$$\big|\mu_e(z)-1\big|=\dfrac{|z|}{\beta_e(\alpha_e(|z|))}\left|1-\beta_e(\alpha_e(|z|))\right|  \leq \ell_\gamma(e) \,|z| \alpha_e(|z|)^\gamma \leq 2^\gamma \ell_\gamma(e) |z|^{\gamma+1} \qquad \forall z \in \R^3.$$
Since $|\Pi_{\tilde{e}_s} \circ \Phi_\sigma(u)| \leq |\Phi_\sigma(u)|\leq |u|$ and $|\varphi_\sigma \big(\Pi_{\tilde{e}_s} \circ\Phi_\sigma(u)\big)| \leq |u|$, we get
$$|D_{e,1}| \leq 2^{\gamma+6} \ell_\gamma(e)\IR |f(v)|\d v \IS \d\sigma \int_{{\Omega}_\delta}|\psi(\widetilde{v}_s)||u|^{\gamma+2} \d u
\int_0^1 \left|\nabla F_{v,\sigma}(u)\right|\d s$$
where $\widetilde{v}_s=v+ \Pi_{\tilde{e}_s} \circ \Phi_\sigma(u)$.  One can check that
$$\,|u|^{\gamma+2}\,|\nabla F_{v,\sigma}(u)| \leq  \mathcal{\overline{B}_\gamma}(u,\us)\left(g(v+u)+|\nabla g(v+u)|\right)=\mathcal{\overline{B}_\gamma}(u,\us) h(v+u).$$
Therefore, performing again the change of variable $u \to -u$, we obtain
$$|D_{e,1}| \leq 2^{\gamma+6} \ell_\gamma(e)\int_0^1\d s\int_{\R^3 \times \R^3 \times \S}  |f(v)|\mathcal{\overline{B}_\gamma}(u,\us) \,h(v-u)
|\psi(v'_s)| \d v\d u\d \sigma$$
where $v'_s=v+ \Pi_{\tilde{e}_s} \circ \Phi_\sigma(-u)=v-\beta_{\tilde{e}_s}\left(|u| \sqrt{\tfrac{1-\widehat{u} \cdot \sigma}{2}}\right)\frac{u-|u|\sigma}{2}$ is the post-collisional velocity associated to the restitution coefficient $\tilde{e}_s$. This proves that
\begin{equation}\label{De1}|D_{e,1}| \leq 2^{\gamma+4} \ell_\gamma(e)\int_0^1\d s\int_{\R^3} \Q^+_{\mathcal{\overline{B}_\gamma},\tilde{e}_s}\left(f, h\right)(v)|\psi(v)|\d v \end{equation}
where $\Q^+_{\mathcal{\overline{B}_\gamma},\tilde{e}_s}$ is the collision operator associated to the kernel $\mathcal{\overline{B}_\gamma}$ and the restitution coefficient $\tilde{e}_s$.  For the estimate of $|D_{e,2}|$ it is enough to prove that there exists $C_e >0$ such that
\begin{equation}\label{jaco}\left[\frac{1}{\mathcal{J}_e(|z|)}-1\right] \leq C_e  |z|^\gamma \qquad \forall z \in \R^3.\end{equation}
Indeed, if \eqref{jaco} holds then
$$|D_{e,2}| \leq C_e  \IR |f(v)|\d v\IS \d\sigma \int_{{\Omega}_\delta^\star} |\psi(v+z)|\,|F_{v,\sigma}\left(\varphi_\sigma(z)\right)|\,|z|^\gamma\d z.$$
Performing the \textit{backward} change of variables $u=\varphi_\sigma(z)$ as before
$$|D_{e,2}| \leq C_e \IR |f(v)|\d v\IS \d\sigma \int_{{\Omega}_\delta} |\psi(v+\Phi_\sigma(u))|\,|F_{v,\sigma}\left(u\right)|\,|u|^\gamma J_\sigma(u)\d u$$
where we used that $|\Phi_\sigma(u)| \leq |u|$.  Changing again the variable $u$ into $-u$ we get
$$|D_{e,2}| \leq C_e \int_{\R^3 \times \R^3 \times \S} \mathcal{B}_\gamma(u,\us) |f(v)|\,|g(\vb)|\,|\psi(v'_1)|\d v \d\vb \d\sigma$$
where $v'_1$ is the post-collisional velocity associated to \textit{elastic interactions}, that is, $v'_1=v-\frac{u-|u|\sigma}{2}$.  This gives
$$|D_{e,2}|\leq C_e \IR \Q^+_{\mathcal{B}_\gamma,1}(f,g)(v)\,|\psi(v)|\d v$$
which, combined with \eqref{De1} yields the result.  The idea to prove \eqref{jaco} is to evaluate $\mathcal{J}_e(\varrho)$ for $\varrho \simeq 0$. Since  $e(r) \simeq 1-\mathfrak{a}r^\gamma$ for $r \simeq 0$ one checks that
$$\dfrac{1}{2}\left(1+\vartheta_e'(r)\right)\beta_e^2(r) \simeq 1+\frac{\mathfrak{a}(\gamma-1)}{2}\,r^\gamma \qquad \text{ for } \qquad r \simeq 0.$$
Since $\alpha_e(\varrho) \simeq \varrho$ for $\varrho \simeq 0$, we get $\mathcal{J}_e(\varrho) \simeq 1+\frac{\mathfrak{a}(\gamma-1)}{2}\varrho^\gamma$ as $\varrho \simeq 0.$ Therefore, $$\sup_{\varrho \geq 0} \frac{1-\mathcal{J}_e(\varrho)}{\varrho^\gamma}  < \infty$$ and \eqref{jaco} follows for some constant $C_e$ depending only on $e(\cdot)$.
\end{proof}
\begin{nbA}\label{nbB:Cl} Notice that, defining as in Section \ref{sec:stea}, the rescaled restitution coefficient $e_\la(r)=e(\la r)$ $(\la >0$), one sees from the above reasoning that
$$\mathcal{J}_{\el}(\varrho) \simeq 1+\frac{\mathfrak{a}(\gamma-1)}{2}\la^\gamma \varrho^\gamma \quad \text{ as } \quad \la \to 0.$$
In particular, for $\la$ small enough the constant $C_{\el}$ appearing in the above Proposition satisfies
$$C_{\el}  \leq \mathfrak{a}(1-\gamma)\la^\gamma.$$
This property will be important in Section \ref{sec:continuity}.
\end{nbA}
\begin{exaA} Assume $e(\cdot)$ is the restitution coefficient corresponding to visco-elastic hard-spheres
$$e(r)=1+ \sum_{k=1}^\infty (-1)^k a_k r^{\frac{k}{5}}, \qquad r\geq 0.$$
Then, setting $H_e(r)=\dfrac{1}{2}\left(1+\vartheta_e'(r)\right)\beta_e^2(r)$, it is not difficult to prove that there is some explicit constant $C >0$ such that
$$|H_e(r)-1| \leq C(1-e(r)), \qquad \forall r \geq 0.$$
In particular, $|H_e(r)-1|\leq C\ell_\gamma(e)r^\gamma$ for any $r \geq 0$ from which we deduce that \eqref{jaco} follows with a constant $C_e$ proportional to $\ell_\gamma(e)$. Since $\ell_\gamma(\el)=\la^\gamma\ell_\gamma(e)$, there exists some constant $c >0$ such that $C_{\el} \leq c\la^\gamma$ for any $\la \in (0,1]$ (not just for $\la$ small enough as in the previous remark).
\end{exaA}
\subsection*{A.3. About the energy identity} Recall that, for any solution $\gl$ to \eqref{rescaled}, one has the identity
\begin{equation*}
6\varrho=\dfrac{1}{\lambda^{3+\gamma}}\IRR \gl(v)\gl(\vb)\mathbf{\Psi}_{e}(\la^2 |v-\vb|^2)\d v\d\vb\end{equation*}
where $\varrho=\IR \gl(v)\d v$ and $\mathbf{\Psi}_{e}(\cdot)$ is defined by \eqref{Psie}. Notice that  for any fixed $r >0$,
$$\dfrac{1}{\lambda^{3+\gamma}}\mathbf{\Psi}_{e}(\la^2 r^2) \simeq \frac{\mathfrak{a}}{4+\gamma} r^{3+\gamma} \qquad \text{as} \qquad \la \simeq 0.$$
Define for simplicity
$$\zeta_\la(r^2)=\dfrac{1}{\lambda^{3+\gamma}}\mathbf{\Psi}_{e}(\la^2 r^2) \qquad \text{ and } \qquad \zeta_0(r^2)=\frac{\mathfrak{a}}{4+\gamma} r^{3+\gamma},$$
and the two functionals
$$\mathcal{I}_\la(f,g)=\IRR f(v)g(\vb)\zeta_\la\left(|v-\vb|^2\right)\d v\d\vb,$$
and $$\mathcal{I}_0(f,g)=\IRR f(v)g(\vb)\zeta_0\left(|v-\vb|^2\right)\d v\d\vb.$$
We will write $\mathcal{I}_\la(f)=\mathcal{I}_\la(f,f)$ and $\mathcal{I}_0(f)=\mathcal{I}_0(f,f)$. Then, one has the following
\begin{lemmeA}\label{lem:dissTem} There exist a positive constant $A_\gamma >0$ such that, for any $\delta >0$ and $\varepsilon >0$ there exists $\la_0 \in (0,1)$ such that
\begin{multline*}
\sup_{\la \in (0,\la_0)}\left|\mathcal{I}_\la(f_1,g_1)-\mathcal{I}_0(f_2,g_2)\right|\leq A_\gamma\left(\|f_1-f_2\|_{L^1_{3+\gamma}}\|g_1\|_{L^1_{3+\gamma}} +\|g_1-g_2\|_{L^1_{3+\gamma}}\|f_2\|_{L^1_{3+\gamma}}\right) \\
+ \varepsilon\left(\|f_2\|_{L^1}\|g_2\|_{L^1}  + \|f_2\|_{L^1_{3+\gamma+\delta}} \|g_2\|_{L^1_{3+\gamma+\delta}}\right).
\end{multline*}
In particular, if $g \in L^1_{3+\gamma+\delta}$ and $f \in L^1_{3+\gamma}$, then
$$\limsup_{\la \to 0} \left|\mathcal{I}_\la(f)-\mathcal{I}_0(g)\right| \leq A_\gamma\|f-g\|_{L^1_{3+\gamma}} \left(\|f\|_{L^1_{3+\gamma}}+\|g\|_{L^1_{3+\gamma}}\right).$$
\end{lemmeA}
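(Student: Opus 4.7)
The plan is to split the difference by triangle inequality,
$$\mathcal{I}_\la(f_1,g_1)-\mathcal{I}_0(f_2,g_2) = \bigl[\mathcal{I}_\la(f_1,g_1)-\mathcal{I}_\la(f_2,g_2)\bigr] + \bigl[\mathcal{I}_\la(f_2,g_2)-\mathcal{I}_0(f_2,g_2)\bigr],$$
and to treat each bracket with a different tool. The first bracket contributes the $\la$-uniform part of the inequality via bilinearity and a uniform upper bound on $\zeta_\la$, whereas the second bracket, which is the genuinely $\la$-dependent piece, is controlled by a large-velocity truncation exploiting the pointwise convergence $\zeta_\la \to \zeta_0$.

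The first step is to establish the uniform upper bound $\zeta_\la(r^2) \leq A_\gamma\, r^{3+\gamma}$ for every $\la > 0$ and $r \geq 0$. This follows from the fact, already used in Proposition \ref{propo:tails}, that there exists $K > 0$ with $\mathbf{\Psi}_e(x) \leq K\,x^{(3+\gamma)/2}$ for all $x \geq 0$: near the origin by \eqref{psie0}, and for large $x$ by the asymptotic $\mathbf{\Psi}_e(x) = \mathrm{O}(x^{3/2})$ combined with $\gamma \geq 0$. Scaling yields $\zeta_\la(r^2) = \la^{-(3+\gamma)} \mathbf{\Psi}_e(\la^2 r^2) \leq K\, r^{3+\gamma}$. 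Using then $|v-\vb|^{3+\gamma} \lesssim \langle v\rangle^{3+\gamma} + \langle \vb\rangle^{3+\gamma}$ and Fubini, one obtains $|\mathcal{I}_\la(f,g)| \leq A_\gamma \|f\|_{L^1_{3+\gamma}} \|g\|_{L^1_{3+\gamma}}$ uniformly in $\la \geq 0$. Applying this to the bilinear expansion
$$\mathcal{I}_\la(f_1,g_1) - \mathcal{I}_\la(f_2,g_2) = \mathcal{I}_\la(f_1-f_2,\,g_1) + \mathcal{I}_\la(f_2,\,g_1-g_2)$$
produces the first pair of terms on the right-hand side of the claimed inequality (up to adjusting the constant $A_\gamma$).

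For the second bracket, I would pick $R = R(\varepsilon,\delta) > 1$ to be chosen and split the domain of integration using $\mathbf{1}_{|v-\vb| \leq R} + \mathbf{1}_{|v-\vb| > R}$. On $\{|v-\vb| > R\}$ the uniform bound on $\zeta_\la - \zeta_0$ together with the Markov-type inequality $r^{3+\gamma} \mathbf{1}_{r > R} \leq R^{-\delta} r^{3+\gamma+\delta}$ gives a contribution at most $2 A_\gamma R^{-\delta} \|f_2\|_{L^1_{3+\gamma+\delta}} \|g_2\|_{L^1_{3+\gamma+\delta}}$, which becomes $\leq \varepsilon \|f_2\|_{L^1_{3+\gamma+\delta}} \|g_2\|_{L^1_{3+\gamma+\delta}}$ upon choosing $R = (2 A_\gamma/\varepsilon)^{1/\delta}$. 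On the bounded region $\{|v-\vb| \leq R\}$ I would invoke the uniform convergence $\sup_{0 \leq r \leq R} |\zeta_\la(r^2) - \zeta_0(r^2)| \to 0$ as $\la \to 0^+$, which is a direct consequence of \eqref{psie0} since $\la^2 r^2$ lies in the compact neighborhood $[0,\la^2 R^2]$ of the origin; choosing $\la_0 = \la_0(R,\varepsilon) = \la_0(\varepsilon,\delta)$ small enough yields a contribution bounded by $\varepsilon \|f_2\|_{L^1} \|g_2\|_{L^1}$. Combining the two pieces proves the main inequality for every $\la \in (0,\la_0)$.

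The corollary follows at once by specializing $f_1 = g_1 = f$ and $f_2 = g_2 = g$: the $\varepsilon$-term is dominated by quantities not depending on $\la$, so it disappears in the $\limsup_{\la \to 0}$ (being arbitrary in $\varepsilon > 0$). The main technical obstacle in this plan is securing the genuinely \emph{uniform} convergence of $\zeta_\la$ on the compact set $[0,R]$, rather than just pointwise convergence; this relies on having a quantitative version of the asymptotic \eqref{psie0}, which is available from the explicit formula \eqref{Psie} combined with Assumption \ref{HYP}(3). The rest is bookkeeping with weighted $L^1$-norms and elementary splitting.
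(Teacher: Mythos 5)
Your proposal is correct and follows essentially the same route as the paper's proof: the paper also splits into $\mathcal{D}^1_\la+\mathcal{D}^2_\la+\mathcal{D}^3_\la$, controls the first two via the uniform bound $\zeta_\la(r^2)\le K_\gamma r^{3+\gamma}$ (the paper's $K_\gamma$, your $A_\gamma$), and handles $\mathcal{D}^3_\la$ by the same $R$-truncation, introducing $\omega_\la(R)=\sup_{0\le r\le R}|\zeta_\la(r^2)-\zeta_0(r^2)|$ on the compact set and the $R^{-\delta}$ Markov tail on $\{|v-\vb|>R\}$, then choosing $R$ large and $\la_0$ small. Your identification of the uniform convergence of $\zeta_\la$ on compacts as the only point requiring care is accurate; the paper simply asserts it, and your observation that it follows from the quantitative asymptotic \eqref{psie0} (via $\zeta_\la(r^2)=r^{3+\gamma}\cdot\mathbf{\Psi}_e(\la^2r^2)/(\la^2r^2)^{(3+\gamma)/2}$ together with the uniform bound for small $r$) is the right way to make it rigorous.
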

\begin{proof} Note that $\left|\mathcal{I}_\la(f_1,g_1)-\mathcal{I}_0(f_2,g_2)\right| \leq \mathcal{D}^1_\la+\mathcal{D}^2_\la+\mathcal{D}^3_\la$ where
\begin{equation*}\begin{split}
\mathcal{D}^1_\la&=\IRR \left|f_1(v)-f_2(v)\right|\,|g_1(\vb)|\zeta_\la\left(|v-\vb|^2\right)\d v \d\vb \\
\mathcal{D}^2_\la&=\IRR |f_2(v)|\,\left|g_1(\vb)-g_2(\vb)\right|\,\zeta_\la\left(|v-\vb|^2\right)\d v\d\vb\\
\mathcal{D}^3_\la&= \IRR |f_2(v)|\,|g_2(\vb)|\,\left|\zeta_\la(|v-\vb|^2)-\zeta_0\left(|v-\vb|^2\right)\right|\d v.\d\vb
\end{split}
\end{equation*}
Let us investigate separately these three terms. Since there is some positive constant $K_\gamma >0$ such that $\mathbf{\Psi}_e(r^2) \leq K_\gamma r^{3+\gamma}$, it is clear that $\zeta_\la\left(|v-\vb|^2\right)\leq K_\gamma |v-\vb|^{3+\gamma} \leq 2^{\frac{3+\gamma}{2}}K_\gamma \langle v \rangle^{3+\gamma}\langle \vb\rangle^{3+\gamma}$ for any $(v,\vb)$. Therefore
\begin{multline*}
\mathcal{D}_\la^1 \leq 2^{\frac{3+\gamma}{2}}K_\gamma \IRR \left|f_1(v)-f_2(v)\right|\,|g_1(\vb)|\langle v \rangle^{3+\gamma}\langle \vb\rangle^{3+\gamma}\d v\d\vb\\
=2^{\frac{3+\gamma}{2}}K_\gamma\|f_1-f_2\|_{L^1_{3+\gamma}}\|g_1\|_{L^1_{3+\gamma}}.
\end{multline*}
In the same way
$$\mathcal{D}_\la^2 \leq 2^{\frac{3+\gamma}{2}}K_\gamma\|g_1-g_2\|_{L^1_{3+\gamma}}\|f_2\|_{L^1_{3+\gamma}}.$$
Regarding the term $\mathcal{D}^3_\la$, set $\omega_\la(R)=\sup_{0 \leq r \leq R}\left|\zeta_\la(r^2)-\zeta_0(r^2)\right|$ for any $R > 0$.  It is clear that for any fixed $R >0$ one has $\lim_{\la \to 0}\omega_\la(R)=0$.  Let $R >0$ be fixed and split $\mathcal{D}_\la^3$ as
\begin{multline*}
\mathcal{D}_\la^3=\int_{|v-\vb| \leq R}|f_2(v)|\,|g_2(\vb)|\,\left|\zeta_\la(|v-\vb|^2)-\zeta_0\left(|v-\vb|^2\right)\right|\d v\d\vb\\
+\int_{|v-\vb| > R}|f_2(v)|\,|g_2(\vb)|\,\left|\zeta_\la(|v-\vb|^2)-\zeta_0\left(|v-\vb|^2\right)\right|\d v\d\vb\\
\leq \omega_\la(R) \|f_2\|_{L^1}\|g_2\|_{L^1}  +(K_\gamma+C_\gamma)\int_{|v-\vb| > R}|f_2(v)|\,|g_2(\vb)|\,\left|v-\vb\right|^{3+\gamma}\d v\d\vb
\end{multline*}  where we used the fact that $\left|\zeta_\la(|v-\vb|^2)-\zeta_0\left(|v-\vb|^2\right)\right| \leq (K_\gamma+C_\gamma)|v-\vb|^{3+\gamma}$ for any $(v,\vb)$.  Consequently, for any $\delta >0$,
$$\mathcal{D}_\la^3 \leq \omega_\la(R)\|f_2\|_{L^1}\|g_2\|_{L^1} + \frac{K_\gamma+C_\gamma}{R^{\delta}} \IRR |f_2(v)|\,|g_2(\vb)|\,|v-\vb|^{3+\gamma+\delta}\d v\d\vb,$$
that is,
$$\mathcal{D}_\la^3 \leq \omega_\la(R)\|f_2\|_{L^1}\|g_2\|_{L^1} + 2^{\frac{3+\gamma}{2}}\frac{K_\gamma+C_\gamma}{R^{\delta}} \|g_2\|_{L^1_{3+\gamma+\delta}}\|f_2\|_{L^1_{3+\gamma+\delta}}.$$
Taking first $R >0$ large enough and then $\la$ small enough we get the conclusion.
\end{proof}
\begin{lemmeA}\label{lem:quantit} Assume that  there exist two positive constants $\mathfrak{a},\mathfrak{b} >0$ and two exponents  $\overline{\gamma}  >\gamma >0$ such that
\begin{equation*}
\left|e(r)- 1+\mathfrak{a}\,r^\gamma\right| \leq \mathfrak{b}\,r^{\overline{\gamma}} \text{ for any } r \geq  0.\end{equation*}
Then, there exist two explicit positive constant $A_\gamma, B_\gamma >0$ such that
\begin{multline}\label{dissTem}
\left|\mathcal{I}_\la(f_1,g_1)-\mathcal{I}_0(f_2,g_2)\right|\leq A_\gamma\left(\|f_1-f_2\|_{L^1_{3+\gamma}}\|g_1\|_{L^1_{3+\gamma}} +\|g_1-g_2\|_{L^1_{3+\gamma}}\|f_2\|_{L^1_{3+\gamma}}\right)\\
 + B_\gamma\,\la^\alpha \|f_2\|_{L^1_{3+\gamma+\overline{\gamma}}}\|g_2\|_{L^1_{3+\gamma+\overline{\gamma}}} \qquad \forall \la \in (0,1)
\end{multline}
where $\alpha=\min(\gamma,\overline{\gamma}-\gamma)$.
\end{lemmeA}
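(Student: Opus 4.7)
The strategy is to follow the decomposition used in the proof of Lemma A.\ref{lem:dissTem}, estimating two of the three pieces exactly as before, and making the analysis of the third piece quantitative by exploiting hypothesis \eqref{order2}. Precisely, writing
\[
\mathcal{I}_{\la}(f_{1},g_{1})-\mathcal{I}_{0}(f_{2},g_{2})=\mathcal{D}^{1}_{\la}+\mathcal{D}^{2}_{\la}+\mathcal{D}^{3}_{\la},
\]
with $\mathcal{D}^{1}_{\la}, \mathcal{D}^{2}_{\la}, \mathcal{D}^{3}_{\la}$ defined exactly as in the proof of Lemma A.\ref{lem:dissTem}, we immediately obtain the bounds
\[
\mathcal{D}^{1}_{\la}\le A_{\gamma}\|f_{1}-f_{2}\|_{L^{1}_{3+\gamma}}\|g_{1}\|_{L^{1}_{3+\gamma}},\qquad\mathcal{D}^{2}_{\la}\le A_{\gamma}\|g_{1}-g_{2}\|_{L^{1}_{3+\gamma}}\|f_{2}\|_{L^{1}_{3+\gamma}}
\]
using the uniform-in-$\la$ estimate $\zeta_{\la}(r^{2})\le K_{\gamma}\,r^{3+\gamma}$ coming from \eqref{psie0}. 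These two contributions account for the first term in the right-hand side of \eqref{dissTem}.

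The entire quantitative content therefore resides in the pointwise control of $|\zeta_{\la}(r^{2})-\zeta_{0}(r^{2})|$. Starting from
\[
\zeta_{\la}(r^{2})-\zeta_{0}(r^{2})=\frac{r^{3}}{2\la^{\gamma}}\int_{0}^{1}\Bigl[1-e(\la r z)^{2}-2\mathfrak{a}(\la r z)^{\gamma}\Bigr]z^{3}\,\d z,
\]
I would first prove the uniform pointwise estimate
\[
\bigl|\,1-e(s)^{2}-2\mathfrak{a}s^{\gamma}\bigr|\le C_{\gamma,\mathfrak{a},\mathfrak{b}}\,s^{\gamma+\alpha},\qquad\forall s\ge 0,\qquad\alpha=\min(\gamma,\overline{\gamma}-\gamma).
\]
This is the main technical step: for $s\in[0,1]$ one writes $1-e(s)=\mathfrak{a}s^{\gamma}+\rho(s)$ with $|\rho(s)|\le \mathfrak{b}s^{\overline{\gamma}}$ and expands
\[
1-e(s)^{2}-2\mathfrak{a}s^{\gamma}=2\rho(s)-(1-e(s))^{2},
\]
so that $|1-e(s)^{2}-2\mathfrak{a}s^{\gamma}|\le 2\mathfrak{b}s^{\overline{\gamma}}+(\mathfrak{a}+\mathfrak{b})^{2}s^{2\gamma}$, and both exponents $\overline{\gamma}$ and $2\gamma$ are at least $\gamma+\alpha$, which yields the claim on $[0,1]$. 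For $s\ge 1$, the trivial bound $|1-e(s)^{2}-2\mathfrak{a}s^{\gamma}|\le 1+2\mathfrak{a}s^{\gamma}\le(1+2\mathfrak{a})s^{\gamma+\alpha}$ suffices because $s\ge 1$.

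Plugging this into the integral representation and performing the resulting elementary integration in $z$, one obtains
\[
|\zeta_{\la}(r^{2})-\zeta_{0}(r^{2})|\le C'_{\gamma}\,\la^{\alpha}\,r^{3+\gamma+\alpha}\le 2C'_{\gamma}\,\la^{\alpha}\,\langle r\rangle^{3+\gamma+\overline{\gamma}},
\]
where the last inequality follows from $\alpha\le\overline{\gamma}$ together with the bound $r^{a}\le 1+r^{b}$ valid for $0\le a\le b$. Finally, injecting this estimate into $\mathcal{D}^{3}_{\la}$ and using the trivial inequality $\langle v-\vb\rangle^{3+\gamma+\overline{\gamma}}\le 2^{(3+\gamma+\overline{\gamma})/2}\langle v\rangle^{3+\gamma+\overline{\gamma}}\langle \vb\rangle^{3+\gamma+\overline{\gamma}}$ gives
\[
\mathcal{D}^{3}_{\la}\le B_{\gamma}\,\la^{\alpha}\,\|f_{2}\|_{L^{1}_{3+\gamma+\overline{\gamma}}}\|g_{2}\|_{L^{1}_{3+\gamma+\overline{\gamma}}},
\]
which combined with the estimates on $\mathcal{D}^{1}_{\la}$ and $\mathcal{D}^{2}_{\la}$ delivers \eqref{dissTem}. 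The only delicate point is getting the uniform (in $s\ge 0$) exponent $\gamma+\alpha$ with the precise value $\alpha=\min(\gamma,\overline{\gamma}-\gamma)$; a less careful argument would only give $\alpha=\overline{\gamma}-\gamma$ (from the remainder $\rho$) and would miss the contribution coming from the quadratic term $(1-e(s))^{2}$, which is what limits the rate when $\gamma<\overline{\gamma}-\gamma$.
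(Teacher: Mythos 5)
Your proof is correct and follows essentially the same route as the paper's: both expand $\zeta_\la-\zeta_0$ via the integral formula, use $1-e(s)=\mathfrak{a}\,s^\gamma+\rho(s)$ together with the algebraic identity $1-e^2=2(1-e)-(1-e)^2$ to control the integrand, and then bound $\mathcal{D}^3_\la$ exactly as the other two pieces. The only cosmetic difference is that you package the pointwise estimate into a single uniform bound $|1-e(s)^2-2\mathfrak{a}\,s^\gamma|\le C\,s^{\gamma+\alpha}$ by separating $s\le 1$ from $s\ge 1$, whereas the paper keeps a three-term sum in powers of $\la$ and collects it into $\la^\alpha\langle r\rangle^{3+\gamma+\overline\gamma}$ at the end.
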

\begin{proof} For any $\la \in (0,1]$ and $r >0$
$$\zeta_\la(r^2)-\zeta_0(r^2)=\dfrac{r^{3+\gamma}}{2}\int_0^1 \left(\frac{1-e^2(\la\,r\,z)}{(\la\,r\,z)^\gamma} -2\mathfrak{a}\right)z^{3+\gamma}\d z.$$
Then, under our assumption on $e(\cdot)$, there are three constants $A, B, C >0$ such that
$$\left|\zeta_\la(r^2)-\zeta_0(r^2)\right| \leq A\la^{\overline{\gamma}-\gamma} r^{3+\gamma}+B\la^\gamma\,r^{3+2\gamma}+ C \la^{\overline{\gamma}} r^{3+\gamma+\overline{\gamma}}\qquad \forall \la > 0, r >0.$$
In other words, there is $C_\gamma >0$ such that
$$\bigg|\zeta_\la(|v-\vb|^2)-\zeta_0(|v-\vb|^2)\bigg| \leq C_\gamma \la^\alpha \langle  v \rangle^{3+\gamma+\overline{\gamma}}\langle \vb \rangle^{3+\gamma+\overline{\gamma}} \quad \forall v,\vb \in \R^3\times\R^3$$
 where $\alpha=\min(\gamma,\overline{\gamma}-\gamma)$. Consequently,
\begin{equation}\label{Dla3}\mathcal{D}_\la^3 \leq C_\gamma \la^\alpha \|f_2\|_{L^1_{3+\gamma+\overline{\gamma}}}\|g_2\|_{L^1_{3+\gamma+\overline{\gamma}}}.
\end{equation}
With this estimate the proof follows as the proof of Lemma A.\ref{lem:dissTem}.
\end{proof}
\begin{nb} For visco-elastic hard-spheres, the assumption \eqref{order2} is met with $\gamma=\frac{1}{5}$ and $\overline{\gamma}=\frac{2}{5}$. In particular,
$\alpha=\frac{1}{5}.$
 \end{nb}
For a given $a \geq 0$ define the exponential weight $m_a(v)=\exp(a|v|)$ and introduce
$$\mathcal{X}=L^1(m_a) \quad \text{ and } \quad \mathcal{Y}=L^1_1(m_a).$$
Define
$$\widetilde{\mathcal{X}}=\mathcal{X} \cap  \left\{f\,:\,\IR f(v)\d v = \IR f(v)v\d v =0\right\}\ \ \text{and}\ \
\widehat{\mathcal{X}}=\widetilde{\mathcal{X}} \cap \left\{f\,:\,\IR f(v)|v|^2\d v=0\right\},$$
and the operator
$$\mathcal{A}\::\:h \in \widetilde{\mathcal{X}} \mapsto \mathcal{A}h=\left(\mathcal{A}_1 h;\mathcal{A}_2 h\right) \in \mathbb{R} \times \widehat{\mathcal{X}}$$
where
$$\mathcal{A}_1 h=2\mathcal{I}_0(\mathcal{M},h) \qquad \text{and} \qquad \mathcal{A}_2h=\Q_1(h,\mathcal{M})+\Q_1(\mathcal{M},h)=\mathscr{L}_1h.$$
The operator $\mathcal{A}$ is a suitable lifting operator of $\mathscr{L}_1$.
\begin{lemmeA}\label{Ainvert}
The linear functional
$$\mathcal{A}\::\widetilde{\mathcal{Y}} \longrightarrow \mathbb{R} \times \widehat{\mathcal{X}}$$
is invertible and the norm $\|\mathcal{A}^{-1}\|=\|\mathcal{A}^{-1}\|_{\mathbb{R} \times \widehat{\mathcal{X}} \to \widetilde{\mathcal{Y}}}$ can be estimated explicitly.
\end{lemmeA}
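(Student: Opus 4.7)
The plan is to decompose $\widetilde{\mathcal{Y}}$ into $\widehat{\mathcal{Y}}$, where $\mathscr{L}_1 = \mathcal{A}_2$ is already invertible by Proposition \ref{spect}, and a one-dimensional complement accounting for the missing energy constraint. A natural complement is provided by a fixed element of the null space of $\mathscr{L}_1$ that lies in $\widetilde{\mathcal{Y}}\setminus\widehat{\mathcal{Y}}$. Concretely, I would take
\[
\psi(v) := \bigl(|v|^{2} - 3\Theta\bigr)\mathcal{M}(v).
\]
By construction $\int\psi\,\d v = 0$ and $\int v\,\psi\,\d v = 0$ (the latter by parity), so $\psi \in \widetilde{\mathcal{X}}$; its Gaussian decay places it in $\widetilde{\mathcal{Y}}$. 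Since Proposition \ref{spect} identifies $\mathscr{N}(\mathscr{L}_1)=\mathrm{Span}(\mathcal{M},v_1\mathcal{M},v_2\mathcal{M},v_3\mathcal{M},|v|^{2}\mathcal{M})$, we have $\mathscr{L}_1\psi = 0$. The key scalar quantity is
\[
K := \mathcal{A}_1\psi = 2\iint_{\R^{3}\times\R^{3}} \mathcal{M}(v)\bigl(|v_{*}|^{2}-3\Theta\bigr)\mathcal{M}(v_{*})\,\zeta_{0}\bigl(|v-v_{*}|^{2}\bigr)\d v\d v_{*},
\]
which must be shown to be nonzero; in fact $K>0$ follows from a covariance/FKG-type argument, since the radial function $v_{*} \mapsto \int\mathcal{M}(v)|v-v_{*}|^{3+\gamma}\d v$ is strictly increasing in $|v_{*}|$, and its correlation against the mean-zero weight $(|v_{*}|^{2}-3\Theta)\mathcal{M}(v_{*})$ is therefore strictly positive. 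An explicit lower bound for $K$ in terms of $\Theta$ and $\gamma$ can be obtained by reducing to Gaussian integrals.

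Once $K\neq 0$ is established, the inverse of $\mathcal{A}$ is produced explicitly. For any $(\beta,g)\in\mathbb{R}\times\widehat{\mathcal{X}}$, set $h_{0} := \mathscr{L}_1^{-1}g\in\widehat{\mathcal{Y}}$, well-defined with explicit norm control by Proposition \ref{spect}, and $c := (\beta - \mathcal{A}_1 h_{0})/K$, and define
\[
\mathcal{A}^{-1}(\beta,g) := h_{0} + c\,\psi.
\]
Using $\mathscr{L}_1\psi = 0$ and $\mathcal{A}_1\psi = K$, direct verification yields $\mathcal{A}(h_{0}+c\psi) = (\beta,g)$, giving surjectivity. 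Injectivity is immediate: if $h_{0}+c\psi = 0$, applying $\mathscr{L}_1$ gives $\mathscr{L}_1 h_{0} = 0$, hence $h_{0} = 0$ since $h_{0}\in\widehat{\mathcal{Y}}$ and $\mathscr{L}_1$ is injective there; then $c\psi = 0$ forces $c=0$. This construction gives in one stroke the estimate
\[
\|\mathcal{A}^{-1}(\beta,g)\|_{\widetilde{\mathcal{Y}}} \le \bigl\|\mathscr{L}_1^{-1}\bigr\|\,\|g\|_{\widehat{\mathcal{X}}} + \frac{|\beta| + \|\mathcal{A}_1\|_{\mathcal{Y}\to\mathbb{R}}\,\|\mathscr{L}_1^{-1}\|\,\|g\|_{\widehat{\mathcal{X}}}}{|K|}\,\|\psi\|_{\widetilde{\mathcal{Y}}},
\]
in which every constant is explicit: $\|\mathscr{L}_1^{-1}\|_{\widehat{\mathcal{X}}\to\widehat{\mathcal{Y}}}$ is furnished by Proposition \ref{spect}, the operator norm $\|\mathcal{A}_1\|_{\mathcal{Y}\to\mathbb{R}}$ follows from the pointwise bound $\zeta_{0}(|v-v_{*}|^{2}) \le C_{\gamma}\langle v\rangle^{3+\gamma}\langle v_{*}\rangle^{3+\gamma}$ combined with the finiteness of all moments of $\mathcal{M}$, and $\|\psi\|_{\widetilde{\mathcal{Y}}}$ and $K$ reduce to Gaussian integrals.

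The main obstacle is the quantitative lower bound on $K$: while positivity is clear from the monotonicity/covariance argument above, turning this into an \emph{explicit} constant (needed, for instance, in Theorem \ref{theo:uniqueQuant} to make $\la^{\dagger}$ quantitative) requires a careful but elementary computation of the Gaussian integral against $|v-v_{*}|^{3+\gamma}$. Once this is in hand, the remaining algebra and norm estimates are routine, and the lemma follows.
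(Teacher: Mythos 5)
Your proof is correct and follows essentially the same route as the paper: pick the null-space element $\psi=(|v|^2-3\Theta)\mathcal{M}$ (the paper's $\varphi_1$), combine it with the invertibility of $\mathscr{L}_1$ on $\widehat{\mathcal{Y}}$ from Proposition \ref{spect}, and observe that $K=\mathcal{A}_1\psi=2\wp_\gamma\neq 0$ to write the explicit inverse $h=h^\bot+c\,\psi$. You supply slightly more justification than the paper (which invokes ``direct inspection'' for $\wp_\gamma\neq 0$) by giving the subharmonicity/covariance argument for $K>0$.
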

\begin{proof} The fact that the mapping $\mathcal{A}_2=\mathscr{L}_1\::\: \widetilde{\mathcal{X}} \longrightarrow \widehat{\mathcal{X}}$ is invertible with explicit inverse is a direct consequence of Proposition \ref{spect} (see \cite{Mo} for details).  Set
\begin{multline*}
\wp_\gamma:=\IRR \left(|\vb|^2-3\Theta\right)\mathcal{M}(v)\mathcal{M}(\vb)\zeta_0(|v-\vb|^2)\d v\d\vb\\
=C_\gamma\IRR \left(|\vb|^2-3\Theta\right)\mathcal{M}(v)\mathcal{M}(\vb)|v-\vb|^{3+\gamma}\d v\d\vb.\end{multline*} Direct inspection shows that $\wp_\gamma \neq 0$ for any $\gamma >0$. Arguing as in \cite[Lemma 4.3]{MiMo3},  we deduce that, for any $y \in \mathbb{R},$ $g \in \widehat{\mathcal{X}}$ the unique solution to the equation
$\mathcal{A}h=(y,g)$ is given by $h=h_1 \, \varphi_1  + h^\bot$ with
$$h^\bot=\mathscr{L}_1^{-1}g,\qquad h_1=\dfrac{1}{2\wp_\gamma}\left(y-\mathcal{A}_1 h^\bot\right).$$
This proves the Lemma.
\end{proof}
\section*{Appendix B: Existence of a steady solution for  diffusively driven granular gases}\label{CauchyPB}
\setcounter{equation}{0}
\renewcommand{\theequation}{B.\arabic{equation}}
The main objective of this section is to prove Theorem \ref{theo:exists}, that is, to prove the existence of an steady solution $F$ to \eqref{steady}. The proof, see \cite{GaPaVi}, is based on a dynamic version of Tykhonov fixed point theorem and it is achieved by controlling the $L^2$-norm, the moments and the regularity of the solution to the time-dependent problem associated to \eqref{steady}.  Consider the diffusively driven Boltzmann equation
\begin{align}\label{driven}
\partial_t f(t,v)&=\Q_{e}(f,f)(t,v) + \mu \Delta f(v,t) \quad  t >0, \; v \in \mathbb{R}^3\nonumber\\
f(0,v)&=f_0(v) \quad v \in \mathbb{R}^3,
\end{align}
with $\mu >0$ and where the initial datum $f_0$ is a nonnegative velocity distribution satisfying
\begin{equation}\label{initial}\IR f_0(v)\d v=1, \quad \IR f_0(v) v \d v =0 \quad \text{ and } \quad \IR f_0(v)|v|^3 \d v < \infty.\end{equation}
Notice that if $E_f(t)$ denotes the kinetic energy of $f(t,v)$ at time $t \geq 0$, that is, $E_f(t)=\IR f(t,v)|v|^2\d v$
then it satisfies
$$\dfrac{\d }{\d t}E_f(t)=-\mathcal{I}_e(f(t))+6\mu$$
where $\mathcal{I}_e$ is the energy dissipation functional defined by \eqref{dissIeF} (justifying, a posteriori, the terminology we used in the core of the paper).  Problem \eqref{driven} is well posed due to the following theorem.
\begin{theoB}\label{exist} Assume the restitution coefficient $e(\cdot)$ satisfies Assumption \ref{HYP} and the initial datum $f_0 \in L^1(\R^3) \cap L\log L(\R^3)$ satisfies  \eqref{initial}. Then, there exists a unique nonnegative
weak solution
$$f \in L^\infty([0,\infty),L^1_2(\R^3)), \qquad f\log f \in L^\infty([0,\infty),L^1(\R^3))$$
to equation \eqref{driven}, with the initial condition $f (\cdot, 0) = f_0$. Furthermore, if in addition $f_0 \in L^1_2 \cap L^2(\R^3)$ then $f \in \mathcal{C}_b^\infty([t_0,\infty), \mathcal{S}(\R^3))$ for every $t_0 > 0$.
\end{theoB}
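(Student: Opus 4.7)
The plan is to adapt the regularization/compactness scheme of \cite{GaPaVi} to the non-constant restitution setting. First I would introduce a truncated problem by replacing the hard-spheres kernel $|u|$ by $|u|\,\chi_n(|u|)$, where $\chi_n$ is a smooth cut-off supported in $[0,n]$, yielding a regularized equation whose bilinear collision operator is bounded on $L^2$. Local-in-time existence of a non-negative solution $f^n$ for this approximating problem follows from a Banach fixed point argument in $L^1_2\cap L^2$, exploiting the parabolic smoothing of $\mu\Delta$. Non-negativity is preserved by recasting the loss term in semigroup form, and global existence is then a consequence of mass conservation together with the a priori bounds described below.

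The core of the argument is the derivation of a priori estimates uniform in $n$. Mass conservation and vanishing momentum follow directly from the weak form \eqref{Ie3BHS} and the fact that $\int \Delta f^n\,\d v=\int v\Delta f^n\,\d v=0$. For the kinetic energy $E_{f^n}(t)$, the identity $\tfrac{\d}{\d t}E_{f^n}(t)=-\mathcal{I}_{e,n}(f^n)+6\mu$, combined with Jensen's inequality applied to the convex potential $\mathbf{\Psi}_e$, yields $\mathcal{I}_{e,n}(f^n)\geq \mathbf{\Psi}_e(E_{f^n}(t))$; since $\mathbf{\Psi}_e(r)\sim r^{3/2}$ at infinity (see Appendix A), this forces $E_{f^n}(t)$ to remain bounded uniformly in $n$ and $t$. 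Higher moments are propagated through a Povzner-type argument mirroring Proposition \ref{mo}, where the Laplacian contribution enters only as a lower order term, and entropy and $L\log L$ control follow by multiplying by $\log f^n$ and using the non-positivity of Boltzmann's $H$-functional for $\Q_e$ together with the dissipation $\mu\int |\nabla\sqrt{f^n}|^2$ from the diffusion. Uniform $L^2$ bounds are obtained in the spirit of Proposition \ref{propo:L2}, balancing the coercivity of the loss term against the smoothing of $\mu\Delta$.

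With these uniform bounds and an Aubin--Lions compactness argument I would extract a subsequence converging to a weak solution $f$ of \eqref{driven}. Uniqueness in $L^\infty([0,\infty);L^1_2)$ is established via a Gronwall argument on the difference $h=f_1-f_2$ of two solutions sharing the same initial data: multiplying the evolution equation for $h$ by $\mathrm{sgn}(h)\langle v\rangle^2$ and invoking the bilinear continuity of $\Q_e$ on $L^1_2$ (which is insensitive to the precise form of $e(\cdot)$) gives a closed $L^1_2$-contraction. For the additional smoothness assertion, the instantaneous regularization produced by $\mu\Delta$, combined with the moment propagation, upgrades the solution to $\mathbf{H}^s_k$ for all $s,k\geq 0$; a bootstrap using the regularity estimates on $\Q_e^+$ from Section~2 then yields $\mathcal{C}^\infty_b([t_0,\infty);\mathcal{S}(\R^3))$ for every $t_0>0$.

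The main obstacle is the uniform control of the kinetic energy under the variable restitution coefficient. For constant $e$, the dissipation reduces essentially to a multiple of $\iint f f_\star |v-\vb|^3\,\d v\d\vb$ and the energy ODE closes trivially. For non-constant $e(\cdot)$ the dissipation is governed by the more intricate potential $\mathbf{\Psi}_e$, and the closure of the energy balance relies crucially on the convexity and asymptotic behaviour of $\mathbf{\Psi}_e$ recalled in \eqref{Psie}--\eqref{psie0}; without these, one could not rule out energy blow-up. A secondary technical point is propagating the $L\log L$ bound, since the entropy production from the inelastic collision operator is not signed in general, and one must carefully play off the (non-negative) Fisher information dissipated by $\mu\Delta$ against the possible entropy growth produced by the inelasticity.
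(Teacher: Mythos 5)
Your overall scheme (truncation, a priori moment/energy/$L^2$/entropy bounds, compactness, Gronwall uniqueness, parabolic bootstrap for smoothness) follows the same template as the paper's adaptation of \cite{GaPaVi}, and most of the secondary steps — Povzner estimates for moments, $L^2$ coercivity via the loss term, Aubin--Lions compactness, $\mathcal{C}^\infty_b(\mathcal{S})$ regularity by parabolic bootstrap — align with what the paper does. However, there is a genuine gap precisely at the step you single out as the main obstacle.

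Your a priori energy estimate does not close for the truncated problem. You truncate the kinetic kernel from above, replacing $|u|$ by $|u|\chi_n(|u|)$ with $\chi_n$ supported in $[0,n]$, and then claim that Jensen's inequality applied to the convex $\mathbf{\Psi}_e$ gives $\mathcal{I}_{e,n}(f^n)\geq\mathbf{\Psi}_e(E_{f^n}(t))$. This inequality is false for the truncated operator: the dissipation potential that actually appears is $\mathbf{\Psi}_{e,n}(|u|^2):=\chi_n(|u|)\,\mathbf{\Psi}_e(|u|^2)$, which vanishes for $|u|>n$, hence is \emph{bounded} by $\mathbf{\Psi}_e(n^2)$ and is not convex. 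Jensen at best yields a lower bound by a bounded function, which cannot offset the constant heating $6\mu$; the energy of the truncated system may grow without a uniform-in-$t$ bound, and certainly the bound is not uniform in $n$. The paper sidesteps this entirely. It uses the two-sided truncation $\Phi_{m,M}(|u|)=m+\min(|u|,M)$, which is bounded \emph{below} by $m>0$, together with the observation — specific to the variable restitution case and highlighted explicitly as the modification to \cite{GaPaVi} — that $\limsup_{r\to\infty}e(r)<1$ forces $\mathbf{\Psi}_e(|u|^2)\geq C|u|^3$ for $|u|>R_0$. Plugging these two facts into the dissipation gives a \emph{linear} lower bound $\mathcal{I}_{m,M}(g)\geq 2Cm\,E_g-CmR_0^2$, closing the energy ODE with a constant $A_1=A_1(m)$ independent of $M$; the limits $M\to\infty$ and $m\to0$ are then taken \emph{in that order}. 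If you want to keep your one-parameter upper truncation, you would have to run a more delicate argument (e.g. a bootstrap splitting $\{|u|\leq n/2\}$ from $\{|u|>n/2\}$ and using Chebyshev to show most of the mass remains in the former region); the straightforward Jensen route you wrote down does not work.

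Two smaller remarks. First, the paper constructs $\mathcal{T}$ by adding $Mf$ to both sides and then invokes a Schauder/Tykhonov-type argument on the invariant convex set $\mathcal{B}$, not a Banach contraction; this is a design choice, but worth noting since it makes the invariance of $\mathcal{B}$ the key structural step. Second, on the $L\log L$ propagation, you correctly observe that the inelastic entropy production has no sign and that the Fisher information from $\mu\Delta$ must be used to control it; the paper simply inherits this from \cite{GaPaVi} without modification, since the inelastic entropy-production argument there is insensitive to whether $e(\cdot)$ is constant. Your statement that one uses "the non-positivity of Boltzmann's $H$-functional for $\Q_e$" in the first instance is misleading and contradicted by your own caveat in the last paragraph; the correct starting point is that the inelastic $H$-production is controlled (but not signed), and the diffusive term supplies the dominating dissipation.
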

The proof of Theorem \ref{theo:exists} can be deduced from Theorem B.\ref{exist} following the proof of \cite[Theorem 5.2]{GaPaVi}, thus, we shall only recall the main steps in the proof of Theorem B.\ref{exist}.  The proof will follow the path presented in \cite[Theorem 5.1]{GaPaVi} with the differences clearly explained.
\subsection*{B.1 Povzner-type inequalities}\label{sec:povzner}  We derived in \cite{AloLo1} Povzner's estimates in the spirit of  \cite{BoGaPa} and \cite{MiMo2}. We shall extend this result, using some ideas of \cite{GaPaVi}. Recall that for any \textit{nonnegative} function $f$ and text function $\psi(v)=\Psi(|v|^2)$ with $\Psi$ nondecreasing and convex
$$
\int_{\mathbb{R}^{3}}\Q_{B,e}(f,f)(v)\psi(v)\d v=\frac{1}{2}\int_{\mathbb{R}^{3} \times \mathbb{R}^3}f(v)f(\vb)\Phi(|u|) \mathcal{A}_{B,e}[\Psi](v,\vb)\;\d\vb\d v,
$$
where
\begin{equation*}\begin{split}\mathcal{A}_{B,e}[\Psi](v,\vb)&=\int_{\mathbb{S}^{2}}\bigg( \Psi( |{v'}|^2)+\Psi( |{\vb'}|^2) - \Psi( |v |^2)-\Psi( |\vb|^2) \bigg)b(\widehat{u}\cdot\sigma)\d\sigma\\
&=\mathcal{A}^+_{B,e}[\Psi](v,\vb)-\left(\Psi( |v |^2)+\Psi( |\vb|^2)\right).
\end{split}\end{equation*}
The collision cross-section $B(u,\sigma)$ is given by \eqref{Bu} with the normalization assumption \eqref{normalization} (notation is slightly changed with respect to \cite{AloLo1}). Define  the velocity of the center of mass $U=\dfrac{v+\vb}{2}$ so that
$$v'=U+\dfrac{|u|}{2}\omega \ \ \text{and}\ \ \vb'=U-\dfrac{|u|}{2}\omega$$
with $\omega=(1-\beta)\widehat{u}+\beta \sigma$.  We proved in \cite[Eq. (2.15)]{AloLo1} that the post-collisional integral can be estimated from above as follows
\begin{equation*}\label{Ae+}
\mathcal{A}^+_{B,e}[\Psi](v,\vb)\leq \int_{\{\widehat{U}\cdot\sigma\geq0\}} \left[\Psi\left(E \frac{3+\widehat{U}\cdot\sigma}{4}\right)+\Psi\left(E \frac{1-\widehat{U}\cdot\sigma}{4} \right)\right]\tilde{b}(\widehat{u}\cdot\sigma)\d\sigma.\end{equation*}
Under assumption \eqref{crois} one can prove, see \cite[Lemma 1]{BoGaPa}, that this integral (involving $\widehat{U}$ and $\widehat{u}$) takes its maximum value whenever $\widehat{U}=\widehat{u}$, that is,
\begin{equation}\label{Ae+}
\mathcal{A}^+_{B,e}[\Psi](v,\vb)\leq 2\pi\int_0^1 \left[\Psi\left(E \frac{3+s}{4}\right)+\Psi\left(E \frac{1-s}{4} \right)\right]\tilde{b}(s)\d s
\end{equation}
where $\tilde{b}(s)=b(s)+b(-s)$ and $E=|v|^2+|\vb|^2.$  At this point, we shall adopt the viewpoint of \cite{GaPaVi} and assume that $\Psi$ satisfies the following conditions:
\begin{subequations}\label{cond:psi}
\begin{eqnarray}
\label{cond:psi-1} & \Psi(x)\ge0, \quad x>0; \quad \Psi(0) = 0;
\\ \label{cond:psi-2} & \Psi  \;\text{is convex,}\;\; \Psi'' \in L^\infty_{\textrm{loc}}((0,\infty));
\\ \label{cond:psi-3} & \Psi'(ax) \le \eta_1(a)\,
\Psi'({x})\quad  \text{ and } \quad \Psi''(ax) \le \eta_2(a)\, \Psi''(x),
\quad x>0\quad a>1,
\end{eqnarray}
\end{subequations}
where \(\eta_1(\cdot)\) and \(\eta_2(\cdot)\) are locally bounded functions. Then, one has the following generalization of \cite[Lemma 3.3]{GaPaVi} to non constant restitution coefficient.
\begin{propoB}\label{Povzner} Assume that \(\Psi(x)\) satisfies
\eqref{cond:psi}. Then, for any $(v,\vb) \in \R^3 \times \R^3$,
$$\mathcal{K}_{B,e}[\Psi](v,\vb)  \leq A \left(|v|^2 \Psi'(|\vb|^2)+ |\vb|^2 \Psi'(|v|^2)\right) -k\left(|v|^2+|\vb|^2\right)^2 \Psi''(|v|^2+|\vb|^2)$$
where \(A=\eta_1(2)\) while $k >0$ is an explicit constant depending only on $\eta_2$ and on $b(\cdot)$. For instance in the hard-sphere case $b(\cdot)=\frac{1}{4\pi}$, then $k\;\eta_2(2)=\tfrac{5}{96}$.
\end{propoB}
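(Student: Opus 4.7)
The plan is to begin from the upper bound \eqref{Ae+} on $\mathcal{A}^+_{B,e}[\Psi]$ established just above the statement, which crucially no longer involves the restitution coefficient $e(\cdot)$ on its right-hand side: all the inelasticity has already been absorbed into the elastic-type overestimate via assumption \eqref{crois} on $\widetilde b$. Consequently the proof reduces to a Povzner-type argument for the one-dimensional integral
$$I(E):=2\pi\int_0^1\left[\Psi\!\left(E\tfrac{3+s}{4}\right)+\Psi\!\left(E\tfrac{1-s}{4}\right)\right]\widetilde b(s)\,\d s-\Psi(|v|^2)-\Psi(|\vb|^2),$$
where $E=|v|^2+|\vb|^2$, and the task is to adapt \cite[Lemma 3.3]{GaPaVi} to the class of test functions $\Psi$ satisfying \eqref{cond:psi}. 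Using $2\pi\|b\|_{L^1(-1,1)}=1$ (and hence $2\pi\int_0^1\widetilde b=1$), I would split $I(E)=I_1+I_2$ with
$$I_1:=2\pi\int_0^1\left[\Psi(tE)+\Psi((1-t)E)-\Psi(E)\right]\widetilde b(s)\,\d s,\qquad I_2:=\Psi(E)-\Psi(|v|^2)-\Psi(|\vb|^2),$$
where $t=t(s)=(3+s)/4$.

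The bound on $I_2$ is the easier half. Assuming without loss that $|v|^2\geq|\vb|^2$ so that $E\leq 2|v|^2$, the mean-value theorem together with \eqref{cond:psi-3} applied with $a=2$ yields $\Psi(E)-\Psi(|v|^2)\leq|\vb|^2\Psi'(E)\leq\eta_1(2)|\vb|^2\Psi'(|v|^2)$; discarding $-\Psi(|\vb|^2)\leq 0$ and symmetrizing in $(v,\vb)$ then gives
$$I_2\leq \eta_1(2)\bigl(|v|^2\Psi'(|\vb|^2)+|\vb|^2\Psi'(|v|^2)\bigr),$$
which produces the first term in the stated inequality with $A=\eta_1(2)$.

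The main work lies in bounding $I_1$ by the coercive term $-kE^2\Psi''(E)$. Taylor's formula with integral remainder (or equivalently a Fubini rewrite) delivers the identity
$$\Psi(tE)+\Psi((1-t)E)-\Psi(E)=-\int_0^{(1-t)E}\int_y^{E-y}\Psi''(z)\,\d z\,\d y,$$
valid for $t\in[1/2,1]$. For $z$ in a suitable inner subinterval of $[0,E]$ — concretely $z\in[E/4,3E/4]$, which is contained in $[y,E-y]$ whenever $y\leq E/4$, i.e.\ for every $s\in[0,1]$ — I would invoke \eqref{cond:psi-3} for $\Psi''$ with $a=E/z\leq 4$ to get $\Psi''(z)\geq\Psi''(E)/\eta_2(4)$, or, more sharply, $\Psi''(z)\geq \Psi''(E)/\eta_2(2)$ on $[E/2,E]$. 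Inserting this lower bound, exchanging the orders of integration and computing the elementary double integral in $(s,y)$ against the weight $\widetilde b(s)$ then yields $I_1\leq -k\,E^2\,\Psi''(E)$ for an explicit constant $k>0$ depending only on $\widetilde b$ and on $\eta_2$. In the hard-sphere case $b\equiv1/4\pi$, $2\pi\widetilde b\equiv 1$, and the explicit calculation
$$\int_0^1\int_0^{(1-s)/4}\bigl(E-2y\bigr)\,\d y\,\d s=\tfrac{5}{96}\,E^2$$
produces exactly $k\,\eta_2(2)=5/96$, in agreement with the claimed value.

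The principal obstacle will be the bookkeeping inside the $I_1$ estimate: one must pick an inner window of $z$ on which $\Psi''(z)$ can be controlled from below by $\Psi''(E)$ through a \emph{single} invocation of \eqref{cond:psi-3}, simultaneously large enough that, after integrating in $(s,y)$ against $\widetilde b$, one retains a strictly positive fraction of $E^2$; only then is the constant $k$ explicit and independent of $\Psi$. Once $I_1$ and $I_2$ are in hand, the bound for $\mathcal{K}_{B,e}[\Psi]$ follows from $I_1+I_2$, and no further dependence on the restitution coefficient appears — this is precisely what makes \eqref{Ae+} the key preliminary step allowing the non-constant case to be handled exactly as in \cite{GaPaVi}.
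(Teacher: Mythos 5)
Your proposal is correct and follows essentially the same route as the paper's proof. The paper splits $\mathcal{A}_{B,e}[\Psi]=\mathcal{P}[\Psi]-\mathcal{N}[\Psi]$, bounds $\mathcal{P}$ by the quoted inequality \eqref{by_above} from \cite[Lemma 3.1]{GaPaVi}, and bounds $\mathcal{N}$ from below by inserting \eqref{Ae+}, using that $E=E\tfrac{3+s}{4}+E\tfrac{1-s}{4}$, and then applying \eqref{by_below}; your $I_2$ is exactly $\mathcal{P}[\Psi]$ (after integrating $b$) and your $I_1$ is $-\mathcal{N}[\Psi]$ after \eqref{Ae+}, so the decomposition is the same, only you re-derive \eqref{by_above} via the mean value theorem and \eqref{by_below} via the explicit Taylor double integral instead of citing the lemma. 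Two small remarks: in the $I_1$ step the correct window on which a \emph{single} application of \eqref{cond:psi-3} with $a=2$ controls $\Psi''$ is $[E/2,\,E-y]$ (contained in $[y,E-y]$ precisely because $y\le(1-s)E/4\le E/4$), not $[E/4,3E/4]$ which would cost a $\sup_{a\le 4}\eta_2(a)$ rather than $\eta_2(2)$; and your displayed hard-sphere integral should read $\int_0^1\int_0^{(1-s)E/4}\left(\tfrac{E}{2}-y\right)\d y\,\d s=\tfrac{5}{96}E^2$ — the upper limit must carry the factor $E$ and the integrand should be $E/2-y$, the length of the window $[E/2,E-y]$ — which then reproduces $k\,\eta_2(2)=5/96$ as in the paper.
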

\begin{proof} Recall that, see \cite[Lemma 3.1]{GaPaVi}, if $\Psi$ satisfies \eqref{cond:psi} then
\begin{equation}
\label{by_above}
\Psi(x+y) - \Psi(x) - \Psi(y)
\le A\, (\,x\,\Psi'(y) + y\,\Psi'(x)\,)
\end{equation}
and
\begin{equation}
\label{by_below}
\Psi(x+y) - \Psi(x) - \Psi(y) \ge a_0\, xy \,\Psi''(x+y),
\end{equation}
where \(A=\eta_1(2)\) and \(a_0=(2\eta_2(2))^{-1}\). Let $(v,\vb)$ be fixed and write $\mathcal{A}_{B,e}[\Psi](v,\vb)=\mathcal{P}[\Psi](v,\vb)-\mathcal{N}[\Psi](v,\vb)$ where
$$\mathcal{P}[\Psi](v,\vb)=\int_{\mathbb{S}^{2}} \bigg(\Psi(|v|^2+|\vb|^2)-\Psi(|v|^2)-\Psi(|\vb|^2)\bigg)b(\widehat{u}\cdot\sigma)\d\sigma$$
and
$$\mathcal{N}[\Psi](v,\vb)=\int_{\mathbb{S}^{2}} \bigg(\Psi(|v|^2+|\vb|^2)-\Psi(|v'|^2)-\Psi(|\vb'|^2)\bigg)b(\widehat{u}\cdot\sigma)\d\sigma.$$
Using \eqref{by_above} and the normalization assumption \eqref{normalization} one gets directly that
$$\mathcal{P}[\Psi](v,\vb)\leq A \left(|v|^2 \Psi'(|\vb|^2)+ |\vb|^2 \Psi'(|v|^2)\right).$$
Let us extimate $\mathcal{N}[\Psi](v,\vb)$ from below.  First, one notices that
$$\mathcal{N}[\Psi](v,\vb)= \Psi(|v|^2+|\vb|^2)-\mathcal{A}_{B,e}^+[\Psi](v,\vb)$$ and deduces from \eqref{Ae+} that
$$\mathcal{N}[\Psi](v,\vb)\geq \Psi(E) -2\pi\int_0^1 \left[\Psi\left(E \frac{3+s}{4}\right)+\Psi\left(E \frac{1-s}{4} \right)\right]\tilde{b}(s)\d s, \quad E=|v|^2+|\vb|^2.$$
Second, since $\int_0^1 \tilde{b}(s)\d s=\frac{1}{2\pi}$ according to \eqref{normalization} one can write
$$\mathcal{N}[\Psi](v,\vb)\geq 2\pi\int_0^1 \left[\Psi(E)-\Psi\left(E \frac{3+s}{4}\right)+\Psi\left(E \frac{1-s}{4} \right)\right]\tilde{b}(s)\d s.$$
Noticing that $E=E \frac{3+s}{4}+E \frac{1-s}{4}$ for any $s \in (0,1)$, it is possible to apply directly \eqref{by_below} to obtain
$$\mathcal{N}[\Psi](v,\vb) \geq \frac{\pi a_0}{8} E^2 \Psi''(E)\int_0^1  (3+s)(1-s) \tilde{b}(s)\d s.$$
Setting $k=\frac{\pi a_0}{8}\int_0^1  (3+s)(1-s) \tilde{b}(s)\d s$, the desired conclusion follows.
\end{proof}
\begin{nbB} Note that the above estimate does not depend on the restitution coefficient $e(\cdot)$.  Indeed, the two constants $A$ and $b$ are depending only on $\Psi$ and the angular cross-section $b(\cdot)$ but not on $e(\cdot)$.
\end{nbB}
With Proposition B.\ref{Povzner} the following properties are derived exactly as shown in \cite{GaPaVi}.
\begin{lemmeB}\label{momentsp} Let $p > 1$ and $\Psi(x)=x^p$. Then, for $b(\cdot)=1/4\pi$ one has
$$|v-\vb|\mathcal{K}_{ e}[\Psi](v,\vb) \leq  -k_p\left(|v|^{2p+1} + |\vb|^{2p+1}\right) + A_p\left(|v|\,|\vb|^{2p} + |v|^{2p}\,|\vb|\right) \quad \forall (v,\vb) \in \R^6$$
where the constants $k_p$ and $A_p$ are independent on the restitution coefficient $e(\cdot)$. As a consequence, for any nonnegative distribution $f=f(v) \geq 0$,
\begin{multline*}
\IR \Q_e(f,f)(v)\,|v|^{2p}\d v \leq - k_p \left(\IR f(v)\d v\right)\left(\IR f(v)|v|^{2p+1}\d v\right)\\ +A_p\left(\IR f(v)|v|\d v\right)\left(\IR f(v)|v|^{2p}\d v\right).\end{multline*}
\end{lemmeB}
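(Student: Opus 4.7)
The plan is to apply Proposition~B.\ref{Povzner} with $\Psi(x)=x^p$, multiply the resulting bound by $|v-\vb|$, and rearrange the outcome using two elementary inequalities. Substituting $\Psi'(x)=px^{p-1}$ and $\Psi''(x)=p(p-1)x^{p-2}$ into Proposition~B.\ref{Povzner} yields
\[
\mathcal{K}_e[\Psi](v,\vb) \leq A p\bigl(|v|^{2}|\vb|^{2(p-1)} + |\vb|^{2}|v|^{2(p-1)}\bigr) - k\,p(p-1)\,(|v|^{2}+|\vb|^{2})^{p},
\]
where $A=\eta_1(2)$ and $k$ depend only on $p$ and $b(\cdot)$ (hence are independent of $e(\cdot)$ per the remark following Proposition~B.\ref{Povzner}).

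On the loss side, one first uses the convexity bound $(|v|^2+|\vb|^2)^p\geq |v|^{2p}+|\vb|^{2p}$ to reduce to $-kp(p-1)(|v|^{2p}+|\vb|^{2p})$, and then invokes the sharp pointwise identity
\[
|v-\vb|\,(|v|^{2p}+|\vb|^{2p}) \geq (|v|^{2p+1}+|\vb|^{2p+1}) - (|v||\vb|^{2p}+|v|^{2p}|\vb|),
\]
which follows (assuming WLOG $|v|\geq|\vb|$) from the algebraic expansion $(|v|-|\vb|)(|v|^{2p}-|\vb|^{2p})=(|v|^{2p+1}+|\vb|^{2p+1})-(|v||\vb|^{2p}+|v|^{2p}|\vb|)$ combined with $|v-\vb|\geq|v|-|\vb|$ and $|v|^{2p}-|\vb|^{2p}\leq|v|^{2p}+|\vb|^{2p}$. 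This extracts exactly the dissipative contribution $-kp(p-1)(|v|^{2p+1}+|\vb|^{2p+1})$ plus a positive mixed tail. On the gain side, $|v-\vb|\leq|v|+|\vb|$ produces four monomials $|v|^a|\vb|^b$ with $a+b=2p+1$ and (for $p\geq 3/2$) $a,b\geq 2$; each such monomial admits the Young-type convex decomposition
\[
|v|^{a}|\vb|^{b} \leq \tfrac{2p-a}{2p-1}|v||\vb|^{2p} + \tfrac{a-1}{2p-1}|v|^{2p}|\vb|,
\]
valid whenever $a\in[1,2p]$, so that the gain is dominated by $C_p(|v||\vb|^{2p}+|v|^{2p}|\vb|)$ for an explicit $C_p>0$. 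Combining both sides gives the claimed pointwise inequality with $k_p=kp(p-1)>0$ and $A_p=kp(p-1)+ApC_p$, both independent of $e(\cdot)$.

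The integrated consequence then follows by multiplying the pointwise estimate by $\tfrac12 f(v)f(\vb)$ and integrating, using the weak form $\int \Q_e(f,f)|v|^{2p}\d v=\tfrac12\iint f(v)f(\vb)|v-\vb|\mathcal{K}_e[\Psi](v,\vb)\d v\d\vb$; the bivariate integrals of $|v|^{2p+1}+|\vb|^{2p+1}$ and $|v||\vb|^{2p}+|v|^{2p}|\vb|$ against $f(v)f(\vb)$ factor as $2(\int f)(\int f|v|^{2p+1})$ and $2(\int f|v|)(\int f|v|^{2p})$, reproducing the stated bound after absorbing factors of two into the constants.

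The only technical obstacle lies in the gain-side Young's inequality when $p\in(1,\tfrac{3}{2})$: then the exponent $2p-2<1$ appearing in the monomial $|v|^3|\vb|^{2p-2}$ takes the decomposition outside the admissible range $a\in[1,2p]$, and one must instead use Young's with the pure monomials, $|v|^3|\vb|^{2p-2}\leq(3-2p)|v|^{2p+1}+(2p-2)|v|^{2p}|\vb|$, which generates a small positive $|v|^{2p+1}$ contribution of coefficient at most $Ap(3-2p)$. With the constants provided by Proposition~B.\ref{Povzner} this is not readily absorbed into the dissipative part, and one has to appeal to a sharpened Povzner inequality à la \cite{BoGaPa} to close the argument in this regime. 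This subtlety does not arise in the applications of the lemma made in Section~\ref{sec:stea}, where only integer $p\geq2$ is used in the induction of Proposition~\ref{mo}.
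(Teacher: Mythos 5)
Your proof follows the same route the paper gestures at (apply Proposition~B.\ref{Povzner} with $\Psi(x)=x^p$, multiply by $|v-\vb|$, then use $|v-\vb|\leq|v|+|\vb|$ for the gain and $|v-\vb|\geq\big||v|-|\vb|\big|$ together with subadditivity for the loss), which is exactly the GPV argument the paper refers to. The algebraic identity for the loss side is correctly stated and verified, the Young decomposition for the gain monomials is correct for the claimed range of exponents, and the passage to the integrated statement is fine (the factor $\tfrac12$ from the weak form and the factor $2$ from symmetrization simply cancel, no absorption needed). So for $p\geq\tfrac32$ your proof is complete and correct.

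Regarding the flagged regime $p\in(1,\tfrac32)$: you are right that the monomial-by-monomial Young's inequality forces a positive $|v|^{2p+1}$-contribution of size $Ap(3-2p)$, and that with $A=\eta_1(2)=2^{p-1}$ and $k=\tfrac{5}{96\cdot 2^{p-2}}$ this is not absorbed by $kp(p-1)$ for $p$ near $1$, so the route with $k_p=kp(p-1)$ cannot close. However, you do not actually need the stronger BoGaPa-type Povzner here: the obstruction is created by pinning $k_p$ to $kp(p-1)$. If instead one takes $k_p$ strictly smaller than $kp(p-1)$ and checks the inequality in the ratio variable $t=|\vb|/|v|\in[0,1]$, the extra slack $kp(p-1)-k_p>0$ at $t=0$ absorbs the $O(t^{2p-2})$ gain contribution near $t=0$, and for $t$ bounded away from $0$ the positive part $A_p(|v||\vb|^{2p}+|v|^{2p}|\vb|)$ dominates once $A_p$ is taken large. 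So the statement does hold for every $p>1$, but with $k_p<kp(p-1)$, by a global (not termwise) argument. Finally, one small factual point: the induction in Proposition~\ref{mo} produces $\m_{p+1/2}$ from $\m_p$ and $\m_{p-1}$, so it in fact runs over half-integers $p=1,\tfrac32,2,\tfrac52,\dots$, not only integers $p\geq 2$; fortunately $p=\tfrac32$ is exactly the borderline where your decomposition begins to work ($3-2p=0$), so the applications in Section~\ref{sec:stea} are still covered by your argument.
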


\subsection*{B.2 Proof of Theorem B. \ref{exist}}  The proof is a modification of \cite[Theorem 5.2]{GaPaVi} and we only give a sketch of it explaining where the original argument has to be modified to handle the non-constant restitution coefficient.  Using our Povzner's estimates, the propagation and appearance of moments given in \cite[Lemma 3.5]{GaPaVi} follow.  Additionally, using the control of  $\|\Q_{B,e}(f,f)\|_{L^p}$ derived in Corollary A. \ref{quadratic-estim}, we can easily adapt the proof of \cite[Lemma 4.7]{GaPaVi} to our case, yielding a local in time propagation of $\mathbf{H}^1(\R^3)$ norms.  Therefore, the \textit{a priori} estimates for the solution to \eqref{driven} derived for the constant restitution case extends.

Let us first deal with a smooth initial datum $f_0$ with compact support. For any truncation parameters $M > 1> m >0$, define then
$$\Phi_{m,M}(|u|)=m+\min\left(|u|,M\right)$$
and set $B_{m,M}(u,\sigma)=\frac{1}{4\pi}\Phi_{m,M}(|u|)$, $u \in \R^3,\,\sigma \in \S$. Define the collision operator $\Q_{m,M}=\Q_{B_{m,M},e}$ (using the notations of equation \eqref{Ie3B}). For any $T >0$, let $g=g(t,v) \in L^\infty([0,T]\,;\,L^1_2(\R^3) \cap L^2(\R^3))$ be a nonnegative function with
$$\IR g(t,v)\d v=1\qquad\text{ and } \qquad \IR g(t,v)\d v=0 \qquad \forall t \in [0,T].$$
Consider the auxiliary problem
\begin{equation}\label{drivenmM}\begin{cases}
&\partial_t f(t,v)-\mu\Delta_v f(t,v)+M f(t,v) =\Q_{m,M}(g,g)(t,v) + M  g(t,v)\qquad  t \in [0,T]\,, \; v \in \mathbb{R}^3\\
&f(0,v)=f_0(v).
\end{cases}
\end{equation} Setting $h=\Q_{m,M}(g,g)(t,v) + M g(t,v)$, one checks, see \cite[Theorem 5.2]{GaPaVi}, that $h \in L^\infty([0,T]\,;\,L^1_2(\R^3) \cap L^2(\R^3))$ and $h \geq -g(g \ast \Phi_{m,M}) + M  g \geq 0$.  The unique solution $f \in L^\infty([0,T]\,;\,L^1_2(\R^3) \cap L^2(\R^3))$ to \eqref{drivenmM} can be given explicitly and by a classical
parabolic regularity result
\begin{equation}
\label{fp:H2}
\|f\|_{\mathbf{H}^2([0,T]\times\R^3)} \le C_M
(\|h\|_{L^2([0,T]\times\R^3)} + \|f_0\|_{\mathbf{H}^1(\R^3)}).
\end{equation} Denoting by $\mathcal{T}$ the operator that maps $g$ into $f$, the core of the proof consists in showing that for a certain choice of constants $A_1$ and $A_2$, the operator $\mathcal{T}$ maps $\mathcal{B}$ into itself.  Here we refer to the set,
\begin{equation}\label{B}
\begin{split}
\mathcal{B}&=\bigg\{ f \in L^1([0,T] \times \R^3)\,:\,f \geq 0\,,\,\IR f(t,v)\d v=1\,,\,\IR f(t,v)v\d v=0,\\
&E_f(t):=\IR f(t,v) |v|^2\d v \leq A_1\,,\,\IR f^2(t,v)\d v \leq A_2 \quad \text{ for a.e. } t \in [0,T]\bigg\}.
\end{split}
\end{equation}
The first three properties are clearly satisfied. To determine $A_1$, one multiplies equation \eqref{drivenmM} by $|v|^2$ and integrate by parts. This yields
\begin{equation}\label{Eft}
\begin{split}
\dfrac{\d}{\d t} &E_f(t) + M  E_f(t) \leq 6\mu+M  E_g(t) + \IR \Q_{m,M}(g,g)(t,v)|v|^2\d v\\
&\leq 6\mu +M^2E_g(t) - \IRR g(t,v)g(t,v_*)\Phi_{m,M}(|u |)\dfrac{\mathbf{\Psi}_e(|u|^2)}{|u|}\d v\d v_*,
\end{split}
\end{equation}
where we used \eqref{PhiPsi} in the last identity.  Since $\Psi_e(r)$ may be arbitrarily small for small $r >0$, the argument changes slightly with respect to \cite{GaPaVi}.  Using that $\limsup_{r \to \infty}e(r)=e_0 < 1$, there exists some $R_0 \gg 1$ and some constant $C >0$ such that
$$\mathbf{\Psi}_e(|u|^2) \geq C|u|^3 \qquad \forall |u| > R_0.$$
Therefore,
\begin{equation*}\begin{split}
\IRR g(t,v)g(t,v_*)\Phi_{m,M}(|u|)\dfrac{\mathbf{\Psi}_e(|u|^2)}{|u|}\d v\d v_* &\geq C\int_{|u| \geq R_0} g(t,v)g(t,v_*)\Phi_{m,M}(|u |)|u|^2 \d v\d v_*\\
 &\geq Cm\int_{|u| \geq R_0} g(t,v)g(t,v_*)|u|^2 \d v\d v_*.\end{split}\end{equation*}
 Since $g$ has unit mass,
\begin{multline*}
\int_{|u| \geq R_0} g(t,v)g(t,v_*)|u|^2 \d v\d v_* =\IRR g(t,v)g(t,v_*)|u|^2 \d v\d v_*\\
-\int_{|u| < R_0} g(t,v)g(t,v_*)|u|^2 \d v\d v_* \geq 2E_g(t)-R_0^2.\end{multline*}
Going back to \eqref{Eft} finally leads to the estimate
\begin{equation}\label{eftegt}\dfrac{\d}{\d t} E_f(t) + M  E_f(t) \leq 6\mu+M  E_g(t) -  2CmE_g(t)+ CmR_0^2.\end{equation}
Setting $A_1'=\dfrac{6\mu+CmR_0^2}{2Cm}$ and assuming $E_g(t) \leq A_1'$ yields the differential inequality
$$\dfrac{\d}{\d t} E_f(t) + M  E_f(t) \leq M  A_1'$$
which, in turn, implies $E_f(t) \leq \max\left(A_1',E_f(0)\right)$ for any $t \in [0,T]$.  Thus, one may choose
$$A_1=\max\left(A_1',\IR f_0(v)|v|^2\d v\right)$$
in the definition \eqref{B} of $\mathcal{B}$. For the determination of the parameter $A_2 >0$ just follow the path of \cite[Theorem 5.2]{GaPaVi}. This leads to the existence of a solution $f=f_{m,M} \in L^\infty([0,T],L^1_2(\R^3) \cap L^2(\R^3))$  to the modified Boltzmann equation
\begin{equation*}\begin{cases}
\partial_t f(t,v) &=\Q_{m,M}(f,f)(t,v)+\mu \Delta_v f(t,v) \qquad  t \in [0,T]\,, \; v \in \mathbb{R}^3\\
f(0,v)&=f_0(v).
\end{cases}
\end{equation*}
It remains to pass to the limit as $M \to \infty$ and $m \to 0$. To this end, we will show that
the bounds found in the \textit{a priori} estimates hold for the fixed point solutions and are
uniform in $M$ and $m$. From \eqref{eftegt} with $f=f_{m,M}$
$$\dfrac{\d}{\d t}E_f(t) \leq 6\mu -  2CmE_f(t)+ CmR_0^2,$$
which yields
$$E_f(t) \leq A_1=\max\left(\dfrac{3\mu}{Cm}+\frac{R_0^2}{2},\IR f_0(v)|v|^2\d v\right)$$
which provides a bound independent of $M$. Using Proposition B.\ref{Povzner}, it is possible to adapt the proof of \cite[Theorem 5.2]{GaPaVi} to get that, for any $p \geq 1$ and $T >0$, the bounds of \(f=f_{m,M}\) in
\(L^\infty([0,T],L^1_{2p}(\R^3))\) are
independent of \(M\).  Since $f \in \mathbf{H}^2([0,T] \times \R^3)$, using the extension of \cite[Lemma 4.7]{GaPaVi} and then \cite[Lemmas 4.8 \& 4.9]{GaPaVi},
\[
f\in L^\infty([0,T],\mathbf{H}^n_{2p}(\R^3)),
\]
for every $n \geq 1$, and every \(p\ge0\), with bounds
independent on \(M\).  This allows to pass to the
limit as \(M\to\infty\) in the weak form and to show that
the limit solutions satisfy the equation with the kernel
\[
(m+|u|)\,b(u,\sigma).
\]
Following the argument of \cite{GaPaVi} it is possible to prove that the bounds in \(L^\infty([0,T],L^1_{2p}(\R^3))\)  are actually independent on \(m\) and \(T\). This allows to pass to the limit as \(m\to0\) and the limit solution
obtained is a solution to \eqref{driven}.  A standard approximation argument generalize the initial conditions from smooth ones.

\end{document}